
\documentclass[a4paper]{amsart}
\usepackage{amsmath,amssymb}
\usepackage{tikz}
\usepackage{hyperref}


\textwidth 13cm

\title[On the Uniform Computational Content of Ramsey's Theorem]
{On the Uniform Computational Content\\ of Ramsey's Theorem}

\author{Vasco Brattka}
\address{Faculty of Computer Science, Universit\"at der Bundeswehr M\"unchen, Germany and 
             Department of Mathematics \& Applied Mathematics, University of Cape Town, South Africa\footnote{The research leading to these results has received funding from the National Research Foundation of South Africa and by the People Programme (Marie Curie Actions) of the European Union's Seventh Framework Programme FP7/2007-2013/ under REA grant agreement no PIRSES-GA-2011-294962-COMPUTAL.}}
\email{Vasco.Brattka@cca-net.de}

\author{Tahina Rakotoniaina}
\address{Department of Mathematics \& Applied Mathematics, University of Cape Town, South Africa\footnote{Tahina Rakotoniaina has additionally been supported by the German Academic Exchange Service (DAAD).}}
\email{fenoira@gmail.com}



\def\AA{{\mathcal A}}

\def\CC{{\mathcal C}}
\def\DD{{\mathcal D}}

\def\HH{{\mathcal H}}



\def\IN{{\mathbb{N}}}

\def\IR{{\mathbb{R}}}


\def\TO{\Longrightarrow}
\def\In{\subseteq}

\def\prefix{\sqsubseteq}

\def\mto{\rightrightarrows}

\def\id{{\rm id}}
\def\pr{{\rm pr}}

\def\dom{{\rm dom}}
\def\range{{\rm range}}

\def\Tr{{\rm Tr}}

\newcommand{\SO}[1]{{{\bf\Sigma}^0_{#1}}}

\newcommand{\sO}[1]{{\Sigma^0_{#1}}}

\newcommand{\pO}[1]{{\Pi^0_{#1}}}

\def\LPO{\text{\rm\sffamily LPO}}
\def\LLPO{\text{\rm\sffamily LLPO}}
\def\WKL{\text{\rm\sffamily WKL}}
\def\RCA{\text{\rm\sffamily RCA}}
\def\ACA{\text{\rm\sffamily ACA}}
\def\ATR{\text{\rm\sffamily ATR}}

\def\IVT{\text{\rm\sffamily IVT}}

\def\BWT{\text{\rm\sffamily BWT}}

\def\B{\text{\rm\sffamily B}}

\def\C{\mbox{\rm\sffamily C}}
\def\ConC{\mbox{\rm\sffamily CC}}
\def\UC{\mbox{\rm\sffamily UC}}

\def\LPO{\mbox{\rm\sffamily LPO}}
\def\LLPO{\mbox{\rm\sffamily LLPO}}

\def\K{\text{\rm\sffamily K}}

\def\L{\text{\rm\sffamily L}}
\def\CL{\text{\rm\sffamily CL}}
\def\KL{\text{\rm\sffamily KL}}

\def\U{\text{\rm\sffamily U}}

\def\J{\text{\rm\sffamily J}}

\def\DNC{\text{\rm\sffamily DNC}}
\def\ACC{\text{\rm\sffamily ACC}}

\def\MLR{\text{\rm\sffamily MLR}}
\def\KL{\text{\rm\sffamily KL}}
\def\WWKL{\text{\rm\sffamily WWKL}}

\def\COH{\text{\rm\sffamily COH}}
\def\PA{\text{\rm\sffamily PA}}
\def\RT{\text{\rm\sffamily RT}}
\def\CRT{\text{\rm\sffamily CRT}}
\def\SRT{\text{\rm\sffamily SRT}}

\def\CSRT{\text{\rm\sffamily CSRT}}
\def\D{\text{\rm\sffamily D}}
\def\I{\text{\rm\sffamily I}}

\def\R{\mbox{\rm\sffamily R}}

\def\leqT{\mathop{\leq_{\mathrm{T}}}}
\def\nleqT{\mathop{\not\leq_{\mathrm{T}}}}

\def\leqW{\mathop{\leq_{\mathrm{W}}}}
\def\equivW{\mathop{\equiv_{\mathrm{W}}}}

\def\leqSW{\mathop{\leq_{\mathrm{sW}}}}

\def\equivSW{\mathop{\equiv_{\mathrm{sW}}}}

\def\nleqW{\mathop{\not\leq_{\mathrm{W}}}}
\def\nleqSW{\mathop{\not\leq_{\mathrm{sW}}}}

\def\lW{\mathop{<_{\mathrm{W}}}}
\def\lSW{\mathop{<_{\mathrm{sW}}}}
\def\nW{\mathop{|_{\mathrm{W}}}}
\def\nSW{\mathop{|_{\mathrm{sW}}}}

\newcommand{\dash}{\mbox{-}}

\date{\today}

\newtheorem{theorem}{Theorem}[section]
\newtheorem{proposition}[theorem]{Proposition}
\newtheorem{lemma}[theorem]{Lemma}
\newtheorem{fact}[theorem]{Fact}

\newtheorem{corollary}[theorem]{Corollary}

\theoremstyle{definition}
\newtheorem{definition}[theorem]{Definition}

\newtheorem{question}[theorem]{Question}

\begin{document}

\begin{abstract}
We study the uniform computational content of Ramsey's theorem in the Weihrauch lattice. 
Our central results provide information on how Ramsey's theorem behaves under product, parallelization and jumps. 
From these results we can derive a number of important properties of Ramsey's theorem. 
For one,  the parallelization of Ramsey's theorem for cardinality $n\geq1$ and an
arbitrary finite number of colors $k\geq2$ is equivalent to the $n$--th jump of weak K\H{o}nig's lemma.
In particular, Ramsey's theorem for cardinality $n\geq1$ is $\SO{n+2}$--measurable in the effective Borel hierarchy,
but not $\SO{n+1}$--measurable. 
Secondly, we obtain interesting lower bounds, for instance the $n$--th jump of weak K\H{o}nig's lemma
is Weihrauch reducible to (the stable version of) Ramsey's theorem of cardinality $n+2$ for $n\geq2$.
We prove that with strictly increasing numbers of colors Ramsey's theorem forms a strictly increasing chain 
in the Weihrauch lattice.
Our study of jumps also shows that certain uniform variants of Ramsey's theorem that are indistinguishable
from a non-uniform perspective play an important role. 
For instance, the colored version of Ramsey's theorem explicitly includes the color of the homogeneous
set as output information, and the jump of this problem (but not the uncolored variant) is equivalent to the
stable version of Ramsey's theorem of the next greater cardinality.
Finally, we briefly discuss the particular case of Ramsey's theorem for pairs, and we provide some new
separation techniques for problems that involve jumps in this context.
In particular, we study uniform results regarding the relation of boundedness and induction problems to Ramsey's theorem,
and we show that there are some significant differences with the non-uniform situation in reverse mathematics. 
\  \bigskip \\
{\bf Keywords:} computable analysis, Weihrauch lattice, Ramsey's theorem.
\end{abstract}

\maketitle

\setcounter{tocdepth}{1}
\tableofcontents

\pagebreak

\section{Introduction}
\label{sec:introduction}

In this paper we study uniform computational properties of Ramsey's theorem for cardinality $n$ and $k$ colors. 
We briefly recall some basic definitions. 

By $[M]^n:=\{A\In M:|A|=n\}$ we denote the set of subsets of $M$ with exactly $n$ elements.
We identify $k$ with the set $\{0,...,k-1\}$ for every $k\in\IN$. We also allow the case $k=\IN$.
Any map $c:[\IN]^n\to k$ with finite range is called a {\em coloring} ({\em of} $[\IN]^n$).
A subset $M\In\IN$ is called {\em homogeneous} ({\em for} $c$) if there is some $i\in k$ such that $c(A)=i$ for every $A\in[M]^n$.
In this situation we write $c(M)=i$, which is understood to imply that $M$ is homogeneous. 
Frank P.\ Ramsey proved the following theorem \cite{Ram30}.

\begin{theorem}[Ramsey's theorem 1930~\cite{Ram30}] 
\label{thm:Ramsey}
For every coloring $c:[\IN]^n\to k$ with $n,k\geq1$
there exists an infinite homogeneous set $M\In\IN$.
\end{theorem}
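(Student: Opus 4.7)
The plan is to prove Ramsey's theorem by induction on the cardinality $n$, with the base case handled by the infinite pigeonhole principle.

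For the base case $n=1$, a coloring $c:[\IN]^1\to k$ is essentially a $k$-partition of $\IN$ into the classes $c^{-1}(\{i\})$ for $i<k$. Since $k$ is finite and $\IN$ is infinite, at least one class must be infinite, and any such class is an infinite homogeneous set.

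For the inductive step, suppose the theorem holds for cardinality $n$ and consider $c:[\IN]^{n+1}\to k$. I would construct by recursion a decreasing sequence of infinite sets $M_0\supseteq M_1\supseteq M_2\supseteq\ldots$ together with elements $a_i\in M_i$ and colors $d_i\in k$ as follows. Set $M_0:=\IN$ and $a_0:=\min M_0$. Given $M_i$ infinite and $a_i=\min M_i$, define a coloring $c_i:[M_i\setminus\{a_i\}]^n\to k$ by $c_i(A):=c(\{a_i\}\cup A)$. By the inductive hypothesis applied to (the isomorphic copy of $\IN$ given by) $M_i\setminus\{a_i\}$, there is an infinite homogeneous set $M_{i+1}\In M_i\setminus\{a_i\}$ for $c_i$ with some color $d_i\in k$; then set $a_{i+1}:=\min M_{i+1}$. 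By construction the infinite set $A:=\{a_0,a_1,a_2,\ldots\}$ satisfies
\[ c(\{a_{i_0},a_{i_1},\ldots,a_{i_n}\})=d_{i_0} \quad\text{whenever } i_0<i_1<\ldots<i_n. \]
Finally, I would apply the base case to the $k$-coloring $i\mapsto d_i$ of $\IN$ to obtain an infinite subset $J\In\IN$ on which this coloring is constant with value $d\in k$. Then $M:=\{a_j:j\in J\}$ is an infinite homogeneous set for $c$ with color $d$.

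The main conceptual step is the diagonal construction in the inductive step, where one must be careful that the choice of $a_i$ from the previously chosen homogeneous set $M_i$ ensures that $c$ restricted to $(n+1)$-subsets starting with $a_i$ depends only on $d_i$; the remaining work is bookkeeping and a second invocation of pigeonhole. Classical effectiveness considerations (which are the actual subject of the paper) are not needed for the existence statement itself.
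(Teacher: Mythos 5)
Your proof is correct; it is the classical diagonal (``fusion'') argument for Ramsey's theorem, with the base case being the infinite pigeonhole principle and the inductive step building a nested sequence of homogeneous sets and then applying pigeonhole a second time to the colors $d_i$. The paper does not actually prove this theorem---it cites it as a classical result of Ramsey~\cite{Ram30} and takes it as given---so there is no in-paper proof to compare against; your argument matches the standard textbook proof and is the kind of reasoning that underlies several of the paper's uniform constructions (e.g., the stability and cohesiveness arguments in Proposition~\ref{prop:RT-SRT-COH} and the Erd\H{o}s--Rado method mentioned after Corollary~\ref{cor:induction}).
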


We will abbreviate Ramsey's theorem for cardinality $n$ and $k$ colors by $\RT_{n,k}$.\footnote{We do not use the more common abbreviation $\RT_k^n$ since we
will use upper indices to indicate the number of jumps or products.}
The computability theoretic study of Ramsey's theorem started when Specker proved that there exists a computable counterexample for Ramsey's theorem for
pairs \cite{Spe71}, which shows that Ramsey's theorem cannot be proved constructively.  

\begin{theorem}[Specker 1969~\cite{Spe71}] 
\label{thm:Specker}
There exists a computable coloring $c:[\IN]^2\to k$ without a computable
infinite homogeneous set $M\In\IN$.
\end{theorem}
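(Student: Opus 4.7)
The plan is to reduce the problem to the case $k=2$ and construct a computable $2$-coloring all of whose homogeneous sets lie inside a level set of a carefully designed $\dO{2}$ function. For any $k\geq 2$, a map $c:[\IN]^2\to\{0,1\}$ is in particular a map into $k=\{0,\ldots,k-1\}$, and the notion of homogeneity depends only on the function itself and not on the declared codomain, so it suffices to handle $k=2$.

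The key ingredient is a set $A\In\IN$ of Turing degree at most $\emptyset'$ such that every infinite c.e.\ subset of $\IN$ meets both $A$ and $\IN\sm A$. I would build such an $A$ by a direct priority argument: list the c.e.\ sets as $(W_e)_{e\in\IN}$ and at stage $s$ find the least $e\leq s$ that has not yet been acted on and for which $W_{e,s}$ contains two elements $x<y$ disjoint from the finite set of previously committed elements. Reserve them by declaring $A(x):=0$ and $A(y):=1$, mark $e$ as acted on, and give the default value $0$ to every element that is never reserved. This yields a computable approximation $(A_s)_{s\in\IN}$ with $A(m)=\lim_{s\to\infty}A_s(m)$, and by construction every infinite $W_e$ is eventually acted on, so it receives both colours.

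The coloring is then $c:[\IN]^2\to 2$, $c(\{m,n\}):=A_n(m)$ for $m<n$, which is manifestly computable. If $H$ were an infinite computable (hence c.e.) set homogeneous of color $i\in\{0,1\}$, then for each fixed $m\in H$ one would have $A_n(m)=i$ for the infinitely many $n\in H$ with $n>m$; taking the limit forces $A(m)=i$, so $H\In A^{-1}(i)$, contradicting the defining property of $A$.

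The main point that needs careful verification is the priority construction itself: one must check that no element is reserved twice, which is automatic from the explicit avoidance rule, and that every infinite $W_e$ is indeed eventually acted on, which holds because when $e$ reaches priority only finitely many elements are off-limits, so an infinite $W_e$ must eventually enumerate two fresh witnesses. Once these two facts are established, the limit computation at the heart of the argument is routine.
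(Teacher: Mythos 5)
Your overall strategy is exactly Jockusch's classical proof of Specker's theorem: take a $\dO{2}$ set $A$ such that no infinite c.e.\ set lies entirely inside $A$ or inside $\IN\sm A$, and colour $c\{m,n\}:=A_n(m)$ for $m<n$ from a computable approximation $(A_s)_s$; every infinite homogeneous set must then lie inside $A$ or inside its complement, and no infinite computable set can do so. The reduction to $k=2$, the stability/limit argument, and the final contradiction are all fine \emph{given} such an $A$. The problem is that the $A$ you construct does not have the required property, and the reason is structural rather than a detail of the bookkeeping you flag for verification: a reservation, once made, is permanent, and everything unreserved defaults to $0$, so $A=\{m:m\text{ is eventually reserved with value }1\}$ is \emph{computably enumerable} (simulate the effective construction and output each element that gets a $1$). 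But any infinite c.e.\ set contains an infinite computable subset (enumerate it and extract an increasing subsequence), so $A$ satisfies your requirement only if $A$ is finite --- in which case $\IN\sm A$ is cofinite and the requirement fails on the other side. No forward, monotone assignment of bits can produce a set with the property you want.

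Indeed the resulting coloring $c$ has a computable infinite homogeneous set: enumerate $A$ by simulating the construction and choose $h_0<h_1<\cdots$ so that each $h_i$ is reserved with value $1$ before stage $h_{i+1}$; then $A_n(h_i)=1$ for every $n\in\{h_0,h_1,\ldots\}$ with $n>h_i$, so $\{h_0,h_1,\ldots\}$ is a computable homogeneous set of colour $1$. For the same reason your claimed verification that ``every infinite $W_e$ is eventually acted on'' is false: for any index $e_A$ with $W_{e_A}=A$, acting on $e_A$ would reserve some $x\in A$ with $A(x)=0$, a contradiction, so $e_A$ is never acted on (it is permanently starved). The standard repair is to abandon monotonicity and do a $\emptyset'$-recursive finite extension construction: build $A=\bigcup_i\sigma_i$ from strings $\sigma_0\prefix\sigma_1\prefix\cdots$, at stage $2e$ asking $\emptyset'$ whether $W_e$ contains some $n\geq|\sigma_{2e}|$ and, if so, extending $\sigma_{2e}$ by $0$'s up to position $n$ and then a~$1$ (and symmetrically with a terminal $0$ at stage $2e+1$). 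The resulting $A$ is computable in $\emptyset'$ but not c.e., and with this $A$ the remainder of your argument --- the coloring, the stability, and the contradiction --- goes through as written.
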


Jockusch provided a very simple proof of Specker's theorem, and he improved Specker's result by showing the following \cite{Joc72}.

\begin{theorem}[Jockusch 1972~\cite{Joc72}] 
\label{thm:Jockusch}
For every computable coloring $c:[\IN]^n\to 2$ with $n\geq1$ there exists an infinite homogeneous
set $M\In\IN$ such that $M'\leqT\emptyset^{(n)}$. 
However, there exists a computable coloring $c:[\IN]^n\to2$ for each $n\geq2$ without an 
infinite homogeneous set $M\In\IN$ that is computable in $\emptyset^{(n-1)}$.
\end{theorem}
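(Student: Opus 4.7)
I would prove the upper and lower bounds separately, each by induction on $n$.

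For the upper bound, I would establish the relativised statement: for every oracle $X$ and every $X$-computable coloring $c:[\IN]^n\to 2$ there is an infinite homogeneous $M\In\IN$ with $M'\leqT X^{(n)}$. The base case $n=1$ is immediate, since the two color classes of a computable $c:\IN\to 2$ are already $X$-computable. For the inductive step from $n$ to $n+1$, I would combine an Erd\H{o}s--Rado-style tree argument with a relativized low basis theorem: from the $X$-computable coloring $c:[\IN]^{n+1}\to 2$, extract an infinite reservoir $C$ on which the induced coloring stabilises, $\tilde c(\vec x):=\lim_{y\in C,\,y\to\infty}c(\vec x,y)$, with $C$ chosen low over $X'$ (so $(C\oplus X')'\equivT X''$). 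Such $C$ is obtained by phrasing the natural tree of candidates as a $\pO{1}(X')$-class and invoking low basis relative to $X'$. Then $\tilde c$ is $(C\oplus X')$-computable, and applying the inductive hypothesis relative to $C\oplus X'$ yields an infinite $\tilde c$-homogeneous $M\In C$ with $M'\leqT(C\oplus X')^{(n)}=X^{(n+1)}$. A final thinning pass (walking through $M$ and retaining only those elements whose $c$-color with the initial segment matches the target) produces a $c$-homogeneous set without raising the jump.

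For the lower bound, the strategy is diagonalization at the base level $n=2$ combined with a dimension-raising lift via the limit lemma. For $n=2$, a finite-injury priority argument builds a computable $c_2:[\IN]^2\to 2$ that spoils every $\emptyset'$-computable candidate homogeneous set: enumerate all $\emptyset'$-computable sets as $A_e=\lim_s A_{e,s}$ via uniformly computable approximations, and at each stage assign colors so that, whenever $A_e$ is seen to be unbounded, two of its elements receive different colors. For $n\geq 3$, relativise the $n=2$ construction to the oracle $\emptyset^{(n-2)}$, producing an $\emptyset^{(n-2)}$-computable $c:[\IN]^2\to 2$ with no $\emptyset^{(n-1)}$-computable infinite homogeneous set, and then iterate the limit lemma $n-2$ times to write $c(x,y)=\lim_{z_1}\cdots\lim_{z_{n-2}}g(x,y,z_1,\dots,z_{n-2})$ for a computable $g:[\IN]^n\to 2$. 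Any infinite $M$ homogeneous for $g$ is automatically homogeneous for $c$: for $x<y$ in $M$ and arbitrarily large $z_1<\cdots<z_{n-2}$ in $M$, the value $g(x,y,z_1,\dots,z_{n-2})$ is eventually equal both to $c(x,y)$ (via the limit) and to the constant $g$-color on $[M]^n$. Hence no $\emptyset^{(n-1)}$-computable infinite $g$-homogeneous set can exist.

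The principal obstacle is securing the lowness of the reservoir $C$ in the upper bound step: a naive cohesive-set construction only yields $C\leqT X''$, which would degrade the final bound to $M'\leqT X^{(n+2)}$ and ruin the induction. The fix is the relativised low basis application sketched above, which requires formulating the candidate space so that the $\pO{1}(X')$-class genuinely captures "almost stabilization" of the coloring along the reservoir. A secondary subtlety in the lower bound is that iterating the limit lemma must preserve uniformity of the indices producing $g$, so that $g$ is actually computable; this follows from the uniformity of the standard limit-lemma construction but requires routine bookkeeping across the $n-2$ levels.
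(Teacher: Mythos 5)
The paper does not prove this theorem; it is quoted from Jockusch's 1972 paper purely as background, so there is no in-paper proof to compare against. Your sketch takes the pre-homogenization-with-a-low-reservoir route (in the style later systematized by Cholak, Jockusch and Slaman) rather than Jockusch's original Erd\H{o}s--Rado/$\pO{n}$-class argument, and your lower-bound plan --- diagonalize at $n=2$, then raise the dimension by peeling iterated limits --- is in the same spirit as Jockusch's own dimension-raising lemma. The dimension-raising step is sound: for $x<y$ in an infinite $g$-homogeneous $M$ one can choose $z_1<\dots<z_{n-2}$ in $M$ successively large so that $g(x,y,\vec z)$ collapses down the tower of limits to $c(x,y)$, so the constant $g$-color on $[M]^n$ equals $c(x,y)$ and $M$ is $c$-homogeneous.

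There is, however, a genuine gap in the upper-bound induction, distinct from the one you flag (the lowness of the reservoir, which you address correctly). The inductive hypothesis as you state it --- for every $X$-computable $c$ there is homogeneous $M$ with $M'\leqT X^{(n)}$ --- is too weak to carry the step. Applying the IH relative to $Z:=C\oplus X'$ gives $M_0$ with $M_0'\leqT Z^{(n)}$, but $M_0$ is homogeneous for the \emph{reindexed} coloring $\tilde c\circ p$ (with $p$ the principal function of $C$), not for $\tilde c$ on $[C]^n$. Pushing forward by $p$ and then thinning produces $M$ with $M\leqT M_0\oplus C\oplus X$, and from $M_0'\leqT X^{(n+1)}$ and $C\leqT X''$ one can only conclude $M'\leqT (M_0\oplus C\oplus X)'\leqT X^{(n+2)}$, one jump too many. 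The fix is to strengthen the inductive claim to ``there is homogeneous $M$ with $(M\oplus X)'\leqT X^{(n)}$.'' Then the IH yields $(M_0\oplus Z)'\leqT Z^{(n)}\leqT X^{(n+1)}$ (using $(C\oplus X')'\leqT X''$), and since $M\oplus X\leqT M_0\oplus Z$ one gets $(M\oplus X)'\leqT X^{(n+1)}$, closing the induction; the base case $n=1$ still holds because a homogeneous set can be taken $X$-computable. Without this strengthening of the statement being proved, the induction as written does not go through.
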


Another cornerstone in the study of Ramsey's theorem was the cone avoidance theorem (Theorem~\cite[Theorem~2.1]{SS95})
that was originally proved by Seetapun.

\begin{theorem}[Seetapun 1995~\cite{SS95}]
\label{thm:Seetapun}
Let $c:[\IN]^2\to2$ be a coloring that is computable in $B\In\IN$, and let $(C_i)_i$ be a sequence of sets $C_i\In\IN$
such that $C_i\nleqT B$ for all $i\in\IN$. Then there exists an infinite homogeneous set $M$ for $c$ such that $C_i\nleqT M$ for all $i\in\IN$.
\end{theorem}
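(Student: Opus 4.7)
The plan is to build $M$ by a Mathias-style forcing relative to $B$. A \emph{condition} is a pair $(F, X)$ where $F$ is a finite set homogeneous for $c$ of some color $j \in \{0, 1\}$, $X$ is an infinite $B$-computable set with $\max(F) < \min(X)$, and $(F, X)$ is \emph{pre-homogeneous}: for every $n \in F$ and every $m \in X$, $c(\{n, m\}) = j$. An extension $(F', X')$ refines $(F, X)$ by setting $F' = F \cup F_0$ for a finite homogeneous $F_0 \In X$ of color $j$ and thinning $X$ to an infinite pre-homogeneous tail $X' \In X$ above $\max(F_0)$. A sufficiently generic filter along such conditions produces an infinite set $M$ that is automatically homogeneous for $c$. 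To obtain cone avoidance, for every pair $(i, e)$ it suffices to show that the set of conditions forcing $\Phi_e^M \neq C_i$ (or $\Phi_e^M$ to be partial) is dense; countably many such density requirements can then be met by the usual diagonalization.

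The combinatorial heart of the proof is this density lemma: given a condition $(F, X)$ and an index $e$, some extension forces $\Phi_e^M$ to differ from $C_i$ or to be partial. I argue by contradiction and aim for $C_i \leqT B$. Assume no extension of $(F, X)$ forces disagreement. For each $n$, consider finite homogeneous $F_0 \In X$ of color $j$ such that $\Phi_e^{F \cup F_0}(n)$ halts; by the failure assumption every such halting value must equal $C_i(n)$, for otherwise I could extend into that $F_0$ and force disagreement at $n$. At least one such $F_0$ must exist, else $(F, X)$ itself forces $\Phi_e^M(n)$ partial. Searching $B$-computably through finite homogeneous candidates $F_0 \In X$ until one halts then recovers $C_i(n)$, giving $C_i \leqT B$ and contradicting the hypothesis.

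The main obstacle is ensuring that the condition space is rich enough that the extensions invoked above really exist: given $(F, X)$ and a candidate new element $x \in X$, the set $X$ splits into the color-$0$ and color-$1$ neighbors of $x$, and pre-homogeneity forces a choice of one half, after which we must be able to iterate. The Seetapun trick is a case split: either one color class admits an infinite pre-homogeneous subset (in which case we commit to that color and proceed as above), or both do, in which case the failure assumption must hold for both colors simultaneously, and the $B$-computable reduction of $C_i$ sketched above still goes through since any halting $F_0 \In X$ (of either color) must agree with $C_i(n)$. I expect this dichotomy, together with a $B$-computable Mathias extension argument that actually produces the thinned pre-homogeneous tail $X'$, to be the delicate part, since it must be carried out uniformly in the parameter $(F, X)$. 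Once the density lemma is secured, the standard diagonal construction over countably many $(i, e)$ yields the desired infinite homogeneous $M$ satisfying $C_i \nleqT M$ for all $i$.
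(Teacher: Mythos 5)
Your overall skeleton (Mathias forcing with finite pre-homogeneous stems, a density lemma that forces $\Phi_e^M\neq C_i$, and a $B$-computable recovery of $C_i$ from the failure of density) matches the shape of arguments now standard for cone avoidance for $\RT_{2,2}$. Note, though, that the paper does not prove this result; it quotes \cite{SS95}, so the comparison is to Seetapun's argument, not to anything in the text.

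There is a genuine gap, and it is exactly where you flagged uncertainty, but your proposed ``Seetapun trick'' as stated does not close it. With a condition $(F,X)$ committed to color $j$ and $X$ required to be an infinite $B$-computable pre-homogeneous reservoir, the extensions you invoke need not exist: after adjoining $x\in X$ to $F$, the only permissible new reservoir is $X_x:=\{m\in X:m>x,\ c(\{x,m\})=j\}$, and nothing guarantees $X_x$ is infinite. Worse, the predicate ``$X_x$ is infinite'' is $\Pi^0_2(B)$, not $B$-decidable, so your ``$B$-computable search through finite homogeneous $F_0\In X$ until one halts'' is not actually searching through valid extensions of $(F,X)$: it may return an $F_0$ whose induced reservoir is finite. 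For such a spurious $F_0$, the contrapositive ``otherwise I could extend into $F_0$ and force disagreement'' does not apply (there is no condition $(F\cup F_0,X')$ to extend to), and therefore $\Phi_e^{F\cup F_0}(n)$ need not equal $C_i(n)$. The purported reduction $C_i\leqT B$ collapses. The ``case split'' in your last paragraph (one color class admits an infinite pre-homogeneous subset, or both do) is not a usable dichotomy: it is not exclusive, it cannot be decided $B$-effectively, and once the condition has committed to a single color $j$ you cannot switch midstream.

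The actual resolution in Seetapun--Slaman (and in the later Cholak--Jockusch--Slaman presentation) is to \emph{not} commit to a color at the level of a single condition. One uses two-sided conditions of the form $(F_0,F_1,X)$, with $F_i$ homogeneous of color $i$, $X$ infinite, and $F_i\cup X$ pre-homogeneous for color $i$, so that two potential homogeneous sets are grown simultaneously and the density lemma only needs to force disagreement on \emph{one} side per requirement. The combinatorial heart is then Seetapun's lemma about choosing many disjoint large finite homogeneous sets inside $X$ and playing them against each other, which is what guarantees that the search returns only $F_0$'s that are \emph{bona fide} extensions. An alternative (and the one this paper leans on elsewhere, via Proposition~\ref{prop:RT-SRT-COH}) is to decompose $\RT_{2,2}$ through $\COH$ and the stable case and prove cone avoidance separately for each, where the stable case again uses two-sided conditions and the $\DO{2}$ set $A(x)=\lim_y c(x,y)$. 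Either way, the single-color $B$-computable pre-homogeneous reservoirs in your conditions are the wrong condition space; the density lemma as written is false for them.
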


This theorem was generalized by Cholak, Jockusch and Slaman who proved in particular the following version \cite[Theorem~12.2]{CJS01}.

\begin{theorem}[Cholak, Jockusch and Slaman 2001~\cite{CJS01}]
\label{thm:CJS-lower}
For every computable coloring $c:[\IN]^n\to k$ there exists an infinite homogeneous set $M\In\IN$ such that $\emptyset^{(n)}\nleqT M'$.
\end{theorem}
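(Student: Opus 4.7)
The plan is to proceed by induction on $n\geq 2$; the cases $n\leq 1$ are excluded since $\emptyset'\leqT M'$ holds for every $M$.

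For the base case $n=2$, the natural move is to invoke the Cholak--Jockusch--Slaman low$_2$ theorem, which produces, for every computable $c:[\IN]^2\to k$, an infinite homogeneous set $M$ with $M''\leqT\emptyset''$. The argument splits into two phases. First, by Mathias forcing with computable reservoirs, pass to an infinite cohesive set $C$ on which $c$ becomes stable in the sense that $\lim_{y\in C}c(\{x,y\})$ exists for every $x\in C$. Second, use the induced $\dO{2}$ limit coloring to extract the homogeneous set by a further forcing in which the reservoirs are themselves infinite low$_2$ sets; each step decides the next $\Sigma^{M'}_1$ question modulo $\emptyset''$, keeping $M''\leqT\emptyset''$. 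From $M''\leqT\emptyset''$ one then deduces $\emptyset''\nleqT M'$, since otherwise $\emptyset'''\leqT M''\leqT\emptyset''$ would contradict the strict growth of the jump hierarchy.

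For the inductive step $n\geq 3$, Jockusch's pre-homogeneous set construction supplies, computably in $\emptyset'$, an infinite set $P$ on which $c$ reduces to an $\emptyset'$-computable coloring $c':[P]^{n-1}\to k$. A naive relativization of the inductive hypothesis to $\emptyset'$ only yields $\emptyset^{(n)}\nleqT M'\oplus\emptyset''$, which is strictly weaker than what we need. To close this gap I would strengthen the inductive hypothesis to an avoidance form: for every oracle $A$ with $A\leqT\emptyset^{(n-1)}$ and every $A$-computable coloring on $[\IN]^{n-1}$, there exists an infinite homogeneous $M$ such that the \emph{unrelativized} jump $\emptyset^{(n)}$ is not Turing-below $M'$. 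This enhanced statement is preserved by the pre-homogeneous reduction when applied with $A=\emptyset'$ at arity $n-1$.

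The principal obstacle is the interplay between induction and relativization: each inductive step introduces an auxiliary jump from Jockusch's construction that must be absorbed without weakening the strict bound $\emptyset^{(n)}\nleqT M'$. This compels one to carry cone-avoidance data through the underlying forcing construction and, at every stage, to diagonalize against every potential Turing reduction $\Phi_e^{M'}=\emptyset^{(n)}$ by restricting to a reservoir on which no such computation is total and correct on sufficiently many inputs. This $\Sigma^0_2$-preservation argument, fused with the low$_2$ construction at the base, is the technical heart of the proof.
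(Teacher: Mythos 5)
The paper merely states this as Theorem 12.2 of Cholak, Jockusch and Slaman (2001) and gives no proof, so there is no in-paper argument to compare against; I am assessing your sketch on its own terms. The base case $n=2$ is sound as a reduction to the low$_2$ theorem: if $M''\leqT\emptyset''$, then $\emptyset''\leqT M'$ would give $\emptyset'''\leqT M''\leqT\emptyset''$, contradicting the jump theorem. The forcing details you gesture at are vague but are not needed to make this reduction.

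The inductive step, however, rests on a misconception about relativization and then introduces an unnecessary and unestablished strengthening. When the statement ``there exists an infinite homogeneous $H$ with $\emptyset^{(n-1)}\nleqT H'$'' is relativized to an oracle $A$, the Turing jump $H'$ becomes the $A$-relative jump $(H\oplus A)'$, \emph{not} $H'\oplus A'$. Relativizing the inductive hypothesis at arity $n-1$ to $A=\emptyset'$ and applying it to the $\emptyset'$-computable reduced coloring $c'$ therefore yields an infinite homogeneous $H$ for $c'$ with $(\emptyset')^{(n-1)}=\emptyset^{(n)}\nleqT(H\oplus\emptyset')'$. Since the homogeneous set $M$ for $c$ is obtained by composing $H$ with the principal function of the pre-homogeneous set $P\leqT\emptyset'$, one has $M\leqT H\oplus\emptyset'$, hence $M'\leqT(H\oplus\emptyset')'$, and $\emptyset^{(n)}\nleqT M'$ follows directly. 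No ``avoidance form'' strengthening is needed; moreover, the one you propose is not established, since relativizing the low$_2$ theorem to an oracle $A\leqT\emptyset^{(n-1)}$ only bounds $(M\oplus A)''$ by $A''$ and says nothing about avoiding the much smaller $\emptyset^{(n)}$. The correct way to run the induction is to carry the oracle uniformly --- prove for every $A$ and every $A$-computable coloring on $[\IN]^n$ that some homogeneous $H$ has $A^{(n)}\nleqT(H\oplus A)'$ --- so that each dimension reduction absorbs exactly one jump of the oracle. You should also verify the claim that Jockusch's reduction from $[\IN]^n$ to $[\IN]^{n-1}$ costs only $\emptyset'$ for $n\geq 3$; the standard route reduces directly from dimension $n$ to $2$ at a cost of $\emptyset^{(n-2)}$ and applies the relativized $n=2$ case.
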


Cholak, Jockusch and Slaman also improved Jockusch's theorem (Theorem~\ref{thm:Jockusch})  for the case of Ramsey's theorem for pairs \cite{CJS01,CJS09}.

\begin{theorem}[Cholak, Jockusch and Slaman 2001~\cite{CJS01}]
\label{thm:CJS-upper}
For every computable coloring $c:[\IN]^2\to2$ there exists an infinite homogeneous set $M\In\IN$,
which is low$_2$, i.e., such that $M''\leqT\emptyset''$.
\end{theorem}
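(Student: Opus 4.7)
The plan is to follow the classical decomposition of Ramsey's theorem for pairs into a cohesion problem and a stable Ramsey problem, and to show separately that each of these admits low$_2$ solutions. Given a computable coloring $c\colon[\IN]^2\to 2$, for each $n$ set $R_n:=\{m\in\IN : c(\{n,m\})=0\}$. A set $C\In\IN$ is cohesive for the family $(R_n)_{n\in\IN}$ if for every $n$ either $C\setminus R_n$ or $C\cap R_n$ is finite. On any infinite cohesive $C$ the restriction $c|_{[C]^2}$ becomes stable, i.e., the limit $d(n):=\lim_{m\in C}c(\{n,m\})$ exists for all $n\in C$. Thus it suffices to produce a low$_2$ cohesive $C$ first and then, uniformly below $\emptyset''$, extract from the induced stable coloring a low$_2$ infinite homogeneous set $M\In C$.

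First I would construct such a cohesive set by a Mathias-style construction with conditions $(F,X)$, where $F$ is a finite initial segment of $C$ and $X$ is an infinite reservoir. At stage $n$ one replaces $X$ by whichever of $X\cap R_n$ or $X\setminus R_n$ is infinite; this decision is $\Pi^0_2$ in $c$, hence answerable by $\emptyset''$, and a priority bookkeeping that approximates every $\Sigma^0_1(C)$-question at the level of $\emptyset''$ guarantees $C''\leqT\emptyset''$.

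Next, with $C$ in hand, the limit function $d$ is $\Delta^0_2(C)$, and at least one of $A_i:=\{n\in C : d(n)=i\}$ for $i\in\{0,1\}$ is infinite. The stable step requires finding an infinite $M\In A_i$ that is homogeneous for $c$ itself and satisfies $M''\leqT\emptyset''$. I would do this by a second Mathias forcing relative to $C$, whose reservoirs are kept low over $C$, meeting countably many dense requirements that either decide or force convergence of each bit of $M'$. A tree-of-strategies argument ensures that every individual requirement can be arbitrated by one jump of $C$, so that $M''$ can be computed uniformly from $C''$ and hence from $\emptyset''$.

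The main obstacle is the low$_2$ bookkeeping in the stable step: a naive Mathias construction only yields a low infinite subset of $A_i$, which need not be homogeneous for $c$ itself, while demanding genuine $c$-homogeneity forces one to answer $\Pi^0_2$ questions about the reservoir that are not decidable below $\emptyset'$. Balancing forcing strength against jump control, as done in~\cite{CJS01} via a delicate combination of low Mathias forcing and a low$_2$-preserving priority construction, is what makes the theorem substantially harder than Jockusch's classical low upper bound for $\RT_{2,2}$ recalled in Theorem~\ref{thm:Jockusch}.
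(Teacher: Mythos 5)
The paper does not actually prove this theorem: it is quoted verbatim as Theorem~\ref{thm:CJS-upper} from \cite{CJS01} (with a correction in \cite{CJS09}) and used as a black box. So there is no in-paper proof to compare against; what you have sketched is the argument from the cited source. Your high-level road map --- decompose $\RT_{2,2}$ into a cohesiveness step and a stable step, and control both at the level of $\emptyset''$ via Mathias-style forcing --- is indeed the shape of the Cholak--Jockusch--Slaman proof, and you correctly flag that the hard part is the jump control, not the mere existence of solutions.

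That said, two of the details you give are off in ways worth flagging. For the cohesive step, saying the $\Pi^0_2$ questions are ``answerable by $\emptyset''$'' elides the real difficulty: the reservoirs accumulate complexity along the construction, so the questions being asked are not $\Pi^0_2$ over a fixed computable object. The standard route is to invoke the Jockusch--Stephan characterization (the degrees of cohesive sets for a uniformly computable family are exactly those $\mathbf d$ with $\mathbf d'\gg\mathbf 0'$) and then apply the low basis theorem relativized to $\emptyset'$ to get $C''\leqT\emptyset''$. For the stable step, your claim that a ``naive Mathias construction only yields a low infinite subset of $A_i$'' is backwards: it is known that there are $\Delta^0_2$ sets with no low infinite subset of them or of their complements, so even low$_2$ is genuinely nontrivial there, and obtaining \emph{any} infinite subset of $A_0$ or $A_1$ under jump control is precisely where the priority machinery lives. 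Conversely, the worry you raise about converting a subset of $A_i$ into a set homogeneous for $c$ itself is the easy part: once you have an infinite $M_0\In A_i$, you can thin it to a $c$-homogeneous set of color $i$ computably in $M_0\oplus c$, because each $n\in A_i$ has $c(\{n,m\})=i$ for cofinitely many $m$. Finally, it is worth knowing that the original CJS write-up of exactly this jump-control argument contained a gap that was repaired in \cite{CJS09}, which is why the paper cites both.
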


\begin{figure}[htb]
\begin{tikzpicture}[scale=.5,auto=left,every node/.style={fill=black!15}]
\useasboundingbox (-4,-2) rectangle (22,25);
\def\rvdots{\raisebox{1mm}[\height][\depth]{$\huge\vdots$}};

   \node (limSSSS) at (0.5,24) {$\lim^{(4)}$};
  \node (WKLSSSS) at (0.5,22.25) {$\WKL^{(4)}\equivSW\widehat{\C_2^{(4)}}$};
  \node (RT4N) at (5.75,22) {$\RT_{4,\IN}$}; 
  \node (RT4P) at (8.5,21.75) {$\RT_{4,+}$};
  \node (P4) at (10.5,21.625) {$...$};
  \node (RT44) at (12.5,21.5) {$\RT_{4,4}$}; 
  \node (RT43) at (15,21.25) {$\RT_{4,3}$}; 
  \node (RT42) at (17.5,21) {$\RT_{4,2}$}; 
  \node (C2SSSS) at (20,20.75) {$\C_{2}^{(4)}$}; 
  \node (limSSS) at (0.5,18.75) {$\lim^{(3)}$};
  \node (WKLSSS) at (0.5,17) {$\WKL^{(3)}\equivSW\widehat{\C_2^{(3)}}$};
  \node (RT3N) at (5.75,16.75) {$\RT_{3,\IN}$}; 
  \node (RT3P) at (8.5,16.5) {$\RT_{3,+}$};
   \node (P3) at (10.5,16.375) {$...$};
  \node (RT34) at (12.5,16.25) {$\RT_{3,4}$}; 
  \node (RT33) at (15,16) {$\RT_{3,3}$}; 
  \node (RT32) at (17.5,15.75) {$\RT_{3,2}$}; 
  \node (C2SSS) at (20,15.5) {$\C_{2}^{(3)}$}; 
  \node (limSS) at (0.5,13.5) {$\lim''$};
  \node (WKLSS) at (0.5,11.75) {$\WKL''\equivSW\widehat{\C_2''}$};
  \node (RT2N) at (5.75,11.5) {$\RT_{2,\IN}$}; 
  \node (RT2P) at (8.5,11.25) {$\RT_{2,+}$};
  \node (P2) at (10.5,11.075) {$...$};
  \node (RT24) at (12.5,11) {$\RT_{2,4}$}; 
  \node (RT23) at (15,10.75) {$\RT_{2,3}$}; 
  \node (RT22) at (17.5,10.5) {$\RT_{2,2}$}; 
  \node (C2SS) at (20,10.25) {$\C_{2}''$}; 
  \node (limS) at (0.5,8.25) {$\lim'$};
  \node (WKLS) at (0.5,6.5) {$\WKL'\equivSW\widehat{\C_2'}$};
  \node (RT1N) at (5.75,6.25) {$\RT_{1,\IN}$};
  \node (RT1P) at (8.5,6) {$\RT_{1,+}$};  
  \node (P1) at (10.5,5.875) {$...$};
  \node (RT14) at (12.5,5.75) {$\RT_{1,4}$}; 
  \node (RT13) at (15,5.5) {$\RT_{1,3}$}; 
  \node (RT12) at (17.5,5.25) {$\RT_{1,2}$}; 
  \node (C2S) at (20,5) {$\C_{2}'$}; 
  \node (lim) at (0.5,3) {$\lim\equivSW\widehat{\C_\IN}$};
  \node (WKL) at (0.5,1.25) {$\WKL\equivSW\widehat{\C_2}$};
  \node (CN) at (5.75,2.75) {$\C_\IN$};
  \node (KN) at (8.5,1) {$\K_\IN\equivSW\C_{2}^*$};
  \node (P0) at (11.1,0.75) {$...$};
  \node (C4) at (12.5,0.5) {$\C_{4}$}; 
  \node (C3) at (15,0.25) {$\C_{3}$}; 
  \node (C2) at (17.5,0) {$\C_{2}$}; 

  \foreach \from/\to in {
  limSSSS/WKLSSSS,
  WKLSSSS/limSSS,
  RT4N/RT4P,
  RT4P/P4,
  P4/RT44,
  RT44/RT43,
  RT43/RT42,
  RT42/RT3N,
  limSSS/WKLSSS,
  WKLSSS/limSS,
  RT3N/RT3P,
  RT3P/P3,
  P3/RT34,
  RT34/RT33,
  RT33/RT32,
  RT32/RT2N,
  limSS/WKLSS,
  WKLSS/limS,
  RT2N/RT2P,
  RT2P/P2,
  P2/RT24,
  RT24/RT23,
  RT23/RT22,
  RT22/RT1N,
  limS/WKLS,
  WKLS/lim,
  RT1N/RT1P,
  RT1P/P1,
  P1/RT14,
  RT14/RT13,
  RT13/RT12,
  lim/WKL,
  lim/CN,
  WKL/KN,
  CN/KN,
  KN/P0,
  P0/C4,
  C4/C3,
  C3/C2,   
  WKLSSSS/RT4N, 
  WKLSSS/RT3N, 
  WKLSS/RT2N,
  WKLS/RT1N,
  C2S/C2,
  C2SS/C2S,
  C2SSS/C2SS,
  C2SSSS/C2SSS}
  \draw [->,thick] (\from) -- (\to);

  \foreach \from/\to in {
  RT12/C2,
  RT13/C3,
  RT14/C4,
  RT1N/CN,
  RT42/C2SSSS,
  RT32/C2SSS,
  RT22/C2SS,
  RT12/C2S,
  RT1P/KN}
  \draw [->,thick,dashed] (\from) -- (\to);


\draw [->,thick,dashed,looseness=1.5] (RT32) to [out=195,in=59] (WKLS);
\draw [->,thick,dashed,looseness=1.5] (RT42) to [out=195,in=59] (WKLSS);

\draw  (-3,4) rectangle (21,-1);
\draw  (-3.25,9.25) rectangle (21.25,-1.25);
\draw  (-3.5,14.5) rectangle (21.5,-1.5);
\draw  (-3.75,19.75) rectangle (21.75,-1.75);
\draw  (-4,25) rectangle (22,-2);

\node  [fill=none] at (-3.25,24) {$\SO{6}$};
\node  [fill=none] at (-3,19) {$\SO{5}$};
\node  [fill=none] at (-2.75,13.75) {$\SO{4}$};
\node  [fill=none] at (-2.5,8.5) {$\SO{3}$};
\node  [fill=none] at (-2.25,3.25) {$\SO{2}$};

\end{tikzpicture}
  
\ \\[-0.5cm]
\caption{Ramsey's theorem  for different cardinalities and colors in the Weihrauch lattice: all solid arrows indicate strong Weihrauch reductions against the direction of the arrow, all dashed arrows indicate ordinary Weihrauch reductions.}
\label{fig:diagram-RTnk}
\end{figure}
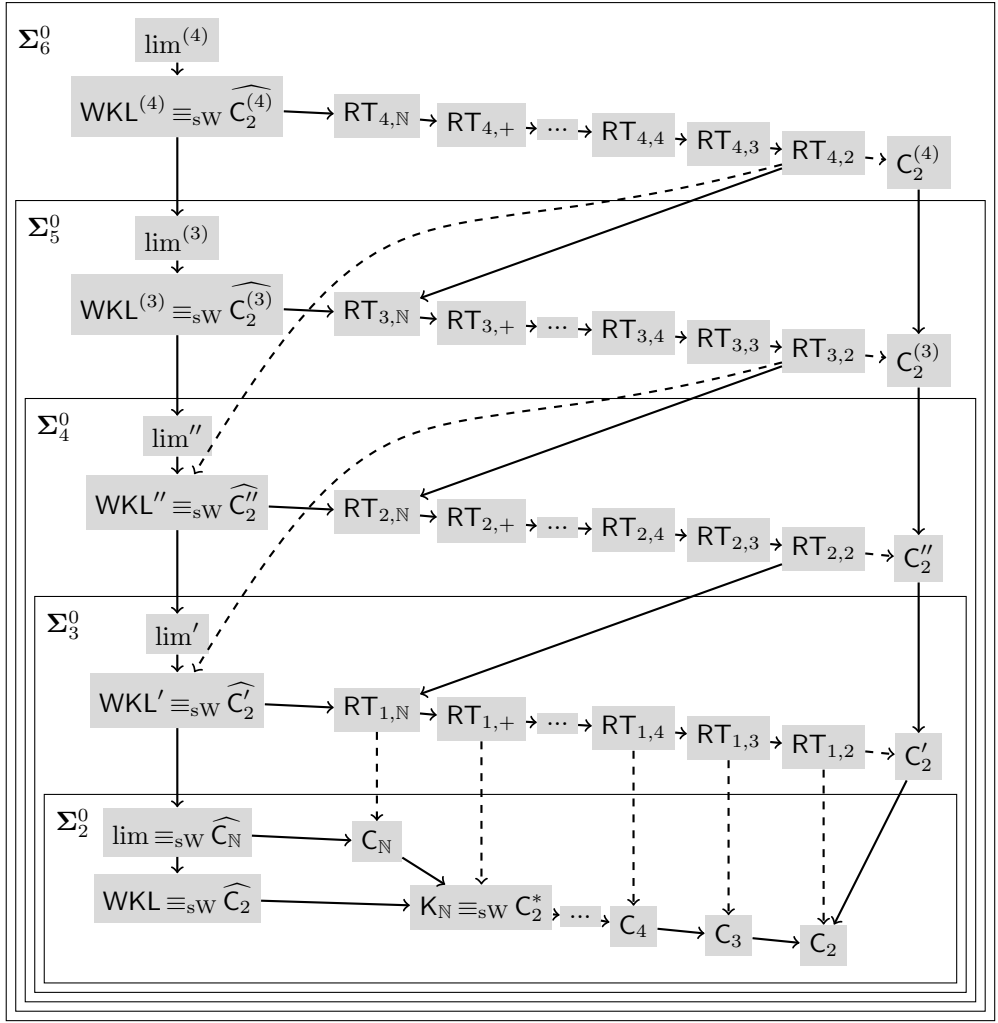

We will make use of these and other earlier results in our uniform study of Ramsey's theorem.
A substantial number of results in this article are based on the second author's PhD thesis \cite{Rak15}.
The first uniform results on Ramsey's theorem were published by Dorais, Dzhafarov, Hirst, Mileti and Shafer~\cite{DDH+16} and by Dzhafarov~\cite{Dzh15,Dzh16}.
Among other things they proved the following squashing theorem~\cite[Theorem~2.5]{DDH+16}
that establishes a relation between products and parallelization for problems such as Ramsey's theorem.

Here a {\em problem} $f:\In\IN^\IN\mto\IN^\IN$, i.e., a partial multi-valued function, is called {\em finitely tolerant} if there is a computable partial function $T:\In\IN^\IN\to\IN^\IN$ such that
for all $p,q\in\dom(f)$ and $k\in\IN$ with $(\forall n\geq k)(p(n)=q(n))$ it follows that $r\in f(q)$ implies $T\langle r,k\rangle\in f(p)$.
By $\langle ,\rangle$ we denote a standard pairing function on Baire space $\IN^\IN$.
Intuitively, finite tolerance means that for two almost identical inputs and a solution for one of these inputs we can compute a solution for the other input. 
The squashing theorem relates products $g\times f$ to parallelizations $\widehat{g}$ of problems (these notions will be defined in the next section).

\begin{theorem}[Squashing theorem \cite{DDH+16}]
\label{thm:squashing}
For $f,g:\In\IN^\IN\mto\IN^\IN$ we obtain:
\begin{enumerate}
\item If $\dom(f)=\IN^\IN$ and $f$ is finitely tolerant, then $g\times f\leqW f\TO \widehat{g}\leqW f$.
\item If $\dom(f)=2^\IN$ and $f$ is finitely tolerant, then $g\times f\leqSW f\TO \widehat{g}\leqSW f$.
\end{enumerate}
\end{theorem}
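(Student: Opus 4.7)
The plan is to start from the assumed reduction $g \times f \leqW f$, iterate it to obtain $g^n \times f \leqW f$ for every finite $n$, and then consolidate the resulting sequence of $f$-instances into a single query by exploiting the finite tolerance of $f$ to absorb the tail discrepancies that remain after the limiting step. Projection of the solution then yields $\widehat{g} \leqW f$.

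Fix computable witnesses $H, K$ for $g \times f \leqW f$: for every $(a, b) \in \dom(g) \times \dom(f)$ one has $H\langle a, b\rangle \in \dom(f)$, and for every $s \in f(H\langle a, b\rangle)$ the value $K\langle\langle a, b\rangle, s\rangle$ decodes into a pair $(u, v)$ with $u \in g(a)$ and $v \in f(b)$. Given an instance $(x_n)_n$ of $\widehat{g}$ and a base point $y^\ast \in \dom(f)$ (take $y^\ast = 0^\IN$, which lies in the domain in both parts of the theorem), define the iterates $y_0 := y^\ast$ and $y_{n+1} := H\langle x_n, y_n\rangle$. Each $y_n$ lies in $\dom(f)$, and any solution to $f(y_N)$ unwinds through $N$ applications of $K$ into solutions $u_i \in g(x_i)$ for $i < N$.

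The crucial step is to consolidate $(y_n)_n$ into a single $f$-query. Using the continuity of $H$ together with an appropriate dovetailing of the iteration (compactness of $2^\IN$ in part~(2); explicit padding on $\IN^\IN$ in part~(1)), produce a pointwise limit $y_\infty \in \dom(f)$ together with computable stages $k_n$ such that $y_\infty$ and $y_n$ agree from position $k_n$ onwards. Finite tolerance of $f$ then transforms a single $s \in f(y_\infty)$ into $T\langle s, k_n\rangle \in f(y_n)$ uniformly in $n$; feeding these back into the unwinding recovers $u_n \in g(x_n)$ for every $n$. In part~(2) the whole procedure is determined by $s$ alone once $(x_n)_n$ has been used to build $y_\infty$, which is exactly what is needed for a strong reduction.

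The main obstacle is the stabilisation: a priori $y_{n+1} = H\langle x_n, y_n\rangle$ need not agree with $y_n$ on any prefix, so the iteration must be organised carefully using the modulus of continuity of $H$ so that each coordinate of $y_\infty$ is eventually frozen. Finite tolerance is indispensable here, because it reduces the problem from exact convergence of the iterates to mere tail-agreement after computable stages. The Baire and Cantor cases differ mainly in the degree of control available: on $2^\IN$, compactness lets one produce a reduction depending only on the $f$-solution $s$, yielding strong Weihrauch reducibility, whereas on $\IN^\IN$ the outer reduction may need to consult the original input, which is permitted under ordinary Weihrauch reducibility.
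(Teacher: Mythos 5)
There is a genuine gap at the ``consolidation'' step, and I do not think the forward iteration can be repaired along the lines you sketch. Once you fix the base point $y^\ast$, the iterates $y_{n+1}:=H\langle x_n,y_n\rangle$ are \emph{rigidly determined} by $y^\ast$ and $(x_n)_n$; there is no remaining freedom. For a single $y_\infty$ to agree with every $y_n$ from some position $k_n$ onwards, the $y_n$ themselves would have to agree with one another on a tail, i.e., $y_n$ and $y_m$ would have to coincide beyond $\max(k_n,k_m)$ for all $n,m$. Nothing in the hypothesis $g\times f\leqW f$ forces this, and in general it fails: $H\langle a,\cdot\rangle$ is just some continuous map, and $H\langle a,b\rangle$ need not share a tail with $b$. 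Note also that the ``pointwise limit'' you invoke (compactness of $2^\IN$, padding on $\IN^\IN$) does not give tail agreement: $y_n:=0^n1^\IN$ converges pointwise to $0^\IN$ but never agrees with it on a tail. So the object $y_\infty$ you need simply does not exist for the iterates as you have defined them, and the vague appeal to ``organising the iteration so that each coordinate is eventually frozen'' cannot fix this, because there is nothing to organise -- the $y_n$ are fully determined.

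The proof in \cite{DDH+16} works in the opposite direction, and that direction has exactly the slack that the forward iteration lacks. Instead of a chain $y_0\to y_1\to\cdots$ fixed at the bottom, one constructs a family $z^{(0)},z^{(1)},z^{(2)},\dots\in\dom(f)$ together with numbers $k_0,k_1,\dots$ satisfying only the \emph{one-step} tail constraint $z^{(n)}(j)=H\langle x_n,z^{(n+1)}\rangle(j)$ for all $j\geq k_n$; the initial segments $z^{(n)}|_{k_n}$ are left free. Crucially the $z^{(n)}$ are \emph{not} required to agree with each other or with some limit on a tail -- each constraint only relates two consecutive auxiliary inputs. The free initial segments are what make the dovetailed recursion terminate: when $z^{(n+1)}$ is first queried (while computing some constrained position of $z^{(n)}$), one sets $k_{n+1}$ large enough that the queried positions of $z^{(n+1)}$ all fall into the free region, and only later queries recurse further. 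From a single $s\in f(z^{(0)})$, finite tolerance yields $T\langle s,k_0\rangle\in f(H\langle x_0,z^{(1)}\rangle)$, the outer reduction then produces a $g(x_0)$-solution and an element of $f(z^{(1)})$, and one repeats; no ``common limit'' is ever needed. Your argument has the right ingredients (iterate, use finite tolerance, unwind via the outer reduction) but couples them to a chain that offers no degrees of freedom, which is precisely what the tail constraint plus free prefixes provide.
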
 

This theorem allowed the authors of \cite{DDH+16} to prove that Ramsey's theorem for strictly increasing numbers of colors forms a strictly
increasing chain with respect to {\em strong} Weihrauch reducibility~\cite[Theorem~3.1]{DDH+16}: 
\[\RT_{n,2}\lSW\RT_{n,3}\lSW\RT_{n,3}\lSW....\]
However, they left it an open question~\cite[Question~7.1]{DDH+16} whether an analogous statement also holds for ordinary Weihrauch reducibility.
We will be able to use our Theorem~\ref{thm:products} on products to answer this question in the affirmative (see Theorem~\ref{thm:increasing-colors}).
Independently, similar results were obtained by Hirschfeldt and Jockusch~\cite{HJ16} and Patey~\cite{Pat16a}.
Altogether, the diagram in Figure~\ref{fig:diagram-RTnk} displays how Ramsey's theorem for different cardinalities and colors is situated in the Weihrauch lattice.

We briefly describe the further organization of this article. In the following Section~\ref{sec:preliminaries} we briefly present
the basic concepts and definitions related to the Weihrauch lattice, and we collect a number of facts that are useful for our study.
In Section~\ref{sec:uniform} we introduce the uniform versions of Ramsey's theorem that we are going to consider in this study, and
we establish a core lower bound in Theorem~\ref{thm:lower-bound} that will be used to derive almost all other lower bound results. 
We also prove key results on products in Theorem~\ref{thm:products} and on parallelization in Theorem~\ref{thm:delayed-parallelization}
that also lead to our main lower bound results, which are formulated in Corollary~\ref{cor:discrete-lower-bounds} and Corollary~\ref{cor:delayed-parallelization}.
In Section~\ref{sec:upper} we discuss jumps of Ramsey's theorem together with upper bound results.
In particular, we prove Theorem~\ref{thm:CRT-SRT}, which is our main result on jumps and shows that the stable version of Ramsey's theorem
can be seen as the jump of the colored version of the next smaller cardinality. 
These results lead to Corollary~\ref{cor:induction}, which shows that composition with one jump of weak K\H{o}nig's lemma
is sufficient to bring Ramsey's theorem from one cardinality to the next greater cardinality. 
From this result we can conclude our main upper bound result in Corollary~\ref{cor:upper-bound},
which shows that Ramsey's theorem of cardinality $n$ is reducible to the $n$--th jump of weak K\H{o}nig's lemma.
Together with our lower bound results this finally leads to the classification of the parallelization of Ramsey's theorem of cardinality $n$
as the $n$--th jump of weak K\H{o}nig's lemma in Corollary~\ref{cor:parallelization} and to the classification of the exact Borel degree
of Ramsey's theorem in Corollary~\ref{cor:Borel-measurability}.
We also use these results to obtain the above mentioned result on increasing numbers of colors. 
In Section~\ref{sec:pairs} we briefly discuss the special case of Ramsey's theorem for pairs, 
we summarize some known results, provide some new insights and formulate some open questions. 
In Section~\ref{sec:separation} we develop separation techniques for jumps, and we apply them to
separate some uniform versions of Ramsey's theorem.
Finally, in Section~\ref{sec:boundedness} we discuss the relation of closed and compact choice problems to Ramsey's theorem,
which corresponds to the discussion of boundedness and induction principles in reverse mathematics.

\section{Preliminaries}
\label{sec:preliminaries}

We use the theory of the Weihrauch lattice (as it has been developed in \cite{BBP12,BG11a,BG11,BGH15a,BGM12,BHK15})  
as a framework for the uniform classification of 
the computational content of mathematical problems.
We present a very brief introduction of the main concepts and refer the reader to \cite{BGH15a} and \cite{BHK15}
for a more detailed introduction. 

Formally, the Weihrauch lattice is formed by equivalence classes of partial multi-valued functions $f:\In X\mto Y$
on represented spaces $X,Y$, as defined below. We will simply call such functions {\em problems} here, and they are, in fact, computational
challenges in the sense that for every $x\in\dom(f)$ the goal is to find {\em some} $y\in f(x)$.
In this case $\dom(f)$ contains the admissible instances $x$ of the problem,
and for each instance $x$ the set $f(x)$ contains the corresponding solutions.

For problems $f:\In X\mto Y$, $g:\In Y\mto Z$ we define the {\em composition}   
$g\circ f: \In X\mto Z$ by $g\circ f(x):=\{z\in Z:(\exists y\in f(x))\;z\in g(y)\}$, 
where it is crucial to use the domain $\dom(g\circ f):=\{x\in X:f(x)\In\dom(g)\}$. 
We also denote the composition briefly by $gf$.

A {\em represented space} $(X,\delta)$ is a set $X$ together with a representation, i.e., a surjective partial
map ${\delta:\In\IN^\IN\to X}$ that assigns {\em names} $p\in\IN^\IN$ to points $\delta(p)=x\in X$.
Given a problem $f:\In X\mto Y$ on represented spaces $(X,\delta_X)$ and $(Y,\delta_Y)$ we say
that a partial function $F:\In \IN^\IN\to\IN^\IN$ {\em realizes} $f$, in symbols $F\vdash f$, if $\delta_YF(p)\in f\delta_X(p)$ 
for all $p\in\dom(f\delta_X)$. Notions such as computability and continuity can now be transferred
from Baire space to represented spaces via realizers. For instance, $f$ is called {\em computable}, if it
admits a computable realizer $F$.
We refer the reader to \cite{BHW08,Wei00} for further details.

The intuition behind Weihrauch reducibility is that $f\leqW g$ holds if there is a computational procedure for solving~$f$
during which a single application of the computational resource~$g$ is allowed.
There are actually two slightly different formal versions of this reduction, which are both needed.

\begin{definition}[Weihrauch reducibility]
Let $f: \In X\mto Y$ and $g: \In W\mto Z$ be problems.
\begin{enumerate}
\item 
$f$ is called {\em Weihrauch reducible} to $g$, in symbols $f\leqW g$,
if there are computable ${K: \In X \mto W}$, $H: \In X\times Z\mto Y$ such that
$\emptyset\not=H(x,gK(x))\In f(x)$ for all $x\in\dom(f)$. 
\item $f$ is called {\em strongly Weihrauch reducible} to $g$, in symbols $f\leqSW g$,
if there are computable $K: \In X\mto W$, $H: \In Z\mto Y$ such that $\emptyset\not=HgK(x)\In f(x)$
for all $x\in\dom(f)$.
\end{enumerate}
\end{definition}

In terms of realizers and functions on Baire space (strong) Weihrauch reducibility can also be rephrased as follows \cite[Lemma~4.5]{GM09}:

\begin{itemize}
\item $f\leqW g\iff(\exists$ computable $H,K:\In\IN^\IN\to\IN^\IN)(\forall G\vdash g)\; H\langle \id,GK\rangle\vdash f$.
\item $f\leqSW g\iff(\exists$ computable $H,K:\In\IN^\IN\to\IN^\IN)(\forall G\vdash g)\; HGK\vdash f$.
\end{itemize}

Here $\id:\IN^\IN\to\IN^\IN$ denotes the identity of Baire space.
A relation between Weihrauch reducibility and strong Weihrauch reducibility can be established with the
notion of a cylinder.
A problem $f$ is called a {\em cylinder} if $\id\times f\leqSW f$, which is equivalent
to the property that $g\leqW f\iff g\leqSW f$ holds for all problems $g$ \cite[Corollary~3.6]{BG11}.

Weihrauch reducibility induces a lattice with a rich
and very natural algebraic structure.
We briefly summarize some of these algebraic operations
for problems ${f: \In X\mto Y}$ and $g: \In W\mto Z$:

\begin{itemize}
\item $f\times g:\In X\times W\mto Y\times Z$ is the {\em product} of $f$ and $g$ and represents the parallel evaluation
        of problem $f$ on some input $x$ and $g$ on some input $w$.
\item $f\sqcup g:\In X\sqcup W\mto Y\sqcup Z$ is the {\em coproduct} of $f$ and $g$ and represents the alternative evaluation
        of $f$ on some input $x$ or $g$ on some input $w$ (where $X\sqcup W$ and $Y\sqcup Z$ denote the disjoint unions).
\item $f*g:=\sup\{f_0\circ g_0: f_0\leqW f\mbox{ and }g_0\leqW g\}$ is the {\em compositional product} and represents the consecutive usage of the problem $f$ after the problem $g$.
\item $f^*:=\bigsqcup_{n=0}^\infty f^n$ is the {\em finite parallelization} and allows an evaluation of the $n$--fold product~$f^n$ for some arbitrary given $n\in\IN$.
\item $\widehat{f}:\In X^\IN\mto Y^\IN$ is the {\em parallelization} of $f$ and allows a parallel evaluation of 
        countably many instances of $f$.
\item $f':\In X\mto Y$ denotes the {\em jump} of $f$, which is formally the same problem as $f$, but the input representation $\delta_X$
of $X$ is replaced by its jump $\delta_X':=\delta_X\circ\lim$.
\end{itemize}

Here $\lim: \In\IN^\IN\to\IN^\IN,\langle p_0,p_1,p_2,...\rangle\mapsto\lim_{i\to\infty}p_i$ is the 
limit map on Baire space for $p_i\in\IN^\IN$, where $\langle\ \rangle$ denotes a standard infinite tupling function
on Baire space. 
We recall that $\langle n,k\rangle:=\frac{1}{2}(n+k+1)(n+k)+k$, and inductively this can be 
extended to finite tuples by
$\langle i_{n+1},i_n,...,i_0\rangle:=\langle i_{n+1},\langle i_n,...,i_0\rangle\rangle$
for all $n\geq1$ and $i_0,...,i_{n+1}\in\IN$. Likewise, we define $\langle p_0,p_1,p_2,...\rangle\in\IN^\IN$ for $p_i\in\IN^\IN$ by
$\langle p_0,p_1,p_2,...\rangle\langle n,k\rangle:=p_n(k)$. 
Similarly as $\lim$, we define the limit $\lim_{2^\IN}:\In2^\IN\to2^\IN$ on Cantor space $2^\IN$.
For finite sets $X\In\IN$ we denote by $\lim_X:\In X^\IN\to X$ the ordinary limit operation with respect
to the discrete topology. 

By $f^{(n)}$ we denote the {\em $n$--fold jump} of $f$, and we denote
the {\em $n$--fold compositional product} of $f$ with itself by $f^{[n]}$, i.e., $f^{[0]}=\id$, $f^{[n+1]}:=f^{[n]}*f$.
By $f^n:\In X^n\mto Y^n$ we denote the {\em $n$--fold product} $f\times ...\times f$ of $f$.

One can also define a sum operation $\sqcap$ that play the role
of the infimum for ordinary Weihrauch reducibility $\leqW$. But we are not going to use this operation here. 
Altogether, $\leqW$ induces a lattice structure, and the resulting lattice is called {\em Weihrauch lattice}.
This lattice is not complete as infinite suprema do not need to exist, but the special
supremum $f*g$ is even a maximum that always exists  \cite[Proposition~31]{BP16}, and the compositional product $*$ is associative \cite[Corollary~17]{BP16}.
Finite parallelization and parallelization are closure operators
in the Weihrauch lattice. Further information on the algebraic structure can be found in \cite{BP16}.

For metric spaces $X$ we denote by $\SO{n}$ the corresponding class of Borel subsets of $X$ of level $n\geq1$ \cite{Kec95}.
We recall that a partial function $f:\In X\to Y$ on metric spaces $X$ and $Y$ is called 
{\em $\SO{n}$--measurable}, if for every open set $U\In Y$
there exists a set $V\in\SO{n}$ such that $f^{-1}(U)=V\cap\dom(f)$, i.e., such that
$f^{-1}(U)$ is a $\SO{n}$--set relative to the domain of $f$.
In the case of $n=1$ we obtain exactly continuity. 
Likewise we define $f$ to be {\em effectively $\SO{n}$--measurable} if a corresponding $V$ can be uniformly
computed from a given $U$ \cite[Definition~3.5]{Bra05}. 
In this case we obtain exactly computability for $n=1$. 
The notion of (effective) $\SO{n}$--measurability can be transferred to problems $f:\In X\mto Y$
via realizers, i.e., $f$ is called {\em (effectively) $\SO{n}$--measurable}, if it has a realizer
$F:\In\IN^\IN\to\IN^\IN$ that is (effectively) $\SO{n}$--measurable.\footnote{This definition yields a conservative extension of
the notion of (effective) measurability to multi-valued functions on represented space. It coincides for single-valued functions
on computable Polish spaces with the usual notion of (effective) measurability as it is known from descriptive set theory~\cite{Bra05}.} 
It is easy to see that Weihrauch reducibility preserves (effective) $\SO{n}$--measurability
downwards. 

\begin{fact}
\label{fact:Borel}
Let $f,g$ be problems and $n,k\geq1$. Then
\begin{enumerate}
\item $f$ (effectively) $\SO{n}$--measurable and $g$ (effectively) $\SO{k}$--measurable $\TO f*g$ (effectively) $\SO{n+k-1}$--measurable.
\item $f,g$ (effectively) $\SO{n}$--measurable $\TO f\times g$ (effectively) $\SO{n}$--measurable.
\item $f\leqW g$ and $g$ (effectively) $\SO{n}$--measurable $\TO f$ (effectively) $\SO{n}$--measurable.
\item $f\leqW\lim^{[n]}\iff f$ is effectively $\SO{n+1}$--measurable.
\end{enumerate}
\end{fact}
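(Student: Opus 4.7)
The plan is to establish the four items in the order (3), (2), (1), (4), since the later claims make essential use of the earlier ones.

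For item (3), I would use the realizer formulation of Weihrauch reducibility recalled earlier: $f\leqW g$ means that there exist computable $H,K\colon\subseteq\IN^\IN\to\IN^\IN$ such that $H\langle\id,GK\rangle\vdash f$ for every realizer $G\vdash g$. Since $H$, $K$, and $\id$ are computable, hence effectively $\SO{1}$--measurable, composing and pairing them with an (effectively) $\SO{n}$--measurable $G$ produces an (effectively) $\SO{n}$--measurable realizer of $f$, using only that the classes of (effectively) $\SO{n}$--measurable functions are closed under composition with continuous (resp.\ computable) maps and under coordinatewise pairing. The latter closure is exactly what is needed for (2): given $F\vdash f$ and $G\vdash g$ one realizes $f\times g$ by $\langle F\pr_0,G\pr_1\rangle$, and the $\SO{n}$--preimage of a subbasic open set of $\IN^\IN\times\IN^\IN$ is a finite Boolean combination of $\SO{n}$--preimages in each coordinate, so the pairing remains (effectively) $\SO{n}$--measurable.

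For item (1), the key lemma is the standard hierarchy fact: if $h$ is (effectively) $\SO{k}$--measurable and $A$ is an (effective) $\SO{n}$--subset of the codomain, then $h^{-1}(A)$ is an (effective) $\SO{n+k-1}$--set. I would prove this by induction on $n$, using that a $\SO{n}$--set is a (uniformly given) countable union of $\PO{n-1}$--sets and that preimages commute with complements and countable unions. Applied to realizers, this yields that $FG$ is (effectively) $\SO{n+k-1}$--measurable whenever $F\vdash f$ is (effectively) $\SO{n}$--measurable and $G\vdash g$ is (effectively) $\SO{k}$--measurable, so $f\circ g$ is (effectively) $\SO{n+k-1}$--measurable. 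For the compositional product $f*g$ I would invoke the maximum property recalled in the preliminaries: $f*g\equivW f_0\circ g_0$ for some $f_0\leqW f$ and $g_0\leqW g$; by (3) these retain their measurability bounds, the single composition argument applies, and a final use of (3) transfers the bound to $f*g$ itself.

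For item (4), the forward direction is immediate from (1) and (3): since $\lim$ is effectively $\SO{2}$--measurable, induction with (1) shows $\lim^{[n]}$ is effectively $\SO{n+1}$--measurable, and (3) transfers this to any problem Weihrauch--below. The converse is the effective Borel representation theorem (e.g.\ from \cite{Bra05}): any effectively $\SO{n+1}$--measurable realizer factors, uniformly in the data, as an iterated limit of a computable function, which is precisely a Weihrauch reduction to $\lim^{[n]}$. The main obstacle is this converse direction of (4): supplying the uniform reduction from an arbitrary effectively $\SO{n+1}$--measurable problem to $\lim^{[n]}$ requires the inductive decomposition that, at each level $m$, uses one application of $\lim$ to witness the passage from $\SO{m+1}$-- to $\SO{m}$--measurability. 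Since the required representation theorem is already available in the literature, I would cite it rather than reprove it, and I would emphasise that the transfer from single-valued Polish-space functions to multi-valued problems on represented spaces is routine via realizers.
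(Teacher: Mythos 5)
Your proof is correct and follows essentially the same route as the paper: both rely on the standard closure properties of (effective) $\SO{n}$--measurability under composition and pairing (the paper cites \cite[Proposition~3.8]{Bra05} for (1) and (2)), both use the realizer formulation of $\leqW$ for (3), both obtain the forward direction of (4) from $\lim$ being effectively $\SO{2}$--measurable, and both invoke the effective Banach--Hausdorff--Lebesgue theorem \cite[Corollary~9.6]{Bra05} for the converse of (4). The only cosmetic difference is your ordering (you prove (3) from bare closure facts and then use it in the compositional-product step of (1), whereas the paper derives (3) from (1) and (2)); the underlying ideas and citations are the same.
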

\begin{proof}
(1) and (2) follow form \cite[Proposition~3.8]{Bra05}.
(3) follows from (1) and (2) 
since $H\langle \id,GK\rangle$ is (effectively) $\SO{n}$--measurable, if $G:\In\IN^\IN\to\IN^\IN$ is so and
the functions
$H,{K:\In\IN^\IN\to\IN^\IN}$ are computable (see also \cite[Proposition~5.2]{Bra05}). 
(4) Here ``$\TO$'' follows from (3) since $\lim$ is effectively $\SO{2}$--measurable \cite[Proposition~9.1]{Bra05},
and hence $\lim^{[n]}$ is effectively $\SO{n+1}$--measurable by (1). 
The inverse implication ``$\Longleftarrow$'' follows with the help of 
the effective Banach-Hausdorff-Lebesgue theorem \cite[Corollary~9.6]{Bra05} applied
to the corresponding realizers on Baire space.
\end{proof}

The last mentioned item can also be expressed such that $\lim^{[n]}$
is {\em effectively $\SO{n+1}$--complete} in the Weihrauch lattice. 

An important problem in the Weihrauch lattice is {\em closed choice} ${\C_X: \In\AA_-(X)\mto X}$, ${A\mapsto A}$,
which maps every closed set $A\In X$ to its points. 
By $\AA_-(X)$ we denote the set of closed subsets of a (computable) metric space $X$ with respect to negative information.
This means that a closed set is essentially described by enumerating basic open balls that exhaust
its complement. 
Intuitively, closed choice $\C_X$ is the following problem: given a closed set $A$ by a description that lists everything that does not belong to $A$, find a point $x\in A$ (see \cite{BBP12} for further information and precise definitions).

By $\BWT_X:\In X^\IN\mto X$ we denote the {\em Bolzano-Weierstra\ss{} theorem} for the
space $X$, which is the problem: given a sequence $(x_n)_n$ with a compact closure, find a cluster point of this sequence.
This problem has been introduced and studied in detail in \cite{BGM12}.
If $X$ itself is compact, then this problem is total. We are particularly interested in finite 
versions of the Bolzano-Weierstra\ss{} theorem here. 
We recall that we identify $k\in\IN$ with the set $X=\{0,...,k-1\}$ in the following.
By $\WKL$ we denote {\em weak K\H{o}nig's lemma}, which is the problem 
$\WKL:\In \Tr\to2^\IN,T\mapsto[T]$ that maps every infinite binary tree $T$ to
the set of its infinite paths. 
We summarize a number of facts that are of particular importance for us.
Here and in the following we will occasionally use that jumps are monotone with respect to strong 
Weihrauch reducibility, i.e., $f\leqSW g\TO f'\leqSW g'$ \cite[Proposition~5.6(2)]{BGM12}.

\begin{fact}
\label{fact:WKL-BWT}
We obtain
\begin{enumerate}
\item $\BWT_k^{(n-1)}\equivSW\C_k^{(n)}$ for all $k\in\IN$, $n\geq1$,
\item $\WKL^{(n)}\equivSW\widehat{\C_k^{(n)}}$ for all $k\geq2$ and $n\in\IN$,
\item $\lim^{(n)}\equivSW\widehat{\lim_k^{(n)}}$ for all $k\geq 2$, $k=\IN$ and $n\in\IN$,
\item $\lim_k^{(n)}\lSW\BWT_k^{(n)}$ and $\lim_k^{(n)}\lW\BWT_k^{(n)}$ for all $k\geq2$, $k=\IN$ and $n\in\IN$,
\item $\WKL^{(n)}\lSW\lim^{(n)}\lSW\WKL^{(n+1)}$ and $\WKL^{(n)}\lW\lim^{(n)}\lW\WKL^{(n+1)}$ for all $n\in\IN$,
\item $(\WKL')^{[n]}\equivW\WKL^{(n)}$ and $\lim^{[n]}\equivW\lim^{(n-1)}$ for all $n\geq1$.
\end{enumerate}
\end{fact}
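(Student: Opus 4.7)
The plan is to derive each of the six items from a base case (usually $n=0$ or $n=1$) established in the cited literature, together with standard algebraic properties of the Weihrauch lattice: monotonicity of jumps under $\leqSW$ (and $\leqW$), the fact that parallelization commutes with jumps up to $\equivSW$ (i.e.\ $\widehat{f^{(n)}}\equivSW\widehat{f}^{(n)}$), and the Borel-hierarchy calibration given by Fact~\ref{fact:Borel}, which supplies the separations that $\leqW$-alone does not obviously provide.

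First I would handle (1), whose base case $\BWT_k\equivSW\C_k'$ is already in \cite{BGM12}: for $\BWT_k\leqSW\C_k'$, compute by one outer limit a jump-name of the non-empty set of cluster points (a closed subset of the finite space $k$); for the converse, given a jump-name of a non-empty closed $A\In k$ output at stage $n$ the least element of $k$ not yet enumerated out, producing a sequence that eventually settles in $A$. Applying the $(n-1)$-st jump on both sides yields the general statement. Items (2) and (3) then follow by combining the well-known $\WKL\equivSW\widehat{\C_2}$ (see \cite{BG11}) and $\lim\equivSW\widehat{\lim_2}$ with the observations $\widehat{\C_k}\equivSW\widehat{\C_2}$ and $\widehat{\lim_k}\equivSW\widehat{\lim_2}$ for finite $k\geq 2$ and $k=\IN$ (by coding into binary sequences), and then commuting parallelization with the $n$-fold jump.

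For (4), the reduction $\lim_k\leqSW\BWT_k$ is witnessed by the identity, since a convergent sequence in the discrete space $k$ has its limit as unique cluster point. Strictness follows because $\BWT_k$ is properly multi-valued for $k\geq2$ via the argument of \cite{BGM12}; this separation is preserved under jumps since $\lim_k^{(n)}$ remains single-valued while $\BWT_k^{(n)}$ remains properly multi-valued, and it refines to $\leqW$ by combining with Fact~\ref{fact:Borel}. Item (5) then combines known facts: $\WKL\leqSW\lim$ via leftmost-path search driven by $\lim$, and $\lim\leqSW\WKL'$ through the chain $\lim\equivSW\widehat{\lim_2}\leqSW\widehat{\C_2'}\equivSW\widehat{\C_2}'\equivSW\WKL'$ obtained from (3), (4), and (2). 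Strictness at each stage follows from Fact~\ref{fact:Borel} together with (4); applying $n$ jumps gives the statements as written.

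Finally, for (6) I would induct on $n$. Both equivalences reduce, at the inductive step, to identities of the form $f*f^{(n-1)}\equivW f^{(n)}$: the direction $\leqW$ holds because a jump-name of the input to $f^{(n-1)}$ can be resolved by one outer $f$-call and then fed into $f^{(n-1)}$, while $\geqW$ follows from $f,f^{(n-1)}\leqW f^{(n)}$ together with the maximality of $*$ among consecutive uses. For the $\WKL'$ chain, one additionally uses $\WKL'\equivW\WKL*\lim$ by the same mechanism. The main obstacle I anticipate is step (6): since $\lim^{[n]}$ and $f^{(n)}$ need not coincide on the nose, one must carefully manage the input/output conversions between compositional products and jump representations, and the equivalence is only in $\equivW$ rather than $\equivSW$; the arguments of Fact~\ref{fact:Borel}(4), characterizing $\lim^{[n]}$ as effectively $\SO{n+1}$-complete, provide the cleanest way to verify that no collapse occurs when transferring between these two presentations.
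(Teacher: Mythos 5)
The overall architecture — reduce to base cases in the literature and push through jumps and parallelization — is the same as the paper's, and your treatments of (1), (2), (3) are essentially right. But there are three genuine gaps in your separation arguments.

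For item (4), your primary argument that "$\BWT_k$ is properly multi-valued while $\lim_k^{(n)}$ remains single-valued" is not a valid separation tool. Multi-valuedness is not a Weihrauch invariant: multi-valued problems routinely reduce (even strongly) to single-valued ones. The only cardinality-type $\leqSW$-invariant available is $\# f$, and $\#\BWT_k = \#\lim_k = k$, so it does not distinguish them. The correct argument — which you mention only as a secondary refinement — is the Borel one: $\lim_k^{(n)}$ is $\SO{n+2}$-measurable, but $\BWT_k^{(n)}$ is not (this non-measurability is cited from \cite{BGH15a}), and effective $\SO{n+2}$-measurability is preserved downward under $\leqW$ by Fact~\ref{fact:Borel}. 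That should be the main argument, not the multi-valuedness claim.

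For item (5), you claim that strictness at each stage follows from Fact~\ref{fact:Borel} together with (4). This works for the second inequality $\lim^{(n)} \lW \WKL^{(n+1)}$ (since $\BWT_k^{(n)} \leqW \WKL^{(n+1)}$ and $\BWT_k^{(n)}$ is not $\SO{n+2}$-measurable while $\lim^{(n)}$ is), but it cannot work for the first one $\WKL^{(n)} \lW \lim^{(n)}$: both $\WKL^{(n)}$ and $\lim^{(n)}$ are $\SO{n+2}$-measurable and both fail to be $\SO{n+1}$-measurable, so they occupy the same level of the effective Borel hierarchy. The separation $\lim^{(n)}\nleqW\WKL^{(n)}$ requires the (relativized) low basis theorem — $\WKL$ admits low solutions on any input, but $\lim$ produces $\emptyset'$ from a computable input — which is a recursion-theoretic argument, not a measurability argument.

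For item (6), the proposed general identity $f * f^{(n-1)} \equivW f^{(n)}$ is false; for example it fails for $f=\id$. The paper's induction for $\WKL$ rests on the specific equivalence $\WKL^{(n+1)} \equivW \WKL^{(n)} * \WKL'$, whose nontrivial direction $\WKL^{(n)} * \WKL' \leqW \WKL^{(n+1)}$ is a genuine theorem about $\WKL$ from \cite{BGM12}, not an instance of a generic algebraic law about compositional products and jumps. The easy direction does follow the way you sketch (via $\WKL^{(n)}$ being a cylinder and $\lim \leqW \WKL'$), but "maximality of $*$" gives you $f_0 \circ g_0 \leqW f * g$, not the collapse $f * g \leqW h$ that you need. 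Your separate observation about $\lim^{[n]}$ versus $\lim^{(n-1)}$ is fine (that one is a cylinder bookkeeping issue), but the $\WKL$ part needs the cited specific ingredient.
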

\begin{proof}
(1) follows from \cite[Theorem~9.4]{BGM12} (noting that the cluster point problem used there is identical to the
Bolzano-Weierstra\ss{} theorem in the finite case).
(2) follows since $\WKL\equivSW\widehat{\C_k}$, which has essentially been proved in \cite[Theorem~8.2]{BG11},
where $\LLPO$ is just a reformulation of $\C_2$ and from the fact that jumps and parallelizations commute by
\cite[Proposition~5.7(3)]{BGM12}.
For (3) see \cite[Fact~3.5]{BGM12}.
(4) The reductions follow since the limit can be seen as a special case of the Bolzano-Weiertra\ss{} theorem \cite[Proposition~11.21]{BGM12} and they are strict since $\lim_k^{(n)}$ is $\SO{n+2}$--measurable,
but $\BWT_k^{(n)}$ for $k\geq2$ is not, see the proof of \cite[Proposition~9.1]{BGH15a}.
The positive parts of the reductions in (5) are easy to see, and the strictness follows from (relativized versions) of the low basis theorem and (relativized versions) of the Kleene tree construction, respectively (see also \cite{BBP12,BGM12}).
(6) The statement for $\WKL$ follows by induction from $\WKL^{(n+1)}\equivW\WKL^{(n)}*\WKL'$, where ``$\leqW$'' holds since
$\lim\leqW\WKL'$ and the converse reduction follows from \cite[Theorem~8.13]{BGM12}.  
The statement for $\lim$ follows for instance from \cite[Corollary~5.16]{BGM12} since $\lim^{(n)}$ is a cylinder.
\end{proof}

\section{The Uniform Scenario, Lower Bounds and Products}
\label{sec:uniform}

In this section we plan to introduce the uniform versions of Ramsey's theorem that we are going to study,
and we will prove some basic facts about them.
While in non-uniform settings such as reverse mathematics \cite{Sim09} certain information on infinite homogeneous sets
can just be assumed to be available, we need to make this more explicit.
For instance, it turned out to be useful to consider an enriched version $\CRT_{n,k}$ of Ramsey's theorem that
provides additional information on the color of the produced infinite homogeneous set. 
In \cite{CJS01} the stable version of Ramsey's theorem $\SRT_{n,k}$ was introduced, which 
is a restriction of Ramsey's theorem that we consider too.

We need to introduce some notation first.
For every $n\geq1$ we assume that we use some total standard numbering $\vartheta_n:\IN\to[\IN]^n$ that can be defined, for instance, 
by
$\vartheta_n\langle i_0,i_1,...,i_{n-1}\rangle:=
  \left\{k+\sum_{j=0}^{k}i_j:k=0,...,n-1\right\}$
for all $i_0,...,i_{n-1}\in\IN$, i.e., the set $\vartheta_n\langle i_0,i_1,...,i_{n-1}\rangle$ contains the numbers 
$i_0<i_0+i_1+1<i_0+i_1+i_2+2<...<i_0+i_1+i_{n-1}+n-1$. 
However, we will not make any technical use of this specific definition of $\vartheta_n$.
Occasionally, we use the notation $\{i_0<i_1<...<i_{n-1}\}$ for a set $\{i_0,i_1,...,i_{n-1}\}\in[\IN]^n$ with the additional property that
$i_0<i_1<...<i_{n-1}$. 
By $\CC_{n,k}$ we denote the set of colorings $c:[\IN]^n\to k$ which is represented 
such that $p\in\IN^\IN$ is a name for $c$, if $p(i)=c(\vartheta_n(i))$ (this is equivalent to using the
natural function space representation $[\vartheta_n\to\id_k]$ as known in computable analysis \cite{BHW08,Wei00}).
By $\CC_{n,\IN}$ we denote the set of all colorings $c:[\IN]^n\to\IN$ with a finite range, represented analogously. 
By $\HH_c$ we denote the set of infinite homogeneous sets $M\In\IN$ for the coloring $c$.
A coloring $c:[\IN]^n\to k$ is called {\em stable}, if $\lim_{i\to\infty}c(A\cup\{i\})$ exists for all $A\in[\IN]^{n-1}$.
The expression $k\geq a,\IN$ with $a\in\IN$ is means $k\in\{x\in\IN:x\geq a\}\cup\{\IN\}$.

\begin{definition}[Uniform variants of Ramsey's theorem]
For all $n\geq 1$ and $k\geq 1,\IN$ we define
\begin{enumerate}
\item $\RT_{n,k}:\CC_{n,k}\mto2^\IN,\RT_{n,k}(c):=\HH_c$,
\item $\CRT_{n,k}:\CC_{n,k}\mto k\times2^\IN,\CRT_{n,k}(c):=\{(c(M),M):M\in\HH_c\}$,
\item $\SRT_{n,k}:\In\CC_{n,k}\mto2^\IN,\SRT_{n,k}(c):=\RT_{n,k}(c)$,\\ where $\dom(\SRT_{n,k}):=\{c\in\CC_{n,k}:c$ stable$\}$,
\item $\CSRT_{n,k}:\In\CC_{n,k}\mto k\times 2^\IN,\CSRT_{n,k}(c):=\{(c(M),M):M\in\HH_c\}$,\\ where $\dom(\CSRT_{n,k}):=\{c\in\CC_{n,k}:c$ stable$\}$,
\item $\RT_{n,+}:=\bigsqcup_{k\geq1}\RT_{n,k}$, $\RT:=\bigsqcup_{n\geq1}\RT_{n,+}$.
\end{enumerate}
\end{definition}

All formalized versions of Ramsey's theorem mentioned here are well-defined by Ramsey's theorem~\ref{thm:Ramsey}.
We call $n$ the {\em cardinality} of the respective version of Ramsey's theorem.
Here $\CRT_{n,k}$ enriches $\RT_{n,k}$ by the information on the color of the resulting infinite homogeneous set, and
$\SRT_{n,k}$ is a restriction of $\RT_{n,k}$ defined only for stable colorings. 
The coproduct $\RT_{n,+}$ as well as $\RT_{n,\IN}$ can both be seen as different uniform versions of the principle $(\forall k)\;\RT_{n,k}$ that is usually
denoted by $\RT^n_{<\infty}$ in reverse mathematics (see, e.g., \cite{Hir15}).
In the case of $\RT_{n,\IN}$ the finite number of colors is left unspecified, whereas in the case of $\RT_{n,+}$, the number
of colors is an input parameter.

We emphasize that we use Cantor space $2^\IN$, i.e., the infinite homogeneous sets $M\in\HH_c$ are represented via their
characteristic functions $\chi_M:\IN\to\{0,1\}$. However, by definition any infinite subset $A\In M$ of an infinite homogeneous set $M\in\HH_c$
is in $\HH_c$ too, and given an enumeration of an infinite set $M$, we can find a characteristic function $\chi_A$ of an infinite subset $A\In M$.
This means that we can equivalently think about sets in $2^\IN$ as being represented via enumerations.\footnote{More formally, we could equivalently consider
the output space of $\RT_{n,k}$ and its variants as $\AA_+(\IN)$ or $\AA(\IN)$, i.e., as space of subsets of $\IN$ equipped with
positive or full information, respectively, which corresponds topologically to the lower Fell topology and the Fell topology, respectively.}

We obtain the following obvious strong reductions between the different versions of Ramsey's theorem.

\begin{lemma}[Basic reductions]
\label{lem:basic-reductions}
$\SRT_{n,k}\leqSW\RT_{n,k}\leqSW\CRT_{n,k}$ and \linebreak
$\SRT_{n,k}\leqSW\CSRT_{n,k}\leqSW\CRT_{n,k}$ for all $n\geq1$ and $k\geq1,\IN$.
\end{lemma}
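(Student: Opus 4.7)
The plan is to exhibit, for each of the four reductions, explicit computable maps $K$ and $H$ witnessing strong Weihrauch reducibility $\leqSW$, and in every case both maps will be essentially either the identity or a projection. Since each reduction is between two problems that differ only by (a) restricting the domain to stable colorings or (b) dropping/adding a color component to the output, no real computation is required — the content of the lemma is just to verify that the domain and output coherence conditions hold.

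First I would handle $\SRT_{n,k}\leqSW\RT_{n,k}$. Here the input type is identical: every stable coloring $c\in\dom(\SRT_{n,k})$ is in particular an element of $\CC_{n,k}=\dom(\RT_{n,k})$, and the output type $2^\IN$ is the same, with $\RT_{n,k}(c)=\HH_c=\SRT_{n,k}(c)$. So taking $K=H=\id$ does the job, since every solution to $\RT_{n,k}(c)$ is a solution to $\SRT_{n,k}(c)$. Exactly the same verification, with $K=\id$ and $H=\id$, gives $\CSRT_{n,k}\leqSW\CRT_{n,k}$, because $\dom(\CSRT_{n,k})\In\dom(\CRT_{n,k})$ and the output sets $\{(c(M),M):M\in\HH_c\}$ agree on stable inputs.

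For $\RT_{n,k}\leqSW\CRT_{n,k}$ I would again take $K=\id$ on the input side, and on the output side take $H$ to be the projection onto the second coordinate, $H(i,M):=M$, which is computable in the natural representations of $k$ and $2^\IN$. Since every $(i,M)\in\CRT_{n,k}(c)$ has $M\in\HH_c=\RT_{n,k}(c)$, we get $H\,\CRT_{n,k}(c)\In\RT_{n,k}(c)$, as required. The very same choice of $K$ and $H$ yields $\SRT_{n,k}\leqSW\CSRT_{n,k}$, restricted to the domain of stable colorings.

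No step is a real obstacle — this lemma is a bookkeeping statement. The only thing worth explicitly noting in the write-up is that the case $k=\IN$ is handled in exactly the same way, since the representation of $\CC_{n,\IN}$ is the analogous function-space representation and the projection $k\times 2^\IN\to 2^\IN$ is computable for $k=\IN$ as well. Consequently all four reductions hold uniformly for $n\geq 1$ and $k\geq 1,\IN$.
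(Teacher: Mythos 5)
Your proof is correct, and it is exactly the routine verification the paper omits: the lemma is stated in the paper without proof (as an ``obvious'' consequence of the definitions), and your explicit choices of $K$ and $H$ — the identity for the domain restrictions, and a projection for dropping the color component — are precisely what one would write out to justify it.
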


We note that the colored versions of Ramsey's theorem are Weihrauch equivalent to the corresponding uncolored versions.
This is because given a coloring $c$ and an infinite homogeneous set $M\in\HH_c$,
we can easily compute $c(M)$ by choosing some points $i_0<i_1<...<i_{n-1}$ in $M$ and by computing $c\{i_0,i_1,...,i_{n-1}\}$.
Hence, we obtain the following corollary.

\begin{corollary}
\label{cor:basic-equivalences}
$\RT_{n,k}\equivW\CRT_{n,k}$ and $\SRT_{n,k}\equivW\CSRT_{n,k}$ for all $n\geq1$, $k\geq1,\IN$.
\end{corollary}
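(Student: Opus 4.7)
The plan is to verify the two equivalences by proving the non-trivial direction of each, since the reductions $\RT_{n,k}\leqW\CRT_{n,k}$ and $\SRT_{n,k}\leqW\CSRT_{n,k}$ are already recorded in Lemma~\ref{lem:basic-reductions} (indeed even as strong Weihrauch reductions). It therefore suffices to establish $\CRT_{n,k}\leqW\RT_{n,k}$ and $\CSRT_{n,k}\leqW\SRT_{n,k}$.

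For $\CRT_{n,k}\leqW\RT_{n,k}$ I would take the input functional $K$ to be the identity on $\CC_{n,k}$, so that any realizer of $\RT_{n,k}$ applied to $K(c)=c$ returns (a name for) some $M\in\HH_c$. The output functional $H$ is then allowed to depend on both the original input $c$ and the produced homogeneous set $M$, which is precisely the extra freedom afforded by ordinary (as opposed to strong) Weihrauch reducibility. Given $\chi_M\in2^\IN$, we can effectively search for the first $n$ indices $i_0<i_1<\ldots<i_{n-1}$ with $\chi_M(i_j)=1$; this search terminates because $M$ is infinite. We then query the coloring $c$ on the set $\{i_0,\ldots,i_{n-1}\}\in[M]^n$, which by homogeneity of $M$ yields the color $c(M)\in k$. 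Outputting $(c(M),M)$ gives a valid element of $\CRT_{n,k}(c)$, establishing the reduction.

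For $\CSRT_{n,k}\leqW\SRT_{n,k}$ the same construction works verbatim: restricting the input functional $K=\id$ to stable colorings preserves stability, so $\SRT_{n,k}$ is applicable, and the extraction of $c(M)$ from the first $n$ elements of $M$ is unchanged. Thus both equivalences follow.

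I do not anticipate any genuine obstacle here; the only point worth checking is that we indeed work with Weihrauch rather than strong Weihrauch reducibility, which is what makes $c$ available to $H$ alongside the homogeneous set $M$. Also, the argument uses only that $\chi_M$ represents $M$ in a way from which its elements can be enumerated, which is standard for the Cantor space representation (and would equally work if the output space were taken to be $\AA_+(\IN)$ as mentioned in the remark following the definition).
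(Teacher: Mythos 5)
Your proposal is correct and matches the paper's argument exactly: the paper also derives the easy direction from Lemma~\ref{lem:basic-reductions} and obtains the reverse reduction by observing that the color $c(M)$ can be computed from $c$ together with the first $n$ elements of a given infinite homogeneous set $M$, which is available precisely because ordinary Weihrauch reducibility allows the outer functional to access the original input. Nothing is missing.
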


\begin{figure}[htb]
\begin{tikzpicture}[scale=.5,auto=left,every node/.style={fill=black!15}]

\def\rvdots{\raisebox{1mm}[\height][\depth]{$\huge\vdots$}};

   \node (lim) at (8,17) {$\lim^{(n)}\equivSW\widehat{\lim_2^{(n)}}$};
  \node (WKL) at (8,14.5) {$\WKL^{(n)}\equivSW\widehat{\C_2^{(n)}}$};
  \node (CRT) at (8,12) {$\CRT_{n,k}$};
  \node (RT) at (8,10.5) {$\RT_{n,k}$};
  \node (CSRT) at (8,7.5) {$\CSRT_{n,k}$};
  \node (SRT) at (8,6) {$\SRT_{n,k}$};
  \node (C2) at (8,3.5) {$\BWT_2^{(n-1)}\equivSW\C_2^{(n)}$};
  \node (lim2) at (8,1.5) {$\lim_2^{(n-1)}$};

  \foreach \from/\to in {
  lim/WKL,
  WKL/CRT,
  CRT/RT,
  CSRT/SRT,
  C2/lim2}
  \draw [->,thick] (\from) -- (\to);

\draw[->,thick,dashed] (RT) to (CSRT);
\draw[->,thick,dashed] (SRT) to (C2);

 \draw [->,thick,looseness=1.5] (CRT) to [out=190,in=170] (CSRT);
  \draw [->,thick,looseness=1.5] (RT) to [out=350,in=10] (SRT);
   \draw [->,thick,looseness=1.5] (CSRT) to [out=190,in=160] (C2);

\draw  (8,11.25) ellipse (2.2 and 2);
\draw  (8,6.75) ellipse (2.2 and 2);
\draw  (1,15.75) rectangle (15,2.5);

\end{tikzpicture}
  
\ \\[-0.5cm]
\caption{The degree of Ramsey's theorem  for fixed $n\geq2$ and $k\geq2,\IN$ in the Weihrauch lattice: all solid arrows indicate strong Weihrauch reductions against the direction of the arrow, all dashed arrows indicate ordinary Weihrauch reductions; the circle indicates what falls into a single Weihrauch degree, and the square box indicates what falls into a single parallelized Weihrauch degree.}
\label{fig:diagram-RT}
\end{figure}
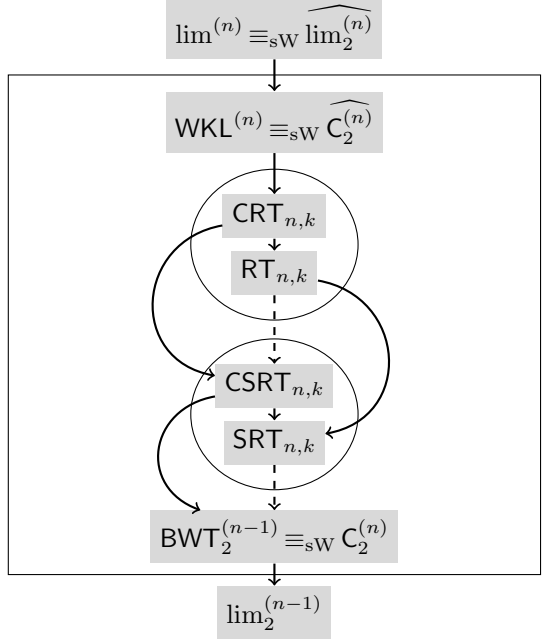

The diagram in Figure~\ref{fig:diagram-RT} illustrates the situation, and it displays further information on lower and upper bounds that is justified by proofs that we will only provide
step by step.  The diagram illustrates the non-trivial case $n\geq2$, $k\geq2,\IN$ whereas the bottom cases where $n=1$ or $k=1$ are described in the following result.

\begin{proposition}[The bottom cases]
\label{prop:bottom}
Let $n\geq1$ and $k\geq 2,\IN$. Then we obtain 
\begin{enumerate}
\item $\CSRT_{n,1}\equivSW\SRT_{n,1}\equivSW\RT_{n,1}\equivSW\CRT_{n,1}$  are computable and not cylinders,
\item $\lim_k\equivW\SRT_{1,k}\lW\BWT_k\equivW\RT_{1,k}$,
\item $\lim_k\leqSW\CSRT_{1,k}$ and $\BWT_k\leqSW\CRT_{1,k}$.
\end{enumerate}
\end{proposition}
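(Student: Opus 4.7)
The plan is to handle each item in turn. For~(1), the case $k=1$ collapses everything: the unique $c\in\CC_{n,1}$ is identically $0$, every infinite subset of $\IN$ is homogeneous, and $c$ is vacuously stable. Hence all four problems share the domain $\CC_{n,1}$ and are realized by the constant map sending every input to (a name of) $(0,\chi_\IN)$; they are therefore computable, $\SRT_{n,1}=\RT_{n,1}$ and $\CSRT_{n,1}=\CRT_{n,1}$ literally as problems, and the remaining strong equivalences reduce to inserting or deleting the fixed color~$0$. To show that these computable problems are not cylinders, I will use the characterization ($f$ is a cylinder iff $g\leqW f\iff g\leqSW f$ for every $g$) applied to $g=\id:\IN^\IN\to\IN^\IN$. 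The reduction $\id\leqW\CSRT_{n,1}$ is immediate by feeding a fixed input to the oracle and recovering $p$ via the input-aware outer map of a Weihrauch reduction. However, any strong reduction factors as $HGK$ for a realizer $G$ of $\CSRT_{n,1}$; choosing $G$ to be the constant realizer above makes $HGK$ constant in $p$, so it cannot equal~$\id$.

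For~(2), a coloring $c:\IN\to k$ is literally a sequence in $k$, and infinite homogeneous sets are precisely infinite level sets of $c$. If $c$ is stable then it is eventually constant, the unique homogeneous color equals $\lim c$, and one passes between $\lim c$ and a homogeneous set via $v\mapsto c^{-1}\{v\}$ (which is infinite) and $M\mapsto c(\min M)$; both directions require access to $c$, yielding $\SRT_{1,k}\equivW\lim_k$. For arbitrary $c$, the colors of infinite homogeneous sets are exactly the cluster points of the sequence $c$, and the analogous translation gives $\RT_{1,k}\equivW\BWT_k$. The strict separation $\SRT_{1,k}\lW\RT_{1,k}$ is then inherited from $\lim_k\lW\BWT_k$ in Fact~\ref{fact:WKL-BWT}(4).

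For~(3), the colored versions package the color into the output, converting the Weihrauch reductions of~(2) into strong ones: $\CSRT_{1,k}(c)=(v,M)$ with $v=\lim c$, so projecting onto the first coordinate strongly reduces $\lim_k$ to $\CSRT_{1,k}$; analogously $\CRT_{1,k}(c)=(v,M)$ exposes a cluster point $v$, giving $\BWT_k\leqSW\CRT_{1,k}$. The only slightly delicate point of the whole proof is the cylinder argument in~(1); everything else amounts to direct unfolding of definitions together with Fact~\ref{fact:WKL-BWT}(4).
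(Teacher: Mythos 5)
Your proof is correct and follows essentially the same route as the paper: observe that for $k=1$ every problem becomes the constant multi-valued map returning all infinite subsets of $\IN$ (with the unique color appended in the colored cases), hence computable and mutually strongly equivalent, and not a cylinder because $\id$ Weihrauch-reduces but does not strongly reduce to a constant problem; then unfold the identification of $1$-colorings with sequences to see that stable colorings give $\lim_k$, arbitrary ones give $\BWT_k$, and the colored variants expose the relevant color directly as output, upgrading ordinary to strong reductions. The only small deviation is a cosmetic one: you name the unique color $0$, whereas the paper's parenthetical writes "color $1$"; your indexing agrees with the paper's convention that $k$ is identified with $\{0,\dots,k-1\}$, so the paper's parenthetical looks like a harmless slip.
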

\begin{proof}
We note that $\CSRT_{n,1},\SRT_{n,1},\RT_{n,1}$ and $\CRT_{n,1}$ can yield any infinite subset of $\IN$ (together with the color $1$ in the case of $\CSRT_{n,1},\CRT_{n,1}$), 
and hence they are all constant as multi-valued functions, hence computable and strongly equivalent to each other. Since the identity cannot be strongly reduced to constant multi-valued problems, 
it follows that all the aforementioned problems are not cylinders.
It is easy to see that $\lim_k\equivW\SRT_{1,k}$ and $\BWT_k\equivW\RT_{1,k}$, whereas $\lim_k\lW\BWT_k$ is given by Fact~\ref{fact:WKL-BWT}(4).
The strong reductions $\lim_k\leqSW\CSRT_{1,k}$ and $\BWT_k\leqSW\CRT_{1,k}$ are also clear.
\end{proof}

This result demonstrates that the Ramsey theorem for cardinality $1$ and $k$ colors $\RT_{1,k}$ is equivalent to the Bolzano-Weierstra\ss{} $\BWT_k$ for the space $k$, 
and hence the general Ramsey theorem can be seen as a generalization of the Bolzano-Weierstra\ss{} theorem for finite spaces. 
Next we want to prove the following interesting lower bound on the complexity of Ramsey's theorem.

\begin{theorem}[Lower bound]
\label{thm:lower-bound}
$\BWT_2^{(n-1)}\equivSW\C_2^{(n)}\leqSW\CSRT_{n,2}$ for all $n\geq2$.
\end{theorem}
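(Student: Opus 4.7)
The equivalence $\BWT_2^{(n-1)}\equivSW\C_2^{(n)}$ is given by Fact~\ref{fact:WKL-BWT}(1), so I only need to establish $\BWT_2^{(n-1)}\leqSW\CSRT_{n,2}$. The plan is to encode the $(n-1)$-fold jumped name of a bit sequence directly as a stable $n$-ary coloring, and to use the color coordinate returned by $\CSRT_{n,2}$ as the desired cluster point.

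An input to $\BWT_2^{(n-1)}$ is a name $p\in\Baire$ such that $n-1$ applications of $\lim$ yield some $s\in\Cantor$, and a solution is any $k^*\in\{0,1\}$ that appears infinitely often in $s$. Using a pairing function, $p$ is unpacked computably into a function $f\colon\IN^n\to\IN$ such that the iterated limit
\[
  s(i_1)=\lim_{i_2\to\infty}\lim_{i_3\to\infty}\cdots\lim_{i_n\to\infty}f(i_1,i_2,\ldots,i_n)
\]
exists for every $i_1\in\IN$. I would then define the computable coloring $c\colon[\IN]^n\to 2$ by
\[
  c(\{i_1<i_2<\cdots<i_n\}):=f(i_1,i_2,\ldots,i_n)\bmod 2.
\]
The existence of the innermost iterated limit in $i_n$ immediately yields that $\lim_{i_n}c(\{i_1,\ldots,i_{n-1},i_n\})$ exists for every $\{i_1<\cdots<i_{n-1}\}\in[\IN]^{n-1}$, hence $c$ is stable and $\CSRT_{n,2}$ may be applied to it.

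The content of the argument is then a short reverse induction showing that any infinite $c$-homogeneous set $M$ of color $k^*$ forces $s(i)=k^*$ for every $i\in M$. I introduce auxiliary colorings $c_k\colon[\IN]^k\to\{0,1\}$ by setting $c_n:=c$ and $c_{k-1}(A):=\lim_{j\to\infty}c_k(A\cup\{j\})$; the iterated-limit property of $f$ guarantees that each of these limits exists, and $c_1(\{i\})=s(i)$ since $s(i)\in\{0,1\}$. If $M$ is $c_k$-homogeneous of color $k^*$, then for any $A\in[M]^{k-1}$ and any $j\in M$ with $j>\max A$ one has $c_k(A\cup\{j\})=k^*$, so the subsequential limit over $j\in M$ is $k^*$; but the full limit $c_{k-1}(A)$ exists and must coincide with this subsequential value, so $c_{k-1}(A)=k^*$. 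Iterating down to $k=1$ yields $s(i)=c_1(\{i\})=k^*$ for every $i\in M$, and since $M$ is infinite, $k^*$ is a cluster point of~$s$.

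The strong Weihrauch reduction is therefore witnessed by the computable maps $K\colon p\mapsto c$ and $H\colon(k,M)\mapsto k$. The main obstacle is purely notational: setting up the unpacking $p\mapsto f$ with the correct nesting order of limits so that the innermost variable is the one along which stability of $c$ is required. Once this convention is fixed, the $\bmod 2$ coercion of $\IN$-valued data to $2$-valued colorings is harmless (since limits of eventually-$\{0,1\}$-valued sequences are preserved by taking residues mod~$2$), and stability together with the reverse induction are straightforward.
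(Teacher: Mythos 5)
Your proof is correct and follows essentially the same route as the paper: encode the iterated-limit data as a stable coloring of $[\IN]^n$, apply $\CSRT_{n,2}$, and return the reported color as the cluster point. The paper works directly with $\lim_{2^\IN}^{[n-1]}$ (so the coloring is automatically binary) and argues correctness via a two-case split on whether $q$ converges, whereas you use Baire-space limits with a $\bmod\,2$ coercion and a tidy reverse induction showing $s(i)=k^*$ for every $i\in M$ --- a minor stylistic variation. One small imprecision: your parenthetical justification for the $\bmod\,2$ step (``limits of eventually-$\{0,1\}$-valued sequences'') does not quite apply, since the intermediate iterated limits of $f$ need not be $\{0,1\}$-valued; the correct reason $\bmod\,2$ commutes with all the iterated limits is that convergent integer sequences (pointwise limits in Baire space) are eventually constant coordinatewise.
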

\begin{proof}
We note that by Fact~\ref{fact:WKL-BWT} $\C_2^{(n)}\equivSW\BWT_2^{(n-1)}\equivSW\BWT_2\circ\lim_{2^\IN}^{[n-1]}$.
Let $p\in\dom(\BWT_2\circ\lim_{2^\IN}^{[n-1]})$ and $q:=\lim_{2^\IN}^{[n-1]}(p)$.
Then
\[q(i_0)=\lim_{i_1\to\infty}\lim_{i_2\to\infty}...\lim_{i_{n-1}\to\infty}p\langle i_{n-1},...,i_0\rangle\]
for all $i_0\in\IN$.
We compute the coloring $c:[\IN]^n\to 2$ with
\[c\{i_0<i_1<....<i_{n-1}\}:=p\langle i_{n-1},i_{n-2},...,i_1,i_0\rangle.\]
It is clear that $c$ is a stable coloring, and with the help of $\CSRT_{n,2}$ we can compute
$(c(M),M)\in\CSRT_{n,2}(c)$.
We claim that $c(M)\in\BWT_2(q)$.
This proves $\BWT_2^{(n-1)}\leqSW\CSRT_{n,2}$.
We still need to prove the claim.\\
\underline{1.\ Case:} $q$ is a convergent binary sequence, i.e., $x:=\lim_2(q)\in\{0,1\}$ exists.\\
In this case $q(i_0)=x$ for all sufficiently large $i_0$ and $\BWT_2(q)=\{x\}$.
Since $M$ is infinite, there will be such a sufficiently large $i_0\in M$.
Since $q=\lim_{2^\IN}^{[n-1]}(p)$ it follows that there will be sufficiently
large $i_{n-1}>...>i_1>i_0$ in $M$ such that  $p\langle i_{n-1},i_{n-2},...,i_1,i_0\rangle=x$,
and hence $c(M)=x\in\{x\}=\BWT_2(q)$.\\
\underline{2.\ Case:} $q$ is not a convergent binary sequence.\\
In this case $\BWT_{2}(q)=\{0,1\}$ and $c(M)\in\BWT_{2}(q)$ hold automatically. 
\end{proof}

It is clear that Ramsey's theorem for any number of colors $k$ can be reduced to Ramsey's theorem for any greater
number of colors. This is because any coloring $c:[\IN]^n\to k$ can be seen as a coloring for any number $m\geq k$ of colors, and
this idea applies to all versions of Ramsey's theorem that we have considered.

\begin{lemma}[Increasing colors]
\label{lem:increasing-color}
For all $n,k\geq1$ we obtain:
\begin{enumerate}
\item $\SRT_{n,k}\leqSW\SRT_{n,k+1}\leqSW\SRT_{n,\IN}$, 
\item $\CSRT_{n,k}\leqSW\CSRT_{n,k+1}\leqSW\CSRT_{n,\IN}$, 
\item $\RT_{n,k}\leqSW\RT_{n,k+1}\leqSW\RT_{n,+}\leqSW\RT_{n,\IN}$, 
\item $\CRT_{n,k}\leqSW\CRT_{n,k+1}\leqSW\CRT_{n,\IN}$.
\end{enumerate}
\end{lemma}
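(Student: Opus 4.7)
The plan is to prove all four chains of reductions by essentially the same elementary embedding argument: since $\{0,\ldots,k-1\}\In\{0,\ldots,k\}\In\IN$, every name of a coloring with $k$ colors is literally (under the representation fixed for $\CC_{n,k}$) a name of a coloring with $k+1$ colors, and also a name of a coloring with finite range in $\IN$. Thus the input translation $K$ in the strong Weihrauch reduction can be taken to be the identity on Baire space (or a trivial re-tagging in the coproduct case), and the output translation $H$ is essentially the identity as well, with only a type coercion on the color component in the colored variants.

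More concretely, to prove $X_{n,k}\leqSW X_{n,k+1}$ for $X\in\{\SRT,\CSRT,\RT,\CRT\}$, I would take $K=\id$: a name $p\in\IN^\IN$ of $c\in\CC_{n,k}$ is a sequence with values in $\{0,\ldots,k-1\}\In\{0,\ldots,k\}$, so the same $p$ names $c$ viewed as an element of $\CC_{n,k+1}$; moreover stability is preserved since the same limits exist. Because $c$ never outputs the value $k$, the set $\HH_c$ computed from $c\in\CC_{n,k}$ and the one computed from $c\in\CC_{n,k+1}$ coincide, and for the colored versions one has $c(M)\in\{0,\ldots,k-1\}$, so the identity on $(k+1)\times 2^\IN$ is in fact a well-defined map into $k\times 2^\IN$ on the relevant domain and serves as $H$. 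The same $K=\id$ (interpreting a name with values in $\{0,\ldots,k-1\}$ as a name of a coloring in $\CC_{n,\IN}$, which is well-defined since its range is finite) yields $X_{n,k}\leqSW X_{n,\IN}$ analogously, and for $\SRT$ also $\SRT_{n,k+1}\leqSW\SRT_{n,\IN}$ by composing the two steps (or directly).

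For the remaining reductions involving the coproduct, $\RT_{n,k}\leqSW\RT_{n,+}$ is achieved by the computable tagging function that injects a name of $c\in\CC_{n,k}$ into the $k$-th component of $\bigsqcup_{k\geq1}\RT_{n,k}$; the output $M\in 2^\IN$ is returned verbatim. For $\RT_{n,+}\leqSW\RT_{n,\IN}$, given a tagged name $\langle k,p\rangle$ of an input $(k,c)\in\dom(\RT_{n,+})$, $K$ strips the tag and outputs $p$, which is a legitimate name of $c$ viewed in $\CC_{n,\IN}$; $H$ is the identity on $2^\IN$. No step presents a real obstacle; the only point that needs a line of verification is that in the colored variant the homogeneous color produced by the larger-target problem actually lies in the smaller color set, which is immediate from the fact that the extended coloring never takes the extra values.
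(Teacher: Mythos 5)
Your proposal is correct and matches the paper's approach: the paper gives no formal proof for this lemma, only the remark preceding it that any $k$-coloring can be viewed as a coloring with more colors, which is exactly the identity-embedding argument you spell out (together with the routine tag-handling for the coproduct case). All the details you supply — preservation of stability, the observation that the extended coloring never uses the extra values so the homogeneous sets and colors agree, and the injection into / stripping out of the coproduct component — are the natural elaboration of the paper's one-line justification.
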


We will later come back to the question whether these reductions are strict.
In particular, with Theorem~\ref{thm:lower-bound} and Lemma~\ref{lem:increasing-color} 
we have now established the lower bound that is indicated in the diagram in Figure~\ref{fig:diagram-RT}.

\begin{corollary}[Lower bound]
\label{cor:lower-bound}
$\BWT_2^{(n-1)}\leqSW\CSRT_{n,k}$ for all $n\geq2$, $k\geq2,\IN$ and 
$\lim\nolimits_2^{(n-1)}\leqSW\CSRT_{n,k}$ for all $n\geq1$ and $k\geq2,\IN$.
\end{corollary}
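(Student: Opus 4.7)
The plan is to derive the corollary by chaining together the results that have already been assembled in the preceding discussion, so the argument is essentially a bookkeeping exercise with no genuine new obstacle.

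First I would handle the $\BWT$-bound. Theorem~\ref{thm:lower-bound} already gives $\BWT_2^{(n-1)}\equivSW\C_2^{(n)}\leqSW\CSRT_{n,2}$ for every $n\geq 2$. To push the number of colors from $2$ up to an arbitrary $k\geq 2$ (including $k=\IN$), I would simply invoke Lemma~\ref{lem:increasing-color}(2), which gives a chain $\CSRT_{n,2}\leqSW\CSRT_{n,3}\leqSW\ldots\leqSW\CSRT_{n,k}$ and also $\CSRT_{n,2}\leqSW\CSRT_{n,\IN}$. Composing strong reductions yields $\BWT_2^{(n-1)}\leqSW\CSRT_{n,k}$ for all $n\geq 2$ and $k\geq 2,\IN$.

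Second I would deduce the $\lim_2$-bound. For $n\geq 2$ the reduction $\lim_2^{(n-1)}\lSW\BWT_2^{(n-1)}$ is part of Fact~\ref{fact:WKL-BWT}(4), so combining it with the first part gives $\lim_2^{(n-1)}\leqSW\CSRT_{n,k}$ for all $n\geq 2$ and $k\geq 2,\IN$. The only remaining case is $n=1$, where $\lim_2^{(0)}=\lim_2$. Here Proposition~\ref{prop:bottom}(3) supplies $\lim_k\leqSW\CSRT_{1,k}$ for $k\geq 2,\IN$, and since a $\{0,1\}$-valued sequence is trivially a $k$-valued sequence we have the obvious strong reduction $\lim_2\leqSW\lim_k$; chaining these yields $\lim_2\leqSW\CSRT_{1,k}$. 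Alternatively, one can observe directly that $\lim_2\leqSW\CSRT_{1,2}$ (a convergent binary sequence $p$ becomes a stable coloring $c(\{i\}):=p(i)$, and any infinite homogeneous set together with its color returns $\lim_2(p)$) and then apply Lemma~\ref{lem:increasing-color}(2) to raise the color count.

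There is no hard step here; the only point requiring a little care is that every reduction in sight is \emph{strong} Weihrauch, since the lemma on increasing colors, Theorem~\ref{thm:lower-bound}, Fact~\ref{fact:WKL-BWT}(4), and the bottom-case reductions of Proposition~\ref{prop:bottom}(3) are each formulated with $\leqSW$, so the composition stays within $\leqSW$ and the corollary follows as stated.
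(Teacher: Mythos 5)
Your proof is correct and follows essentially the same route as the paper: the first reduction comes from Theorem~\ref{thm:lower-bound} followed by color-increase via Lemma~\ref{lem:increasing-color}, and the second comes from Fact~\ref{fact:WKL-BWT}(4) for $n\geq 2$ and Proposition~\ref{prop:bottom} for $n=1$. Your $n=1$ case is slightly more explicit than the paper's one-line justification (you note the extra step $\lim_2\leqSW\lim_k$ or alternatively the direct reduction $\lim_2\leqSW\CSRT_{1,2}$), but this is exactly the routine bookkeeping the paper intends.
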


The second statement follows from the first one by Fact~\ref{fact:WKL-BWT} in the case of $n\geq2$ and follows from Proposition~\ref{prop:bottom} in the case of $n=1$.
We note that by Corollary~\ref{cor:LLPO-RT} below $\CSRT_{n,k}$ cannot be replaced by $\SRT_{n,k}$ in this result.
It follows from Theorem~\ref{thm:CJS-lower} that the binary limit $\lim\nolimits_2$ cannot
be replaced by the limit on Baire space in the previous result.
We recall that $\lim^{(n-1)}\equivSW\lim^{[n]}$.

\begin{corollary}[Limit avoidance]
\label{cor:limit-avoidance1}
$\lim^{(n-1)}\nleqW\lim*\RT_{n,\IN}$ for all $n\geq2$.
\end{corollary}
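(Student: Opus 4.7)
The plan is to derive a contradiction from the assumption $\lim^{(n-1)}\leqW\lim*\RT_{n,\IN}$ by invoking Theorem~\ref{thm:CJS-lower}. Suppose the reduction is witnessed by computable functions $H,K:\In\IN^\IN\to\IN^\IN$, so that for every realizer $G$ of $\lim*\RT_{n,\IN}$ the map $H\langle\id,GK\rangle$ realizes $\lim^{(n-1)}$.

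By Fact~\ref{fact:WKL-BWT}(6) we have $\lim^{(n-1)}\equivW\lim^{[n]}$, and the $n$--fold iteration of $\lim$ applied to a suitable computable input produces $\emptyset^{(n)}\in\IN^\IN$. Transporting along this equivalence, I would fix a computable $p_0\in\IN^\IN$ that names an admissible input for $\lim^{(n-1)}$ with $\lim^{(n-1)}(p_0)=\emptyset^{(n)}$. Then $q:=K(p_0)$ is computable and names an admissible input for $\lim*\RT_{n,\IN}$. Next, using the structure of the compositional product, I would decompose any realizer $G$ of $\lim*\RT_{n,\IN}$ acting on $q$ as follows: a computable preprocessing of $q$ yields a coloring $c\in\CC_{n,\IN}$; a realizer of $\RT_{n,\IN}$ returns an infinite homogeneous set $M\in\HH_c$; a computable function of $q$ and $M$ produces a convergent sequence $s$; a realizer of $\lim$ returns $\ell:=\lim(s)$; and a final computable combination of $q,M,\ell$ yields $GK(p_0)$. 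Since $q$ is computable, so is $c$, and $c$ has finite range by definition of $\CC_{n,\IN}$.

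At this point I would invoke Theorem~\ref{thm:CJS-lower} to obtain some $M\in\HH_c$ with $\emptyset^{(n)}\nleqT M'$, and choose the realizer of $\RT_{n,\IN}$ that outputs exactly this $M$. Since $s\leqT M$, we get $\ell\leqT M'$, and therefore $GK(p_0)\leqT M'$. Because $p_0$ and $H$ are computable, also $H\langle p_0,GK(p_0)\rangle\leqT M'$. But this output is a name for $\lim^{(n-1)}(p_0)=\emptyset^{(n)}$, which forces $\emptyset^{(n)}\leqT M'$ and contradicts the choice of $M$. The main technical step is the clean decomposition of a realizer of $\lim*\RT_{n,\IN}$ into exactly one call to $\RT_{n,\IN}$ followed by exactly one call to $\lim$ with computable glue; this is what ensures that the coloring $c$ fed into $\RT_{n,\IN}$ is itself computable, so that Theorem~\ref{thm:CJS-lower} applies directly to $c$.
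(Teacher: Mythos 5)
Your proof is correct and follows essentially the same argument as the paper: pick a computable input witnessing $\lim^{[n]}$ hitting $\emptyset^{(n)}$, observe that the first stage of the compositional product yields a computable coloring, apply Theorem~\ref{thm:CJS-lower} to get $M$ with $\emptyset^{(n)}\nleqT M'$, and note that the remaining single $\lim$-call and computable glue keep everything $\leqT M'$, giving the contradiction. The paper states the decomposition of a realizer of $\lim*\RT_{n,\IN}$ more tersely, but the underlying reasoning is identical.
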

\begin{proof}
Let us assume that $\lim^{(n-1)}\leqW\lim*\RT_{n,\IN}$ holds for some $n\geq2$.
Then $\lim^{(n-1)}$ maps some computable $p\in \IN^\IN$ to $\emptyset^{(n)}$, and the reduction maps $p$ 
to a computable coloring $c:[\IN]^{n}\to\IN$ with finite $\range(c)$.
By Theorem~\ref{thm:CJS-lower} there exists an infinite homogeneous set $M\In\IN$ for $c$ such that $\emptyset^{(n)}\nleqT M'$.
Now the assumption is that there is a limit computation performed on $p$ and $M$ that produces $\emptyset^{(n)}$.
But any result produced by such a limit computation can also be computed from $M'$ since $p$ is computable (and for 
all computable functions $F,G:\In\IN\to\IN$ there is a computable function $H:\In\IN\to\IN$ such that $F\circ\lim\circ\;G=H\circ\J$).
Hence, $\emptyset^{(n)}\leqT M'$, which is a contradiction.
\end{proof}

We obtain the following corollary since $\lim*\lim^{(n-2)}\equivW\lim^{(n-1)}$, which in turn follows from
\cite[Corollary~5.17]{BGM12} since $*$ is associative.

\begin{corollary}[Limit avoidance]
\label{cor:limit-avoidance}
$\lim^{(n-2)}\nleqW\RT_{n,\IN}$ for all $n\geq2$.
\end{corollary}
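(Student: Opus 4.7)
The plan is to derive a contradiction by composing with $\lim$ on the left and appealing to the previous Corollary~\ref{cor:limit-avoidance1}. Suppose for contradiction that $\lim^{(n-2)}\leqW\RT_{n,\IN}$ for some $n\geq2$. Since the compositional product $*$ is monotone in both arguments (and $\lim\leqW\lim$ trivially), we obtain
\[
\lim *\lim\nolimits^{(n-2)}\leqW\lim *\RT_{n,\IN}.
\]

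Next I would identify the left-hand side with $\lim^{(n-1)}$. Using Fact~\ref{fact:WKL-BWT}(6) we have $\lim^{[n]}\equivW\lim^{(n-1)}$, and in particular $\lim^{(n-2)}\equivW\lim^{[n-1]}$. Combining this with the associativity of $*$ yields
\[
\lim *\lim\nolimits^{(n-2)}\equivW\lim *\lim\nolimits^{[n-1]}\equivW\lim\nolimits^{[n]}\equivW\lim\nolimits^{(n-1)}.
\]
For the base case $n=2$ this is simply $\lim *\lim\equivW\lim'$. Substituting back gives $\lim^{(n-1)}\leqW\lim *\RT_{n,\IN}$, which directly contradicts Corollary~\ref{cor:limit-avoidance1}. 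Hence $\lim^{(n-2)}\nleqW\RT_{n,\IN}$.

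There is essentially no obstacle here beyond bookkeeping: the whole argument is a ``one jump'' improvement obtained by absorbing a single $\lim$ into the reduction via the compositional product. The only mildly delicate point is to be sure that monotonicity of $*$ in both arguments is available (this is standard in the Weihrauch lattice and implicit in the references on the algebraic structure of $*$), and that the identification $\lim *\lim^{(n-2)}\equivW\lim^{(n-1)}$ goes through uniformly for all $n\geq2$, including $n=2$ where $\lim^{(n-2)}=\lim$.
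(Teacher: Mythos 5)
Your proof is correct and takes essentially the same approach as the paper: assume $\lim^{(n-2)}\leqW\RT_{n,\IN}$, apply $\lim *\,\cdot\,$ to both sides, identify $\lim*\lim^{(n-2)}$ with $\lim^{(n-1)}$ (via associativity of $*$ and Fact~\ref{fact:WKL-BWT}(6), which is itself just a repackaging of the cited \cite[Corollary~5.17]{BGM12}), and contradict Corollary~\ref{cor:limit-avoidance1}.
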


We recall that a problem $f$ is called {\em parallelizable}, if $f\equivW\widehat{f}$.
We can conclude  that Ramsey's theorem is not parallelizable. 

\begin{corollary}[Parallelizability]
\label{cor:parallelizability}
$\RT_{n,k}$ and $\SRT_{n,k}$ are not parallelizable for all $n\geq1$ and $k\geq2,\IN$.
\end{corollary}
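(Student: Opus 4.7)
The plan is to assume $f\in\{\RT_{n,k},\SRT_{n,k}\}$ is parallelizable and derive a contradiction. The starting observation is that Corollary~\ref{cor:lower-bound}, combined with $\CSRT_{n,k}\equivW\SRT_{n,k}\leqSW\RT_{n,k}$ from Corollary~\ref{cor:basic-equivalences} and Lemma~\ref{lem:basic-reductions}, yields $\lim_2^{(n-1)}\leqW f$ for all admissible $n,k$. Parallelization is monotone with respect to $\leqW$, and Fact~\ref{fact:WKL-BWT}(3) identifies $\widehat{\lim_2^{(n-1)}}\equivSW\lim^{(n-1)}$; together these give $\lim^{(n-1)}\leqW\widehat f$. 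Under the hypothesis $\widehat f\equivW f$ this produces the crucial inequality $\lim^{(n-1)}\leqW f$.

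For $n\geq2$ the contradiction will follow immediately: Lemmas~\ref{lem:basic-reductions} and~\ref{lem:increasing-color} give $f\leqSW\RT_{n,\IN}$, and $\lim^{(n-2)}\leqW\lim^{(n-1)}$ by Fact~\ref{fact:WKL-BWT}(5), so I would obtain $\lim^{(n-2)}\leqW\RT_{n,\IN}$, directly contradicting Corollary~\ref{cor:limit-avoidance}.

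The main obstacle will be the case $n=1$, since Corollary~\ref{cor:limit-avoidance} is vacuous there (Theorem~\ref{thm:CJS-lower}, its source, fails for $n=1$ because every jump already computes $\emptyset'$). Here I would instead appeal to Proposition~\ref{prop:bottom} to replace $f$ up to Weihrauch equivalence by $\lim_k$ or $\BWT_k$; both problems output a single natural number (in $k$, or in $\IN$ if $k=\IN$), which is trivially computable. Unwinding the reduction $\lim\leqW f$ derived in the first paragraph would then force $\lim(p)$ to be computable for every computable $p\in\IN^\IN$, contradicting the standard fact that there exist computable sequences with non-computable limit. This completes the plan.
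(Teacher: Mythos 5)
Your proposal is correct, and for $n\geq2$ it is essentially the paper's argument: from Corollaries~\ref{cor:lower-bound} and~\ref{cor:basic-equivalences} one gets $\lim_2^{(n-1)}\leqW\SRT_{n,k}$, parallelizing and using Fact~\ref{fact:WKL-BWT}(3) together with the parallelizability hypothesis produces $\lim^{(n-1)}\leqW\SRT_{n,k}\leqW\RT_{n,\IN}$, and Corollary~\ref{cor:limit-avoidance} gives the contradiction.

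What you add beyond the paper is a correct observation that the paper's own proof as written does not cover $n=1$. The displayed chain in the paper begins with $\lim^{(n-2)}\leqW\lim^{(n-1)}$, which makes no sense when $n=1$, and the contradiction is drawn from Corollary~\ref{cor:limit-avoidance}, which is only stated (and can only be proved from Theorem~\ref{thm:CJS-lower}) for $n\geq2$, precisely because the cone-avoidance bound $\emptyset^{(n)}\nleqT M'$ is vacuous at $n=1$. Your patch for $n=1$ is sound: by Proposition~\ref{prop:bottom}, $\SRT_{1,k}\equivW\lim_k$ and $\RT_{1,k}\equivW\BWT_k$, both with output space a subset of $\IN$, so any realizer sends computable inputs to (trivially) computable outputs; the reduction $\lim\leqW\widehat{f}\equivW f$ would then force $\lim$ to have a computable output on every computable name in its domain, which is false since there is a computable sequence of sequences converging to $\chi_{\emptyset'}$. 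Two cosmetic points: in your first paragraph the chain $\CSRT_{n,k}\equivW\SRT_{n,k}\leqSW\RT_{n,k}$ mixes $\equivW$ with $\leqSW$, so it only justifies an ordinary reduction $\leqW$ overall (which is all you need); and the sequences $p$ should of course range over $\dom(\lim)$, not over all of $\IN^\IN$.
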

\begin{proof}
By  Corollaries~\ref{cor:lower-bound} and \ref{cor:basic-equivalences} we have $\lim_2^{(n-1)}\leqW\SRT_{n,k}$ for $n\geq 1$, $k\geq2,\IN$.
Let us assume that $\SRT_{n,k}$ is parallelizable. 
Since parallelization is a closure operator, this implies by Lemma~\ref{lem:increasing-color}
\[\lim\nolimits^{(n-2)}\leqW\lim\nolimits^{(n-1)}\equivW\widehat{\lim\nolimits_2^{(n-1)}}\leqW\widehat{\SRT_{n,k}}\equivW\SRT_{n,k}\leqW\RT_{n,\IN},\] 
where  the first equivalence holds by Fact~\ref{fact:WKL-BWT}~(3).
This is a contradiction to Corollary~\ref{cor:limit-avoidance}.
The proof for $\RT_{n,k}$ follows analogously. 
\end{proof}

We recall that a problem $f$ is called {\em idempotent}, if $f\equivW f\times f$.
The squashing theorem (Theorem~\ref{thm:squashing}) implies that every total, finitely tolerant and idempotent problem is parallelizable.
Hence we can draw the following conclusion from Corollary~\ref{cor:parallelizability} (this has also been stated in \cite[Lemma~3.3]{DDH+16}).

\begin{corollary}[Idempotency]
\label{cor:no-idempotency}
$\RT_{n,k}$ is not idempotent for all $n\geq1$ and $k\geq2$.
\end{corollary}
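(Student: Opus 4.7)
My plan is to derive this from Corollary~\ref{cor:parallelizability} by applying the squashing theorem (Theorem~\ref{thm:squashing}(1)) contrapositively, as the paragraph immediately preceding the statement already hints. If $\RT_{n,k}$ were idempotent, then $\RT_{n,k}\times\RT_{n,k}\leqW\RT_{n,k}$, and choosing $g=f=\RT_{n,k}$ in Theorem~\ref{thm:squashing}(1) would yield $\widehat{\RT_{n,k}}\leqW\RT_{n,k}$, making $\RT_{n,k}$ parallelizable. Since this contradicts Corollary~\ref{cor:parallelizability}, the whole argument reduces to verifying the two hypotheses of the squashing theorem for $\RT_{n,k}$: that its realizer-level domain is all of $\IN^\IN$ and that it is finitely tolerant.

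Totality is essentially cosmetic. The natural representation of $\CC_{n,k}$ restricts the realizer of $\RT_{n,k}$ to $k^\IN\subset\IN^\IN$, but passing to the strongly Weihrauch equivalent variant that interprets any $p\in\IN^\IN$ as the coloring $c(\vartheta_n(i)):=p(i)\bmod k$ yields a computable precomposition with a total realizer. This step costs nothing in Weihrauch degree and removes the domain obstruction.

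The substantive verification is finite tolerance, and this is where I expect the only real work. Suppose $p,q\in\IN^\IN$ name colorings $c_p,c_q$ with $p(i)=q(i)$ for all $i\geq\ell$. Since $\vartheta_n(0),\ldots,\vartheta_n(\ell-1)$ form finitely many finite subsets of $\IN$, one can compute from $\ell$ a bound $N$ such that every $A\in[\IN]^n$ with $\min A>N$ has $\vartheta_n^{-1}(A)\geq\ell$ and therefore satisfies $c_p(A)=c_q(A)$. Given any $M\in\HH_{c_q}$ presented by its characteristic function, the truncation $M':=\{m\in M:m>N\}$ is still infinite and is homogeneous for $c_p$ with the same color; it is uniformly computable from $M$ and $\ell$, which supplies the required tolerance map $T$.

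With totality and finite tolerance in hand, the squashing theorem applies and delivers the contradiction described in the first paragraph. There is no additional obstacle: the potentially delicate point would be the bookkeeping about how $\ell$ (a natural number) is encoded alongside $M$ under the pairing $\langle r,k\rangle$, but this is handled by the standard pairing conventions already used in the definition of finite tolerance in Section~\ref{sec:introduction}.
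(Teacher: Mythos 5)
Your proposal is correct and takes the same route as the paper: the paper derives the corollary by invoking the squashing theorem (Theorem~\ref{thm:squashing}) together with Corollary~\ref{cor:parallelizability}, observing that a total, finitely tolerant, idempotent problem would be parallelizable. The only difference is that you spell out the verification of totality (via the $\bmod\,k$ re-encoding) and finite tolerance (via truncation past a computable bound $N$), which the paper leaves implicit and attributes to~\cite{DDH+16}; your verification is correct.
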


Another noticeable consequence of Corollary~\ref{cor:lower-bound} is the following.

\begin{corollary}[Cylinders]
\label{cor:cylinder-CSRT}
$\widehat{\CSRT_{n,k}^{(m)}}$ and $\widehat{\CRT_{n,k}^{(m)}}$ are cylinders for all $n\geq1$, $k\geq2,\IN$ and $m\geq0$.
\end{corollary}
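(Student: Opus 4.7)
The plan is to reduce the cylinder property $\id\times\widehat{\CSRT_{n,k}^{(m)}}\leqSW\widehat{\CSRT_{n,k}^{(m)}}$ to the simpler claim $\id\leqSW\widehat{\CSRT_{n,k}^{(m)}}$. Indeed, for every problem $f$ the parallelization satisfies $\widehat{f}\times\widehat{f}\equivSW\widehat{f}$: the $\leqSW$ direction via computable interleaving of two $X^\IN$-arguments into one, and the $\geqSW$ direction via the diagonal map sending $p$ to $(p,p)$. Once $\id\leqSW\widehat{f}$ is in hand, composing strong reductions gives
$$\id\times\widehat{f}\leqSW\widehat{f}\times\widehat{f}\equivSW\widehat{f},$$
which is the cylinder property by definition.

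The key steps for $\id\leqSW\widehat{\CSRT_{n,k}^{(m)}}$ are as follows. First, Corollary~\ref{cor:lower-bound} supplies $\lim\nolimits_2^{(n-1)}\leqSW\CSRT_{n,k}$ for all $n\geq 1$, $k\geq 2,\IN$. Second, $m$ additional jumps preserve $\leqSW$, yielding $\lim\nolimits_2^{(n+m-1)}\leqSW\CSRT_{n,k}^{(m)}$. Third, parallelization also preserves $\leqSW$, so $\widehat{\lim\nolimits_2^{(n+m-1)}}\leqSW\widehat{\CSRT_{n,k}^{(m)}}$. Fourth, Fact~\ref{fact:WKL-BWT}(3) identifies the left-hand side with $\lim^{(n+m-1)}$ up to $\equivSW$, and the trivial strong reduction $\id\leqSW\lim\leqSW\lim^{(n+m-1)}$, realized by nested constant-sequence encodings, closes the chain. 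The statement for $\widehat{\CRT_{n,k}^{(m)}}$ is then inherited via Lemma~\ref{lem:basic-reductions}, since $\CSRT_{n,k}\leqSW\CRT_{n,k}$ propagates under jumps and parallelization, and transitivity of $\leqSW$ yields $\id\leqSW\widehat{\CRT_{n,k}^{(m)}}$.

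The only delicate point I anticipate is verifying that $\widehat{f}\times\widehat{f}\leqSW\widehat{f}$ is a genuine \emph{strong} Weihrauch reduction, not merely an ordinary one, since the cylinder notion is sensitive to this distinction. This reduces to checking that the interleaving map $(X^\IN)^2\to X^\IN$ induced by $\langle\cdot,\cdot\rangle$ on $\IN^\IN$ and its inverse are computable and that the backward map on the output side does not need to inspect the input: both are immediate. Beyond this routine verification the proof is a direct assembly of Corollary~\ref{cor:lower-bound}, Fact~\ref{fact:WKL-BWT}(3), Lemma~\ref{lem:basic-reductions}, and the monotonicity of jumps and parallelization under $\leqSW$, so no deeper obstacle is expected.
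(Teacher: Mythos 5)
Your proof is correct and follows essentially the same route as the paper: the paper's one-line justification is the chain $\id\leqSW\lim\equivSW\widehat{\lim\nolimits_2}\leqSW\widehat{\CSRT_{n,k}}$ from Corollary~\ref{cor:lower-bound} and Fact~\ref{fact:WKL-BWT}, leaving implicit both the extension to $m>0$ and to $\CRT$ and the general fact that $\id\leqSW\widehat{f}$ already forces $\widehat{f}$ to be a cylinder. You spell out exactly this implicit fact, via $\widehat{f}\times\widehat{f}\leqSW\widehat{f}$ by interleaving, and explicitly verify that the backward map ignores the input, which is the only non-routine point; the rest is the same assembly of the lower bound, monotonicity of jumps and parallelization under $\leqSW$, and Lemma~\ref{lem:basic-reductions} for the $\CRT$ case.
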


The claim follows since 
$\id\leqSW\lim\equivSW\widehat{\lim\nolimits_2}\leqSW\widehat{\CSRT_{n,k}}$
holds by Corollary~\ref{cor:lower-bound} and Fact~\ref{fact:WKL-BWT}.
The next lemma now formulates a simple finiteness condition that Ramsey's theorem satisfies
and implies that Ramsey's theorem has very little uniform computational power.

\begin{lemma}[Finite Intersection Lemma]
\label{lem:finite-intersection}
Let $c_i:[\IN]^n\to k$ be colorings for $i=1,...,m$ with $m,n\geq1$, $k\geq1,\IN$. Then we obtain $\bigcap_{i=1}^m\RT_{n,k}(c_i)\not=\emptyset$.
\end{lemma}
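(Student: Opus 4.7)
The plan is to reduce this to a straightforward iterated application of Theorem~\ref{thm:Ramsey}. The essential observation is that homogeneity is downward closed under infinite subsets: if $M\In\IN$ is homogeneous for a coloring $c:[\IN]^n\to k$ with $c(M)=i$, and $N\In M$ is any infinite subset, then $[N]^n\In[M]^n$ and hence $c(A)=i$ for every $A\in[N]^n$, so $N$ is homogeneous for $c$ with the same color. This immediately shrinks the problem from finitely many simultaneous colorings to a nested sequence.

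First I would set $M_0:=\IN$ and, inductively, assume an infinite $M_{i-1}\In\IN$ has been found that is homogeneous for $c_1,\dots,c_{i-1}$. Fix an order-preserving bijection $\sigma_i:\IN\to M_{i-1}$ and consider the induced coloring $\tilde c_i:[\IN]^n\to k$ defined by $\tilde c_i(A):=c_i(\sigma_i(A))$ (in the case $k=\IN$ the range of $\tilde c_i$ is still finite as a subset of $\range(c_i)$, so Theorem~\ref{thm:Ramsey} applies). Applying Ramsey's theorem to $\tilde c_i$ yields an infinite homogeneous set $N_i\In\IN$, and setting $M_i:=\sigma_i(N_i)\In M_{i-1}$ gives an infinite set homogeneous for $c_i$. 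By the downward closure observation, $M_i$ is also homogeneous for each of $c_1,\dots,c_{i-1}$.

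After $m$ steps, the set $M_m$ is infinite and homogeneous for every $c_1,\dots,c_m$ simultaneously, i.e.\ $M_m\in\bigcap_{i=1}^m\RT_{n,k}(c_i)$, which proves the claim. There is no real obstacle here; the only point worth mentioning is that the proof is genuinely non-uniform (it invokes Ramsey's theorem $m$ times, and the number of colors for $\tilde c_i$ is not a priori bounded when $k=\IN$), which fits the purpose of the lemma as a statement of what can be guaranteed set-theoretically rather than computationally.
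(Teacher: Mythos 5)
Your proof is correct, but it takes a genuinely different route from the paper's. You use an \emph{iterated thinning} argument: fix an order isomorphism $\sigma_i\colon\IN\to M_{i-1}$, pull $c_i$ back along $\sigma_i$, apply Ramsey's theorem once, push the result forward, and invoke downward closure of homogeneity to keep the accumulated constraints. This is the shortest set-theoretic proof, and the authors explicitly acknowledge that ``this result could be generalized and proved in different possibly simpler ways.'' The paper instead builds a single \emph{product coloring}: a bijection $\alpha\colon\{0,\dots,k-1\}^m\to\{0,\dots,k^m-1\}$ (Cantor tupling when $k=\IN$) is used to define $f(c_1,\dots,c_m)(A):=\alpha(c_1(A),\dots,c_m(A))$, and Ramsey's theorem is applied once to $f(c_1,\dots,c_m)$. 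The reason for preferring this over your argument is not the lemma itself but its afterlife: the map $f$ is a \emph{computable} function $(\CC_{n,k})^m\to\CC_{n,k^m}$ that moreover preserves stability, and this single uniform construction is reused verbatim to get $\bigcap_{i=1}^m\RT_{n,k}\leqSW\RT_{n,k^m}$ in Corollary~\ref{cor:products}, the finite parallelization bound in Corollary~\ref{cor:finite-parallelization}, and the sequence $(d_m)_m$ in the proof of Theorem~\ref{thm:delayed-parallelization}. Your construction, by contrast, applies Ramsey $m$ times consecutively (the step-count and, for $k=\IN$, the intermediate color sets are unbounded a~priori), so it would realize $\bigcap_{i=1}^m\RT_{n,k}$ only via an $m$-fold compositional product rather than a single call to $\RT_{n,k^m}$; this is weaker for the uniform classification the paper is after. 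You did notice the non-uniformity yourself, but the salient point is less that non-uniformity ``fits the purpose of the lemma'' and more that the paper deliberately over-engineers the proof so that the witness map can be recycled.
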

\begin{proof}
We first consider $k\geq1$.
We use a bijection $\alpha:\{0,1,...,k-1\}^m\to\{0,1,...,k^m-1\}$ in order to construct a map $f:(\CC_{n,k})^m\to\CC_{n,k^m}$
by
\[f(c_1,...,c_m)(A):=\alpha(c_1(A),...,c_m(A))\]
for all colorings $c_1,...,c_m\in\CC_{n,k}$ and $A\in[\IN]^n$.
Given $c_1,...,c_m\in\CC_{n,k}$ we consider $c:=f(c_1,...,c_m)$, and we claim that
$\RT_{n,k^m}(c)\In\bigcap_{i=1}^m\RT_{n,k}(c_i)$. 
To this end, let $M\in\RT_{n,k^m}(c)$ and $x:=c(M)$. 
If $(x_1,...,x_m):=\alpha^{-1}(x)$, then we obtain $c_i(A)=x_i$ for all $i=1,...,m$ and $A\in[M]^n$,
and hence $M$ is homogeneous for all $c_1,...,c_m$. This proves the claim.
It follows by Ramsey's theorem~\ref{thm:Ramsey} that $\RT_{n,k^m}(c)\not=\emptyset$.

We now consider the case $k=\IN$. In this case we use Cantor's tupling function $\alpha:\IN^m\to\IN$ in order to construct
a map $f:(\CC_{n,\IN})^m\to\CC_{n,\IN}$ analogously as above. We obtain $\RT_{n,\IN}(c)\In\bigcap_{i=1}^m\RT_{n,\IN}(c_i)$.
It follows by Ramsey's theorem~\ref{thm:Ramsey} that $\RT_{n,\IN}(c)\not=\emptyset$.

In any case this proves the claim of the lemma.
\end{proof}

This result could be generalized and proved in different possibly simpler ways. 
The specific construction used in our proof will be reused for the proof of Corollary~\ref{cor:products}.

We recall that for a multi-valued function $f:\In X\mto Y$ we have defined $\# f$, the {\em cardinality} of $f$, in \cite{BGH15a} as the supremum of all cardinalities of sets $M\In\dom(f)$
such that $\{f(x):x\in M\}$ is pairwise disjoint. The cardinality yields an invariant for strong Weihrauch reducibility, i.e.,
$f\leqSW g$ implies $\# f\leq \#g$ \cite[Proposition~3.6]{BGH15a}.
Lemma~\ref{lem:finite-intersection} now implies the following perhaps surprising fact.

\begin{corollary}[Cardinality]
\label{cor:cardinality-RT}
$\#\RT_{n,k}^{(m)}=\#\widehat{\RT_{n,k}^{(m)}}=1$ for all $n\geq1$, $k\geq1,\IN$ and $m\geq0$.
\end{corollary}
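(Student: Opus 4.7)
The plan is to derive both equalities as a direct consequence of the Finite Intersection Lemma~\ref{lem:finite-intersection}. The key observation is that the cardinality invariant $\#f$ depends only on the underlying multi-valued function (its domain and its value sets), and not on the chosen input representation, so the jump $\RT_{n,k}^{(m)}$ has exactly the same cardinality as $\RT_{n,k}$. Moreover, parallelization and jump commute (as recalled in the proof of Fact~\ref{fact:WKL-BWT}), so it will suffice to prove $\#\RT_{n,k}=\#\widehat{\RT_{n,k}}=1$.

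For the first equality, I would argue as follows. Since $\dom(\RT_{n,k})\neq\emptyset$, any singleton $M\In\dom(\RT_{n,k})$ shows $\#\RT_{n,k}\geq1$. Conversely, suppose $M\In\dom(\RT_{n,k})$ contains at least two distinct colorings $c_1,c_2$. Then Lemma~\ref{lem:finite-intersection}, applied with $m=2$, yields $\RT_{n,k}(c_1)\cap\RT_{n,k}(c_2)\neq\emptyset$, so the family $\{\RT_{n,k}(c):c\in M\}$ fails to be pairwise disjoint. Hence the cardinality cannot exceed $1$, and equality follows.

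For $\#\widehat{\RT_{n,k}}=1$ I would argue analogously, but now componentwise. Given any two sequences of colorings $(c_i)_{i\in\IN}$ and $(d_i)_{i\in\IN}$ in $\CC_{n,k}^\IN$, a common output of $\widehat{\RT_{n,k}}$ on both inputs is a sequence $(M_i)_{i\in\IN}$ such that $M_i\in\RT_{n,k}(c_i)\cap\RT_{n,k}(d_i)$ for every $i\in\IN$. Each of these intersections is nonempty by Lemma~\ref{lem:finite-intersection} with $m=2$, so such a sequence exists. Thus $\widehat{\RT_{n,k}}((c_i)_i)\cap\widehat{\RT_{n,k}}((d_i)_i)\neq\emptyset$ for arbitrary inputs, which prevents any two-element family from having disjoint value sets, and $\#\widehat{\RT_{n,k}}=1$ follows exactly as before.

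There is no real obstacle here; the only point requiring care is the observation that the cardinality is unaffected by the change of representation implicit in the jump operation and that the value sets of $\widehat{\RT_{n,k}^{(m)}}$ and $\widehat{\RT_{n,k}}$ coincide. Once these are noted, the result is an immediate two-line consequence of the Finite Intersection Lemma.
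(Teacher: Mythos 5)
Your proposal is correct and follows essentially the same approach the paper takes: the cardinality invariant is insensitive to the jump (which only changes the input representation, not the underlying multi-valued function), and the Finite Intersection Lemma~\ref{lem:finite-intersection}, applied pairwise and in the parallelized case componentwise, rules out any pair of inputs with disjoint solution sets. The appeal to commutativity of jump and parallelization is redundant given your own observation that $\widehat{\RT_{n,k}^{(m)}}$ and $\widehat{\RT_{n,k}}$ have identical value sets, but this does not affect the correctness of the argument.
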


In the case of the parallelization we only need to apply Lemma~\ref{lem:finite-intersection} pairwise to any component of the sequence.
We recall that a multi-valued function $f$ was called {\em discriminative} in \cite{BHK15} if $\C_2\leqW f$ and {\em indiscriminative} otherwise.
Likewise, we could call $f$ {\em strongly discriminative} if $\C_2\leqSW f$. We obtain $\#\C_2=2$ since $\{0\}$ and $\{1\}$ are in the domain of $\C_2$. Hence it follows that Corollary~\ref{cor:cardinality-RT}
implies in particular that Ramsey's theorem is not strongly discriminative, not even in its parallelized form.

\begin{corollary}[Strong discrimination]
\label{cor:LLPO-RT}
$\C_2\nleqSW\widehat{\RT_{n,k}^{(m)}}$ for all $n\geq1$, $k\geq1,\IN$ and $m\geq 0$.
\end{corollary}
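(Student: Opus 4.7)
The plan is to derive the corollary directly from the cardinality invariant established in Corollary~\ref{cor:cardinality-RT}. The general principle to invoke is that the cardinality $\#$ is monotone under strong Weihrauch reducibility, so $f\leqSW g$ entails $\#f\leq\#g$. Hence it suffices to exhibit a cardinality gap: show that the left-hand side has cardinality at least $2$ while the right-hand side has cardinality $1$.

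First I would verify that $\#\C_2\geq 2$. The domain of $\C_2$ contains the two descriptions of the singletons $\{0\}$ and $\{1\}$ (viewed as closed subsets of $\{0,1\}$ with negative information); the corresponding solution sets $\C_2\{0\}=\{0\}$ and $\C_2\{1\}=\{1\}$ are disjoint, witnessing $\#\C_2\geq 2$. On the other hand, Corollary~\ref{cor:cardinality-RT} gives $\#\widehat{\RT_{n,k}^{(m)}}=1$ for all admissible $n,k,m$, which in turn rests on the Finite Intersection Lemma (Lemma~\ref{lem:finite-intersection}): any finite family of colorings admits a common infinite homogeneous set, and this pairwise common-solution property propagates to jumps and parallelizations.

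Combining the two observations via the monotonicity of $\#$ under $\leqSW$, an assumed reduction $\C_2\leqSW\widehat{\RT_{n,k}^{(m)}}$ would force $2\leq\#\C_2\leq\#\widehat{\RT_{n,k}^{(m)}}=1$, a contradiction. This yields the desired non-reduction.

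There is no substantial obstacle once Corollary~\ref{cor:cardinality-RT} is granted; the only subtle point is that the argument essentially uses \emph{strong} Weihrauch reducibility, since the cardinality invariant is tied to $\leqSW$ and not to ordinary $\leqW$. Indeed, one should not expect an analogous statement for $\leqW$, which is precisely the reason the conclusion is formulated with $\nleqSW$ rather than $\nleqW$.
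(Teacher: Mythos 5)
Your proposal is correct and matches the paper's own argument exactly: compare $\#\C_2=2$ (witnessed by the singletons $\{0\}$ and $\{1\}$ with disjoint solution sets) against $\#\widehat{\RT_{n,k}^{(m)}}=1$ from Corollary~\ref{cor:cardinality-RT}, and invoke the monotonicity of $\#$ under $\leqSW$ to conclude. Your closing remark about why the argument is necessarily tied to $\leqSW$ and not $\leqW$ is also accurate, since e.g.\ $\C_2\leqW\SRT_{1,2}$ holds.
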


Since $\C_2'\equivSW\BWT_2$ by Fact~\ref{fact:WKL-BWT}, we can conclude that $\CSRT$ cannot be replaced by $\SRT$
in Theorem~\ref{thm:lower-bound} and Corollary~\ref{cor:lower-bound} (without simultaneously replacing the strong reduction by an ordinary one).\footnote{It follows from Corollary~\ref{cor:DNC} below that in Corollary~\ref{cor:LLPO-RT} $\C_2$ cannot be replaced by $\ACC_\IN$, as defined in \cite{BHK15}.}
We can also conclude from Corollary~\ref{cor:cardinality-RT}
that the parallelized uncolored versions of Ramsey's theorem are not cylinders since $\#\id=|\IN^\IN|$.

\begin{corollary}[Cylinders]
\label{cor:cylinder-SRT}
$\widehat{\RT_{n,k}^{(m)}}$ and $\widehat{\SRT_{n,k}^{(m)}}$ are not cylinders for all $n\geq1$, $k\geq1,\IN$ and $m\geq 0$.
\end{corollary}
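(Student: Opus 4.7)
The plan is to derive the corollary directly from Corollary~\ref{cor:cardinality-RT} by exploiting that the cardinality $\#$ is an invariant of strong Weihrauch reducibility (\cite[Proposition~3.6]{BGH15a}). The key preliminary observation is that any cylinder $f$ with nonempty domain must satisfy $\#f\geq|\IN^\IN|$. Indeed, fixing some $y_0\in\dom(f)$, the solution sets
\[(\id\times f)(x,y_0)=\{x\}\times f(y_0)\]
are nonempty and pairwise disjoint as $x$ ranges over $\IN^\IN$, so $\#(\id\times f)\geq|\IN^\IN|=\#\id$. If $f$ were a cylinder, $\id\times f\leqSW f$ would then force $|\IN^\IN|\leq\#(\id\times f)\leq\#f$ by invariance of $\#$ under $\leqSW$.

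For $\widehat{\RT_{n,k}^{(m)}}$, the statement then follows immediately by quoting Corollary~\ref{cor:cardinality-RT}, which gives $\#\widehat{\RT_{n,k}^{(m)}}=1<|\IN^\IN|$, contradicting the cylinder criterion above. For $\widehat{\SRT_{n,k}^{(m)}}$, which is not explicitly covered by Corollary~\ref{cor:cardinality-RT}, I would first invoke Lemma~\ref{lem:basic-reductions} to get $\SRT_{n,k}\leqSW\RT_{n,k}$; monotonicity of jumps under $\leqSW$ (Fact cited from \cite[Proposition~5.6(2)]{BGM12}) and the standard monotonicity of parallelization under $\leqSW$ then yield
\[\widehat{\SRT_{n,k}^{(m)}}\leqSW\widehat{\RT_{n,k}^{(m)}},\]
whence $\#\widehat{\SRT_{n,k}^{(m)}}\leq\#\widehat{\RT_{n,k}^{(m)}}=1$ by invariance of $\#$. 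The cylinder criterion then excludes $\widehat{\SRT_{n,k}^{(m)}}$ as well.

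There is essentially no real obstacle: the substantive combinatorial content has already been extracted in the Finite Intersection Lemma~\ref{lem:finite-intersection} (where several colorings are merged into one via the tupling $\alpha$) and distilled into Corollary~\ref{cor:cardinality-RT}. The present result is a purely structural consequence, obtained by propagating the $\#\leq 1$ bound from $\RT_{n,k}$ down to $\SRT_{n,k}$ via the strong reduction, pushing it through jumps and parallelization by monotonicity, and confronting it with the lower bound $|\IN^\IN|$ that any cylinder must have.
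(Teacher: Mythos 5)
Your proof is correct and takes essentially the same approach as the paper: the paper states this corollary as a one-line consequence of Corollary~\ref{cor:cardinality-RT} together with the observation that $\#\id=|\IN^\IN|$, which is exactly the cardinality argument you spell out in more detail. Your handling of the $\SRT$ case via $\SRT_{n,k}\leqSW\RT_{n,k}$ and monotonicity of jump and parallelization under $\leqSW$ is a legitimate way to fill in the detail the paper leaves implicit (the alternative being to note directly that Lemma~\ref{lem:finite-intersection} applies equally to stable colorings).
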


Since $\#\CSRT_{n,k}\geq k$ holds (because there are monochromatic colorings for each color), we can also conclude from Corollary~\ref{cor:cardinality-RT} that the colored versions
of Ramsey's theorem are not strongly equivalent to the uncolored ones (in the case of at least two colors).

\begin{corollary}
\label{cor:CSRT-RT}
$\CSRT_{n,k}\nleqSW\widehat{\RT_{n,k}^{(m)}}$ for all $n\geq1$, $k\geq2,\IN$ and $m\geq0$.
\end{corollary}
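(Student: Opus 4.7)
The plan is to invoke the cardinality invariant for strong Weihrauch reducibility, exactly as suggested in the paragraph preceding the corollary. First I would verify the claimed inequality $\#\CSRT_{n,k}\geq k$. For each color $i\in k$ the constant coloring $c_i:[\IN]^n\to k$ defined by $c_i(A):=i$ is trivially stable, hence lies in $\dom(\CSRT_{n,k})$, and
\[
\CSRT_{n,k}(c_i)=\{i\}\times\{M\in 2^\IN : M\text{ infinite}\}.
\]
The $k$ families $\{\CSRT_{n,k}(c_i)\}_{i\in k}$ are pairwise disjoint because their first components differ, so by the definition of $\#$ in \cite[Proposition~3.6]{BGH15a} we obtain $\#\CSRT_{n,k}\geq k\geq 2$. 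In the case $k=\IN$ the very same construction with colors $0$ and $1$ already yields $\#\CSRT_{n,\IN}\geq 2$.

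Next I would combine this with Corollary~\ref{cor:cardinality-RT}, which asserts $\#\widehat{\RT_{n,k}^{(m)}}=1$, and with the fact that $\#$ is monotone under strong Weihrauch reducibility, i.e.\ $f\leqSW g$ implies $\#f\leq\#g$. If we had $\CSRT_{n,k}\leqSW\widehat{\RT_{n,k}^{(m)}}$, then
\[
2\leq\#\CSRT_{n,k}\leq\#\widehat{\RT_{n,k}^{(m)}}=1,
\]
a contradiction. Hence $\CSRT_{n,k}\nleqSW\widehat{\RT_{n,k}^{(m)}}$ for all $n\geq1$, $k\geq2,\IN$ and $m\geq 0$.

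I do not anticipate a serious obstacle: the argument merely packages the general cardinality principle with the fact that constant colorings produce disjoint color-tagged solution sets. The only point requiring minor care is making sure the monochromatic witnesses lie in the domain of the \emph{stable} variant $\CSRT_{n,k}$ (which they do, trivially), so that the lower bound on $\#$ is obtained for the same problem that appears on the left of the reduction.
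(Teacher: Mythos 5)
Your proposal is correct and follows exactly the argument the paper intends: you instantiate the cardinality invariant $\#$ via the $k$ monochromatic stable colorings to get $\#\CSRT_{n,k}\geq k\geq 2$, and combine this with Corollary~\ref{cor:cardinality-RT} (giving $\#\widehat{\RT_{n,k}^{(m)}}=1$) and the monotonicity of $\#$ under $\leqSW$. This is precisely the reasoning sketched in the sentence preceding the corollary.
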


The following result is a consequence of Lemma~\ref{lem:finite-intersection} and its proof.
For a finite sequence $(f_i)_{i\leq m}$ of multi-valued functions $f_i:\In X\mto Y$ we denote the {\em intersection} by
$\bigcap_{i=1}^mf_i:\In X\mto Y,x\mapsto\bigcap_{i=1}^mf_i(x)$, where
$\dom(\bigcap_{i=1}^mf_i)$ contains all points $x\in X$ such that $\bigcap_{i=1}^mf_i(x)\not=\emptyset$.
We recall that $f^m$ denotes the $m$--fold product of $f$ with respect to $\times$.

\begin{corollary}[Products]
\label{cor:products}
For all $n,m,k\geq1$ we obtain
\begin{enumerate}
\item $\RT_{n,k}^m\leqSW\bigcap_{i=1}^m\RT_{n,k}\leqSW\RT_{n,k^m}$,
\item $\SRT_{n,k}^m\leqSW\bigcap_{i=1}^m\SRT_{n,k}\leqSW\SRT_{n,k^m}$,
\item $\RT_{n,\IN}^m\leqSW\bigcap_{i=1}^m\RT_{n,\IN}\leqSW\RT_{n,\IN}$,
\item $\SRT_{n,\IN}^m\leqSW\bigcap_{i=1}^m\SRT_{n,\IN}\leqSW\SRT_{n,\IN}$.
\end{enumerate}
\end{corollary}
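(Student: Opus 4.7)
The plan is to derive all four chains by applying twice a single construction already given in the proof of Lemma~\ref{lem:finite-intersection}, plus a trivial diagonal reduction for the left-hand inequalities.

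For the first reduction $f^m \leqSW \bigcap_{i=1}^m f$ in each item, I would use $K = \id$ on the input $(c_1,\ldots,c_m)$ and the output map $H(M) := (M,M,\ldots,M)$. Indeed, if $M \in \bigcap_{i=1}^m \RT_{n,k}(c_i)$ then $M$ is by definition infinite and homogeneous for each $c_i$, so $(M,\ldots,M) \in \RT_{n,k}(c_1) \times \cdots \times \RT_{n,k}(c_m) = \RT_{n,k}^m(c_1,\ldots,c_m)$; the same works verbatim for $\SRT_{n,k}$, $\RT_{n,\IN}$, and $\SRT_{n,\IN}$.

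For the second reduction in items (1) and (2), I would reuse the coloring combination from the proof of Lemma~\ref{lem:finite-intersection}: given $(c_1,\ldots,c_m) \in \CC_{n,k}^m$, we compute $c := f(c_1,\ldots,c_m) \in \CC_{n,k^m}$ via the bijection $\alpha\colon \{0,\ldots,k-1\}^m \to \{0,\ldots,k^m-1\}$, so that $c(A) = \alpha(c_1(A),\ldots,c_m(A))$ for every $A \in [\IN]^n$. The computation of $c$ is $K$, and $H$ is the identity on $2^\IN$. As shown in the lemma's proof, any $M \in \RT_{n,k^m}(c)$ lies in $\bigcap_{i=1}^m \RT_{n,k}(c_i)$. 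For the stable versions, I need the extra check that if each $c_i$ is stable, then $c$ is also stable: for every $A \in [\IN]^{n-1}$ each $\lim_{j\to\infty} c_i(A\cup\{j\})$ exists, so $\lim_{j\to\infty} c(A\cup\{j\}) = \alpha(\lim_j c_1(A\cup\{j\}),\ldots,\lim_j c_m(A\cup\{j\}))$ exists too (the tuple is eventually constant). This gives items (1) and (2).

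For items (3) and (4), where $k = \IN$, exactly the same reductions go through, but the tupling $\alpha$ is replaced by Cantor's tupling function $\alpha\colon \IN^m \to \IN$, as indicated in the $k=\IN$ case of the lemma's proof. The combined coloring $c := \alpha \circ (c_1,\ldots,c_m)$ still has finite range (because each $c_i$ does) so it lies in $\CC_{n,\IN}$, and the stability check is identical. Since the maps $K$ and $H$ in all four cases are computable, and the output of the reduction call is used without being combined with the input, every reduction is strong. I do not anticipate any real obstacle: the work is essentially already contained in Lemma~\ref{lem:finite-intersection}, and the only point requiring a sentence of attention is the preservation of stability under $\alpha$-tupling.
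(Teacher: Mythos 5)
Your proof is correct and follows essentially the same approach as the paper: the right-hand inequalities come from the computable maps $f$ constructed in the proof of Lemma~\ref{lem:finite-intersection} (with the observation that they preserve stability), and the left-hand inequalities are the trivial diagonal reduction, which is well-defined precisely because Lemma~\ref{lem:finite-intersection} guarantees the intersection operation is total. The paper states the stability preservation without detail; you have simply spelled it out.
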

\begin{proof}
The functions $f:(\CC_{n,k})^m\to\CC_{n,k^m}$ and $f:(\CC_{n,\IN})^m\to\CC_{n,\IN}$ constructed in the proof of 
Lemma~\ref{lem:finite-intersection} are computable, and hence they yield the 
reductions $\bigcap_{i=1}^m\RT_{n,k}\leqSW\RT_{n,k^m}$ and $\bigcap_{i=1}^m\RT_{n,\IN}\leqSW\RT_{n,\IN}$, respectively.
Additionally, both maps $f$ have the property that they map stable colorings $c_1,...,c_m$ to
stable colorings $c:=f(c_1,...,c_m)$, hence they also yield the corresponding reductions in the stable case.
The other reductions follow from Lemma~\ref{lem:finite-intersection}.
\end{proof}

We note that by \cite[Proposition~2.1]{DDH+16} we also have $\RT_{n,k}\times\RT_{n,l}\leqSW\RT_{n,kl}$ for all $n,k,l\geq1$. We also note that the following result is implicitly included in \cite[Section~1]{DDH+16} (the proof given
there is for the case $l=1$ and $k=2$ and can be generalized straightforwardly).

\begin{proposition}[Compositional products]
\label{prop:compositional-products}
$\RT_{n,k+l}\leqW \RT_{n,k}*\RT_{n,l+1}$ for all $n,k,l\geq1$.
\end{proposition}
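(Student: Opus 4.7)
The plan is to exploit the classical color-merging trick and realize it as a compositional product. Given $c \in \CC_{n,k+l}$ with $k+l$ colors $\{0,1,\ldots,k+l-1\}$, I would first pre-compute a ``merged'' coloring $c' \in \CC_{n,l+1}$ defined by
\[
c'(A) := \begin{cases} 0 & \text{if } c(A) < k, \\ c(A)-k+1 & \text{if } c(A) \geq k, \end{cases}
\]
and apply $\RT_{n,l+1}$ to $c'$ to obtain an infinite set $M \in \HH_{c'}$. This first step defines a problem $g_0$ that takes $c$ to the pair $(c,M)$, and clearly $g_0 \leqW \RT_{n,l+1}$ by forwarding $c$ and applying $\RT_{n,l+1}$ to $c'$.

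Next I would define $f_0:\CC_{n,k+l}\times 2^\IN \mto 2^\IN$ as follows. Enumerate $M=\{m_0<m_1<m_2<\ldots\}$ and compute a $k$-coloring $\tilde c\in\CC_{n,k}$ via
\[
\tilde c\{i_0<\ldots<i_{n-1}\} := c\{m_{i_0},\ldots,m_{i_{n-1}}\} \bmod k,
\]
apply $\RT_{n,k}$ to $\tilde c$ to obtain $N \in \HH_{\tilde c}$, and return $N':=\{m_i : i\in N\}\In M$. This is realized by a computable pre-processing followed by one call of $\RT_{n,k}$, so $f_0\leqW\RT_{n,k}$. Composing, $f_0\circ g_0 \leqW \RT_{n,k}*\RT_{n,l+1}$ by definition of the compositional product.

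It remains to check correctness, i.e., that $N'$ is an infinite homogeneous set for $c$. There are two cases according to the $c'$-color $i \in\{0,1,\ldots,l\}$ of $M$. If $i\geq 1$ (a non-merged color), then $c$ is already constant with value $k+i-1$ on $[M]^n$, so every infinite subset $N'\In M$ is trivially homogeneous for $c$ and the particular choice produced by $\RT_{n,k}$ on $\tilde c$ is irrelevant. If $i=0$ (the merged color), then $c$ takes only values in $\{0,\ldots,k-1\}$ on $[M]^n$, so on this range $\tilde c = c\circ\{m_\bullet\}$ honestly and an $\RT_{n,k}$-homogeneous set $N$ for $\tilde c$ lifts to a genuine homogeneous set $N'$ for $c$. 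Hence $f_0\circ g_0(c) \In \RT_{n,k+l}(c)$, which yields $\RT_{n,k+l}\leqW \RT_{n,k}*\RT_{n,l+1}$ via the identity reduction.

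The only subtle point is the uniform treatment of the two cases: since we cannot decide from finite information which case occurs, the reduction must always invoke $\RT_{n,k}$, and the definition of $\tilde c$ must be total; the $\bmod k$ trick (or any computable collapse of $\{0,\ldots,k+l-1\}$ onto $\{0,\ldots,k-1\}$) takes care of this, and monochromaticity of $c$ on $M$ in the non-merged case absorbs any spurious behaviour of $\RT_{n,k}$ on $\tilde c$.
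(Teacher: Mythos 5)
Your proof is correct and follows exactly the color-merging argument that the paper credits to \cite[Section~1]{DDH+16} (there given for $l=1$, $k=2$) and says ``can be generalized straightforwardly'': merge colors $0,\dots,k-1$ into one, apply $\RT_{n,l+1}$, then relativize the original coloring to the resulting homogeneous set and apply $\RT_{n,k}$, with the $\bmod\,k$ collapse ensuring totality and the monochromaticity of $c$ on $M$ in the non-merged case absorbing any spurious output. This is the paper's intended proof.
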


From Corollary~\ref{cor:products} we can directly conclude that Ramsey's theorem for an unspecified finite
number of colors is idempotent.

\begin{corollary}[Idempotency]
\label{cor:idempotency}
$\RT_{n,\IN}$ and $\SRT_{n,\IN}$ are idempotent for all $n\geq1$.
\end{corollary}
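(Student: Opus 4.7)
The plan is to observe that this follows almost immediately from the case $m=2$ of Corollary~\ref{cor:products}, items (3) and (4). Specifically, that corollary yields
\[
\RT_{n,\IN}\times\RT_{n,\IN}=\RT_{n,\IN}^{2}\leqSW\RT_{n,\IN}
\qquad\text{and}\qquad
\SRT_{n,\IN}^{2}\leqSW\SRT_{n,\IN},
\]
which are precisely the non-trivial directions needed for idempotency.

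For the reverse directions $\RT_{n,\IN}\leqW\RT_{n,\IN}\times\RT_{n,\IN}$ and $\SRT_{n,\IN}\leqW\SRT_{n,\IN}\times\SRT_{n,\IN}$, I would just note that these are trivial: given a coloring $c$, pair it with any fixed computable coloring (say the constant $0$ coloring, which is stable), solve the product with one application of $f\times f$, and project to the first component. Both reductions in fact hold with $\leqSW$.

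Combining these two observations gives $\RT_{n,\IN}\equivW\RT_{n,\IN}\times\RT_{n,\IN}$ and $\SRT_{n,\IN}\equivW\SRT_{n,\IN}\times\SRT_{n,\IN}$, i.e., idempotency, for every $n\geq 1$. There is no real obstacle here, since the work has already been done in Corollary~\ref{cor:products}; the key point is that the map $f:(\CC_{n,\IN})^{2}\to\CC_{n,\IN}$ from the proof of Lemma~\ref{lem:finite-intersection} (built via Cantor's pairing function on $\IN^{2}$) stays within countably-many-colors even when the two input colorings already use countably many colors, which is precisely why the case $k=\IN$ permits an arbitrary fixed product to collapse back into a single instance. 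The stable case works identically since $f$ preserves stability.
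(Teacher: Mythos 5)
Your proof is correct and follows exactly the same route as the paper: the authors also derive idempotency directly from Corollary~\ref{cor:products}, items (3) and (4) at $m=2$. The additional remarks you make (about the trivial reverse reduction and about Cantor's pairing keeping the number of colors finite) are accurate but only spell out what the paper treats as immediate.
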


Since the sequence $(f_m)_m$ of maps $f_m:(\CC_{n,\IN})^m\to\CC_{n,\IN}$ from 
the proof of Corollary~\ref{cor:products} is uniformly computable, we obtain the following corollary. 

\begin{corollary}[Finite parallelization]
\label{cor:finite-parallelization}
$\RT_{n,k}^*\leqW\RT_{n,+}$ and 
$\SRT_{n,k}^*\leqW\SRT_{n,+}$ for all $n\geq1$ and $k\geq1,\IN$.
\end{corollary}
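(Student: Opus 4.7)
The plan is to reuse the packing maps $f_m$ from the proof of Corollary~\ref{cor:products} and to note that the family $(f_m)_{m\geq 1}$ is \emph{uniformly} computable in $m$. This uniformity is what turns the pointwise strong reductions $\RT_{n,k}^m \leqSW \RT_{n,k^m}$ (and their stable counterparts) already obtained there into single Weihrauch reductions on the coproducts.

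For finite $k \geq 1$, given an input $(m; c_1,\ldots,c_m)$ to $\RT_{n,k}^* = \bigsqcup_{m\geq 0} \RT_{n,k}^m$, I would compute the combined coloring $c := f_m(c_1,\ldots,c_m) \in \CC_{n,k^m}$ defined by $c(A) := \alpha(c_1(A),\ldots,c_m(A))$ for a standard bijection $\alpha : \{0,\ldots,k-1\}^m \to \{0,\ldots,k^m-1\}$, and submit the pair $(k^m, c)$ into the summand $\RT_{n,k^m}$ of $\RT_{n,+}$. Any homogeneous set $M \in \RT_{n,k^m}(c)$ returned is, by the argument in the proof of Lemma~\ref{lem:finite-intersection}, simultaneously homogeneous for every $c_i$, so the backward transformation is just the diagonal $M \mapsto (M,\ldots,M) \in \RT_{n,k}^m(c_1,\ldots,c_m)$. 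For $k = \IN$ the analogous construction uses Cantor's tupling function, so the combined coloring lands in $\CC_{n,\IN}$ uniformly in $m$, and the reduction proceeds through the corresponding summand.

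For the stable variant, I would separately verify that each $f_m$ preserves stability: with $A \in [\IN]^{n-1}$ fixed, $c(A \cup \{j\}) = \alpha(c_1(A \cup \{j\}),\ldots, c_m(A \cup \{j\}))$, and since $\alpha$ has a finite discrete codomain, $\lim_{j \to \infty} c(A \cup \{j\})$ exists whenever each individual limit exists. Hence the same reduction restricts to domains of stable colorings and yields $\SRT_{n,k}^* \leqW \SRT_{n,+}$. There is no substantial obstacle: the whole content of the proof is organizational, checking that the construction from the proof of Corollary~\ref{cor:products} goes through uniformly in the arity parameter~$m$ and that stability is compatible with the packing.
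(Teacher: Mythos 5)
Your proposal matches the paper's approach exactly: the paper's entire proof is the remark preceding the corollary, which observes that the sequence $(f_m)_m$ of packing maps from the proof of Corollary~\ref{cor:products} is uniformly computable in $m$, and you correctly spell out how this uniformity upgrades the family of strong reductions $\RT_{n,k}^m\leqSW\RT_{n,k^m}$ to a single Weihrauch reduction on the coproduct, routing the combined coloring into the $\RT_{n,k^m}$ summand of $\RT_{n,+}$ and pulling back via the diagonal. The stability check is also the same one already recorded in the proof of Corollary~\ref{cor:products}. For finite $k\geq 1$ the argument is complete and correct.

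One caveat, which affects the paper's statement as much as your write-up: for $k=\IN$ the phrase ``the reduction proceeds through the corresponding summand'' has no referent, since $\RT_{n,+}=\bigsqcup_{k\geq 1}\RT_{n,k}$ only has summands indexed by finite $k$, and the image of the $\IN$-valued packing map lies in $\CC_{n,\IN}$ with no computable bound on the range. In fact $\RT_{n,\IN}^*\leqW\RT_{n,+}$ cannot hold: composing with the trivial reduction $\RT_{n,\IN}\leqW\RT_{n,\IN}^*$ (the $m=1$ summand) would give $\RT_{n,\IN}\leqW\RT_{n,+}$, contradicting Corollary~\ref{cor:arbitrary-colors}. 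So the $k=\IN$ clause of the corollary should be read as $\RT_{n,\IN}^*\leqW\RT_{n,\IN}$ (which the uniform computability of the $\IN$-valued $f_m$ does give), or simply dropped; your proof is fine once that is understood.
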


We note that $\RT_{n,k}^*=\bigsqcup_{m\geq0}\RT_{n,k}^m$, where $\RT_{n,k}^0=\id$, and hence
we obtain only an ordinary Weihrauch reduction in the previous result. 
 Corollary~\ref{cor:finite-parallelization} leads to the obvious question, whether additional factors can make up for color increases.

\begin{question}[Colors and factors]
\label{quest:colors-factors}
Does $\RT_{n,k}^*\equivW\RT_{n,+}$ hold for $n,k\geq	2$?
\end{question}

We note that the equivalence is known to hold in the case of $n=1$. 

\begin{proposition}[Colors and factors]
\label{prop:colors-factors}
$\RT_{1,n+1}\leqW\RT_{1,2}^n$ for all $n\geq1$ and, in particular, $\RT_{1,2}^*\equivW\RT_{1,+}$.
\end{proposition}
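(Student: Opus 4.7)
The plan is to prove the main claim $\RT_{1,n+1}\leqW\RT_{1,2}^n$ by induction on $n\geq1$. The base case $n=1$ is trivial. For the inductive step, I will first establish the decomposition $\RT_{1,n+1}\leqW\RT_{1,n}\times\RT_{1,2}$, and then combine it with the inductive hypothesis $\RT_{1,n}\leqW\RT_{1,2}^{n-1}$ and the monotonicity of products to conclude $\RT_{1,n+1}\leqW\RT_{1,n}\times\RT_{1,2}\leqW\RT_{1,2}^{n-1}\times\RT_{1,2}\equivW\RT_{1,2}^n$.

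For the key reduction $\RT_{1,n+1}\leqW\RT_{1,n}\times\RT_{1,2}$, given a coloring $c:\IN\to\{0,\ldots,n\}$, I set up two parallel queries. The binary coloring $e:\IN\to\{0,1\}$ is defined by $e(j):=1$ iff $c(j)=n$. The $n$-coloring $d:\IN\to\{0,\ldots,n-1\}$ is defined by re-enumeration: let $j_0<j_1<j_2<\ldots$ be the increasing enumeration of $e^{-1}(0)=\{j:c(j)<n\}$ and set $d(k):=c(j_k)$ when $j_k$ exists, with $d(k):=0$ otherwise. Both $e$ and $d$ are computable from $c$. The oracle calls yield $M_e$ homogeneous for $e$ with color $b$ and $M_d$ homogeneous for $d$ with color $i\in\{0,\ldots,n-1\}$.

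The postprocessor then acts as follows. If $b=1$, then $c(M_e)=\{n\}$ and we output $M_e$. Otherwise $b=0$, so $M_e\In e^{-1}(0)$, and since $M_e$ is infinite, so is $e^{-1}(0)$; hence the enumeration $(j_k)_k$ does not terminate and $d(k)=c(j_k)$ holds for every $k\in\IN$. In that case we output $M:=\{j_k:k\in M_d\}$, which is infinite (as a strictly increasing image of $M_d$) and satisfies $c(j)=i$ for every $j\in M$. The central insight making this work is to define $d$ by re-enumerating $e^{-1}(0)$ rather than by a pointwise modification of $c$; this sidesteps the usual ``bad case'' in which one would need a finite-looking intersection of homogeneous sets to be forced infinite.

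Finally, the ``in particular'' statement $\RT_{1,2}^*\equivW\RT_{1,+}$ follows immediately: the direction $\RT_{1,2}^*\leqW\RT_{1,+}$ is Corollary~\ref{cor:finite-parallelization}, while conversely, given any $\RT_{1,k}$-instance, the main claim reduces it to $\RT_{1,2}^{k-1}$, and hence to $\RT_{1,2}^*=\bigsqcup_{m\geq 0}\RT_{1,2}^m$ uniformly in $k$.
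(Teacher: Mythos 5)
Your overall plan differs from the paper's: the paper does not attempt a direct combinatorial decomposition $\RT_{1,n+1}\leqW\RT_{1,n}\times\RT_{1,2}$, but instead quotes Pauly's result $\C_{n+1}\leqSW\C_2^n$ and transports it up one jump level (using $\BWT_k\equivSW\C_k'$, monotonicity of the jump under $\leqSW$, and $\RT_{1,k}\equivW\BWT_k$). So your route is genuinely different — but it contains a gap.

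The problem is the claim that $d$ is computable from $c$. Your $d$ is defined by $d(k):=c(j_k)$ where $j_0<j_1<\cdots$ enumerates $\{j:c(j)<n\}$, with default $d(k):=0$ when $j_k$ does not exist. But that default clause cannot be realized by a computable functional: on input the constant coloring $c\equiv n$ (a perfectly valid element of $\CC_{1,n+1}$), the set $e^{-1}(0)$ is empty, the search for $j_0$ never halts, and no output symbol for the name of $d$ is ever produced. More generally, whenever $c$ is eventually $n$, the name of $d$ that your $K$ produces is a finite prefix and never extends to a total name, so $K(c)\notin\dom(\RT_{1,n}\times\RT_{1,2})$ and the reduction fails. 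You cannot patch this by "only needing $d$ when $b=0$": in a Weihrauch reduction $f\leqW g$ the inner functional $K$ must produce a valid $g$-instance for every $f$-instance, before any oracle answer is available — conditioning the definition of $d$ on the $e$-answer would be the compositional product $\RT_{1,n}*\RT_{1,2}$, not the product $\RT_{1,n}\times\RT_{1,2}$. Re-enumeration is exactly the wrong move here, because the set being re-enumerated need not be infinite; the "bad case" you claimed to sidestep is precisely the case that breaks totality of $K$.

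The "in particular" part of your argument ($\RT_{1,2}^*\equivW\RT_{1,+}$ given the main claim and Corollary~\ref{cor:finite-parallelization}) is fine, and is essentially what the paper does, except that the uniformity in $k$ of the reduction $\RT_{1,k}\leqW\RT_{1,2}^{k-1}$ needs to be noted (and is noted in the paper, which inherits it from the uniformity of Pauly's construction).
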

\begin{proof}
In \cite[Theorem~32]{Pau10} it was proved that $\C_{n+1}\leqSW\C_2^n$ holds for all $n\geq1$ 
(only ``$\leqW$'' was claimed but the proof shows ``$\leqSW$'').
By Fact~\ref{fact:WKL-BWT}(1) this implies $\BWT_{n+1}\leqSW\BWT_2^n$, since jumps are monotone with respect to $\leqSW$. Hence, with Proposition~\ref{prop:bottom} we obtain
$\RT_{1,n+1}\equivW\BWT_{n+1}\leqW\BWT_2^n\equivW\RT_{1,2}^n$.
Since this reduction is uniform in $n$, we can conclude that 
\[\RT_{1,+}=\bigsqcup_{k\geq1}\RT_{1,k}\leqW\bigsqcup_{k\geq1}\RT_{1,2}^{k-1}\equivW\RT_{1,2}^*.\]
The other direction follows since the reductions in Corollary~\ref{cor:products} are uniform and hence
\[\RT_{1,2}^*=\bigsqcup_{m\in\IN}\RT_{1,2}^m\leqW\bigsqcup_{m\in\IN}\RT_{1,2^m}\leqW\bigsqcup_{k\geq1}\RT_{1,k}=\RT_{1,+}.\qedhere\]
\end{proof}

As a consequence of the next result we obtain that any unspecified finite number of colors can be reduced to two colors
for the price of an increase of the cardinality. Simultaneously, the following theorem also gives us a handle to show
that the complexity of Ramsey's theorem increases with increasing numbers of colors.

\begin{theorem}[Products]
\label{thm:products}
$\RT_{n,\IN}\times\RT_{n+1,k}\leqSW\RT_{n+1,k+1}$ and\linebreak 
$\SRT_{n,\IN}\times\SRT_{n+1,k}\leqSW\SRT_{n+1,k+1}$ for all $n\geq1$ and $k\geq1,\IN$.
\end{theorem}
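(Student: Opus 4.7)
The plan is to construct, from $(c_1,c_2)\in\CC_{n,\IN}\times\CC_{n+1,k}$, a single coloring $c\colon[\IN]^{n+1}\to k+1$ so that every infinite homogeneous set $M$ for $c$ is simultaneously homogeneous for $c_1$ (on $[M]^n$) and for $c_2$; the output map $M\mapsto(M,M)$ then witnesses a strong reduction. The definition uses the extra color $k$ as a ``disagreement tag'':
\[
c\{i_0<i_1<\dots<i_n\}:=\begin{cases}c_2\{i_0,\dots,i_n\} & \text{if } c_1\{i_0,\dots,i_{n-1}\}=c_1\{i_1,\dots,i_n\},\\ k & \text{otherwise.}\end{cases}
\]
Both $c$ and the map $M\mapsto(M,M)$ are clearly computable, so the reductions will follow once correctness of the output is established.

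The central step will be to rule out $c(M)=k$ for infinite $M$. If it held, every $\{i_0<\dots<i_n\}\in[M]^{n+1}$ would satisfy $c_1\{i_0,\dots,i_{n-1}\}\neq c_1\{i_1,\dots,i_n\}$; since $c_1$ has finite range, Ramsey's Theorem~\ref{thm:Ramsey} applied to $c_1$ on $[M]^n$ yields an infinite $c_1$--homogeneous $M_0\In M$, whose $(n+1)$--subsets contradict the inequality. Hence $c(M)=j\in\{0,\dots,k-1\}$, which immediately forces $c_2$ to be constantly $j$ on $[M]^{n+1}$, and also $c_1\{i_0,\dots,i_{n-1}\}=c_1\{i_1,\dots,i_n\}$ on every $(n+1)$--subset of $M$. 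Iterating this ``remove the bottom element, append a larger one'' identity in $M$, starting from any $S\in[M]^n$, one reaches every $T\in[M]^n$ lying strictly above $\max S$; comparing arbitrary $S,S'\in[M]^n$ via a common $T$ lying above both then shows $c_1$ is constant on $[M]^n$, so $M$ is homogeneous for both $c_1$ and $c_2$.

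For the stable variant I would verify that $c$ is stable whenever $c_1$ and $c_2$ are: for fixed $A=\{i_0<\dots<i_{n-1}\}\in[\IN]^n$ and $i\to\infty$, the stable limits $L_1:=\lim_i c_1(\{i_1,\dots,i_{n-1}\}\cup\{i\})$ and $L_2:=\lim_i c_2(A\cup\{i\})$ exist, and according to whether $c_1\{i_0,\dots,i_{n-1}\}$ equals $L_1$ or not, $c(A\cup\{i\})$ is eventually $c_2(A\cup\{i\})$ (with limit $L_2$) or eventually $k$. The same construction hence yields $\SRT_{n,\IN}\times\SRT_{n+1,k}\leqSW\SRT_{n+1,k+1}$. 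The main subtlety to watch out for is the exclusion of the bad color $k$: the pointwise ``top equals bottom'' identity obtained from $c(M)\neq k$ is strictly weaker than $c_1$--homogeneity on $[M]^n$, so one must both invoke Ramsey's theorem on $c_1$ to rule out $k$ and then chain shifts carefully to lift that identity to full $c_1$--homogeneity.
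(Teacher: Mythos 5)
Your proof is correct, but it uses a genuinely different coloring construction from the paper's. The paper sets $c^+(A):=c_2(A)$ when $A$ is \emph{fully} homogeneous for $c_1$ (every $n$-element subset of the $(n+1)$-set $A$ receives the same $c_1$-color) and $c^+(A):=k$ otherwise; you only compare the ``bottom'' subset $\{i_0,\dots,i_{n-1}\}$ against the ``top'' subset $\{i_1,\dots,i_n\}$, a strictly weaker tag. Both work, and the subsequent analyses mirror each other: the paper rules out color $k$ by pulling $c_1$ back along the principal function of $M$ and invoking Ramsey's theorem, whereas you apply Ramsey's theorem directly to $c_1\restriction[M]^n$ (essentially the same thing, stated without the change of variables); the paper then chains $(n+1)$-sets through shared $n$-element subsets to show $c_1$ is constant on $[M]^{n+1}$, whereas you iterate the single-step ``drop the bottom, append above the top'' shift to pass from any $S\in[M]^n$ to any $T\in[M]^n$ lying above it and compare via a common high $T$. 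Your construction buys a slightly simpler coloring (a single equality test, no quantifier over $n$-subsets of $A$) at the cost of a marginally more delicate transfer argument, and the stability verification requires pinning down the limit $L_1=\lim_i c_1(\{i_1,\dots,i_{n-1},i\})$ rather than the paper's more direct case analysis; both are routine. One small presentational remark: it is worth stating explicitly that passing from $S$ to $T$ with $\max S<\min T$ takes exactly $n$ shifts, and why the intermediate sets all lie in $[M]^n$ — you wave at this but the reader has to reconstruct it.
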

\begin{proof}
We start with the first reduction.
Given a coloring $c_1:[\IN]^n\to\IN$ with finite range and a coloring $c_2:[\IN]^{n+1}\to k$ we construct a coloring
$c^+:[\IN]^{n+1}\to k+1$ as follows:
\[c^+(A):=\left\{\begin{array}{ll}
c_2(A) & \mbox{if $A$ is homogeneous for $c_1$}\\
k    & \mbox{otherwise}
\end{array}\right.\]
for all $A\in[\IN]^{n+1}$.
Let $M\in\RT_{n+1,k+1}(c^+)$ and let $p:\IN\to\IN$ be the principal function of $M$. 
Then we define a coloring $c_p:[\IN]^n\to\IN$ by $c_p(A):=c_1(p(A))$ for all
$A\in[\IN]^n$. By construction, $c_p$ has finite range too. Let $B\in\RT_{n,\IN}(c_p)$. Then $p(B)$ is homogeneous for $c_1$
and $p(B)\In M$. Hence any $A\in[p(B)]^{n+1}$ is also homogeneous for $c_1$,
which implies $c^+(A)=c_2(A)$ and hence $c^+(M)=c_2(A)<k$. This implies $M\in\RT_{n+1,k}(c_2)$
and all $A\in[M]^{n+1}$ are homogeneous for $c_1$. We claim that this implies $M\in\RT_{n,\IN}(c_1)$. 
The claim yields $\RT_{n+1,k+1}(c^+)\In\RT_{n,\IN}(c_1)\cap\RT_{n+1,k}(c_2)$, and hence the desired reduction follows.

We still need to prove the claim. To this end we show that all $A\in [M]^{n+1}$ are homogeneous
for $c_1$ with respect to one fixed color.  Firstly, we note that for every two $A,B\in[M]^{n+1}$ 
there is a finite sequence $A_1,...,A_l\in[M]^{n+1}$ such that $A_1=A$, $A_l=B$ and $|A_i\cap A_{i+1}|\geq n$
for all $i=1,...,l-1$. This is because each element of $A$ can be replaced step by step by one element of $B$.
Now we fix some $i\in\{1,...,l-1\}$.
Since $A_i$ and $A_{i+1}$ are homogeneous for $c_1$ by assumption and they share an $n$--element subset,
it is clear that $c_1(A_i)=c_1(A_{i+1})$. Since this holds for all $i\in\{1,...,l-1\}$, we obtain $c_1(A)=c_1(B)$.
This means that all $A\in[M]^{n+1}$ are homogeneous for $c_1$ with respect to the same fixed color,
and hence, in particular, all $A\in[M]^n$ share the same color with respect to $c_1$, i.e., $M\in\RT_{n,\IN}(c_1)$.

Now we still show that the same construction also proves the second reduction regarding stable colorings.
For this it suffices to show that $c^+:[\IN]^{n+1}\to k+1$ is stable for all stable colorings $c_1:[\IN]^n\to\IN$ and $c_2:[\IN]^{n+1}\to k$.
For this purpose let $c_1,c_2$ be stable, and let $A\in[\IN]^n$. 
Then $[A]^{n-1}=\{B_0,B_1,...,B_{n-1}\}$ and for each $i=0,...,n-1$
there is some $l_i\geq\max(B_i)$ and some $x_i\in\IN$ such that $c_1(B_i\cup\{j\})=x_i$ for $j>l_i$, since $c_1$ is stable.
There is also some $l_n\geq\max(A)$ and some $x_n\in k$ such that $c_2(A\cup\{j\})=x_n$ for $j>l_n$, since $c_2$ is stable.
Let $l:=\max\{l_0,...,l_{n-1},l_n\}$. Then 
\[c^+(A\cup\{j\})=\left\{\begin{array}{ll}
c_2(A\cup\{j\}) & \mbox{if $c_1(A)=x_0=x_1=...=x_{n-1}$}\\
k & \mbox{otherwise}
\end{array}\right.\]
for all $j>l$. Hence $c^+$ is stable. 
\end{proof}

We note that in the case of $k=1$ we get the following corollary. 

\begin{corollary}[Color reduction]
\label{cor:color-reduction}
$\RT_{n,\IN}\leqSW\RT_{n+1,2}$ and $\SRT_{n,\IN}\leqSW\SRT_{n+1,2}$ for all $n\geq1$.
\end{corollary}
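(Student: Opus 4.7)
The plan is to derive the statement as the special case $k=1$ of Theorem~\ref{thm:products}, combined with the trivial observation that products with a computable problem on the right can be shed in a strong reduction. First I would instantiate Theorem~\ref{thm:products} with $k=1$ to obtain
\[
\RT_{n,\IN}\times\RT_{n+1,1}\leqSW\RT_{n+1,2}
\quad\text{and}\quad
\SRT_{n,\IN}\times\SRT_{n+1,1}\leqSW\SRT_{n+1,2}.
\]

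Next I would invoke Proposition~\ref{prop:bottom}, which tells us that $\RT_{n+1,1}$ (and likewise $\SRT_{n+1,1}$) is computable. Moreover, its domain $\CC_{n+1,1}$ consists essentially of the single constant coloring to $0$, which is trivially computable and trivially stable. So one can append this constant coloring as a dummy second component: given an input $c$ for $\RT_{n,\IN}$, the inner reduction computes the pair $(c,c_0)\in\dom(\RT_{n,\IN}\times\RT_{n+1,1})$, any realizer of the product yields some pair $(M,N)$ with $M\in\RT_{n,\IN}(c)$, and the outer reduction just projects to $M$. This witnesses the strong reduction
\[
\RT_{n,\IN}\leqSW\RT_{n,\IN}\times\RT_{n+1,1},
\]
and the analogous one holds for stable colorings.

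Composing the two strong reductions gives the desired
$\RT_{n,\IN}\leqSW\RT_{n+1,2}$ and $\SRT_{n,\IN}\leqSW\SRT_{n+1,2}$, using transitivity of $\leqSW$. I do not expect any genuine obstacle here: the content of the corollary lies entirely in Theorem~\ref{thm:products}, and the only thing to be checked is that the projection argument is actually strong (not merely ordinary) Weihrauch, which is immediate because the dummy input $c_0$ is a fixed computable point and the post-processing is pure projection.
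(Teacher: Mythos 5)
Your proof is correct and matches the paper's intent exactly: the paper presents the corollary as the $k=1$ instance of Theorem~\ref{thm:products}, and the only gap to fill is shedding the trivial factor $\RT_{n+1,1}$ (resp.\ $\SRT_{n+1,1}$), which you do correctly by padding with the unique (computable, trivially stable) constant coloring and projecting, giving $f\leqSW f\times g$ whenever $g$ has a computable point in its domain.
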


We note that the increase of the cardinality from $n$ to $n+1$ in this corollary is necessary in the case of $\RT$ by Corollaries~\ref{cor:no-idempotency} and \ref{cor:finite-parallelization}.

We mention that the proof of Theorem~\ref{thm:products} also shows that the coloring $c^+$ constructed therein has only
infinite homogeneous sets of colors other than $k$. 
In the case of $k=1$ this means that only infinite homogeneous sets of color $0$ occur.
We denote by $0\dash\SRT_{n+1,2}$ the stable version $\SRT_{n+1,2}$
of Ramsey's theorem restricted to colorings that only admit infinite homogeneous sets of color $0$.
We obtain the following corollary, which we consider only as a technical step towards the
proof of Corollary~\ref{cor:color-reduction-jumps} in the next section (and hence we do not phrase it for $\RT$ in place of $\SRT$).

\begin{corollary}[Color reduction]
\label{cor:color-reduction-zero}
$\SRT_{n,\IN}\leqSW0\dash\SRT_{n+1,2}$ for all $n\geq1$.
\end{corollary}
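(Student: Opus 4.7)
The approach is to specialize the proof of Theorem~\ref{thm:products} to the degenerate case $k=1$, where the coloring $c_2:[\IN]^{n+1}\to 1$ is forced to be identically $0$. The construction and analysis there already do almost all of the work; what remains is to observe that in this degenerate case the auxiliary coloring $c^+$ only admits infinite homogeneous sets of color $0$, so that $c^+\in\dom(0\dash\SRT_{n+1,2})$, and then to read off the strong reduction from the same argument.

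First I would set up the forward map of the reduction: given a stable $c_1:[\IN]^n\to\IN$ with finite range, compute $c^+:[\IN]^{n+1}\to 2$ by
\[
c^+(A):=\left\{\begin{array}{ll}
0 & \text{if $A$ is homogeneous for $c_1$,}\\
1 & \text{otherwise.}
\end{array}\right.
\]
This is uniformly computable from $c_1$ (since $c^+(A)$ is determined by finitely many values of $c_1$), and the stability argument at the end of the proof of Theorem~\ref{thm:products}, specialized to $c_2\equiv 0$, shows that $c^+$ is stable whenever $c_1$ is.

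Next I would verify that $c^+\in\dom(0\dash\SRT_{n+1,2})$, i.e.\ that every infinite homogeneous set $M$ for $c^+$ necessarily has color $0$. The key step simply replays the principal-function argument from the proof of Theorem~\ref{thm:products}: letting $p:\IN\to\IN$ be the principal function of $M$, the coloring $c_p(A):=c_1(p(A))$ on $[\IN]^n$ has finite range, so by Ramsey's theorem~\ref{thm:Ramsey} it admits an infinite homogeneous set $B$. Then $p(B)\subseteq M$ is infinite and homogeneous for $c_1$, so every $A\in[p(B)]^{n+1}$ is itself homogeneous for $c_1$, which forces $c^+(A)=0$. Since $M$ is $c^+$-homogeneous, this pins down $c^+(M)=0$. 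The very same argument then shows, as in the claim established inside the proof of Theorem~\ref{thm:products}, that $M\in\RT_{n,\IN}(c_1)$ (and stability is preserved, so $M\in\SRT_{n,\IN}(c_1)$); thus the backward map can simply discard the color component and return $M$.

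The only mildly subtle point, and therefore the place to be careful, is this last verification: one must extract both the ``color $0$'' guarantee \emph{and} the ``$M$ solves $c_1$'' guarantee from the existence of the infinite $c_1$-homogeneous subset $p(B)\subseteq M$, using nothing about $M$ beyond its $c^+$-homogeneity. Everything else is bookkeeping inherited from the proof of Theorem~\ref{thm:products}, and the resulting reduction is strong because both $K:c_1\mapsto c^+$ and $H:(0,M)\mapsto M$ are computable with no access to the input.
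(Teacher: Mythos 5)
Your proposal is correct and follows exactly the route the paper intends: the paper remarks immediately before the corollary that the construction in the proof of Theorem~\ref{thm:products} specialized to $k=1$ produces a coloring $c^+$ whose infinite homogeneous sets all have color $0$, and then states the corollary as an immediate consequence without further proof, which is what you have reconstructed in detail. One minor nit: $0\dash\SRT_{n+1,2}$ outputs the set $M$ alone (it is a restriction of $\SRT$, not of $\CSRT$), so there is no color component for the backward map to discard --- it is just the identity, which of course only makes the argument simpler.
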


With the help of Proposition~\ref{prop:bottom}(2) in the case $k=\IN$ we obtain the following corollary of Corollary~\ref{cor:color-reduction}.

\begin{corollary}[Discrete lower bounds]
\label{cor:discrete-lower-bounds}
$\lim_\IN\leqW\SRT_{2,2}$ and $\BWT_\IN\leqW\RT_{2,2}$.
\end{corollary}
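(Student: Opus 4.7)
The plan is a straightforward two-step chain using the two results explicitly mentioned in the preamble to the corollary. First I would instantiate Corollary~\ref{cor:color-reduction} at $n=1$, which gives the strong reductions
\[
  \RT_{1,\IN}\leqSW\RT_{2,2}\qquad\text{and}\qquad \SRT_{1,\IN}\leqSW\SRT_{2,2},
\]
and in particular ordinary Weihrauch reductions $\RT_{1,\IN}\leqW\RT_{2,2}$ and $\SRT_{1,\IN}\leqW\SRT_{2,2}$.

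Next I would invoke Proposition~\ref{prop:bottom}(2) in the case $k=\IN$, which (via the general statement $\lim_k\equivW\SRT_{1,k}$ and $\BWT_k\equivW\RT_{1,k}$, applied at $k=\IN$) yields the equivalences
\[
  \lim\nolimits_\IN\equivW\SRT_{1,\IN}\qquad\text{and}\qquad \BWT_\IN\equivW\RT_{1,\IN}.
\]
Composing these equivalences with the reductions from the previous step gives $\lim_\IN\leqW\SRT_{2,2}$ and $\BWT_\IN\leqW\RT_{2,2}$, which is precisely the claim.

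There is no real obstacle here, since the work has already been done in the two cited results; the only thing to observe is that although Corollary~\ref{cor:color-reduction} produces strong reductions, Proposition~\ref{prop:bottom}(2) is phrased in terms of ordinary Weihrauch equivalence, so the conclusion must be stated using $\leqW$ rather than $\leqSW$. This matches the statement of the corollary.
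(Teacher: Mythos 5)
Your proof is correct and follows exactly the route the paper intends: the corollary is presented as an immediate consequence of Corollary~\ref{cor:color-reduction} at $n=1$ together with Proposition~\ref{prop:bottom}(2) at $k=\IN$, which is precisely the two-step chain you give. Your closing remark about why the conclusion is stated with $\leqW$ rather than $\leqSW$ is also accurate.
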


Since Ramsey's theorem is not parallelizable by Corollary~\ref{cor:parallelizability}, it is clear that some increase in the 
cardinality is necessary in order to accommodate the parallelization. 
We will show that it is sufficient and necessary to increase the cardinality by $2$.
The next result shows that this is sufficient, and it uses the ideas that have already been applied in the proofs of 
Corollary~\ref{cor:products} and Theorem~\ref{thm:products}.

\begin{theorem}[Delayed Parallelization]
\label{thm:delayed-parallelization}
$\widehat{\RT_{n,k}}\leqSW\RT_{n+2,2}$ and\linebreak $\widehat{\SRT_{n,k}}\leqSW\SRT_{n+2,2}$ for all $n\geq1$ and $k\geq1,\IN$.
\end{theorem}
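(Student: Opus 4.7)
The plan is to construct, from a sequence $(c_m)_{m\in\IN}$ of colorings in $\CC_{n,k}$, a single coloring $c\in\CC_{n+2,2}$ defined by
\[c(\{i_0<i_1<\dots<i_{n+1}\}):=\left\{\begin{array}{ll}
0 & \text{if $\{i_1,\dots,i_{n+1}\}$ is homogeneous for $c_m$ for every $m\leq i_0$,}\\
1 & \text{otherwise.}
\end{array}\right.\]
This is clearly computable from $(c_m)_m$. The output map will send any infinite homogeneous set $M$ for $c$ to the sequence $(M_m)_m$ where $M_m:=M\cap(p_m,\infty)$ and $p_m:=\min\{x\in M:x\geq m\}$; this is computable from $M$, and gives the desired $c_m$-homogeneous sets.

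The first key step is to rule out the possibility that an infinite homogeneous set $M$ for $c$ has color $1$. Assuming it did, set $j_0:=\min(M)$ and transfer the colorings $c_0,c_1,\dots,c_{j_0}$ along an order-preserving bijection $\IN\to M\cap(j_0,\infty)$. Applying the finite intersection lemma (Lemma~\ref{lem:finite-intersection}) to this finite family of colorings yields an infinite set $N\In M\cap(j_0,\infty)$ that is simultaneously homogeneous for $c_0,\dots,c_{j_0}$. Then for any $(n+1)$-subset $B$ of $N$, the $(n+2)$-subset $\{j_0\}\cup B$ is contained in $M$ and forces $c(\{j_0\}\cup B)=0$, contradicting $c(M)=1$. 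I expect this step to be the main obstacle, as it is what forces the increase of the cardinality by $2$ rather than just $1$.

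Once it is known that $c(M)=0$, I fix $m\in\IN$ and verify that $M_m$ is $c_m$-homogeneous. For any $(n+1)$-subset $B$ of $M_m$, the set $\{p_m\}\cup B$ is an $(n+2)$-subset of $M$ whose least element satisfies $p_m\geq m$, so color $0$ of $c$ on this set tells us that $B$ is $c_m$-homogeneous. Hence every $(n+1)$-subset of $M_m$ is $c_m$-homogeneous, and the same ``chain'' argument already employed in the proof of Theorem~\ref{thm:products} (connecting any two $n$-subsets of $M_m$ through a sequence of common $(n+1)$-supersets overlapping in $n$ elements) propagates the color to give that $M_m$ itself is $c_m$-homogeneous.

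For the stable variant, it remains to observe that if each $c_m$ is stable, then so is $c$. Fixing $A=\{i_0<\dots<i_n\}$ and varying $j>i_n$, the value $c(A\cup\{j\})$ depends only on whether $\{i_1,\dots,i_n,j\}$ is $c_m$-homogeneous for each $m\leq i_0$; stability of each of these finitely many $c_m$ implies that each of the $n$ relevant limits $\lim_{j\to\infty}c_m(\{i_1,\dots,\widehat{i_l},\dots,i_n,j\})$ exists, and consequently the conjunction defining $c(A\cup\{j\})$ stabilizes as $j\to\infty$. Since the identical construction and extraction work in this case, and the color-$1$ impossibility argument via Lemma~\ref{lem:finite-intersection} does not care about stability, both reductions $\widehat{\RT_{n,k}}\leqSW\RT_{n+2,2}$ and $\widehat{\SRT_{n,k}}\leqSW\SRT_{n+2,2}$ follow from the same construction.
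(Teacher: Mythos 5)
Your proof is correct and follows essentially the same route as the paper's: you construct a single coloring $c$ of $[\IN]^{n+2}$ whose value on a set with least element $m$ records whether the tail is simultaneously homogeneous for the first $m$ (or so) input colorings, you invoke the finite intersection lemma to rule out a color-$1$ homogeneous set, and you extract the solutions by taking tails of $M$. The only differences are presentational: the paper builds $c$ in two stages via $d_m := f_m(c_0,\dots,c_{m-1})$ and $d_m^+$ and cites "the reasoning in the proof of Theorem~\ref{thm:products}" for the color-$0$ conclusion, while you inline the simultaneous-homogeneity condition directly and apply Lemma~\ref{lem:finite-intersection} explicitly, which arguably makes the key step more transparent.
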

\begin{proof}
We start with the reduction $\widehat{\RT_{n,k}}\leqSW\RT_{n+2,2}$ for $k\geq1$.
Given a sequence $(c_i)_i$ of colorings $c_i:[\IN]^n\to k$, we want to determine
infinite homogeneous sets $M_i$ for all of them in parallel, using $\RT_{n+2,2}$.
The sequence $(f_m)_m$ of functions $f_m:(\CC_{n,k})^m\to\CC_{n,k^m}$, defined as in the proof of Lemma~\ref{lem:finite-intersection}, 
is computable, and we use it to compute a sequence $(d_m)_m$ of colorings $d_m\in\CC_{n,k^m}$ by
$d_m:=f_m(c_0,...,c_{m-1})$. Given the sequence $(d_m)_m$, we can compute a sequence $(d_m^+)_m$ 
of colorings $d_m^+:[\IN]^{n+1}\to2$ by
\[d_m^+(A):=\left\{\begin{array}{ll}
0 & \mbox{if $A$ is homogeneous for $d_m$}\\
1 & \mbox{otherwise}
\end{array}\right.\]
for all $A\in[\IN]^{n+1}$.
Now, in a final step we compute a coloring $c:[\IN]^{n+2}\to2$ with
\[c(\{m\}\cup A):=d_m^+(A)\]
for all $A\in[\IN]^{n+1}$ and $m<\min(A)$. Given an infinite homogeneous set $M\in\RT_{n+2,2}(c)$
we determine a sequence $(M_i)_i$ as follows: for each fixed $i\in\IN$ we first search for a number $m>i$ in $M$, and then we let $M_i:=\{x\in M:x>m\}$.
It follows from the definition of $c$ that $M_i$ is homogeneous for $d_m^+$, and 
following the reasoning in the proof of Theorem~\ref{thm:products}, we obtain that $M_i$ is also homogeneous for $d_m$.  
Following the reasoning in the proof of Lemma~\ref{lem:finite-intersection}, we finally conclude that $M_i\in\bigcap_{j=0}^{m-1}\RT_{n,k}(c_j)$,
hence, in particular, $M_i\in\RT_{n,k}(c_i)$, which was to be proved. 

We note that the entire construction preserves stability. As shown in the proof of Corollary~\ref{cor:products}, the function
$f$ preserves stability. Hence, given a sequence $(c_i)_i$ of stable colorings, also the sequence $(d_m)_m$ consists
of stable colorings. Likewise, it was shown in the proof of  Theorem~\ref{thm:products} that in this case
also the sequence $(d_m^+)_m$ consists of stable colorings.
It follows immediately from the construction of $c$ that also $c$ is stable, since
\[\lim_{j\to\infty}c(\{m\}\cup A\cup\{j\})=\lim_{j\to\infty}d_m(A\cup\{j\})\]
for all $A\in[\IN]^{n}$ and $m<\min(A)$. Altogether, this proves $\widehat{\SRT_{n,k}}\leqSW\SRT_{n+2,2}$.
The case $k=\IN$ can be handled analogously. 
\end{proof}

Again the observation made after Corollary~\ref{cor:color-reduction} applies: the colorings $d_m^+$ can only
have infinite homogeneous sets of color $0$, and hence also $c$ can only have infinite homogeneous sets of color $0$.
This yields the following corollary, which we consider as a technical step towards the proof of
Corollary~\ref{cor:lower bounds with jumps} in the next section (hence we do not phrase it for $\RT$ in place of $\SRT$).

\begin{corollary}[Delayed Parallelization]
\label{cor:delayed-parallelization-color}
$\widehat{\SRT_{n,k}}\leqSW0\dash\SRT_{n+2,2}$ for all $n\geq1$ and $k\geq1,\IN$.
\end{corollary}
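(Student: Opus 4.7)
The plan is to recycle the reduction witnessing $\widehat{\SRT_{n,k}}\leqSW\SRT_{n+2,2}$ from Theorem~\ref{thm:delayed-parallelization} and verify the stronger colour constraint on the constructed instance. Concretely, I would keep the same forward map: from a sequence $(c_i)_i$ of stable colourings $c_i:[\IN]^n\to k$ compute $d_m:=f_m(c_0,\ldots,c_{m-1})$, then $d_m^+:[\IN]^{n+1}\to 2$, and finally $c:[\IN]^{n+2}\to 2$ with $c(\{m\}\cup A):=d_m^+(A)$ for $m<\min(A)$, and also keep the same backward map that extracts the tail sequence $(M_i)_i$ from an infinite homogeneous set $M$ for $c$. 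Theorem~\ref{thm:delayed-parallelization} already shows that these maps yield a strong Weihrauch reduction to $\SRT_{n+2,2}$ and that the constructed $c$ is stable, so the only remaining task is to verify that $c\in\dom(0\dash\SRT_{n+2,2})$.

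The key step is to show that $c$ admits no infinite homogeneous set of colour $1$. Assume for contradiction that $M$ is such a set, pick any $m\in M$, and let $M':=\{x\in M:x>m\}$, which is still infinite. By the definition of $c$, every $A\in[M']^{n+1}$ satisfies $d_m^+(A)=1$, so no $(n+1)$-element subset of $M'$ is $d_m$-homogeneous. On the other hand, Ramsey's theorem (Theorem~\ref{thm:Ramsey}) applied to the restriction of $d_m$ to $[M']^n$ yields an infinite $d_m$-homogeneous set $B\In M'$; but then any $A\in[B]^{n+1}\In[M']^{n+1}$ is $d_m$-homogeneous by the very definition of homogeneity, contradicting the preceding sentence. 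Hence every infinite homogeneous set for $c$ has colour $0$, and the same forward and backward maps now witness $\widehat{\SRT_{n,k}}\leqSW 0\dash\SRT_{n+2,2}$.

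There is no real obstacle beyond this Ramsey-style argument ruling out colour $1$, which is essentially the observation already hinted at in the paragraph preceding the statement. The case $k=\IN$ is handled identically, since by construction of $f_m$ each $d_m$ still has finite range, so Ramsey's theorem applies without modification.
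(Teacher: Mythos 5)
Your proposal is correct and follows the same route as the paper: reuse the forward and backward maps from Theorem~\ref{thm:delayed-parallelization} and verify that the constructed $c$ admits no infinite homogeneous set of colour~$1$, which is exactly the observation the paper draws from the proof of Theorem~\ref{thm:products} (the colorings $d_m^+$, and hence $c$, only have colour-$0$ infinite homogeneous sets). Your contradiction argument via Ramsey's theorem applied to $d_m$ on $[M']^n$ is a perfectly good way of spelling out that observation, and your remark that $d_m$ retains finite range in the $k=\IN$ case correctly closes that side as well.
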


Theorem~\ref{thm:delayed-parallelization} in combination with some other results also yields
the following lower bounds of versions of Ramsey's theorem.

\begin{corollary}[Lower bounds]
\label{cor:delayed-parallelization}
$\lim\leqW\SRT_{3,2}$, $\WKL'\leqW\RT_{3,2}$ and\linebreak $\WKL^{(n)}\leqW\SRT_{n+2,2}$ for all $n\geq2$.
\end{corollary}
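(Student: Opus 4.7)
The plan is to assemble the three reductions by applying Theorem~\ref{thm:delayed-parallelization} (delayed parallelization as an upper bound) to suitable parallelizations of the lower bound results we have already established, using Fact~\ref{fact:WKL-BWT} to rewrite the various jumps of $\WKL$ and $\lim$ as parallelizations.

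For the first reduction $\lim\leqW\SRT_{3,2}$, I first rewrite $\lim$ using Fact~\ref{fact:WKL-BWT}(3) (with $n=0$) as $\lim\equivSW\widehat{\lim\nolimits_\IN}$. Since Proposition~\ref{prop:bottom}(2) gives $\lim\nolimits_\IN\equivW\SRT_{1,\IN}$, and parallelization is a closure operator (hence monotone), this yields $\lim\equivW\widehat{\SRT_{1,\IN}}$. An application of Theorem~\ref{thm:delayed-parallelization} with $n=1$ and $k=\IN$ then gives $\widehat{\SRT_{1,\IN}}\leqSW\SRT_{3,2}$, completing the first bound.

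For the second reduction $\WKL'\leqW\RT_{3,2}$, I proceed analogously along the chain
\[\WKL'\equivSW\widehat{\C_2'}\equivSW\widehat{\BWT_2}\equivW\widehat{\RT_{1,2}}\leqSW\RT_{3,2},\]
where the first equivalence is Fact~\ref{fact:WKL-BWT}(2), the second combines Fact~\ref{fact:WKL-BWT}(1) with commutation of jumps and parallelization, the third uses Proposition~\ref{prop:bottom}(2), and the final strong reduction is Theorem~\ref{thm:delayed-parallelization} with $n=1$ and $k=2$.

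For the third reduction $\WKL^{(n)}\leqW\SRT_{n+2,2}$ with $n\geq 2$, I write $\WKL^{(n)}\equivSW\widehat{\C_2^{(n)}}$ by Fact~\ref{fact:WKL-BWT}(2), and use Corollary~\ref{cor:lower-bound} together with Corollary~\ref{cor:basic-equivalences} to get $\C_2^{(n)}\equivSW\BWT_2^{(n-1)}\leqSW\CSRT_{n,2}\equivW\SRT_{n,2}$; the hypothesis $n\geq 2$ is precisely what Corollary~\ref{cor:lower-bound} requires for this step (and is why the third part of the corollary excludes $n=1$, which is handled instead by the second part). Parallelizing yields $\widehat{\C_2^{(n)}}\leqW\widehat{\SRT_{n,2}}$, and one final application of Theorem~\ref{thm:delayed-parallelization} with cardinality $n\geq 1$ and $k=2$ gives $\widehat{\SRT_{n,2}}\leqSW\SRT_{n+2,2}$, closing the chain. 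There is no substantive obstacle; the whole proof is a bookkeeping exercise that pipes the already-established lower bounds through parallelization into the delayed-parallelization upper bound.
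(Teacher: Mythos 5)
Your proof is correct and follows essentially the same route as the paper: rewrite the $\WKL$ and $\lim$ jumps as parallelizations of finite choice/Bolzano--Weierstra\ss{} via Fact~\ref{fact:WKL-BWT}, push the known lower bounds (Proposition~\ref{prop:bottom}, Corollary~\ref{cor:lower-bound}) through parallelization, and close each chain with Theorem~\ref{thm:delayed-parallelization}. The only cosmetic difference is that you route the first bound through $\lim_\IN$ and $\SRT_{1,\IN}$ where the paper uses $\lim_2$ and $\SRT_{1,2}$, which is equally valid.
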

\begin{proof}
With the help of Fact~\ref{fact:WKL-BWT}, Corollary~\ref{cor:basic-equivalences} and Theorem~\ref{thm:delayed-parallelization} we obtain:
\begin{itemize}
\item $\lim\equivW\widehat{\lim_2}\equivW\widehat{\SRT_{1,2}}\leqW\SRT_{3,2}$,
\item $\WKL'\equivW\widehat{\C_2'}\leqW\widehat{\RT_{1,2}}\leqW\RT_{3,2}$,
\item $\WKL^{(n)}\equivW\widehat{\C_2^{(n)}}\leqW\widehat{\SRT_{n,2}}\leqW\SRT_{n+2,2}$ for $n\geq2$.
\end{itemize}
For the first statement we have additionally used Proposition~\ref{prop:bottom} and in the latter two cases
Theorem~\ref{thm:lower-bound}.
\end{proof}

Corollary~\ref{cor:delayed-parallelization} generalizes \cite[Corollary~2.3]{HJ16} (see Corollary~\ref{cor:KL}).
Corollary~\ref{cor:limit-avoidance} together with Corollary~\ref{cor:delayed-parallelization}
show that both corollaries are optimal in the sense that 
$\lim^{(n-2)}$ cannot be replaced by $\lim^{(n-3)}$ in the statement of the Corollary~\ref{cor:limit-avoidance},
and $\WKL^{(n)}$ cannot be replaced by $\lim^{(n)}$ in the statement of Corollary~\ref{cor:delayed-parallelization}.
In particular, $n+2$ in Theorem~\ref{thm:delayed-parallelization} is also optimal and cannot be replaced by $n+1$.
However, the following question remains open.

\begin{question}
\label{quest:WKL-SRT32}
Does $\WKL'\leqW\SRT_{3,2}$ hold?
\end{question}

As a combination of Corollaries~\ref{cor:discrete-lower-bounds} and \ref{cor:delayed-parallelization}
we obtain the following result on versions of Ramsey's theorem above the limit and the Bolzano-Weierstra\ss{} theorem
for $\{0,1\}, \IN$ and $\IN^\IN$, respectively. 

\begin{corollary}[Cones]
\label{cor:cones}
$\lim_2\leqSW\BWT_2\leqW\RT_{1,2}$, $\lim_\IN\leqSW\BWT_\IN\leqW\RT_{2,2}$ and $\lim\leqSW\BWT_{\IN^\IN}\leqW\RT_{3,2}$.
\end{corollary}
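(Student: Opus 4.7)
The plan is to verify six reductions arranged as three chains, each combining a limit-to-Bolzano-Weierstra\ss{} reduction on the left with a Bolzano-Weierstra\ss-to-Ramsey reduction on the right. The three left-hand reductions are nearly immediate. For the first two chains, $\lim_2\leqSW\BWT_2$ and $\lim_\IN\leqSW\BWT_\IN$ are direct instances of Fact~\ref{fact:WKL-BWT}(4) at $n=0$. For the third chain, $\lim\leqSW\BWT_{\IN^\IN}$ follows by observing that the range of a convergent sequence $(p_n)$ in $\IN^\IN$, together with its limit, is a compact set, so the sequence lies in the domain of $\BWT_{\IN^\IN}$; since the unique cluster point of a convergent sequence is its limit, the identity realizer witnesses the reduction on both sides.

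For the right-hand reductions, $\BWT_2\leqW\RT_{1,2}$ is immediate from the equivalence $\BWT_k\equivW\RT_{1,k}$ in Proposition~\ref{prop:bottom}(2), and $\BWT_\IN\leqW\RT_{2,2}$ is already part of Corollary~\ref{cor:discrete-lower-bounds}. The only non-routine step is $\BWT_{\IN^\IN}\leqW\RT_{3,2}$. My approach is to factor it through the jump of weak K\H{o}nig's lemma as $\BWT_{\IN^\IN}\leqW\WKL'\leqW\RT_{3,2}$, where the second reduction is supplied by Corollary~\ref{cor:delayed-parallelization} and the first by the classification of the Bolzano-Weierstra\ss{} theorem for Baire space from \cite{BGM12}.

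The main obstacle is securing the reduction $\BWT_{\IN^\IN}\leqW\WKL'$ in the uniform form needed here. If one wishes to avoid reliance on the external classification, an alternative route is to use the compactness information accompanying the input to computably transfer the sequence into a homeomorphic closed subset of Cantor space, and then invoke the analogous classification $\BWT_{2^\IN}\equivW\WKL'$ before composing with Corollary~\ref{cor:delayed-parallelization} to conclude.
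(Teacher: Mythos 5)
Your main argument is correct and follows essentially the same route as the paper: the right-hand reductions are obtained from Proposition~\ref{prop:bottom}, Corollary~\ref{cor:discrete-lower-bounds}, and the factorization $\BWT_{\IN^\IN}\equivW\WKL'\leqW\RT_{3,2}$ via \cite{BGM12} and Corollary~\ref{cor:delayed-parallelization}, while for the left-hand side the paper uniformly cites \cite[Proposition~11.21]{BGM12} for $\lim_X\leqW\BWT_X$, whereas you use Fact~\ref{fact:WKL-BWT}(4) for the discrete cases and give a short direct argument for $\lim\leqSW\BWT_{\IN^\IN}$ --- a reasonable choice since the corollary asserts a \emph{strong} reduction there. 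One caveat on the closing ``alternative route'': the input to $\BWT_{\IN^\IN}$ is merely a sequence that happens to have compact closure, with no explicit bound or modulus of compactness accompanying it, so a computable transfer of the sequence into a homeomorphic closed subset of $2^\IN$ is not available from the input alone, and that alternative does not go through as sketched (fortunately it is not needed, as your primary argument already suffices).
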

\begin{proof}
We note that $\lim_X\leqW\BWT_X$ holds for arbitrary computable metric spaces $X$ by \cite[Proposition~11.21]{BGM12}.
Hence the first reduction chain follows with Proposition~\ref{prop:bottom}, the second one
follows from Corollary~\ref{cor:discrete-lower-bounds} and the third one from Corollary~\ref{cor:delayed-parallelization}
with the extra observation that $\WKL'\equivW\BWT_{\IN^\IN}$  by \cite[Corollaries 11.6 and 11.7]{BGM12}.
\end{proof}

\section{Jumps, Increasing Cardinality and Color and Upper Bounds}
\label{sec:upper}

The purpose of this section is to provide a useful upper bound on Ramsey's theorem.
Simultaneously, we will demonstrate how the complexity of Ramsey's theorem increases with increasing cardinality.
The proof is subdivided into several steps.  
The first and crucial step made by Theorem~\ref{thm:CRT-SRT} is interesting by itself and connects the jump of the colored version of Ramsey's theorem
with the stable version of the next greater cardinality. This is one result where the usage of the colored
version of Ramsey's theorem is essential. We start with a result that prepares the first direction of this theorem.

\begin{proposition}[Jumps]
\label{prop:jumps}
$\CRT_{n,k}'\leqSW\CSRT_{n+1,k}$  for all $n\geq1$ and $k\geq1,\IN$.
\end{proposition}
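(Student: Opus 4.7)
The plan is to reduce $\CRT_{n,k}'$ to $\CSRT_{n+1,k}$ via the classical limit-lemma trick, encoding a $\Delta^0_2$ coloring of $[\IN]^n$ as a stable coloring of $[\IN]^{n+1}$ by using the largest element of each $(n+1)$-set as a time parameter for the approximation.

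More precisely, a $\delta'$-name of $c\in\CC_{n,k}$ is a sequence $(p_s)_{s\in\IN}$ converging pointwise to a $\delta$-name $p$ of $c$, so for every index~$i$ the values $p_s(i)$ eventually equal $p(i)=c(\vartheta_n(i))$. From this I will extract a computable approximation $g:[\IN]^n\times\IN\to k$ with $\lim_{s\to\infty}g(A,s)=c(A)$ for each $A\in[\IN]^n$; for $k$ finite this is done by setting $g(A,s):=p_s(\vartheta_n^{-1}(A))$ when that value lies in $k$ and $0$ otherwise. I then define
\[
d\{a_0<a_1<\ldots<a_n\}:=g(\{a_0,a_1,\ldots,a_{n-1}\},a_n).
\]
Stability of $d$ is immediate: $\lim_{j\to\infty}d(A\cup\{j\})=\lim_{j\to\infty}g(A,j)=c(A)$ exists for every $A\in[\IN]^n$, so $d\in\dom(\CSRT_{n+1,k})$. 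This provides the computable inner reduction $K$.

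Applying $\CSRT_{n+1,k}$ to $d$ returns a pair $(x,M)$ with $M$ infinite and homogeneous for $d$ of color~$x$. I would take the outer reduction $H$ to be the identity, since the same pair already lies in $\CRT_{n,k}(c)$: fixing any $A\in[M]^n$ and any $j\in M$ with $j>\max A$, we have $g(A,j)=d(A\cup\{j\})=x$, and since $M$ is infinite this occurs for infinitely many $j$. Together with $\lim_j g(A,j)=c(A)$ this forces $c(A)=x$, so every $B\in[M]^n$ satisfies $c(B)=x$, i.e.\ $M\in\HH_c$ with $c(M)=x$. The resulting reduction is thus strong, as only a single application of $\CSRT_{n+1,k}$ is used and the output is passed through unchanged.

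The main subtlety is to ensure that the constructed $d$ actually lies in $\CC_{n+1,k}$. For $k$ finite, clipping intermediate values suffices, as above, and does not disturb pointwise convergence. For $k=\IN$ the representation additionally requires $d$ to have finite range, so the approximation $g$ must be arranged to inherit the finite range of~$c$; this is the one point where the construction requires some care rather than blindly following the template, but it can be carried out using only information implicit in the $\delta'$-name.
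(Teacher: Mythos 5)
Your construction is the paper's: the stable $(n+1)$-coloring $d$ is defined by reading off the $a_n$-th approximation to the color of $\{a_0,\ldots,a_{n-1}\}$, stability of $d$ is verified the same way, the pair $(x,M)$ returned by $\CSRT_{n+1,k}$ is passed through unchanged, and the key step that $c(A)=x$ for every $A\in[M]^n$ follows by the same subsequence argument (the approximation converges to $c(A)$ and equals $x$ infinitely often along $M$). For finite $k$ this is correct and matches the paper's proof line for line.

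You are right to single out $k=\IN$, but the caveat is more serious than your closing sentence suggests. The clipping step is vacuous there, and it does not automatically follow that $d$ lands in $\CC_{n+1,\IN}$, which carries the finite-range side condition. The naive construction can in fact violate it. With $n=1$, take $p_s(j):=s$ if $j=s-1$ and $p_s(j):=0$ otherwise; then $(p_s)_s$ converges pointwise to $\widehat{0}$, so it is a legitimate $\delta'$-name of the constant-$0$ coloring, yet $d(\{j,j+1\})=p_{j+1}(j)=j+1$ for every $j$, so $\range(d)=\IN$ and $d\notin\CC_{2,\IN}$. Your assertion that a finite-range $d$ ``can be carried out using only information implicit in the $\delta'$-name'' is left unsupported, and it is not obvious how to do it: there is no computable bound on the transient values $p_s(j)$, the quantity $\max\range(c)$ is not computable from the $\delta'$-name, and natural repairs (delayed confirmation, rank re-encoding, thresholding by $\min A$ or $\max A$) still leave $\range(d)$ unbounded on similar adversarial names. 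Note that the paper's own proof only treats the finite case explicitly, since ``replace every value $\geq k$ by $0$'' says nothing for $k=\IN$; so this is a point that genuinely needs an additional argument (or a restriction to $k<\IN$), and should not be left as a throwaway remark.
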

\begin{proof}
Let $(c_i)_i$ be a sequence that converges to a coloring $c_\infty:[\IN]^n\to k$.
Without loss of generality we can assume that the $c_i$ are colorings $c_i:[\IN]^n\to k$ themselves
(this can easily be achieved by replacing every value $\geq k$ in the range of $c_i$ by $0$).
We compute the coloring $c:[\IN]^{n+1}\to k$ with
\[c(A\cup\{i\}):=c_i(A)\]
for all $A\in[\IN]^n$ and $i>\max(A)$.
Then $c$ is stable, and we claim that $\RT_{n+1,k}(c)\In\RT_{n,k}(c_\infty)$.
To this end, let $M\in\RT_{n+1,k}(c)$, and let $A\in[M]^n$. 
Since $M$ is infinite and $\lim_{i\to\infty}c(A\cup\{i\})=c_\infty(A)$, we obtain $c(M)=c_\infty(A)$.
Since this holds for all $A\in[M]^n$, we obtain that $M$ is homogeneous for $c_\infty$, i.e., $M\in\RT_{n,k}(c_\infty)$.
We note that we also obtain $c_\infty(M)=c(M)$, and hence this proves $\CRT_{n,k}'\leqSW\CSRT_{n+1,k}$.
\end{proof}

We obtain the following corollary, since the jump operator is monotone with respect to strong reductions
by \cite[Proposition~5.6]{BGM12}.

\begin{corollary}[Jumps]
\label{cor:jumps}
$\RT_{n,k}^{(m)}\leqW\SRT_{{n+m},k}$ for all $m,n\geq1$, $k\geq1,\IN$.
\end{corollary}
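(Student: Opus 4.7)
The plan is a straightforward induction on $m$, iterating Proposition~\ref{prop:jumps} and exploiting the fact that the jump operator is monotone with respect to $\leqSW$.

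First I would combine Proposition~\ref{prop:jumps} with the basic reduction $\CSRT_{n+1,k}\leqSW\CRT_{n+1,k}$ from Lemma~\ref{lem:basic-reductions} to obtain
\[\CRT_{n,k}'\leqSW\CRT_{n+1,k}\]
for all $n\geq1$ and $k\geq1,\IN$. Then, proceeding by induction on $m$: assuming $\CRT_{n,k}^{(m)}\leqSW\CRT_{n+m,k}$, applying the jump to both sides (using monotonicity) gives
\[\CRT_{n,k}^{(m+1)}\leqSW\CRT_{n+m,k}'\leqSW\CRT_{n+m+1,k},\]
which closes the induction. Using Proposition~\ref{prop:jumps} at the very last step (instead of the weaker $\CSRT_{n+m,k}\leqSW\CRT_{n+m,k}$) even yields the slightly stronger intermediate conclusion $\CRT_{n,k}^{(m)}\leqSW\CSRT_{n+m,k}$.

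To convert this into a statement about $\RT$ and $\SRT$, I would start from $\RT_{n,k}\leqSW\CRT_{n,k}$ (Lemma~\ref{lem:basic-reductions}) and apply jump monotonicity to obtain $\RT_{n,k}^{(m)}\leqSW\CRT_{n,k}^{(m)}$. Chaining this with the inductive conclusion gives $\RT_{n,k}^{(m)}\leqSW\CSRT_{n+m,k}$, and one further step using $\CSRT_{n+m,k}\equivW\SRT_{n+m,k}$ from Corollary~\ref{cor:basic-equivalences} yields $\RT_{n,k}^{(m)}\leqW\SRT_{n+m,k}$.

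The only subtle point, and the reason why the corollary is stated with ordinary rather than strong Weihrauch reducibility, is that the final step $\CSRT_{n+m,k}\leqW\SRT_{n+m,k}$ is genuinely not strong: recovering the color of a homogeneous set $M$ from $M$ alone requires evaluating the input coloring on some $A\in[M]^n$, which uses access to the original input. Hence there is no real obstacle; the proof is essentially a clean iteration of Proposition~\ref{prop:jumps}, with the loss of strength from $\leqSW$ to $\leqW$ happening exactly once at the very end.
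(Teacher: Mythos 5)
Your proof is correct and is the natural reconstruction of the argument the paper merely sketches (``since the jump operator is monotone with respect to strong reductions'' applied to Proposition~\ref{prop:jumps}): iterate $\CRT_{n,k}'\leqSW\CSRT_{n+1,k}\leqSW\CRT_{n+1,k}$ via jump-monotonicity, prepend $\RT_{n,k}\leqSW\CRT_{n,k}$, and weaken $\CSRT_{n+m,k}$ to $\SRT_{n+m,k}$ via Corollary~\ref{cor:basic-equivalences} at the end. One small caution on your closing remark: it is true that \emph{this} proof path loses strength at the last step (and Corollary~\ref{cor:CSRT-RT} confirms $\CSRT_{n+m,k}\nleqSW\SRT_{n+m,k}$), but that only explains why the present argument yields $\leqW$; the paper explicitly leaves open whether $\RT_{n,k}^{(m)}\leqSW\SRT_{n+m,k}$ might still hold by a different route, so one should not claim that the statement is \emph{necessarily} restricted to $\leqW$.
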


It is not immediately clear whether this result also holds with a strong reduction.
Now we are prepared to formulate our main result on jumps of Ramsey's theorem.

\begin{theorem}[Jumps]
\label{thm:CRT-SRT}
$\CRT_{n,k}'\equivW\SRT_{n+1,k}$ for all $n\geq1$, $k\geq1,\IN$.
\end{theorem}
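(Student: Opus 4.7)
The plan is to prove the two directions separately, showing that the colored output of $\CRT_{n,k}$ is crucial for the reverse direction.

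For the easy direction $\CRT_{n,k}' \leqW \SRT_{n+1,k}$: this is essentially immediate from Proposition~\ref{prop:jumps}, which already gives $\CRT_{n,k}' \leqSW \CSRT_{n+1,k}$, combined with the equivalence $\CSRT_{n+1,k} \equivW \SRT_{n+1,k}$ from Corollary~\ref{cor:basic-equivalences}. So I would simply cite these and move on.

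The interesting direction is $\SRT_{n+1,k} \leqW \CRT_{n,k}'$. Given a stable coloring $c:[\IN]^{n+1}\to k$, I would first compute (without the oracle) a convergent name for a coloring $c_\infty:[\IN]^n\to k$, namely by defining $c_m(A) := c(A \cup \{m\})$ when $m > \max(A)$ and $c_m(A):=0$ otherwise. Stability of $c$ guarantees that $(c_m)_m$ converges pointwise to the coloring $c_\infty(A) := \lim_{i\to\infty} c(A\cup\{i\})$, so this sequence is a valid input for $\CRT_{n,k}'$. Applying the oracle produces a pair $(j,M)$ with $M$ infinite and homogeneous for $c_\infty$ of color $j$.

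The post-processing step is the key idea and is where we critically use the fact that we have the explicit color $j$, not only the set $M$. I would build an infinite subset $M'=\{m_0'<m_1'<\ldots\}\In M$ greedily: having chosen $m_0'<\cdots<m_{\ell-1}'$, enumerate the elements of $M$ beyond $m_{\ell-1}'$ and pick the first one, call it $m_\ell'$, such that $c(A\cup\{m_\ell'\}) = j$ for every $A\in[\{m_0',\ldots,m_{\ell-1}'\}]^n$. This is a finite and decidable check using the input $c$, so the construction is effective. Existence of a valid $m_\ell'$ at each stage follows because, for each of the finitely many $n$-subsets $A$ of $\{m_0',\ldots,m_{\ell-1}'\}\In M$, we have $c_\infty(A)=j$, so $c(A\cup\{i\})=j$ for all but finitely many $i\in\IN$; hence the set of valid candidates in $M$ is cofinite in $M$. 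By construction, every $(n+1)$-subset of $M'$ has the form $A\cup\{m_\ell'\}$ where $A\In\{m_0',\ldots,m_{\ell-1}'\}$, and receives color $j$, so $M'$ is an infinite homogeneous set for $c$.

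The main obstacle conceptually is seeing why $\CRT$ (rather than $\RT$) must appear in the statement: without the explicit color $j$ there is no uniformly computable greedy criterion for thinning $M$ to $M'$, since the only way to detect the "correct" color of $c$ on $(n{+}1)$-sets through $c_\infty$ is via its limit behaviour, which cannot be probed by a single $c$-evaluation. The case $k=\IN$ requires only a mild technical adjustment, since the finite range of $c$ induces a finite range for $c_\infty$ and for each $c_m$, so the construction above goes through verbatim.
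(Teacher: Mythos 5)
Your proof is correct and matches the paper's proof in all essentials: the same preprocessing $c_i(A):=c(A\cup\{i\})$ yielding $c_\infty$, the same appeal to Proposition~\ref{prop:jumps} and Corollary~\ref{cor:basic-equivalences} for the easy direction, and the same greedy thinning of $M$ (the paper writes this as a nested union $M=\bigcup_i M_i$ rather than element-by-element, but it is the identical construction). Your remark on why the color output $j$ is essential for effectivity is also in line with the paper's intent.
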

\begin{proof}
By Proposition~\ref{prop:jumps} and Corollary~\ref{cor:basic-equivalences} we obtain $\CRT_{n,k}'\leqW\SRT_{n+1,k}$, 
and it remains to prove $\SRT_{n+1,k}\leqW\CRT_{n,k}'$.
Let $c:[\IN]^{n+1}\to k$ be a stable coloring. Then we can define a 
sequence $(c_i)_i$ of colorings $c_i:[\IN]^n\to k$ by
\[c_i(A):=\left\{\begin{array}{ll}
  c(A\cup\{i\}) & \mbox{if $i>\max(A)$}\\
  0                  & \mbox{otherwise}
\end{array}\right.\]
for all $A\in[\IN]^n$.
Then $(c_i)_i$ is a converging sequence of colorings, and let the limit be denoted by $c_\infty:[\IN]^n\to k$. 
With the help of $\CRT_{n,k}$ we can compute $(c_\infty(M_\infty),M_\infty)\in\CRT_{n,k}(c_\infty)$.
We will now describe how we can use this set $M_\infty$ together with $c_\infty$ and $c$ in order
to computably enumerate an infinite homogeneous set $M\in\SRT_{n+1,k}(c)$.
The set $M=\bigcup_{i=0}^\infty M_i$ will be defined inductively using sets $M_i\in[M_\infty]^{n+i}$.
We start with choosing some $M_0\in[M_\infty]^n$. Then we continue in steps $i=0,1,2...$ as follows:

Let us assume that we have $M_i\in [M_\infty]^{n+i}$. For all $A\in[M_i]^{n}$ we obtain
\[\lim_{j\to\infty} c(A\cup\{j\})=c_\infty(A)=c_\infty(M_\infty).\]
Hence, we can effectively find an $m>\max(M_i)$ such that $M_{i+1}:=M_i\cup\{m\}$ satisfies
$c(M_{i+1})=c_\infty(M_\infty)$. 

Since all the sets $M_i$ with $i\geq1$ are homogeneous sets for $c$ with the same color $c_\infty(M_\infty)$, the set $M:=\bigcup_{i=0}^\infty M_i$
is an infinite homogeneous set for $c$ with $c(M)=c_\infty(M_\infty)$. This proves the desired reduction $\SRT_{n+1,k}\leqW\CRT_{n,k}'$.
\end{proof}

It follows from Corollaries~\ref{cor:cylinder-CSRT} and \ref{cor:cylinder-SRT} 
that the equivalence in Theorem~\ref{thm:CRT-SRT} cannot be replaced
by a strong equivalence.
We note that the color provided by $\CRT_{n,k}'$ is not needed if the color of the infinite homogeneous set that is to be constructed is known
in advance. Hence, the proof of Theorem~\ref{thm:CRT-SRT} yields also the following result.

\begin{corollary}[Jumps in the case of known color]
\label{cor:CRT-SRT}
$0\dash\SRT_{n+1,2}\leqW\RT_{n,2}'$ for all $n\geq1$.
\end{corollary}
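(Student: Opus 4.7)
The plan is to adapt the proof of Theorem~\ref{thm:CRT-SRT} to the setting where the color of the target homogeneous set is known in advance (namely $0$), so that the uncolored $\RT_{n,2}$ suffices in place of $\CRT_{n,2}$.

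First, given a stable coloring $c:[\IN]^{n+1}\to 2$ in the domain of $0\dash\SRT_{n+1,2}$, I would reuse the construction from the proof of Theorem~\ref{thm:CRT-SRT}: define a sequence $(c_i)_i$ of colorings $c_i:[\IN]^n\to 2$ by $c_i(A):=c(A\cup\{i\})$ when $i>\max(A)$ and $c_i(A):=0$ otherwise. Stability of $c$ ensures that $(c_i)_i$ converges to some $c_\infty:[\IN]^n\to 2$, and the sequence is computable from $c$, so $c_\infty$ constitutes a legitimate input for the jump $\RT_{n,2}'$.

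The key observation is that the assumption on $c$ forces every infinite homogeneous set for $c_\infty$ to also have color $0$. Indeed, the inductive construction from Theorem~\ref{thm:CRT-SRT} shows that from any $M_\infty \in \RT_{n,2}(c_\infty)$ one can build an infinite set $M \supseteq M_0 \in [M_\infty]^n$ that is homogeneous for $c$ with $c(M)=c_\infty(M_\infty)$. Since by hypothesis all infinite homogeneous sets of $c$ have color $0$, we conclude $c_\infty(M_\infty)=0$. Thus the color information that was previously supplied by $\CRT_{n,k}$ is now available a priori and does not need to be read off from the solution.

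With this in hand, the reduction is straightforward. Apply $\RT_{n,2}'$ to $c_\infty$ to obtain some $M_\infty\in\RT_{n,2}(c_\infty)$. Starting from an arbitrary $M_0\in[M_\infty]^n$, I would inductively extend $M_i\in[M_\infty]^{n+i}$ by searching for some $m\in M_\infty$ with $m>\max(M_i)$ and $c(M_i\cup\{m\})=0$; such $m$ exists because for every $A\in[M_i]^n$ we have $\lim_{j\to\infty}c(A\cup\{j\})=c_\infty(A)=0$. Setting $M_{i+1}:=M_i\cup\{m\}$ and $M:=\bigcup_{i\geq 0}M_i$ yields an infinite homogeneous set for $c$ of color $0$, as required.

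The only subtlety, and the step I would present carefully, is the transfer of the $0$--color restriction from $c$ to $c_\infty$; once this is in place, the rest of the argument is a verbatim repetition of the inductive phase of the proof of Theorem~\ref{thm:CRT-SRT}, with the constant $0$ replacing the previously read-off value $c_\infty(M_\infty)$.
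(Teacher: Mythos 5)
Your proof is correct and is essentially what the paper intends: the paper only remarks that the proof of Theorem~\ref{thm:CRT-SRT} goes through once the color is known a priori, and you fill in the details correctly — in particular, the key observation (which deserves being made explicit, as you note) that the $0$-color restriction on $c$ passes to $c_\infty$, so the inductive extension phase can use the constant $0$ in place of the read-off value $c_\infty(M_\infty)$.
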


We note that Corollaries~\ref{cor:CRT-SRT} and \ref{cor:color-reduction-zero} allow us to improve the bound given in Corollary~\ref{cor:color-reduction}
somewhat.

\begin{corollary}[Color reduction with jumps]
\label{cor:color-reduction-jumps}
$\SRT_{n,\IN}\leqSW\RT_{n,2}'$ for all $n\geq1$.
\end{corollary}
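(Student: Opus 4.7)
The plan is to chain the two immediately preceding corollaries. Starting from a stable coloring $c:[\IN]^n\to\IN$ with finite range, Corollary~\ref{cor:color-reduction-zero} supplies the strong reduction $\SRT_{n,\IN}\leqSW 0\dash\SRT_{n+1,2}$ via the auxiliary coloring $c^+:[\IN]^{n+1}\to 2$ from the proof of Theorem~\ref{thm:products} (set $c^+(A)=0$ when $A$ is $c$-homogeneous and $c^+(A)=1$ otherwise); this $c^+$ is stable, and as observed after Corollary~\ref{cor:color-reduction}, only admits infinite homogeneous sets of color $0$. Corollary~\ref{cor:CRT-SRT} then supplies $0\dash\SRT_{n+1,2}\leqW\RT_{n,2}'$, and concatenating drops the cardinality on the right-hand side from $n+1$ (as in Corollary~\ref{cor:color-reduction}) to $n$, at the cost of a single jump.

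Unpacking the composite: from $c^+$, form the Cauchy sequence $(c^+_i)_i$ in $\CC_{n,2}$ defined by $c^+_i(A):=c^+(A\cup\{i\})$ when $i>\max(A)$ (and $c^+_i(A):=0$ otherwise), converging to a limit $c^+_\infty\in\CC_{n,2}$; this sequence is the input fed to $\RT_{n,2}'$. Given an oracle return $M_\infty\in\RT_{n,2}(c^+_\infty)$, inductively thin it: begin with an arbitrary $M_0\in[M_\infty]^n$, and extend $M_i$ to $M_{i+1}:=M_i\cup\{m\}$ by picking the least $m\in M_\infty$ with $m>\max(M_i)$ and $c^+(A\cup\{m\})=0$ for every $A\in[M_i]^n$. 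The existence of such an $m$ is guaranteed by the stability of $c^+$ together with the $0\dash$ restriction, which forces $c^+_\infty$ to be identically $0$ on $[M_\infty]^n$. The resulting $M:=\bigcup_i M_i$ is infinite and homogeneous for $c^+$ of color $0$, hence by the defining structure of $c^+$ also homogeneous for the original $c$, yielding $M\in\SRT_{n,\IN}(c)$.

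The main technical obstacle is securing the strong reducibility $\leqSW$ rather than merely $\leqW$: the thinning step above consults $c^+$, a function of the input $c$, and a naive composition of the two corollaries only preserves ordinary Weihrauch reducibility. The way around this is to observe that all the information the backward map actually uses is already present in the input Cauchy sequence via the stability data of $c^+$, so its dependence on the input coloring can be absorbed into the input transformation. Combined with the strong step from Corollary~\ref{cor:color-reduction-zero}, this allows the backward map to be re-expressed as a function solely of $M_\infty$, delivering $\leqSW$.
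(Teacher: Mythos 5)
Your decomposition — chain Corollary~\ref{cor:color-reduction-zero} with Corollary~\ref{cor:CRT-SRT} — matches the paper's one-line sketch, and you correctly notice the real difficulty: Corollary~\ref{cor:CRT-SRT} is stated only for $\leqW$, so a naive composition gives $\SRT_{n,\IN}\leqW\RT_{n,2}'$, not $\leqSW$. However, the paragraph in which you claim to repair this has a genuine gap. In a strong Weihrauch reduction $HGK\vdash f$, the backward map $H$ sees \emph{only} the oracle's output $M_\infty$; it has no access to the input Cauchy sequence produced by $K$, nor to the original coloring $c$ (or $c^+$). Saying that the information ``is already present in the input Cauchy sequence via the stability data of $c^+$, so its dependence on the input coloring can be absorbed into the input transformation'' does not help: $K$ can manipulate what the oracle receives, but it cannot smuggle data past the oracle to $H$ unless that data shows up in the oracle's output, and the oracle's output here is just a subset of $\IN$. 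So as written, the thinning step still consults $c^+$, and you have only argued $\leqW$.

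The fix is simpler than the thinning construction, and you actually have the key observation in hand without exploiting it: the set $M_\infty$ returned by $\RT_{n,2}'$ is \emph{already} an infinite homogeneous set for the original coloring $c$, so $H$ can just be (essentially) the identity. To see this, note first, as you observe, that $c^+_\infty$ can only admit $0$--colored infinite homogeneous sets (if $M'$ were $1$--colored for $c^+_\infty$, the thinning argument of Theorem~\ref{thm:CRT-SRT} would produce a $1$--colored infinite homogeneous set for the stable coloring $c^+$, contradicting the $0\dash$ property guaranteed by Corollary~\ref{cor:color-reduction-zero}); hence $c^+_\infty(A)=0$ for all $A\in[M_\infty]^n$. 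Now unfold what $c^+_\infty(A)=0$ means: for large $i$, the set $A\cup\{i\}$ is $c$--homogeneous, i.e.\ $c(A)=c(B\cup\{i\})$ for all $B\in[A]^{n-1}$. Taking the limit in $i$ and using stability of $c$, this gives $c(A)=\lim_{j\to\infty}c(B\cup\{j\})$ for all $B\in[A]^{n-1}$. In particular, whenever two sets $A_1,A_2\in[M_\infty]^n$ share an $(n-1)$--element subset $B$, we get $c(A_1)=\lim_j c(B\cup\{j\})=c(A_2)$, and the chaining argument used in the proof of Theorem~\ref{thm:products} (any two $n$--subsets of the infinite set $M_\infty$ can be joined by a chain of $n$--subsets overlapping in $n-1$ elements) then shows $c$ is constant on $[M_\infty]^n$. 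Thus $M_\infty\in\SRT_{n,\IN}(c)$, the backward map is a trivial computable function of $M_\infty$ alone, and the reduction is strong.
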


We collect all a number of lower bound results in the following corollary
that strengthen some earlier results
mentioned in Corollaries~\ref{cor:discrete-lower-bounds} and \ref{cor:delayed-parallelization}.

\begin{corollary}[Lower bounds with jumps]
\label{cor:lower bounds with jumps}
$\lim_\IN\leqW\RT_{1,2}'$, $\lim\leqW\RT_{2,2}'$ and\linebreak 
$\WKL^{(n)}\leqW\RT_{n+1,2}'$ for all $n\geq2$.
\end{corollary}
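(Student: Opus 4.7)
The plan is to derive each of the three reductions by chaining together three ingredients already established in the paper: (i) translations of $\lim_\IN$, $\lim$, and $\WKL^{(n)}$ into parallelized Ramsey-type problems, (ii) the color-restricted delayed parallelization bound $\widehat{\SRT_{n,k}}\leqSW 0\dash\SRT_{n+2,2}$ from Corollary~\ref{cor:delayed-parallelization-color}, and (iii) the jump identification $0\dash\SRT_{n+1,2}\leqW\RT_{n,2}'$ from Corollary~\ref{cor:CRT-SRT}. Each statement is obtained by assembling these pieces in the right order. This follows exactly the pattern of Corollary~\ref{cor:delayed-parallelization}, but replaces the final unparallelized bound with the jumped one made available by Theorem~\ref{thm:CRT-SRT}.

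For the first claim, $\lim_\IN\leqW\RT_{1,2}'$, I would invoke Proposition~\ref{prop:bottom}(2) in the case $k=\IN$ to obtain $\lim_\IN\equivW\SRT_{1,\IN}$, and then apply Corollary~\ref{cor:color-reduction-jumps} with $n=1$ to get $\SRT_{1,\IN}\leqSW\RT_{1,2}'$. This case bypasses the delayed parallelization step entirely and relies only on the unbounded-color reduction into a jump.

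For the second claim, $\lim\leqW\RT_{2,2}'$, I would use Fact~\ref{fact:WKL-BWT}(3) in the form $\lim\equivSW\widehat{\lim_2}$ together with Proposition~\ref{prop:bottom}(2) to write $\lim\equivW\widehat{\SRT_{1,2}}$. Then Corollary~\ref{cor:delayed-parallelization-color} with $n=1$, $k=2$ yields $\widehat{\SRT_{1,2}}\leqSW 0\dash\SRT_{3,2}$, and a final application of Corollary~\ref{cor:CRT-SRT} with $n=2$ gives $0\dash\SRT_{3,2}\leqW\RT_{2,2}'$, composing to the desired reduction.

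For the third claim, $\WKL^{(n)}\leqW\RT_{n+1,2}'$ with $n\geq2$, I would begin with $\WKL^{(n)}\equivSW\widehat{\C_2^{(n)}}$ from Fact~\ref{fact:WKL-BWT}(2), apply Theorem~\ref{thm:lower-bound} to obtain $\C_2^{(n)}\leqSW\CSRT_{n,2}\equivW\SRT_{n,2}$ (using Corollary~\ref{cor:basic-equivalences}), and then parallelize monotonically to get $\widehat{\C_2^{(n)}}\leqW\widehat{\SRT_{n,2}}$. Finishing with Corollary~\ref{cor:delayed-parallelization-color} and Corollary~\ref{cor:CRT-SRT} gives $\widehat{\SRT_{n,2}}\leqSW 0\dash\SRT_{n+2,2}\leqW\RT_{n+1,2}'$. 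Since all ingredients are in place, no step is a genuine obstacle; the only point requiring care is tracking that the delayed parallelization actually lands in the $0\dash$ class (so that Corollary~\ref{cor:CRT-SRT} rather than merely Theorem~\ref{thm:CRT-SRT} applies), which is precisely why Corollary~\ref{cor:delayed-parallelization-color} was isolated.
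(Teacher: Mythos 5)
Your proof is correct and follows essentially the same route as the paper: the first claim via Proposition~\ref{prop:bottom} and Corollary~\ref{cor:color-reduction-jumps}, and the second and third by chaining $\lim\equivW\widehat{\SRT_{1,2}}$ respectively $\widehat{\C_2^{(n)}}\leqW\widehat{\SRT_{n,2}}$ through Corollary~\ref{cor:delayed-parallelization-color} into $0\dash\SRT_{n+2,2}$ and then down to $\RT_{n+1,2}'$ via Corollary~\ref{cor:CRT-SRT}. Your explicit invocation of Corollary~\ref{cor:basic-equivalences} to pass from $\CSRT_{n,2}$ to $\SRT_{n,2}$ before parallelizing merely spells out a step the paper leaves implicit.
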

\begin{proof}
We obtain $\lim_\IN\equivW\SRT_{1,\IN}\leqW\RT_{1,2}'$ by Proposition~\ref{prop:bottom} and Corollary~\ref{cor:color-reduction-jumps}.
With the help of Fact~\ref{fact:WKL-BWT}, Proposition~\ref{prop:bottom}, Theorem~\ref{thm:lower-bound} and Corollaries~\ref{cor:delayed-parallelization-color} and \ref{cor:CRT-SRT} we obtain
\begin{itemize}
\item $\lim\equivW\widehat{\lim_2}\equivW\widehat{\SRT_{1,2}}\leqSW 0\dash\SRT_{3,2}\leqW\RT_{2,2}'$,
\item $\WKL^{(n)}\equivW\widehat{\C_2^{(n)}}\leqW\widehat{\SRT_{n,2}}\leqW0\dash\SRT_{n+2,2}\leqW\RT_{n+1,2}'$
for $n\geq2$.\qedhere
\end{itemize}
\end{proof}

Analogously to Question~\ref{quest:WKL-SRT32} the following question remains open.

\begin{question}
\label{quest:WKL-RT22}
Does $\WKL'\leqW\RT_{2,2}'$ hold?
\end{question}

Roughly speaking, Theorem~\ref{thm:CRT-SRT} indicates that any increase in the cardinality of Ramsey's theorem corresponds
to a jump. 
We can also conclude from Corollary~\ref{cor:jumps} that Ramsey's theorem is increasing with respect
to increasing cardinality. 

\begin{lemma}[Increasing cardinality]
\label{lem:increasing-size}
$\SRT_{n,k}\leqW\RT_{n,k}\leqW\SRT_{n+1,k}\leqW \RT_{n+1,k}$ for all $n\geq1$, $k\geq1,\IN$.
\end{lemma}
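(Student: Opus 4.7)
The plan is to establish the three inequalities separately, with the middle one carrying the real content.

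The outer reductions $\SRT_{n,k} \leqW \RT_{n,k}$ and $\SRT_{n+1,k} \leqW \RT_{n+1,k}$ will follow immediately (even strongly) from Lemma~\ref{lem:basic-reductions}, since by definition $\SRT_{n,k}$ is just the restriction of $\RT_{n,k}$ to stable colorings, so the identity on instances and on solutions witnesses the reduction.

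For the middle reduction $\RT_{n,k} \leqW \SRT_{n+1,k}$ my strategy is to chain three already-available facts. First, by Corollary~\ref{cor:basic-equivalences}, $\RT_{n,k} \equivW \CRT_{n,k}$, since any infinite homogeneous set $M$ for $c$ determines the color $c(M)$ computably via evaluating $c$ on any $n$ of its points. Second, I would invoke the general triviality $f \leqSW f'$ valid for any problem $f$: given a $\delta_X$-name $p$ of the input $x$, the constant sequence $\langle p,p,p,\ldots\rangle$ is a $\delta_X' = \delta_X \circ \lim$ name of the same $x$, so feeding it to $f'$ returns a solution of $f(x)$ unchanged. Applied to $f = \CRT_{n,k}$ this yields $\CRT_{n,k} \leqSW \CRT_{n,k}'$. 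Third, Theorem~\ref{thm:CRT-SRT} gives $\CRT_{n,k}' \equivW \SRT_{n+1,k}$. Composing these three steps,
\[\RT_{n,k} \equivW \CRT_{n,k} \leqSW \CRT_{n,k}' \equivW \SRT_{n+1,k},\]
which is the desired ordinary Weihrauch reduction.

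There is no genuine obstacle once Theorem~\ref{thm:CRT-SRT} is in hand; the lemma is a bookkeeping corollary. The one subtle point worth flagging is why the statement claims only $\leqW$ and not $\leqSW$: the constant-sequence embedding is strong and Theorem~\ref{thm:CRT-SRT} supplies an ordinary equivalence, but the colored/uncolored passage $\RT_{n,k} \equivW \CRT_{n,k}$ is intrinsically ordinary because the color of a produced homogeneous set is recovered by consulting the original input coloring, so the chain cannot be upgraded to a strong reduction without a separate argument.
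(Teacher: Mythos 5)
Your chain is correct and is essentially the paper's own argument with the two innocuous steps permuted: the paper writes $\RT_{n,k}\leqW\RT_{n,k}'\leqSW\CRT_{n,k}'\leqW\SRT_{n+1,k}$ (jump, then colourize via monotonicity of the jump under $\leqSW$), whereas you colourize first and then jump; both routes pass through $\CRT_{n,k}'$ and both terminate via Proposition~\ref{prop:jumps}/Theorem~\ref{thm:CRT-SRT}, so there is no genuine difference in content.

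One small correction to your closing remark: in the direction you actually use, $\RT_{n,k}\leqSW\CRT_{n,k}$ is a \emph{strong} reduction (Lemma~\ref{lem:basic-reductions}), so the passage from uncoloured to coloured Ramsey on the left is not the weak link. What is only an ordinary reduction is the final step $\CRT_{n,k}'\leqW\SRT_{n+1,k}$, because its proof (via Proposition~\ref{prop:jumps} followed by Corollary~\ref{cor:basic-equivalences}) discards the colour through $\CSRT_{n+1,k}\leqW\SRT_{n+1,k}$, an uncolouring step on the \emph{larger}-cardinality side that needs the input; the paper also notes after Theorem~\ref{thm:CRT-SRT} (via Corollaries~\ref{cor:cylinder-CSRT} and \ref{cor:cylinder-SRT}) that this equivalence genuinely cannot be upgraded to a strong one.
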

\begin{proof}
It follows from Lemma~\ref{lem:basic-reductions} and Corollaries~\ref{cor:jumps} and \ref{cor:basic-equivalences} that\linebreak
$\SRT_{n,k}\leqW\RT_{n,k}\leqW\RT_{n,k}'\leqSW\CRT_{n,k}'\leqW\SRT_{n+1,k}\leqW\RT_{n+1,k}$.
\end{proof}

We will soon see in Corollary~\ref{cor:increasing-size} and \ref{cor:RT22-SRT2N} that the reductions in this lemma are all strict in certain cases.
We note that $\CRT_{n,k}'\leqW\CRT_{n,k}*\lim\equivW\RT_{n,k}*\lim$. The latter problem is very stable and has several useful descriptions.

\begin{lemma}[Jump of the cylindrification]
\label{lem:jump-cylindrification}
For all $n\geq1$, $k\geq1,\IN$ we obtain
$\RT_{n,k}*\lim\equivW(\RT_{n,k}\times\id)'\equivW\RT_{n,k}'\times\lim$.
\end{lemma}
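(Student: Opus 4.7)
The plan is to establish the two equivalences in the chain separately, using general properties of jumps and compositional products.

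\emph{The equivalence $(\RT_{n,k}\times\id)' \equivW \RT_{n,k}' \times \lim$.} This is an instance of the general fact that jumps commute with finite products: $(f\times g)' \equivSW f' \times g'$, which holds because the representation of a product space is the componentwise pairing of its factors' representations, and convergence in the product topology is componentwise. Applied to $g = \id$ on Baire space, unfolding definitions shows that $\id'$ has input representation $\delta_{\IN^\IN}' = \lim$ and output representation $\id_{\IN^\IN}$, so $\id' \equivSW \lim$. Hence $(\RT_{n,k}\times\id)' \equivSW \RT_{n,k}' \times \id' \equivSW \RT_{n,k}' \times \lim$.

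\emph{The reduction $(\RT_{n,k}\times\id)' \leqW \RT_{n,k}*\lim$.} I write the jump as a composition $f_0 \circ g_0$ with $g_0 := \lim$ and $f_0$ the problem that, on input $\langle c, y\rangle \in \IN^\IN$ where $c$ names a coloring, returns $\langle M, y\rangle$ for some $M \in \RT_{n,k}(c)$. The post-processor extracting $y$ and pairing it with $M$ uses both the original input and the output of $\RT_{n,k}$, which shows $f_0 \leqW \RT_{n,k}$. Since $f_0 \circ \lim$ computes precisely $(\RT_{n,k}\times\id)'$, and $\RT_{n,k}*\lim$ is the Weihrauch supremum of all such compositions, the reduction follows.

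\emph{The reduction $\RT_{n,k}*\lim \leqW (\RT_{n,k}\times\id)'$.} By definition of the compositional product, it suffices to bound every composition $f_0 \circ g_0$ with $f_0 \leqW \RT_{n,k}$ and $g_0 \leqW \lim$. Fix computable functionals $H_f, K_f$ and $H_g, K_g$ witnessing the two reductions. On input $q \in \dom(f_0 \circ g_0)$, write $K_g(q) = \langle p_0, p_1, \ldots\rangle$ with $p_n \to p := \lim K_g(q)$. By continuity of $H_g$, the sequence $s_n := H_g\langle q, p_n\rangle$ converges to $s := H_g\langle q, p\rangle$, which realizes $g_0(q)$. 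By continuity of $K_f$, the sequence $\langle K_f(s_n), s_n\rangle$ converges to $\langle K_f(s), s\rangle$, and this is a valid $\delta'$-name in $\CC_{n,k}\times\IN^\IN$ because $s \in \dom(f_0)$. Applying $(\RT_{n,k}\times\id)'$ yields a pair $(M, s)$ with $M \in \RT_{n,k}(K_f(s))$, after which $H_f\langle s, M\rangle$ realizes the value of $f_0 \circ g_0$ at $q$.

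The main technical point is the bookkeeping in the last step: the second coordinate of the cylindrification is used to carry the name $s$ of the intermediate value $g_0(q)$ through the jump, so that the post-processor $H_f$ of the second reduction can still be applied after the homogeneous set is delivered. This is exactly why the plain jump $\RT_{n,k}'$ would not suffice here in place of $(\RT_{n,k}\times\id)'$.
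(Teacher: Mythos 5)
Your proposal takes a genuinely different route from the paper's. The paper disposes of the second equivalence in three lines by citing two facts from \cite{BGM12}: jumps commute with products, and $f'\equivW f*\lim$ for any cylinder $f$ (Corollary~5.17 there), together with the idempotence of $\lim$. You instead re-derive the cylinder fact for the specific case $f=\RT_{n,k}\times\id$ by a direct realizer argument. Your first paragraph (commuting jumps with products, $\id'\equivSW\lim$) matches the paper's; your second paragraph is fine. The difference, and the gap, lies in the third paragraph.

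The issue is the treatment of partial computable functions. You write ``by continuity of $H_g$, the sequence $s_n:=H_g\langle q,p_n\rangle$ converges to $s:=H_g\langle q,p\rangle$,'' and likewise for $K_f(s_n)$. But $H_g$ and $K_f$ are \emph{partial} computable functions, and the reduction witnessing $g_0\leqW\lim$ only guarantees $H_g$ to be defined at $\langle q,\lim K_g(q)\rangle$ for $q\in\dom(g_0)$; it says nothing about $\langle q,p_n\rangle$ for finite $n$. The same problem recurs with $K_f$, which is only guaranteed to be defined on names of points in $\dom(f_0)$, and $s_n$ need not be such a name. As written the continuity argument does not go through. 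The standard fix is to replace $H_g\langle q,p_n\rangle$ by the (possibly finite) output string of a monotone machine computing $H_g$ on that input, padded to an infinite sequence; then the sequence $(s_n)_n$ so defined really is total, computable in $q$, and converges to $s$ because the inputs converge, and similarly for $K_f(s_n)$. This is exactly what makes Corollary~5.17 of \cite{BGM12} non-trivial, and since you are re-proving rather than citing it, you need to spell out this step. Apart from that, your identification of why the cylindrification is needed — to carry the name $s$ through the jump so that $H_f$ can be applied afterwards — is the right insight and matches the role of the cylinder hypothesis in the cited lemma.
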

\begin{proof}
The second equivalence holds since jumps and products commute by \cite[Proposition~5.7~(2)]{BGM12}.
Since $\RT_{n,k}\times\id$ is a cylinder we have  by \cite[Corollary~5.17]{BGM12}
\[(\RT_{n,k}\times\id)'\equivW(\RT_{n,k}\times\id)*\lim\equivW(\RT_{n,k}*\lim)\times\lim\equivW\RT_{n,k}*\lim.\]
The latter two equivalences hold since $\lim$ is idempotent.
\end{proof}

Since $\RT_{n,k}\times\id$ is the cylindrification of Ramsey's theorem, this lemma characterizes the
jump of the cylindrification of Ramsey's theorem up to Weihrauch equivalence.
In particular, we obtain the following consequence of Theorem~\ref{thm:CRT-SRT}. 

\begin{corollary}
\label{cor:SRT-RT-lim}
$\SRT_{n+1,k}\leqW\RT_{n,k}*\lim$ for all $n\geq1$, $k\geq1,\IN$.
\end{corollary}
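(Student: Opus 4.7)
The plan is to chain together the results just proved. First, by Theorem~\ref{thm:CRT-SRT} we have the Weihrauch equivalence $\SRT_{n+1,k}\equivW\CRT_{n,k}'$, so it suffices to exhibit a reduction $\CRT_{n,k}'\leqW\RT_{n,k}*\lim$.

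Second, for any problem $f$ we have $f'\leqW f*\lim$ essentially by definition of the jump: a name $p$ of an input in the jumped representation $\delta'=\delta\circ\lim$ satisfies $\delta(\lim p)=\delta'(p)$, so applying $\lim$ first and then a realizer of $f$ produces a realizer of $f'$. Taking $f=\CRT_{n,k}$ this yields $\CRT_{n,k}'\leqW\CRT_{n,k}*\lim$.

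Third, by Corollary~\ref{cor:basic-equivalences} we have $\CRT_{n,k}\equivW\RT_{n,k}$, and since the compositional product $*$ is defined as a supremum over the downward Weihrauch cones of its arguments, it respects Weihrauch equivalence in each coordinate. Hence $\CRT_{n,k}*\lim\equivW\RT_{n,k}*\lim$. Concatenating the three reductions delivers
\[\SRT_{n+1,k}\equivW\CRT_{n,k}'\leqW\CRT_{n,k}*\lim\equivW\RT_{n,k}*\lim,\]
as desired.

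There is no real obstacle here; the corollary is a direct consequence of Theorem~\ref{thm:CRT-SRT} together with the observation that the colored and uncolored versions of Ramsey's theorem coincide up to (non-strong) Weihrauch equivalence. Alternatively one could route through Lemma~\ref{lem:jump-cylindrification}, using $\RT_{n,k}*\lim\equivW\RT_{n,k}'\times\lim$ and then noting that $\CRT_{n,k}'\leqW\RT_{n,k}'\times\lim$ via the trivial reduction $\CRT_{n,k}\leqW\RT_{n,k}\times\id$, but the route through the compositional product is the most economical.
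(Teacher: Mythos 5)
Your proof is correct and follows essentially the same route as the paper: the paper itself notes $\CRT_{n,k}'\leqW\CRT_{n,k}*\lim\equivW\RT_{n,k}*\lim$ immediately before the corollary and then invokes Theorem~\ref{thm:CRT-SRT}. Your fleshing out of the general fact $f'\leqW f*\lim$ and the invariance of $*$ under $\equivW$ in each coordinate is accurate and supplies exactly the reasoning the paper leaves implicit.
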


This result is the crucial step towards our upper bound result. 
The next step involves a usage of the cohesiveness problem.
We recall that a set $A\In\IN$ is called {\em cohesive} for a sequence $(R_i)_i$ of sets $R_i\In\IN$ if
$A\cap R_i$ or $A\cap(\IN\setminus R_i)$ is finite for each $i\in\IN$. In other words, up to finitely many
exceptions, $A$ is fully included in any $R_i$ or its complement. By $\COH:(2^\IN)^\IN\mto2^\IN$ we denote
the cohesiveness problem, where $\COH(R_i)$ contains all sets $A\In\IN$ that are cohesive for $(R_i)_i$.
The uniform computational content of $\COH$ has already been studied in \cite{DDH+16,BHK15,Dzh15}.
The relevance of the cohesiveness problem for Ramsey's theorem has originally been noticed by
Cholak et al.\ who proved over $\RCA_0$ that $\RT_{2,2}\iff\SRT_{2,2}\wedge\COH$ \cite[Lemma~7.11]{CJS01} (see \cite{CJS09} for a correction). 
We use the same idea to prove the following, which was in the case of $n=k=2$ also observed by Dorais et al.\ \cite{DDH+16}.

\begin{proposition}
\label{prop:RT-SRT-COH}
$\RT_{n,k}\leqW\SRT_{n,k}*\COH$ for all $n\geq1$, $k\geq1,\IN$.
\end{proposition}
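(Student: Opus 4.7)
The plan is to take the classical decomposition underlying Cholak, Jockusch and Slaman's result $\RT_{2,2}\iff\SRT_{2,2}\wedge\COH$ and recast it in uniform form, so that each constituent step becomes either computable or a single call to $\COH$ or $\SRT_{n,k}$, yielding a Weihrauch reduction through the compositional product.

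Given an input coloring $c:[\IN]^n\to k$, I would first compute, uniformly in $c$, the sequence $(R_{B,i})_{B\in[\IN]^{n-1},\,i<k}$ where $R_{B,i}:=\{j>\max(B):c(B\cup\{j\})=i\}$, which is uniformly computable from $c$. Applying $\COH$ to this sequence returns an infinite $A\In\IN$ cohesive for it. Since the sets $R_{B,0},\ldots,R_{B,k-1}$ partition the cofinite set $\{j>\max(B)\}$ for each fixed $B$, cohesiveness forces a unique color $i_B<k$ with $A\cap R_{B,i_B}$ cofinite in $A\cap\{j>\max(B)\}$; equivalently, $\lim_{j\to\infty,\,j\in A}c(B\cup\{j\})=i_B$ exists. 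The same argument handles $k=\IN$ by observing that $c$ has finite range, so only finitely many $R_{B,i}$ are nonempty for each $B$.

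From $A$ I would compute its principal function $p_A:\IN\to A$ and define $c':[\IN]^n\to k$ by $c'(X):=c(p_A(X))$. For any $B'\in[\IN]^{n-1}$ and $j>\max(B')$, order-preservation of $p_A$ yields $c'(B'\cup\{j\})=c(p_A(B')\cup\{p_A(j)\})$, and as $j\to\infty$ the element $p_A(j)$ ranges cofinally through $A$. Hence the stability limit for $c'$ at $B'$ equals $i_{p_A(B')}$, so $c'$ is a stable coloring. Applying $\SRT_{n,k}$ to $c'$ yields an infinite $M\in\HH_{c'}$. Its image $p_A(M)\In A$ is infinite, and every $X\in[p_A(M)]^n$ has the form $p_A(X')$ for a unique $X'\in[M]^n$, so $c(X)=c'(X')$ is the single homogeneous color of $M$; thus $p_A(M)\in\HH_c$.

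To package the construction as a Weihrauch reduction via the compositional product, set $g_0(c):=\langle c,\COH((R_{B,i})_{B,i})\rangle$, which satisfies $g_0\leqW\COH$ since it is built from one call to $\COH$ together with a computable pre-processor and a computable post-processor that retains the original input. Set $f_0\langle c,A\rangle$ to be the procedure that forms $c'$ from $c$ and $p_A$, calls $\SRT_{n,k}$ on $c'$, and returns $p_A(M)$; then $f_0\leqW\SRT_{n,k}$. The composition $f_0\circ g_0$ realizes a selector for $\RT_{n,k}$, so $\RT_{n,k}\leqW f_0\circ g_0\leqW\SRT_{n,k}*\COH$. The only nontrivial point is verifying the stability of $c'$, which as above hinges entirely on the cohesiveness of $A$ with respect to the partition sequence $(R_{B,i})_{B,i}$.
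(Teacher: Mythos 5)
Your proof is correct and follows essentially the same route as the paper's: compute the sequence $(R_{B,i})$ indexed by $(n-1)$-element sets $B$ and colors $i$, apply $\COH$ to obtain a cohesive set, use its principal function to pull back $c$ to a stable coloring, feed that to $\SRT_{n,k}$, and push the homogeneous set forward again. The paper phrases this with the explicit numbering $\vartheta_{n-1}$ and is slightly more terse about the compositional-product bookkeeping, but the decomposition and the key stability argument are identical.
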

\begin{proof}
We fix some $n,k\geq1$.
Given a coloring $c:[\IN]^{n}\to k$ we compute a sequence $(R_i)_i$ of sets $R_i\In \IN$ as follows:
\[R_{\langle i,j\rangle}:=\{r\geq\max\vartheta_{n-1}(i):c(\vartheta_{n-1}(i)\cup\{r\})=j\}\]
for all $i,j\in\IN$. Here $\vartheta_{n-1}$ denotes the numbering of $[\IN]^{n-1}$ introduced in Section~\ref{sec:uniform}.
With the help of $\COH$ we can compute
an infinite cohesive set $Y\in\COH(R_i)_i$ for the sequence $(R_i)_i$.
Let $\sigma:\IN\to\IN$ be the principal function of $Y$.
Since $Y$ is cohesive for $(R_i)_i$ and $\range(c)$ is finite, it follows that for all $B\in[\IN]^{n-1}$ there is some $j\in\IN$ such that
$c(B\cup\{r\})=j$
holds for almost all $r\in Y$.
Hence, the coloring $c_\sigma:[\IN]^n\to k$, defined by 
\[c_\sigma(A):=c(\sigma(A))\]
for all $A\in[\IN]^n$ is stable.
With the help of $\SRT_{n,k}$ we can compute an infinite homogeneous set $M_\sigma\in\SRT_{n,k}(c_\sigma)$.
It is clear that $M:=\sigma(M_\sigma)$ is an infinite homogeneous set for $c$.
\end{proof}

Now we can combine Corollary~\ref{cor:SRT-RT-lim} with Proposition~\ref{prop:RT-SRT-COH}
in both possible orders in order to obtain the following result.
We recall that the compositional product $*$ is associative \cite[Proposition~31]{BP16}.

\begin{corollary}
\label{cor:RT-SRT-lim-COH}
For all $n\geq1$, $k\geq1,\IN$ we obtain
\begin{enumerate}
\item $\RT_{n+1,k}\leqW\RT_{n,k}*\lim*\COH$,
\item $\SRT_{n+1,k}\leqW\SRT_{n,k}*\COH*\lim$.
\end{enumerate}
\end{corollary}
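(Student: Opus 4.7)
The proof will simply be a chaining of the two results immediately preceding the corollary, together with associativity and monotonicity of the compositional product $*$. There is no real obstacle here; I view this as a clean corollary.

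For part (1), the plan is: start from Proposition~\ref{prop:RT-SRT-COH} applied at cardinality $n+1$, which gives
\[\RT_{n+1,k}\leqW\SRT_{n+1,k}*\COH.\]
Then substitute the bound on $\SRT_{n+1,k}$ from Corollary~\ref{cor:SRT-RT-lim}, namely $\SRT_{n+1,k}\leqW\RT_{n,k}*\lim$, into the left factor. Since the compositional product $*$ is monotone in each argument (this is immediate from its definition as the supremum over $f_0\leqW f$ and $g_0\leqW g$), we obtain
\[\RT_{n+1,k}\leqW(\RT_{n,k}*\lim)*\COH.\]
Associativity of $*$, as noted in the paper via \cite[Proposition~31]{BP16} (or \cite[Corollary~17]{BP16}), removes the parentheses and gives $\RT_{n+1,k}\leqW\RT_{n,k}*\lim*\COH$.

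For part (2), the argument is symmetric but with the two results applied in the opposite order. Start from Corollary~\ref{cor:SRT-RT-lim}:
\[\SRT_{n+1,k}\leqW\RT_{n,k}*\lim.\]
Then apply Proposition~\ref{prop:RT-SRT-COH} at cardinality $n$, which gives $\RT_{n,k}\leqW\SRT_{n,k}*\COH$, and substitute into the left factor, again using monotonicity of $*$:
\[\SRT_{n+1,k}\leqW(\SRT_{n,k}*\COH)*\lim.\]
Associativity yields the desired $\SRT_{n+1,k}\leqW\SRT_{n,k}*\COH*\lim$.

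The only point worth double-checking is that the order of the factors really is as stated: the convention in the paper is that $f*g$ denotes ``$f$ after $g$''; Proposition~\ref{prop:RT-SRT-COH} thus naturally delivers $\COH$ as the right factor (applied first), and Corollary~\ref{cor:SRT-RT-lim} naturally delivers $\lim$ as the right factor. Substituting preserves these positions, which is exactly what is needed for the two formulas above.
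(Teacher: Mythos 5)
Your proof is correct and follows exactly the route the paper sketches in the sentence immediately preceding the corollary (``combine Corollary~\ref{cor:SRT-RT-lim} with Proposition~\ref{prop:RT-SRT-COH} in both possible orders'' together with associativity of~$*$). No gaps; the monotonicity observation for~$*$ and the check on factor order are both accurate.
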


The diagram in Figure~\ref{fig:diagram-RT-COH-lim} illustrates the situation.
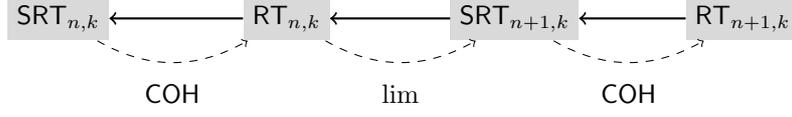
\begin{figure}[htb]
\begin{tikzpicture}[scale=.5,auto=left,every node/.style={fill=black!15}]

\def\rvdots{\raisebox{1mm}[\height][\depth]{$\huge\vdots$}};



  \node (SRT) at (0,3) {$\SRT_{n,k}$};
  \node (RT) at (6,3) {$\RT_{n,k}$};
  \node (SRT1) at (12,3) {$\SRT_{n+1,k}$};
  \node (RT1) at (18,3) {$\RT_{n+1,k}$};

  \foreach \from/\to in {
RT1/SRT1,
SRT1/RT,
RT/SRT}
  \draw [->,thick] (\from) -- (\to);


 \draw [->,dashed,looseness=1] (SRT) to [out=330,in=210] (RT);
  \draw [->,dashed,looseness=1] (RT) to [out=330,in=210] (SRT1);
  \draw [->,dashed,looseness=1] (SRT1) to [out=330,in=210] (RT1);

\node [fill=none] at (3,1) {$\COH$};
\node [fill=none] at (9,1) {$\lim$};
\node [fill=none] at (15,1) {$\COH$};

\end{tikzpicture}
  
\ \\[-0.5cm]
\caption{Upper bounds for Ramsey's theorem for $n\geq2$, $k\geq2,\IN$: all solid arrows indicated strong Weihrauch reductions against the direction of the arrow, all circled dashed arrows are labeled with an upper bound of the inverse implication.}
\label{fig:diagram-RT-COH-lim}
\end{figure}
The first bound given by Corollary~\ref{cor:RT-SRT-lim-COH} is particularly useful, since the following result was proved in \cite[Corollary~14.14]{BHK15}.

\begin{fact}
\label{fact:WKL-lim-COH}
$\WKL'\equivW\lim*\COH$.
\end{fact}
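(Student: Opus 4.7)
Fact~\ref{fact:WKL-lim-COH} is \cite[Corollary~14.14]{BHK15}; I propose a proof of both directions.

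For the direction $\WKL' \leqW \lim * \COH$, I would use Fact~\ref{fact:WKL-BWT}(1) and (2) to note $\WKL' \equivSW \widehat{\C_2'} \equivSW \widehat{\BWT_2}$, which reduces the task to $\widehat{\BWT_2} \leqW \lim * \COH$. The plan is: given $(x_i)_{i\in\IN}$ in $\{0,1\}^\IN$, form the sets $R_i := \{n\in\IN : x_i(n)=1\}$, apply $\COH$ to obtain an infinite cohesive set $A = \{a_0 < a_1 < \cdots\}$, and then apply $\lim$ to the sequence $\langle i,n\rangle \mapsto x_i(a_n)$ (computable from $(x_i)_i$ and $A$) in order to extract $c_i := \lim_n x_i(a_n)$. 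Cohesiveness ensures that for each $i$ either $A\cap R_i$ or $A\setminus R_i$ is finite, so $(x_i(a_n))_n$ is eventually constant and the value $c_i$ is a cluster point of $x_i$.

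For the direction $\lim * \COH \leqW \WKL'$, I would first establish $\COH \leqW \WKL'$ by constructing, from $(R_i)_i$, a $\dO{2}$ tree whose paths encode cohesive sets. The natural tree of Boolean traces $\sigma \in 2^{<\IN}$ with $V_\sigma := \bigcap_{i<|\sigma|}R_i^{\sigma(i)}$ infinite (using $R^1=R$ and $R^0=\IN\setminus R$) is only $\pO{2}$, but a uniform Jockusch--Stephan-style pruning produces a nonempty $\dO{2}$ subtree each of whose paths still encodes a cohesive set for $(R_i)_i$. To absorb the subsequent $\lim$ into the same application of $\WKL'$, I would enlarge this tree to a $\dO{2}$ tree over $(2\times 2)^{<\IN}$ whose second coordinate tracks a finite approximation to the output of the computable post-$\COH$ function; consistency of this second coordinate with the eventual limit is again $\dO{2}$. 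A single use of $\WKL'$ on the enlarged tree then returns a path encoding both a cohesive set and the desired limit.

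The main obstacle is the uniform $\dO{2}$ pruning of the tree of cohesive traces, where one genuinely uses the effective Ramsey-theoretic content of $\COH$; once that is in place, interleaving the post-$\COH$ limit computation into the same tree is essentially bookkeeping, and the first direction is a direct translation of the classical cluster-point-via-cohesiveness argument.
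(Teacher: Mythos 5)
The paper does not prove this fact; it simply cites it from \cite[Corollary~14.14]{BHK15}, so there is no internal proof to compare yours against. Judging your proposal on its own merits:

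Your first direction ($\WKL'\leqW\lim*\COH$) is correct and clean. The chain $\WKL'\equivSW\widehat{\C_2'}\equivSW\widehat{\BWT_2}$ follows from Fact~\ref{fact:WKL-BWT}(1)(2), and the cohesiveness argument is exactly the classical realization of the Bolzano--Weierstra\ss{} theorem via a cohesive set: for each $i$ the subsequence $(x_i(a_n))_n$ is eventually constant, so the double array converges in Baire space, one application of $\lim$ extracts the limits, and each limit is attained infinitely often since $A$ is infinite. This is a direct reduction of the operational form required for a compositional product.

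Your second direction ($\lim*\COH\leqW\WKL'$) has a real gap, and I think you are underestimating it by calling the absorption of the post-$\COH$ $\lim$ ``essentially bookkeeping.'' First, the $\dO{2}$ pruning you gesture at can in fact be carried out more simply than you suggest: the tree $T':=\{\sigma\in 2^{<\IN}:|V_\sigma|\geq|\sigma|\}$ is $\sO{1}$ relative to $(R_i)_i$ (hence $\dO{2}$), is an infinite tree, and every infinite path $f$ has $|V_{f\restriction n}|\geq m$ for all $m\geq n$, so $V_{f\restriction n}$ is infinite for every $n$ and one can compute a cohesive set from $f\oplus(R_i)_i$. So $\COH\leqW\WKL'$ is fine (though it already follows from $\COH\leqW\lim\leqW\WKL'$). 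The genuine difficulty is the second step. The cohesive set $A$ is only determined by the \emph{entire} path $f$, so $\lim K_2\langle p,A\rangle$ depends on all of $f$; a node at level $n$ of your enlarged tree would have to commit to a guess $\tau$ about this limit that is consistent with every infinite extension of the trace seen so far, and whether such a guess is correct is a $\pO{2}(p\oplus f)$ condition, not a $\dO{2}(p)$ one. Your claim that ``consistency of this second coordinate with the eventual limit is again $\dO{2}$'' conflates the complexity of the verification with the complexity of the tree: without further argument (e.g.\ showing that the relevant paths can be taken low over $p'$, so that $(p\oplus f)'\leqT p'$ and the limit becomes $\dO{2}(p)$-definable from the node data), the enlarged tree you describe is not $\dO{2}$ and $\WKL'$ cannot be applied to it. You yourself flag that the ``main obstacle'' is the pruning, but the obstacle is really in the interleaving, and as written it is not resolved.
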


Fact~\ref{fact:WKL-lim-COH} together with Corollary~\ref{cor:RT-SRT-lim-COH} yields the following.

\begin{corollary}[Induction]
\label{cor:induction}
$\RT_{n+1,k}\leqW\RT_{n,k}*\WKL'$ for all $n\geq1$, $k\geq1,\IN$.
\end{corollary}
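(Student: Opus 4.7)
The plan is to simply chain together Corollary~\ref{cor:RT-SRT-lim-COH}(1) with Fact~\ref{fact:WKL-lim-COH}. From the former we already have
\[\RT_{n+1,k}\leqW\RT_{n,k}*\lim*\COH,\]
and the latter gives us $\WKL'\equivW\lim*\COH$. So all that is needed is to regroup the right-hand side of the first reduction as $\RT_{n,k}*(\lim*\COH)$ and then substitute.

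The only subtle point is that this regrouping requires associativity of the compositional product $*$. This is exactly the associativity result cited from \cite[Corollary~17]{BP16} (also used earlier in this section just before Corollary~\ref{cor:RT-SRT-lim-COH}), so we can invoke it directly. Since Weihrauch equivalence of a factor preserves the compositional product up to Weihrauch equivalence (because $*$ is a well-defined operation on Weihrauch degrees), replacing $\lim*\COH$ by $\WKL'$ in the composition yields
\[\RT_{n,k}*(\lim*\COH)\equivW\RT_{n,k}*\WKL'.\]
Combining this with the reduction from Corollary~\ref{cor:RT-SRT-lim-COH}(1) gives $\RT_{n+1,k}\leqW\RT_{n,k}*\WKL'$, which is the claim.

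I do not anticipate any real obstacle here, since both ingredients are already in hand; the proof is essentially a one-line substitution justified by associativity. The only thing to be careful about is formally that $*$ is associative on the level of Weihrauch degrees, which is why the grouping $\RT_{n,k}*\lim*\COH$ is unambiguous and may be read as $\RT_{n,k}*(\lim*\COH)$.
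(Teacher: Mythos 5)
Your proof is correct and follows essentially the same approach as the paper, which simply combines Corollary~\ref{cor:RT-SRT-lim-COH}(1) with Fact~\ref{fact:WKL-lim-COH} and implicitly uses associativity of $*$ (which the paper recalls just before Corollary~\ref{cor:RT-SRT-lim-COH}). Your attention to the associativity and degree-invariance of $*$ makes explicit what the paper leaves tacit.
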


This corollary can be interpreted such that $\WKL'$ is sufficient to transfer $\RT_{n,k}$ into $\RT_{n+1,k}$. 
An obvious question is whether $\WKL'$ is minimal with this property.
It follows from Proposition~\ref{prop:RT2*RT1xlim} below and Lemma~\ref{lem:jump-cylindrification}
that $\WKL'$ cannot be replaced by $\lim$ in Corollary~\ref{cor:induction}.

Corollary~\ref{cor:induction} could also be proved directly using so-called Erd\H{o}s-Rado trees.\footnote{This approach has been used in \cite[Proposition~6.6.1]{Rak15};
for a proof theoretic analysis of Ramsey's theorem this method has been applied in \cite{KK09a,KK12,BS14}, and in reverse mathematics
it has been used to prove that Ramsey's theorem is provable over $\ACA_0$; see \cite{EHMR84a} for the Erd\H{o}s-Rado method in general.}
It reflects the fact that Ramsey's theorem can be proved inductively, where the induction base follows from the Bolzano-Weierstra\ss{} theorem for the
$k$--point space by
$\RT_{1,k}\equivW\BWT_k\leqW\WKL'$, which holds by Proposition~\ref{prop:bottom} and Fact~\ref{fact:WKL-BWT}.
Altogether, this means that we can derive the following result from Corollary~\ref{cor:induction} and Fact~\ref{fact:WKL-BWT}(6).

\begin{corollary}[Upper bound]
\label{cor:upper-bound}
$\RT_{n,k}\leqSW\WKL^{(n)}$ for all $n\geq1$, $k\geq1,\IN$.
\end{corollary}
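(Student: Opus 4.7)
The plan is to prove the weak form $\RT_{n,k}\leqW\WKL^{(n)}$ by induction on $n$, and then promote the reduction to $\leqSW$ using the fact that $\WKL^{(n)}$ is a cylinder. The inductive engine is Corollary~\ref{cor:induction} together with the identity $(\WKL')^{[n]}\equivW\WKL^{(n)}$ from Fact~\ref{fact:WKL-BWT}(6); these two facts together express the slogan that each increase in cardinality of Ramsey's theorem is absorbed by one extra factor of $\WKL'$ in the compositional product.

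For the base case $n=1$ I would argue as follows. For $k=1$, $\RT_{1,1}$ is computable by Proposition~\ref{prop:bottom}(1), so there is nothing to prove. For finite $k\geq 2$, Proposition~\ref{prop:bottom}(2) gives $\RT_{1,k}\equivW\BWT_k$, Fact~\ref{fact:WKL-BWT}(1) gives $\BWT_k\equivSW\C_k'$, and since $\C_k\leqSW\WKL$ we conclude $\RT_{1,k}\leqW\C_k'\leqSW\WKL'=\WKL^{(1)}$. For $k=\IN$, the same reasoning gives $\RT_{1,\IN}\equivW\BWT_\IN\leqW\BWT_{\IN^\IN}\equivW\WKL'$, where the middle reduction uses the computable embedding $\IN\hookrightarrow\IN^\IN$ and the final equivalence is the one invoked in Corollary~\ref{cor:cones}.

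For the inductive step, assume $\RT_{n,k}\leqW\WKL^{(n)}$. By Corollary~\ref{cor:induction}, $\RT_{n+1,k}\leqW\RT_{n,k}*\WKL'$, and by monotonicity of the compositional product (which follows directly from its definition as a supremum, see \cite{BP16}) together with the induction hypothesis we obtain $\RT_{n,k}*\WKL'\leqW\WKL^{(n)}*\WKL'$. Using Fact~\ref{fact:WKL-BWT}(6) and the associativity of $*$, we rewrite
\[\WKL^{(n)}*\WKL'\equivW(\WKL')^{[n]}*\WKL'\equivW(\WKL')^{[n+1]}\equivW\WKL^{(n+1)},\]
which completes the induction. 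Finally, $\WKL^{(n)}\equivSW\widehat{\C_2^{(n)}}$ by Fact~\ref{fact:WKL-BWT}(2); this parallelization is idempotent and, for $n\geq 1$, lies above $\lim$ (and $\WKL$ itself is well known to be a cylinder for $n=0$), so $\id\times\WKL^{(n)}\leqSW\WKL^{(n)}$, i.e., $\WKL^{(n)}$ is a cylinder. Hence the weak reduction just established upgrades automatically to the strong reduction claimed in the corollary.

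The work is essentially bookkeeping; the only mildly subtle point is the cylinder argument at the end, since a strong upgrade is not automatic from the non-uniform inductive formulation, but once one notices that $\WKL^{(n)}$ is parallelized and sits above $\id$, no obstacle remains.
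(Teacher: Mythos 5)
Your proposal is correct and follows essentially the same route as the paper: the base case $\RT_{1,k}\equivW\BWT_k\leqW\WKL'$ via Proposition~\ref{prop:bottom} and Fact~\ref{fact:WKL-BWT}, the inductive step using Corollary~\ref{cor:induction} and $(\WKL')^{[n]}\equivW\WKL^{(n)}$ from Fact~\ref{fact:WKL-BWT}(6), and the upgrade to $\leqSW$ because $\WKL^{(n)}$ is a cylinder. You merely spell out some details the paper leaves implicit (treating $k=1$ and $k=\IN$ separately in the base case, and sketching why $\WKL^{(n)}$ is a cylinder via $\WKL^{(n)}\equivSW\widehat{\C_2^{(n)}}$, strong idempotency of parallelization, and $\id\leqSW\lim\leqSW\WKL^{(n)}$ for $n\geq 1$).
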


We note that we get a strong reduction here, since $\WKL^{(n)}$ is a cylinder.
The upper bound is tight in terms of the number of jumps and also up to parallelization. 
Using the upper bound from Corollary~\ref{cor:upper-bound} together with the lower bound from Corollary~\ref{cor:delayed-parallelization}
yields the following characterization of Ramsey's theorem $\RT$.

\begin{corollary}
\label{cor:RT}
$\RT\equivW\bigsqcup_{n=0}^\infty\lim^{(n)}\equivW\bigsqcup_{n=0}^\infty\WKL^{(n)}$.
\end{corollary}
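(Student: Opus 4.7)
The plan is to establish the two equivalences separately, with the middle coproduct acting as a convenient intermediate.

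For the upper bound $\RT\leqW\bigsqcup_{n=0}^\infty\WKL^{(n)}$, I would unfold the definition $\RT=\bigsqcup_{n\geq1}\bigsqcup_{k\geq1}\RT_{n,k}$ and apply Corollary~\ref{cor:upper-bound}, which gives $\RT_{n,k}\leqSW\WKL^{(n)}$. The crucial point is that this reduction is uniform in $n$ and $k$: an input to $\RT$ arrives tagged with its $(n,k)$ coordinates, so from this tag we can select the corresponding reduction procedure witnessing $\RT_{n,k}\leqSW\WKL^{(n)}$ and inject the output into the $n$-th component of the coproduct $\bigsqcup_{n=0}^\infty\WKL^{(n)}$.

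For the middle equivalence $\bigsqcup_{n=0}^\infty\WKL^{(n)}\equivW\bigsqcup_{n=0}^\infty\lim^{(n)}$, I would use Fact~\ref{fact:WKL-BWT}(5), which supplies uniform reductions $\WKL^{(n)}\leqSW\lim^{(n)}\leqSW\WKL^{(n+1)}$. Given an input tagged with $n$ for the left coproduct, we either invoke $\lim^{(n)}$ (for ``$\leqW$'') or $\WKL^{(n+1)}$ (for the other direction), with a trivial relabelling of the tag. Both directions are uniform since the reductions in Fact~\ref{fact:WKL-BWT}(5) are uniform in $n$.

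For the lower bound $\bigsqcup_{n=0}^\infty\WKL^{(n)}\leqW\RT$, I would reduce each component $\WKL^{(n)}$ uniformly into some $\RT_{m,2}$ with $m$ computable from $n$, and then inject into the $m$-th component of $\RT$. The cases $n\geq 2$ are handled uniformly by Corollary~\ref{cor:delayed-parallelization}, which yields $\WKL^{(n)}\leqW\SRT_{n+2,2}\leqW\RT_{n+2,2}$. The cases $n=0,1$ require the remaining clauses of Corollary~\ref{cor:delayed-parallelization}: for $n=1$ we have $\WKL'\leqW\RT_{3,2}$ directly, and for $n=0$ we use $\WKL\leqSW\lim\leqW\SRT_{3,2}\leqW\RT_{3,2}$ via Fact~\ref{fact:WKL-BWT}(5) and $\lim\leqW\SRT_{3,2}$. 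Combining the three cases gives a uniformly computable assignment $n\mapsto m(n)$ together with uniform reductions $\WKL^{(n)}\leqW\RT_{m(n),2}$, which yields the required reduction of coproducts into $\RT$.

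The main obstacle is bookkeeping the uniformity of all these reductions and their behaviour on the finitely many exceptional small values of $n$; once that is settled the argument is essentially a chain of citations. No new construction is needed beyond assembling the previously established upper and lower bounds at the level of the full coproduct $\RT$.
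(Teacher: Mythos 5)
Your proposal is correct and follows essentially the same route as the paper, which simply cites the upper bound from Corollary~\ref{cor:upper-bound} and the lower bound from Corollary~\ref{cor:delayed-parallelization} (together with Fact~\ref{fact:WKL-BWT}(5) for the middle equivalence) and leaves the assembly at the coproduct level to the reader. You have fleshed out that assembly, correctly noted the uniformity requirement for coproduct reductions, and handled the small cases $n=0,1$ explicitly; nothing here departs from the paper's intended argument.
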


This degree corresponds to the class $\ACA_0'$ in reverse mathematics (see \cite[Theorem~6.27]{Hir15}).
We call a problem $f$  {\em countably irreducible} if $f\leqW\bigsqcup_{n=0}^\infty g_n$ implies
$f\leqW g_n$ for some $n\in\IN$.\footnote{This property has been called ``join-irreducible'' in previous publications, but formally it is a strengthening of join-irreducibility in the lattice theoretic sense. It is also not identical to ``countable join-irreducibility'' since the countable coproduct is not a supremum in general.}
Since it is clear that $\C_{\IN^\IN}$ is countably irreducible \cite[Corollary~5.6]{BBP12}, and it is easy to see that $\bigsqcup_{n=0}^\infty\lim^{(n)}\leqW\C_{\IN^\IN}$,
 we obtain the following corollary.

\begin{corollary}
\label{cor:RT-CNN}
$\RT\lW\C_{\IN^\IN}$.
\end{corollary}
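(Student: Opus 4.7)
The plan is to combine the characterization $\RT \equivW \bigsqcup_{n=0}^\infty \lim^{(n)}$ from Corollary~\ref{cor:RT} with the two observations spelled out immediately before the statement: that $\C_{\IN^\IN}$ is countably irreducible (in the sense of the definition just given), and that $\bigsqcup_{n=0}^\infty \lim^{(n)} \leqW \C_{\IN^\IN}$.

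For the reduction direction $\RT \leqW \C_{\IN^\IN}$, I would simply chain
\[\RT \equivW \bigsqcup_{n=0}^\infty \lim\nolimits^{(n)} \leqW \C_{\IN^\IN},\]
using Corollary~\ref{cor:RT} and the cited reduction. The latter uses that for each $n$ the iterated limit $\lim^{(n)}$ admits a uniform encoding as closed choice on Baire space, exploiting that $\C_{\IN^\IN}$ dominates the entire arithmetical choice hierarchy. The tagging by the first component of the coproduct input is used only to select the arithmetical level at which the closed subset is constructed.

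For strictness, I would argue by contradiction. Assume $\C_{\IN^\IN} \leqW \RT$. By Corollary~\ref{cor:RT} this gives $\C_{\IN^\IN} \leqW \bigsqcup_{n=0}^\infty \lim^{(n)}$, and countable irreducibility of $\C_{\IN^\IN}$ then produces a fixed index $n_0$ with $\C_{\IN^\IN} \leqW \lim^{(n_0)}$. Composing with the upper bound from the first part I get
\[\lim\nolimits^{(n_0+1)} \leqW \bigsqcup_{n=0}^\infty \lim\nolimits^{(n)} \leqW \C_{\IN^\IN} \leqW \lim\nolimits^{(n_0)},\]
which violates the strict chain $\lim^{(n_0)} \lW \lim^{(n_0+1)}$ from Fact~\ref{fact:WKL-BWT}(5). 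This contradiction yields $\C_{\IN^\IN} \nleqW \RT$, completing the strict separation.

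The only step requiring any genuine verification is the uniform reduction $\bigsqcup_n \lim^{(n)} \leqW \C_{\IN^\IN}$, which the excerpt labels as easy; apart from that, the argument is a purely formal combination of countable irreducibility of $\C_{\IN^\IN}$ with the known strictness of the iterated-limit hierarchy. I do not anticipate any additional obstacle, since Corollary~\ref{cor:RT} has already done the heavy lifting of identifying $\RT$ with the countable coproduct of the jumps of $\lim$.
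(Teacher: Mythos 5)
Your proof is correct and follows essentially the same route as the paper: reduce $\RT$ to $\C_{\IN^\IN}$ via Corollary~\ref{cor:RT} and the reduction $\bigsqcup_n\lim^{(n)}\leqW\C_{\IN^\IN}$, then use countable irreducibility of $\C_{\IN^\IN}$ to rule out the reverse reduction. The paper leaves the final contradiction implicit (pinning $\C_{\IN^\IN}\leqW\lim^{(n_0)}$ is already absurd); your explicit derivation via $\lim^{(n_0+1)}\leqW\lim^{(n_0)}$ against Fact~\ref{fact:WKL-BWT}(5) is a valid way to close the argument.
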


Here $\C_{\IN^\IN}$ can be seen as a possible counterpart of $\ATR_0$ in reverse mathematics, in the sense
that some statements that are equivalent to $\ATR_0$ over $\RCA_0$ in reverse mathematics turn out to be equivalent to $\C_{\IN^\IN}$
in the Weihrauch lattice if interpreted as problems.\footnote{Results in this direction are not yet published, but this
emerged during a discussion at a recent Dagstuhl seminar on the subject.}
We obtain the following characterization of the parallelization of Ramsey's theorem.

\begin{corollary}[Parallelization]
\label{cor:parallelization}
$\widehat{\RT_{n,k}}\equivW\WKL^{(n)}\equivSW\widehat{\CRT_{n,k}}$ for all $n\geq1$, $k\geq2,\IN$ and
$\widehat{\SRT_{n,k}}\equivW\WKL^{(n)}\equivSW\widehat{\CSRT_{n,k}}$ for all $n\geq2$, $k\geq2,\IN$.
\end{corollary}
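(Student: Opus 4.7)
The plan is to assemble matching upper and lower bounds from results already proved; both sides meet at $\WKL^{(n)}$ after parallelization. For the upper bound I would first upgrade $\RT_{n,k}\leqSW\WKL^{(n)}$ from Corollary~\ref{cor:upper-bound} to $\CRT_{n,k}\leqSW\WKL^{(n)}$ by combining $\CRT_{n,k}\equivW\RT_{n,k}$ (Corollary~\ref{cor:basic-equivalences}) with the fact that $\WKL^{(n)}$ is a cylinder, so that every ordinary reduction into it is automatically strong. Since parallelization is monotone and idempotent and $\WKL^{(n)}\equivSW\widehat{\C_2^{(n)}}$ by Fact~\ref{fact:WKL-BWT}(2), this gives
\[\widehat{\CRT_{n,k}}\leqSW\widehat{\WKL^{(n)}}\equivSW\widehat{\C_2^{(n)}}\equivSW\WKL^{(n)},\]
and the same bound for $\widehat{\CSRT_{n,k}}$ follows from $\CSRT_{n,k}\leqSW\CRT_{n,k}$ (Lemma~\ref{lem:basic-reductions}).

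For the lower bound, Corollary~\ref{cor:lower-bound} directly gives $\C_2^{(n)}\leqSW\CSRT_{n,k}$ for $n\geq 2$ and $k\geq 2,\IN$, so parallelizing yields
\[\WKL^{(n)}\equivSW\widehat{\C_2^{(n)}}\leqSW\widehat{\CSRT_{n,k}}\leqSW\widehat{\CRT_{n,k}}.\]
For the remaining case $n=1$, which is needed only for $\widehat{\CRT_{1,k}}$, I would combine Proposition~\ref{prop:bottom}(3) with Lemma~\ref{lem:increasing-color} to obtain $\C_2'\equivSW\BWT_2\leqSW\CRT_{1,2}\leqSW\CRT_{1,k}$, and then parallelize to conclude $\WKL'\equivSW\widehat{\C_2'}\leqSW\widehat{\CRT_{1,k}}$.

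Combining the matching bounds yields the strong equivalences $\widehat{\CRT_{n,k}}\equivSW\WKL^{(n)}$ for $n\geq 1$ and $\widehat{\CSRT_{n,k}}\equivSW\WKL^{(n)}$ for $n\geq 2$. The ordinary equivalences for $\widehat{\RT_{n,k}}$ and $\widehat{\SRT_{n,k}}$ then follow immediately from $\RT_{n,k}\equivW\CRT_{n,k}$ and $\SRT_{n,k}\equivW\CSRT_{n,k}$ (Corollary~\ref{cor:basic-equivalences}), since parallelization preserves $\equivW$; they cannot be strengthened to $\equivSW$ because Corollary~\ref{cor:cylinder-SRT} shows that $\widehat{\RT_{n,k}}$ and $\widehat{\SRT_{n,k}}$ fail to be cylinders while $\WKL^{(n)}$ is one. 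There is no serious obstacle in this proof; the main things to keep track of are the cylinder property of $\WKL^{(n)}$ when lifting $\leqW$ to $\leqSW$ at the right moments, and the observation that the restriction to $n\geq 2$ in the stable case is forced by the genuine gap $\widehat{\SRT_{1,k}}\equivW\widehat{\lim\nolimits_k}\equivW\lim\lW\WKL'$, which follows from Proposition~\ref{prop:bottom}(2) and Fact~\ref{fact:WKL-BWT}(3),(5).
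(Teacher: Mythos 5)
Your proof is correct and takes essentially the same route as the paper's: both rely on parallelizing the lower bound from Corollary~\ref{cor:lower-bound} and the upper bound from Corollary~\ref{cor:upper-bound}, together with Fact~\ref{fact:WKL-BWT}(2) to identify $\widehat{\C_2^{(n)}}$ with $\WKL^{(n)}$. The only organizational difference is that the paper builds a single closed chain of reductions from $\WKL^{(n)}$ through $\widehat{\CSRT_{n,k}}$, $\widehat{\SRT_{n,k}}$, $\widehat{\RT_{n,k}}$ and back, then invokes Corollary~\ref{cor:cylinder-CSRT} to lift the colored equivalences to strong ones; you instead get the strong reductions directly on the colored side by using the cylinder property of $\WKL^{(n)}$ to upgrade $\CRT_{n,k}\leqW\WKL^{(n)}$ to a strong reduction before parallelizing, which avoids the appeal to Corollary~\ref{cor:cylinder-CSRT} at the cost of handling $n=1$ separately a bit more explicitly. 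Both are sound.
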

\begin{proof}
Using Fact~\ref{fact:WKL-BWT}, Corollary~\ref{cor:basic-equivalences}, the lower bound from Corollary~\ref{cor:lower-bound}, the upper bound from Corollary~\ref{cor:upper-bound} and the fact that parallelization is a closure operator
we obtain the following reduction chain for $n\geq2$, $k\geq2,\IN$:
\[\WKL^{(n)}\equivSW\widehat{\C_2^{(n)}}\equivSW\widehat{\BWT_2^{(n-1)}}\leqSW\widehat{\CSRT_{n,k}}\leqW\widehat{\SRT_{n,k}}
\leqSW\widehat{\RT_{n,k}}\leqSW\WKL^{(n)},\]
which implies $\widehat{\SRT_{n,k}}\equivW\widehat{\RT_{n,k}}\equivW\WKL^{(n)}$, and
the corresponding strong equivalences $\widehat{\CSRT_{n,k}}\equivSW\widehat{\CRT_{n,k}}\equivSW\WKL^{(n)}$
follow, since the colored versions of Ramsey's theorem are cylinders by Corollary~\ref{cor:cylinder-CSRT},
and $\WKL^{(n)}$ is a cylinder too.
The proof for $n=1$ follows similarly using Proposition~\ref{prop:bottom}. 
\end{proof}

We mention that $\widehat{\SRT_{1,k}}\equivW\lim\equivSW\widehat{\CSRT_{1,k}}$ holds for $k\geq2,\IN$ by Corollary~\ref{cor:lower-bound}.
By Corollary~\ref{cor:cylinder-SRT} the parallelized uncolored versions of Ramsey's theorem are not cylinders,
and hence the Weihrauch equivalences $\equivW$ in the previous corollary cannot be replaced by strong ones $\equivSW$.

Since $\WKL^{(n)}\leqW\lim^{(n)}$ and $\WKL^{(n)}\nleqW\lim^{(n-1)}$ for all $n\geq1$ by Fact~\ref{fact:WKL-BWT}
and $\lim^{(n)}$ is effectively $\SO{n+2}$--complete by Facts~\ref{fact:WKL-BWT} and \ref{fact:Borel}, we obtain the following corollary
that characterizes the Borel complexity of Ramsey's theorem. 

\begin{corollary}[Borel complexity]
\label{cor:Borel-measurability}
$\SRT_{n,k}$ and $\RT_{n,k}$ are both effectively $\SO{n+2}$--measurable, but not effectively $\SO{n+1}$--measurable for all $n\geq2$, $k\geq2,\IN$.
\end{corollary}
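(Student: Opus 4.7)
The plan is to derive both the upper and lower Borel bounds as short consequences of the machinery already assembled, namely the upper bound $\RT_{n,k}\leqSW\WKL^{(n)}$ from Corollary~\ref{cor:upper-bound} and the parallelization classification $\widehat{\SRT_{n,k}}\equivW\WKL^{(n)}$ from Corollary~\ref{cor:parallelization}, together with the descriptive-set-theoretic characterization $f\leqW\lim^{[n]}\iff f$ is effectively $\SO{n+1}$--measurable from Fact~\ref{fact:Borel}(4).

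For the upper bound, I would chain reductions as follows. From Corollary~\ref{cor:upper-bound} we have $\RT_{n,k}\leqW\WKL^{(n)}$, and Fact~\ref{fact:WKL-BWT}(5) gives $\WKL^{(n)}\leqW\lim^{(n)}$, while Fact~\ref{fact:WKL-BWT}(6) yields $\lim^{(n)}\equivW\lim^{[n+1]}$. Composing, $\RT_{n,k}\leqW\lim^{[n+1]}$, and Fact~\ref{fact:Borel}(4) then gives effective $\SO{n+2}$--measurability of $\RT_{n,k}$. Since $\SRT_{n,k}\leqW\RT_{n,k}$ by Lemma~\ref{lem:basic-reductions}, the downward preservation of effective $\SO{n+2}$--measurability along Weihrauch reducibility (Fact~\ref{fact:Borel}(3)) transfers this conclusion to $\SRT_{n,k}$.

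For the lower bound, I would argue by contradiction, starting with $\SRT_{n,k}$. Suppose $\SRT_{n,k}$ were effectively $\SO{n+1}$--measurable. Then Fact~\ref{fact:Borel}(4) (with Fact~\ref{fact:WKL-BWT}(6)) gives $\SRT_{n,k}\leqW\lim^{[n]}\equivW\lim^{(n-1)}$. Parallelizing, since parallelization is a closure operator and since jumps commute with parallelization (so that $\widehat{\lim^{(n-1)}}\equivW(\widehat{\lim})^{(n-1)}\equivW\lim^{(n-1)}$, using that $\lim$ is parallelizable), we obtain $\widehat{\SRT_{n,k}}\leqW\lim^{(n-1)}$. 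But Corollary~\ref{cor:parallelization} identifies $\widehat{\SRT_{n,k}}\equivW\WKL^{(n)}$ for $n\geq2$, so we would get $\WKL^{(n)}\leqW\lim^{(n-1)}$, contradicting the strictness $\lim^{(n-1)}\lW\WKL^{(n)}$ from Fact~\ref{fact:WKL-BWT}(5). The analogous conclusion for $\RT_{n,k}$ is immediate: since $\SRT_{n,k}\leqW\RT_{n,k}$, if $\RT_{n,k}$ were effectively $\SO{n+1}$--measurable then Fact~\ref{fact:Borel}(3) would propagate this to $\SRT_{n,k}$, contradicting what we just proved.

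There is no real obstacle here—the result is essentially a bookkeeping corollary of the parallelization classification and the upper bound. The only point requiring a moment of thought is the step $\widehat{\lim^{(n-1)}}\equivW\lim^{(n-1)}$, which follows at once from the parallelizability of $\lim$ and the commutation of parallelization with jumps cited in the proof of Fact~\ref{fact:WKL-BWT}(2).
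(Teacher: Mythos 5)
Your proof is correct and follows essentially the same route the paper takes: upper bound via $\RT_{n,k}\leqW\WKL^{(n)}\leqW\lim^{(n)}$ combined with the effective $\SO{n+2}$--completeness of $\lim^{(n)}$, and lower bound by observing that $\SRT_{n,k}\leqW\lim^{(n-1)}$ would force $\widehat{\SRT_{n,k}}\equivW\WKL^{(n)}\leqW\lim^{(n-1)}$, contradicting Fact~\ref{fact:WKL-BWT}(5). The only detail you spell out more explicitly than the paper is the parallelizability step $\widehat{\lim^{(n-1)}}\equivW\lim^{(n-1)}$, which the paper leaves implicit.
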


Both positive statements and the negative statement on $\RT_{n,k}$ also hold for $n=1$.
By an application of Facts~\ref{fact:Borel} and \ref{fact:WKL-BWT} we can also rephrase the negative statements as follows.

\begin{corollary}
\label{cor:SRT-limits}
$\SRT_{n,k}\nleqW\lim^{(n-1)}$ for $n\geq2$
and $\RT_{n,k}\nleqW\lim^{(n-1)}$ for $n\geq1$ and both for $k\geq2,\IN$.
\end{corollary}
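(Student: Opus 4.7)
The plan is to read off both statements from Corollary~\ref{cor:Borel-measurability} combined with the characterization of $\lim^{(n-1)}$ as an effectively $\SO{n+1}$--complete Weihrauch degree. The key translation is supplied by Fact~\ref{fact:Borel}(4): a problem $f$ satisfies $f\leqW\lim^{[n]}$ if and only if $f$ is effectively $\SO{n+1}$--measurable. Combining this with Fact~\ref{fact:WKL-BWT}(6), which gives $\lim^{[n]}\equivW\lim^{(n-1)}$ for all $n\geq1$, we obtain the convenient reformulation
\[f\leqW\lim\nolimits^{(n-1)}\iff f\mbox{ is effectively }\SO{n+1}\mbox{--measurable}.\]
Contrapositively, showing $f\nleqW\lim^{(n-1)}$ amounts to exhibiting $f$ as not effectively $\SO{n+1}$--measurable.

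For the case $n\geq2$, both negative statements are then immediate from Corollary~\ref{cor:Borel-measurability}: since neither $\SRT_{n,k}$ nor $\RT_{n,k}$ is effectively $\SO{n+1}$--measurable for $n\geq2$ and $k\geq2,\IN$, neither reduces Weihrauch to $\lim^{(n-1)}$. So essentially nothing beyond unpacking the two facts is required here.

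The only case needing separate attention is $n=1$ for $\RT_{1,k}$. Here the claim reads $\RT_{1,k}\nleqW\lim$ for $k\geq2,\IN$, which is not contained verbatim in Corollary~\ref{cor:Borel-measurability}. However, by Proposition~\ref{prop:bottom} we have $\RT_{1,k}\equivW\BWT_k$, and the parenthetical remark in Fact~\ref{fact:WKL-BWT}(4) (for $n=0$) records that $\BWT_k$ with $k\geq2$ fails to be $\SO{2}$--measurable, while $\lim_k$ is. Applying the equivalence above with $n=1$, this immediately yields $\BWT_k\nleqW\lim^{(0)}=\lim$ and hence $\RT_{1,k}\nleqW\lim$, completing the remaining case.

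I expect no real obstacle: the entire argument is a direct unpacking of the two cited facts together with the already established classification in Corollary~\ref{cor:Borel-measurability}, with the minor bookkeeping point being the case $n=1$ for $\RT_{1,k}$ (handled via $\BWT_k$). The proof can therefore be written very briefly, essentially as a one-line remark citing Facts~\ref{fact:Borel}(4) and~\ref{fact:WKL-BWT}(4),(6) and Corollary~\ref{cor:Borel-measurability}.
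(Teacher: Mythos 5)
Your proposal is correct and matches the paper's own route: the paper introduces Corollary~\ref{cor:SRT-limits} with exactly the phrase ``By an application of Facts~\ref{fact:Borel} and \ref{fact:WKL-BWT} we can also rephrase the negative statements as follows,'' where ``the negative statements'' refers to Corollary~\ref{cor:Borel-measurability} together with the sentence that precedes it, which extends the non-measurability of $\RT_{n,k}$ to $n=1$. Your handling of that last case via $\RT_{1,k}\equivW\BWT_k$ (Proposition~\ref{prop:bottom}) and the non-$\SO{2}$--measurability of $\BWT_k$ noted in the justification of Fact~\ref{fact:WKL-BWT}(4) is precisely the content of that sentence, so nothing is missing.
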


Since effective $\SO{n}$--measurability is preserved downwards by Weihrauch reducibility by Fact~\ref{fact:Borel}, this
in turn implies that Ramsey's theorem actually forms a strictly increasing chain with increasing cardinality.

\begin{corollary}[Increasing cardinality]
\label{cor:increasing-size}
$\RT_{n,k}\lW\RT_{n+1,2}$, $\RT_{n,k}\lSW\RT_{n+1,2}$,\linebreak 
$\SRT_{n,k}\lW\SRT_{n+1,2}$ and $\SRT_{n,k}\lSW\SRT_{n+1,2}$ for all $n\geq1$ and $k\geq2,\IN$.
\end{corollary}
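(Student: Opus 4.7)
The plan is to assemble the four strict reductions by combining the existing ``$\leq$'' chain with the Borel complexity classification for strictness. For the underlying reductions, I would first observe that for finite $k\geq 2$ we have $\RT_{n,k}\leqSW\RT_{n,\IN}\leqSW\RT_{n+1,2}$ by Lemma~\ref{lem:increasing-color} followed by Corollary~\ref{cor:color-reduction}, with the case $k=\IN$ being Corollary~\ref{cor:color-reduction} directly; the same chain applies verbatim to $\SRT$. This settles $\leqSW$, and hence $\leqW$, in all four cases.

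For strictness, I would argue via effective Borel measurability. By Corollary~\ref{cor:Borel-measurability}, for all $n\geq 1$ and $k\geq 2,\IN$ the problems $\RT_{n,k}$ and $\SRT_{n,k}$ are effectively $\SO{n+2}$--measurable (invoking the clause that the positive statements extend to $n=1$). Substituting $n+1$ for $n$ in the negative halves of the same corollary yields that $\RT_{n+1,2}$ is not effectively $\SO{n+2}$--measurable for $n\geq 0$, and $\SRT_{n+1,2}$ is not effectively $\SO{n+2}$--measurable for $n\geq 1$. A purported reduction $\RT_{n+1,2}\leqW\RT_{n,k}$ would, by Fact~\ref{fact:Borel}(3), force $\RT_{n+1,2}$ to inherit effective $\SO{n+2}$--measurability from $\RT_{n,k}$, contradicting the above; the identical argument rules out $\SRT_{n+1,2}\leqW\SRT_{n,k}$. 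Since $\leqSW$ implies $\leqW$, these nonreductions also hold for $\nleqSSW$ in the sense that $\nleqSW$ is inherited, and all four strict reductions follow at once.

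There is no substantive obstacle; the argument is essentially bookkeeping. The only delicate point is the treatment of the boundary case $n=1$, which is absorbed by the explicit extension of the positive statements (and of the negative $\RT$ statement) in Corollary~\ref{cor:Borel-measurability}, together with the observation from Proposition~\ref{prop:bottom} that $\SRT_{1,k}\equivW\lim_k$ sits at the $\SO{2}$--level (hence well within $\SO{3}$) so that the measurability premise is available uniformly in $n\geq 1$.
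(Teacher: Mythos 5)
Your proof is correct and follows essentially the same route the paper takes: the positive reductions come from the color-reduction chain (Lemma~\ref{lem:increasing-color} composed with Corollary~\ref{cor:color-reduction}), and strictness follows by observing that $\RT_{n+1,2}$ and $\SRT_{n+1,2}$ are not effectively $\SO{n+2}$--measurable while $\RT_{n,k}$ and $\SRT_{n,k}$ are, combined with the downward preservation of effective measurability along $\leqW$ from Fact~\ref{fact:Borel}(3). You are actually a bit more careful than the paper's one-line remark, which cites only Corollary~\ref{cor:color-reduction} for the positive part and leaves the finite-$k$ step through Lemma~\ref{lem:increasing-color} implicit. (One small typo: you wrote $\nleqSSW$ where you clearly meant $\nleqSW$; the logic is unaffected.)
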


Here the positive parts of the reduction hold by Corollary~\ref{cor:color-reduction}.
The separations (at least in the case of $\RT$) do also follow from classical non-uniform 
separation results of Jockusch \cite{Joc72}
who showed that every computable instance of $\RT_{n,k}$ has a $\emptyset^{(n)}$--computable solution,
whereas there are computable instance of $\RT_{n+1,k}$ that have no $\emptyset^{(n)}$--computable solution.
We can also draw some conclusions on increasing numbers of colors.
In \cite[Theorem~3.1]{DDH+16} Dorais et al.\ have proved that $\RT_{n,k}\lSW\RT_{n,k+1}$ holds for all $n,k\geq1$.
Their main tool was a version of the squashing theorem (Theorem~\ref{thm:squashing}) for strong Weihrauch reducibility.
With the help of Theorem~\ref{thm:products} we can strengthen this result to ordinary Weihrauch reducibility, 
which answers \cite[Question~7.1]{DDH+16}. 
This result was independently obtained by Hirschfeldt and Jockusch~\cite[Theorem~3.3]{HJ16} and Patey~\cite[Corollary~3.15]{Pat16a}.

\begin{theorem}[Increasing numbers of colors]
\label{thm:increasing-colors}
$\RT_{n,k}\lW\RT_{n,k+1}$ for all $n,k\geq1$.
\end{theorem}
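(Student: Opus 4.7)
The forward reduction $\RT_{n,k}\leqW\RT_{n,k+1}$ is already in Lemma~\ref{lem:increasing-color}, so I need only show strictness, and I do so by contradiction, assuming $\RT_{n,k+1}\leqW\RT_{n,k}$. Two boundary cases are disposed of first. If $k=1$ then $\RT_{n,1}$ is computable by Proposition~\ref{prop:bottom} while $\RT_{n,2}$ is not (Corollary~\ref{cor:lower-bound} for $n\geq 2$, and Proposition~\ref{prop:bottom} combined with the non-computability of $\BWT_2$ for $n=1$), so the assumption fails immediately. If $n=1$ and $k\geq 2$, then via $\RT_{1,k}\equivW\BWT_k$ (Proposition~\ref{prop:bottom}) the problem reduces to the strict Weihrauch hierarchy $\BWT_k\lW\BWT_{k+1}$ from the Brattka--Gherardi--Marcone analysis cited via \cite{BGM12}.

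For the principal case $n\geq 2$ and $k\geq 2$ the engine is Theorem~\ref{thm:products}, applied with arity shifted down by one:
$$\RT_{n-1,\IN}\times\RT_{n,k}\leqSW\RT_{n,k+1}.$$
Composing with the assumed reduction gives $\RT_{n-1,\IN}\times\RT_{n,k}\leqW\RT_{n,k}$. I now observe that $\RT_{n,k}$ admits a representation with total domain $\IN^\IN$ (every $p\in\IN^\IN$ can be coerced into a coloring by reduction modulo $k$) and is finitely tolerant, since truncating a homogeneous set past the greatest index at which two colorings differ produces a homogeneous set for either coloring. Hence the first clause of the squashing theorem (Theorem~\ref{thm:squashing}) applies and yields
$$\widehat{\RT_{n-1,\IN}}\leqW\RT_{n,k},$$
which by Corollary~\ref{cor:parallelization} is exactly $\WKL^{(n-1)}\leqW\RT_{n,k}$.

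The final step converts this squashing conclusion into a contradiction through a known limit-avoidance bound. Since $\RT_{n,k}\leqSW\RT_{n,\IN}$ by Lemma~\ref{lem:increasing-color}, the previous chain extends to $\WKL^{(n-1)}\leqW\RT_{n,\IN}$. By Fact~\ref{fact:WKL-BWT}(5), $\lim^{(n-2)}\leqSW\WKL^{(n-1)}$, and composing reductions gives
$$\lim\nolimits^{(n-2)}\leqW\RT_{n,\IN},$$
in direct contradiction with Corollary~\ref{cor:limit-avoidance}, which forbids exactly this reduction for every $n\geq 2$. I expect the most delicate ingredient is verifying the hypotheses of the squashing theorem in the form stated for $\leqW$, namely total domain and finite tolerance of $\RT_{n,k}$; everything else is an assembly of results already established in the paper.
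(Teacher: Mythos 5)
Your proof is correct and follows essentially the same route as the paper: you combine the products theorem (Theorem~\ref{thm:products}) with the squashing theorem (Theorem~\ref{thm:squashing}) to derive $\WKL^{(n-1)}\leqW\RT_{n,k}$ from the assumed reduction, then contradict the limit-avoidance bound (Corollary~\ref{cor:limit-avoidance}) via Corollary~\ref{cor:parallelization} and Fact~\ref{fact:WKL-BWT}. The only cosmetic difference is an index shift (the paper works with $\RT_{n,2}\times\RT_{n+1,k}$ and directly shows $\RT_{n,2}\times\RT_{n+1,k}\nleqW\RT_{n+1,k}$ before invoking the products theorem, whereas you assume $\RT_{n,k+1}\leqW\RT_{n,k}$ and compose), plus your explicit checking of the squashing hypotheses, which the paper leaves implicit but is a sensible inclusion.
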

\begin{proof}
Let us assume that $\RT_{n,2}\times\RT_{n+1,k}\leqW\RT_{n+1,k}$ holds for some $n,k\geq1$.
Then by the squashing theorem (Theorem~\ref{thm:squashing}) we obtain $\widehat{\RT_{n,2}}\leqW\RT_{n+1,k}$, and hence
by Corollary~\ref{cor:parallelization}
\[\lim\nolimits^{(n-1)}\leqW\WKL^{(n)}\equivW\widehat{\RT_{n,2}}\leqW\RT_{n+1,k}\]
in contradiction to Corollary~\ref{cor:limit-avoidance}.
Hence $\RT_{n,2}\times\RT_{n+1,k}\nleqW\RT_{n+1,k}$ for all $n,k\geq1$.
On the other hand, we have $\RT_{n,2}\times\RT_{n+1,k}\leqW\RT_{n+1,k+1}$ by Theorem~\ref{thm:products}.
This implies $\RT_{n+1,k}\lW\RT_{n+1,k+1}$ for all $n,k\geq1$.
The claim $\RT_{1,k}\lW\RT_{1,k+1}$ for all $k\geq1$ was already known \cite[Theorem~13.4]{BGM12} via Proposition~\ref{prop:bottom}.
\end{proof}

From this result we can also conclude that the two uniform versions of $\RT^n_{<\infty}$ are not equivalent.
We recall that a problem $f:\In X\mto Y$ is called a {\em fractal}, if there is an $F:\In\IN^\IN\mto\IN^\IN$ with
$F\equivW f$ and $F|_A\equivW f$ for all clopen $A\In\IN^\IN$ such that $A\cap\dom(F)\not=\emptyset$.
Analogously, {\em strong fractals} are defined with $\equivSW$ in place of $\equivW$.
It is easy to see that $\RT_{n,\IN}$ and $\SRT_{n,\IN}$ are fractals
and hence countably irreducible by \cite[Proposition~2.6]{BGM12},
and thus the strictness of the reductions given in the following corollary follows from Theorem~\ref{thm:increasing-colors}.

\begin{corollary}[Arbitrary numbers of colors]
\label{cor:arbitrary-colors}
$\RT_{n,+}\lW\RT_{n,\IN}$ and \linebreak
$\SRT_{n,+}\lW\SRT_{n,\IN}$ for all $n\geq1$.
\end{corollary}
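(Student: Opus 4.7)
The plan is to split the statement into two halves. The reductions $\RT_{n,+}\leqW\RT_{n,\IN}$ and $\SRT_{n,+}\leqW\SRT_{n,\IN}$ fall out of Lemma~\ref{lem:increasing-color}, so all the effort goes into the strict separations $\RT_{n,\IN}\nleqW\RT_{n,+}$ and $\SRT_{n,\IN}\nleqW\SRT_{n,+}$; following the hint I would deduce these from fractality, countable irreducibility and Theorem~\ref{thm:increasing-colors}.

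First I would verify that both $\RT_{n,\IN}$ and $\SRT_{n,\IN}$ are fractals. Fix the natural realizer $F:\In\IN^\IN\mto\IN^\IN$ that reads a coloring $c\in\CC_{n,\IN}$ from $p$ via $p(i)=c(\vartheta_n(i))$ and outputs a name of some $M\in\HH_c$. Every nonempty clopen $A\In\IN^\IN$ contains a basic cylinder determined by a finite prefix $w$, which prescribes the colors of $\vartheta_n(0),\ldots,\vartheta_n(|w|-1)$; let $N$ strictly exceed every integer appearing in these tuples. From an arbitrary coloring $c$ one computably constructs a coloring $c'$ that matches $w$ on the prescribed tuples, assigns a fixed default color to every remaining tuple with minimum below $N$, and satisfies $c'(A+N):=c(A)$ on the rest. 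Then the canonical name of $c'$ begins with $w$, and any infinite $M'\in\HH_{c'}$ yields an infinite homogeneous set $\{m-N:m\in M',\,m\geq N\}\in\HH_c$. This gives $\RT_{n,\IN}\leqW F|_A$, which is the non-trivial half of the fractality condition. The construction preserves stability — only finitely many tuples above any fixed $(n-1)$-tuple receive the default color — so the same argument shows that $\SRT_{n,\IN}$ is a fractal.

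Fractals are countably irreducible by \cite[Proposition~2.6]{BGM12}, so any reduction $\RT_{n,\IN}\leqW\RT_{n,+}=\bigsqcup_{k\geq1}\RT_{n,k}$ would force $\RT_{n,\IN}\leqW\RT_{n,k_0}$ for some $k_0\geq1$, whereupon Lemma~\ref{lem:increasing-color} supplies $\RT_{n,k_0+1}\leqSW\RT_{n,\IN}\leqW\RT_{n,k_0}$ in direct violation of Theorem~\ref{thm:increasing-colors}. The stable case runs on the same template: $\SRT_{n,\IN}\leqW\SRT_{n,+}$ produces $\SRT_{n,k_0+1}\leqW\SRT_{n,k_0}$, whose refutation replays the squashing argument of Theorem~\ref{thm:increasing-colors} in the stable setting — Theorem~\ref{thm:products} applies to $\SRT$, Corollary~\ref{cor:parallelization} identifies $\widehat{\SRT_{n,2}}$ with $\WKL^{(n)}$, and Corollary~\ref{cor:limit-avoidance} supplies the final contradiction.

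The principal obstacle is the rigorous verification of fractality, and in particular making sure that the shift-by-$N$ construction of $c'$ and the inverse shift of $M'$ both go through at the level of the represented space $\CC_{n,\IN}$ in a uniform way, and that the extension to small tuples genuinely preserves stability. A secondary subtlety appears only in the stable case: Theorem~\ref{thm:squashing} is formulated for problems with domain $\IN^\IN$ or $2^\IN$, whereas $\dom(\SRT_{n,k})$ is a proper subset of $\{0,\ldots,k-1\}^\IN$; this is handled by noting that the finite-tolerance mechanism underpinning squashing persists because finite perturbations of a stable coloring remain stable, so the same proof goes through mutatis mutandis.
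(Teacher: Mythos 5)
Your proof follows exactly the route the paper takes: the paper derives both strict reductions from fractality of $\RT_{n,\IN}$ and $\SRT_{n,\IN}$, countable irreducibility of fractals via \cite[Proposition~2.6]{BGM12}, and Theorem~\ref{thm:increasing-colors}. What you add is legitimate filling-in of two things the paper leaves implicit. First, the paper calls fractality ``easy to see''; your shift-by-$N$ construction (match the prescribed prefix $w$, default color on small tuples, translate $c$ to the tuples above $N$) is the standard way to verify it, and your check that this preserves stability is the part that actually requires a moment's thought -- for a $(n-1)$-tuple $B$ with $\min(B)<N$ the eventual color is the default, and for $\min(B)\geq N$ it is inherited from $c$, so the construction sends stable colorings to stable ones. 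Second, you are right that the paper's pointer to Theorem~\ref{thm:increasing-colors} only literally covers the $\RT$ half, since that theorem is stated for $\RT$; the stable half requires rerunning the squashing argument with Theorem~\ref{thm:products}, Corollary~\ref{cor:parallelization} and Corollary~\ref{cor:limit-avoidance} for $\SRT$, and you correctly identify that the domain hypothesis of Theorem~\ref{thm:squashing} is not literally met by $\SRT_{n,k}$ but that closure of the stable colorings under finite modifications is exactly the feature the squashing proof needs. The one loose end is the bottom case $n=1$, where the squashing template does not apply; but there $\SRT_{1,\IN}\equivW\C_\IN$ and $\SRT_{1,k}\equivW\C_k$ by Proposition~\ref{prop:bottom}, and $\C_\IN\nleqW\C_k$ follows from cardinality (e.g.\ Proposition~\ref{prop:choice-cardinality}), so it is immediate.
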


Another consequence of the squashing theorem (Theorem~\ref{thm:squashing}) is the following result that shows that
the complexity of Ramsey's theorem also grows with an increasing number of factors. 
This generalizes Corollary~\ref{cor:no-idempotency}.

\begin{proposition}[Increasing number of factors]
\label{prop:factors}
$\RT_{n,k}^m\lW\RT_{n,k}^{m+1}$ for all $n,m\geq1$ and $k\geq2$.
\end{proposition}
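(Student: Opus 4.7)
The positive reduction $\RT_{n,k}^m\leqW\RT_{n,k}^{m+1}$ is immediate: pad the input tuple with an additional fixed computable coloring, invoke $\RT_{n,k}^{m+1}$, and project out the first $m$ homogeneous sets. For the strict part I will assume $\RT_{n,k}^{m+1}\leqW\RT_{n,k}^m$ and derive a contradiction by combining the squashing theorem (Theorem~\ref{thm:squashing}), the parallelization identification (Corollary~\ref{cor:parallelization}), and the limit avoidance result (Corollary~\ref{cor:limit-avoidance}).

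Under this assumption, $\RT_{n,k}\times\RT_{n,k}^m\leqW\RT_{n,k}^m$ up to associativity. The plan is to apply part~(1) of the squashing theorem with $g:=\RT_{n,k}$ and $f:=\RT_{n,k}^m$ in order to conclude $\widehat{\RT_{n,k}}\leqW\RT_{n,k}^m$. For the hypotheses I will verify that $f$ is total on $\IN^\IN$ and finitely tolerant. Totality is harmless: any input in $\IN^\IN$ can be read coordinatewise modulo $k$, which does not alter the Weihrauch degree. Finite tolerance of a single factor $\RT_{n,k}$ follows from the observation that if two names $p,q$ of colorings agree from position $k'$ onwards, they disagree only on the $n$-subsets $\vartheta_n(i)$ for $i<k'$; then, with $F:=\bigcup_{i<k'}\vartheta_n(i)$, for any $M\in\HH_{q}$ the set $M\sm F$ is infinite and homogeneous for both colorings, and is computable uniformly from $M$ and $k'$. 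Finite tolerance transfers componentwise to products, so $f=\RT_{n,k}^m$ qualifies.

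To close the argument I plan to chain the reduction
\[\lim\nolimits^{(n-1)}\leqW\widehat{\RT_{n,k}}\leqW\RT_{n,k}^m\leqW\RT_{n,k^m}\leqW\RT_{n,\IN}\leqW\RT_{n+1,\IN}\]
against the lower bound $\lim^{(n-1)}\nleqW\RT_{n+1,\IN}$ provided by Corollary~\ref{cor:limit-avoidance} with $n+1$ in place of $n$. The first reduction follows from Corollary~\ref{cor:parallelization} when $n\geq2$ (combined with $\lim^{(n-1)}\leqW\WKL^{(n)}$ from Fact~\ref{fact:WKL-BWT}) and from the equivalence $\widehat{\RT_{1,k}}\equivW\lim$ noted after Corollary~\ref{cor:parallelization} in the case $n=1$; the second reduction is the output of the squashing step; and the remaining three come from Corollary~\ref{cor:products}, Lemma~\ref{lem:increasing-color}, and Lemma~\ref{lem:increasing-size}, respectively. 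The main obstacle is the tolerance and totality verification required to invoke the squashing theorem cleanly; once these are in place, the rest of the argument is a routine assembly of previously established reductions.
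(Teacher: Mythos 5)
Your proposal is correct and follows the same route as the paper: assume $\RT_{n,k}^{m+1}\leqW\RT_{n,k}^m$, apply the squashing theorem to get $\widehat{\RT_{n,k}}\leqW\RT_{n,k}^m$, and derive a contradiction with limit avoidance; the paper simply routes the final chain through $\RT_{n,k}^m\leqW\RT_{n,k}^*\leqW\RT_{n+1,2}$ rather than through $\RT_{n,k^m}\leqW\RT_{n,\IN}\leqW\RT_{n+1,\IN}$, an immaterial difference (your explicit verification of totality and finite tolerance is a welcome addition the paper leaves implicit). One small slip: for $n=1$ you should cite $\widehat{\RT_{1,k}}\equivW\WKL'$ directly from Corollary~\ref{cor:parallelization}, since the equivalence $\widehat{\SRT_{1,k}}\equivW\lim$ mentioned after that corollary concerns the stable version, not $\RT_{1,k}$ — but as only $\lim\leqW\widehat{\RT_{1,k}}$ is needed, this does not affect your conclusion.
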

\begin{proof}
Let us assume that $\RT_{n,k}^{m+1}\equivW\RT_{n,k}\times\RT_{n,k}^m\leqW\RT_{n,k}^{m}$ holds for some $n,m\geq1$ and $k\geq2$.
Then by the squashing theorem (Theorem~\ref{thm:squashing}) we obtain $\widehat{\RT_{n,k}}\leqW\RT_{n,k}^m$, and hence by Corollaries~\ref{cor:finite-parallelization}, \ref{cor:color-reduction} and \ref{cor:parallelization} we can conclude
\[\lim\nolimits^{(n-1)}\leqW\WKL^{(n)}\equivW\widehat{\RT_{n,k}}\leqW\RT_{n,k}^m\leqW\RT_{n,k}^*\leqW\RT_{n+1,2},\]
in contradiction to Corollary~\ref{cor:limit-avoidance}.
Hence, $\RT_{n,k}^{m+1}\nleqW\RT_{n,k}^{m}$.
\end{proof}

We note that Question~\ref{quest:colors-factors} remains, whether additional factors can make up for color increases.
The characterization given in Corollary~\ref{cor:parallelization} also implies the following result on the arithmetic complexity of homogeneous sets
with the help of the uniform low basis theorem~\cite[Theorem~8.3]{BBP12}.

\begin{corollary}[Arithmetic complexity]
\label{cor:arithmetic-complexity}
Every computable sequence $(c_i)_i$ of colorings $c_i:[\IN]^n\to k$ for $n\geq1$ and $k\geq2$ admits a sequence $(M_i)_i$ 
such that $\langle M_0,M_1,...\rangle'\leqT\emptyset^{(n+1)}$ and
such that $M_i$ is an infinite homogeneous set for $c_i$ for each $i\in\IN$.
\end{corollary}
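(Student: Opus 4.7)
The plan is to combine Corollary~\ref{cor:parallelization} with a relativized form of the uniform low basis theorem \cite[Theorem~8.3]{BBP12}. A computable sequence $(c_i)_i$ of colorings $c_i : [\IN]^n \to k$ is a computable instance of $\widehat{\CRT_{n,k}}$. By Corollary~\ref{cor:parallelization} we have $\widehat{\CRT_{n,k}} \equivSW \WKL^{(n)}$ for $n\geq1$ and $k\geq2$, so there exist computable $K,H : \In \Baire \to \Baire$ such that $HGK \vdash \widehat{\CRT_{n,k}}$ for every realizer $G \vdash \WKL^{(n)}$. Applying $K$ to a name of $(c_i)_i$ yields a computable Baire-space name of a $\WKL^{(n)}$-instance; unfolding the $n$-fold jump in the input representation, this name codes an infinite binary tree $T$ whose characteristic function is $\emptyset^{(n)}$-computable.

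Next I would invoke the uniform low basis theorem relativized to the oracle $\emptyset^{(n)}$. This yields an infinite path $p \in [T]$ satisfying
\[
   p'\leqT (\emptyset^{(n)})' = \emptyset^{(n+1)},
\]
and, by the uniformity of the theorem, $p$ can be chosen as a valid output of some realizer $G$ of $\WKL^{(n)}$ on the computable input $K(\langle c_0, c_1, \ldots\rangle)$. Applying the computable backward function $H$ to $p$ then produces a name of a sequence $\langle M_0, M_1, \ldots\rangle$ in which each $M_i$ is an infinite homogeneous set for $c_i$. Since $H$ is computable, $\langle M_0, M_1, \ldots\rangle \leqT p$, and because Turing reducibility is preserved under jumps we conclude
\[
   \langle M_0, M_1, \ldots\rangle' \leqT p' \leqT \emptyset^{(n+1)},
\]
which is the desired bound.

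The only real subtlety is the correct interpretation of the input representation of $\WKL^{(n)}$: a computable name in the $n$-fold jumped representation does not correspond to a computable tree, but precisely to an $\emptyset^{(n)}$-computable one, which is exactly the setting in which the relativization of \cite[Theorem~8.3]{BBP12} delivers a path of jump bounded by $\emptyset^{(n+1)}$. Once this correspondence is made explicit the argument is a direct application of the strong Weihrauch equivalence of Corollary~\ref{cor:parallelization}, with no further combinatorial content needed beyond what has already been established.
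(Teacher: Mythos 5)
Your argument is correct and rests on the same two pillars as the paper's proof: Corollary~\ref{cor:parallelization} to reduce a computable instance of the parallelized Ramsey theorem to a $\WKL^{(n)}$-instance, and the low basis theorem to extract a suitably low solution. The only difference is one of packaging. The paper stays entirely at the Weihrauch-algebraic level: it records the uniform low basis theorem as the strong reduction $\WKL\leqSW\L$ with $\L=\J^{-1}\circ\lim$, applies monotonicity of jumps under $\leqSW$ to get $\WKL^{(n)}\leqSW\L^{(n)}$, and then reads off the bound from the fact that $\L^{(n)}$ on a computable name produces a point $r$ with $r'\leqT\emptyset^{(n+1)}$. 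You instead unfold the $n$-fold jumped input representation into an explicit $\emptyset^{(n)}$-computable tree and invoke the classical relativized low basis theorem directly, then note that any particular path is realized by some $G\vdash\WKL^{(n)}$. Both are valid; the paper's version is slightly more modular (the intermediate fact $\WKL^{(n)}\leqSW\L^{(n)}$ is reusable), while your version makes the arithmetic content of the jumped representation more explicit. One small remark: since you route through $\widehat{\CRT_{n,k}}\equivSW\WKL^{(n)}$ rather than $\widehat{\RT_{n,k}}\equivW\WKL^{(n)}$, the output of $H$ carries the colors as well; projecting them away is computable, so this is harmless, but it is worth saying explicitly that $\langle M_0,M_1,\ldots\rangle$ is computable from $H(p)$ and hence its jump is still bounded by $p'\leqT\emptyset^{(n+1)}$.
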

\begin{proof}
We use the map $\L:=\J^{-1}\circ\lim$, where $\J:\IN^\IN\to\IN^\IN,p\mapsto p'$ is the Turing jump operation.
The uniform low basis theorem~\cite[Theorem~8.3]{BBP12} states that $\WKL\leqSW\L$ holds,
which implies $\WKL^{(n)}\leqSW\L^{(n)}$, since jumps are monotone with respect to strong Weihrauch reducibility.
By Corollary~\ref{cor:parallelization} we have $\widehat{\RT_{n,k}}\equivW\WKL^{(n)}$, and hence any
instance $(c_i)_i$ of $\widehat{\RT_{n,k}}$ has a solution $M=\langle M_0,M_1,...\rangle$ whose jump $M'$ 
is computable in $\emptyset^{(n+1)}$.
\end{proof}

Here $n+1$ cannot be replaced by $n$ in $\emptyset^{(n+1)}$ by Corollary~\ref{cor:parallelization}, and in this sense this result is optimal.
Hence, we have a striking difference in the arithmetic complexity between the non-uniform Ramsey theorem as witnessed by Theorems~\ref{thm:Jockusch} and \ref{thm:CJS-upper} and
the uniform sequential version of Ramsey's theorem for sequences as witnessed by Corollary~\ref{cor:arithmetic-complexity}.
This yields another proof of Corollary~\ref{cor:parallelizability}.

Since jumps commute with parallelization by \cite[Proposition~5.7(3)]{BGM12}, Corollary~\ref{cor:parallelization} yields also the following corollary, which states that under parallelization
a jump of the colored versions of Ramsey's theorem corresponds exactly to an increase in cardinality by one.

\begin{corollary}[Parallelized jumps]
\label{cor:jumps-parallelization-CSRT}
For all $n\geq2$, $k\geq2,\IN$ we obtain\linebreak
$\widehat{\CSRT_{n,k}'}\equivSW\widehat{\CSRT_{n+1,k}}\equivSW\widehat{\CRT_{n,k}'}\equivSW\widehat{\CRT_{n+1,k}}$. 
\end{corollary}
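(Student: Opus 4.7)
The plan is to route everything through $\WKL^{(n+1)}$, using that parallelization commutes with jumps together with the characterization of the parallelized Ramsey's theorem as an iterated jump of $\WKL$ from Corollary~\ref{cor:parallelization}.

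First, I would invoke the commutativity of parallelization with jumps \cite[Proposition~5.7(3)]{BGM12}, which supplies strong equivalences $\widehat{\CSRT_{n,k}'}\equivSW(\widehat{\CSRT_{n,k}})'$ and $\widehat{\CRT_{n,k}'}\equivSW(\widehat{\CRT_{n,k}})'$. This reduces the statement to comparing four iterated closures of Ramsey's theorem with themselves after a jump, which is the content already packaged by Corollary~\ref{cor:parallelization}.

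Next, by Corollary~\ref{cor:parallelization}, for $n\geq 2$ and $k\geq 2,\IN$ we have
\[
\widehat{\CSRT_{n,k}}\equivSW\widehat{\CRT_{n,k}}\equivSW\WKL^{(n)}.
\]
Since jumps are monotone under strong Weihrauch reducibility by \cite[Proposition~5.6(2)]{BGM12}, applying the jump to both sides preserves strong equivalence and gives
\[
(\widehat{\CSRT_{n,k}})'\equivSW(\widehat{\CRT_{n,k}})'\equivSW\WKL^{(n+1)}.
\]
On the other hand, applying Corollary~\ref{cor:parallelization} directly at cardinality $n+1$ (permissible since $n+1\geq 3\geq 2$) yields $\widehat{\CSRT_{n+1,k}}\equivSW\widehat{\CRT_{n+1,k}}\equivSW\WKL^{(n+1)}$.

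Chaining these strong equivalences across the common middle term $\WKL^{(n+1)}$ produces the fourfold chain asserted in the corollary. There is no real obstacle here; the entire statement is a bookkeeping consequence of Corollary~\ref{cor:parallelization} together with the fact that parallelization and jumps commute, so the proof should fit in a couple of lines.
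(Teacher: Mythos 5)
Your proof is correct and takes essentially the same route the paper does: the paper's one-line justification ("Since jumps commute with parallelization by [Prop.~5.7(3), BGM12], Corollary~\ref{cor:parallelization} yields also the following corollary") is exactly your chain through $\WKL^{(n+1)}$, which you have simply spelled out step by step, including the needed monotonicity of the jump under $\leqSW$.
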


A similar property also holds for the uncolored version of Ramsey's theorem $\RT_{n,k}$, but in this case it cannot be so easily concluded since
the parallelized uncolored theorem is not a cylinder by Corollary~\ref{cor:cylinder-SRT}.
Hence, we need the following reformulation of Corollary~\ref{cor:SRT-RT-lim} that is justified by Lemma~\ref{lem:jump-cylindrification}.

\begin{corollary}
\label{cor:SRT-RT-lim2}
$\SRT_{n+1,k}\leqW\RT_{n,k}'\times\lim$ for all $n\geq1$, $k\geq1,\IN$.
\end{corollary}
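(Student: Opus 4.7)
The plan is to simply chain Corollary~\ref{cor:SRT-RT-lim} with the second equivalence in Lemma~\ref{lem:jump-cylindrification}. Specifically, Corollary~\ref{cor:SRT-RT-lim} gives us $\SRT_{n+1,k}\leqW\RT_{n,k}*\lim$, and Lemma~\ref{lem:jump-cylindrification} tells us that $\RT_{n,k}*\lim\equivW\RT_{n,k}'\times\lim$. Composing these two yields the desired reduction $\SRT_{n+1,k}\leqW\RT_{n,k}'\times\lim$.

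There is essentially no obstacle here: the content of the statement has already been established in Corollary~\ref{cor:SRT-RT-lim}, and Lemma~\ref{lem:jump-cylindrification} is precisely the tool designed to convert the compositional product with $\lim$ into a product of the jump with $\lim$ (this in turn is based on the facts that jumps and products commute and that $\lim$ is idempotent, both noted in the proof of the lemma). Thus the proof I would write is a single line: apply Corollary~\ref{cor:SRT-RT-lim} and then invoke Lemma~\ref{lem:jump-cylindrification}.
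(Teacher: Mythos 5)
Your proof is correct and matches the paper's own justification exactly: the paper states that this corollary is a reformulation of Corollary~\ref{cor:SRT-RT-lim} justified by Lemma~\ref{lem:jump-cylindrification}, which is precisely your one-line chain.
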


Now we are prepared to prove the following result on parallelized jumps of Ramsey's theorem.

\begin{corollary}[Parallelized jumps]
\label{cor:jumps-parallelization-SRT}
$\widehat{\RT_{n,k}'}\equivW\widehat{\RT_{n+1,k}}$ for all $n\geq1$ and $k\geq2,\IN$.
\end{corollary}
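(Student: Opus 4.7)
The plan is to use the characterization $\widehat{\RT_{n+1,k}} \equivW \WKL^{(n+1)}$ from Corollary~\ref{cor:parallelization} as a bridge, and to prove both reductions separately.

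For the easy direction $\widehat{\RT_{n,k}'} \leqW \widehat{\RT_{n+1,k}}$, I would parallelize the chain $\RT_{n,k}' \leqSW \CRT_{n,k}' \leqSW \CSRT_{n+1,k} \leqSW \CRT_{n+1,k} \equivW \RT_{n+1,k}$, which combines Lemma~\ref{lem:basic-reductions} (lifted by monotonicity of the jump with respect to $\leqSW$), Proposition~\ref{prop:jumps}, and Corollary~\ref{cor:basic-equivalences}. Monotonicity of parallelization under $\leqW$ then finishes this direction.

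For the harder reduction $\widehat{\RT_{n+1,k}} \leqW \widehat{\RT_{n,k}'}$, I would start from the chain
\[\C_2^{(n+1)} \leqSW \CSRT_{n+1,k} \equivW \CRT_{n,k}' \leqW \RT_{n,k}' \times \lim,\]
in which the first step combines Theorem~\ref{thm:lower-bound} with Lemma~\ref{lem:increasing-color}, the second is Theorem~\ref{thm:CRT-SRT} together with Corollary~\ref{cor:basic-equivalences}, and the third uses the standard bound $\CRT_{n,k}' \leqW \CRT_{n,k} * \lim \equivW \RT_{n,k} * \lim$ (where the equivalence rests on Corollary~\ref{cor:basic-equivalences}) followed by Lemma~\ref{lem:jump-cylindrification}. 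Parallelizing yields
\[\WKL^{(n+1)} \equivSW \widehat{\C_2^{(n+1)}} \leqW \widehat{\RT_{n,k}'} \times \lim,\]
using that parallelization distributes over products and that $\lim$ is parallelizable.

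The main obstacle is to absorb the residual $\lim$ factor; this cannot be handled by a direct monotonicity argument at the unparallelized level because the equivalence $\RT_{n,k} \equivW \CRT_{n,k}$ is only ordinary, while jumps are monotone only under strong reducibility. I plan to resolve this after parallelization: $\widehat{\RT_{n,k}'}$ is parallelized, hence idempotent, and one has $\lim \leqW \widehat{\RT_{n,k}} \leqSW \widehat{\RT_{n,k}'}$, the first reduction via Corollary~\ref{cor:parallelization} together with $\lim \leqW \WKL^{(n)}$ for $n \geq 1$ from Fact~\ref{fact:WKL-BWT}, and the second by the trivial $f \leqSW f'$ (use the constant sequence as a jumped name). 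Consequently $\widehat{\RT_{n,k}'} \times \lim \leqW \widehat{\RT_{n,k}'} \times \widehat{\RT_{n,k}'} \equivW \widehat{\RT_{n,k}'}$, which closes the chain and completes the second reduction.
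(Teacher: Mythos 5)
Your proposal is correct and follows essentially the same route as the paper: the easy direction parallelizes the $\leqSW$-chain underlying Corollary~\ref{cor:jumps}, and the hard direction reduces $\widehat{\RT_{n+1,k}}\equivW\WKL^{(n+1)}$ to $\widehat{\RT_{n,k}'}\times\lim$ via Theorem~\ref{thm:CRT-SRT} and Lemma~\ref{lem:jump-cylindrification} and then absorbs the residual $\lim$ into the parallelized (hence idempotent) $\widehat{\RT_{n,k}'}$. Your absorption step $\lim\leqW\WKL^{(n)}\equivW\widehat{\RT_{n,k}}\leqSW\widehat{\RT_{n,k}'}$ is a slightly more direct justification than the paper's appeal to $\lim\equivW\widehat{\lim_\IN}$ and Corollary~\ref{cor:lower bounds with jumps}, but it is the same idea.
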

\begin{proof}
It follows from Corollary~\ref{cor:jumps} that $\RT_{n,k}'\leqW\RT_{n+1,k}$,
and hence one direction of the claims follows since parallelization is a closure operator.
By Corollaries~\ref{cor:parallelization}, \ref{cor:SRT-RT-lim2}, \ref{cor:lower bounds with jumps}, Fact~\ref{fact:WKL-BWT}~(3)
and since parallelization commutes with products it follows that
\[\widehat{\RT_{n+1,k}}\equivW\widehat{\SRT_{n+1,k}}\leqW\widehat{\RT_{n,k}'}\times\lim\equivW\widehat{\RT_{n,k}'}\times\widehat{\lim\nolimits_\IN}\leqW\widehat{\RT_{n,k}'},\]
which completes the proof.
\end{proof}

We leave it open whether a corresponding fact can be established for the stable version of Ramsey's theorem (which is not the case for $n=1$).
We have now completely provided all the positive information that is displayed in the diagram
in Figure~\ref{fig:diagram-RT}. What still remains to be done are some of the separations.

\section{Ramsey's Theorem for Pairs}
\label{sec:pairs}

In this section we want to discuss $\RT_{2,2}$, which is of particular interest.
For one, we derive some conclusions from the general results that we proved before, we mention some results
that have been proved by other authors, and we raise some open questions.
The neighborhood of Ramsey's theorem in the Weihrauch lattice is illustrated in the diagram in Figure~\ref{fig:diagram-RT22}.

\begin{figure}[htb]
\begin{tikzpicture}[scale=.5,auto=left,every node/.style={fill=black!15}]

\def\rvdots{\raisebox{1mm}[\height][\depth]{$\huge\vdots$}};

   \node (limSS) at (2,18.75) {$\lim''$};
   \node (WKLSS) at (2,17.25) {$\WKL''$};
   \node (limS) at (2,10.75) {$\lim'$};
   \node (WKLS) at (2,9.25) {$\WKL'\equivSW\KL\equivSW\BWT_\IR$};
   \node (COH) at (9.25,6) {$\COH$};
   \node (BWT2) at (12.25,9.25) {$\C_2'\equivSW\BWT_2$};
   \node (BWT2S) at (12.25,12.5) {$\BWT_2'$};
   \node (lim) at (2,7.5) {$\lim$};
   \node (WKL) at (2,6) {$\WKL\equivSW\C_{2^\IN}$};
   \node (WWKL) at (2,4.5) {$\WWKL$};
   \node (PA) at (-1.25,6) {$\PA$};
   \node (MLR) at (-0.75,4.5) {$\MLR$};
   \node (DNC) at (2,3) {$\DNC_\IN$};
   \node (ACC) at (4.75,3) {$\ACC_\IN$};
   \node (LLPO) at (12.25,3) {$\C_2\equivSW\LLPO$};
   \node (LPO) at (12.25,4.5) {$\LPO$};
   \node (lim2) at (12.25,6) {$\lim_2$};
   \node (SRT12) at (18,6) {$\SRT_{1,2}$};
   \node (CN) at (15.25,7.5) {$\C_\IN$};
  \node (CRT12S) at (18,14.25) {$\CRT_{1,2}'$};
  \node (RT22) at (14,15.75) {$\RT_{2,2}$};
  \node (SRT22) at (14,14.25) {$\SRT_{2,2}$};
  \node (SRT22COH) at (9.25,14.25) {$\SRT_{2,2}\sqcup\COH$};
  \node (SRT22xCOH) at (9.25,15.75) {$\SRT_{2,2}\times\COH$};
  \node (SRT22SCOH) at (9.25,17.25) {$\SRT_{2,2}*\COH$};
  \node (RT12) at (18,9.25) {$\RT_{1,2}=\D_{1,2}$};
  \node (RT12S) at (18,12.5) {$\RT_{1,2}'=\D_{2,2}$};
  \node (RT12lim) at (4.25,15.75) {$\RT_{1,2}'\times\lim$};
 
  \foreach \from/\to in {
  limSS/WKLSS,
  WKLSS/limS,
  WKLSS/SRT22SCOH,
   SRT22xCOH/SRT22COH,
  WKLSS/RT12lim,
  SRT22COH/COH,
  RT22/SRT22COH,
  SRT22COH/SRT22,
  CRT12S/BWT2S,
  CRT12S/RT12S,
  BWT2S/BWT2,
  RT12S/RT12,
  limS/WKLS,
  WKLS/lim,
  lim/COH,
  WKLS/BWT2,
  lim/WKL,
  lim/CN,
  WKL/WWKL,
  WKL/PA,
  WWKL/MLR,
  WWKL/DNC,
  WWKL/LLPO,
  DNC/ACC,
  RT12/SRT12,
  BWT2/lim2,
  CN/lim2,
  lim2/LPO,
  LPO/LLPO,
  LLPO/ACC}
  \draw [->,thick] (\from) -- (\to);

\draw[->,thick,dashed] (SRT22SCOH) to (RT22);
\draw[->,thick,dashed] (RT12lim) to (SRT22xCOH);
\draw[->,thick,dashed] (SRT22SCOH) to (SRT22xCOH);

 \draw [->,thick,looseness=2] (RT12S) to [out=190,in=30] (DNC);
  \draw [->,thick,looseness=2.5] (RT12lim) to [out=280,in=10] (lim);

 \draw [->,thick,dashed,looseness=0.5] (BWT2) to [out=350,in=190] (RT12);
 \draw [->,thick,dashed,looseness=0.5] (RT12) to [out=170,in=10] (BWT2);
 \draw [->,thick,dashed,looseness=0.5] (SRT22) to [out=350,in=190] (CRT12S);
 \draw [->,thick,dashed,looseness=0.5] (CRT12S) to [out=170,in=10] (SRT22);
 \draw [->,thick,dashed,looseness=0.5] (lim2) to [out=350,in=190] (SRT12);
 \draw [->,thick,dashed,looseness=0.5] (SRT12) to [out=170,in=10] (lim2);
 \draw [->,thick,dashed,looseness=1.9] (RT12S) to [out=330,in=350] (CN);


\draw  (-2,8.25) rectangle (20.75,2.25);
\draw  (-2.25,11.5) rectangle (21,2);
\draw  (-2.5,19.5) rectangle (21.25,1.75);
\node  [fill=none] at (-1.75,18.75) {$\SO{4}$};
\node  [fill=none] at (-1.5,10.75) {$\SO{3}$};
\node  [fill=none] at (-1.25,7.5) {$\SO{2}$};

\end{tikzpicture}
  
\ \\[-0.5cm]
\caption{Ramsey's theorem  for pairs and two colors in the Weihrauch lattice: all solid arrows indicate strong Weihrauch reductions against the direction of the arrow, all dashed arrows indicate ordinary Weihrauch reductions, and
the boxes indicate the given levels of the effective Borel hierarchy.}
\label{fig:diagram-RT22}
\end{figure}
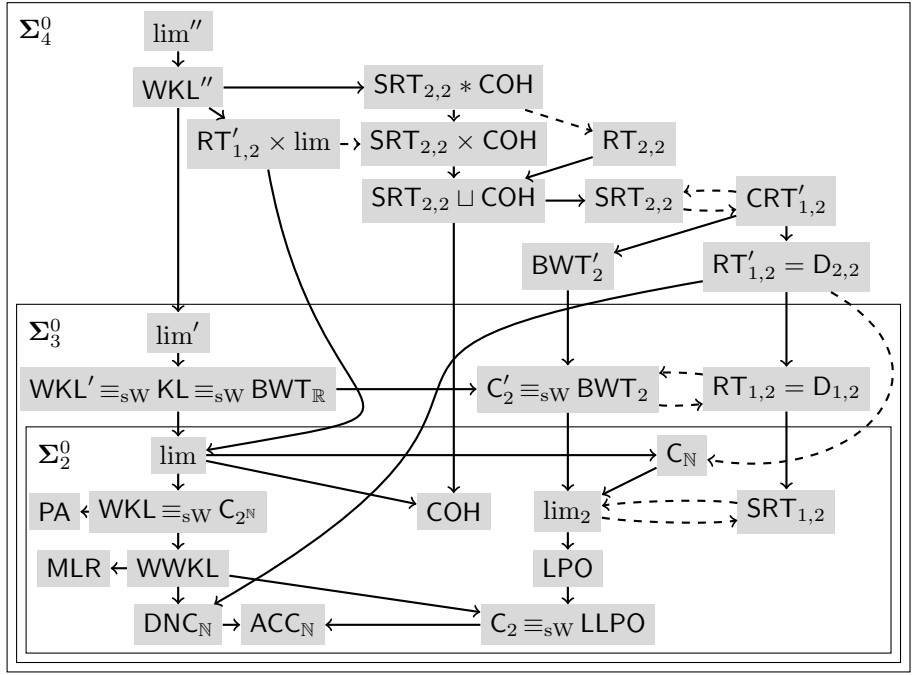

We start with two straightforward separation results that show that the jump of the cylindrification of $\RT_{1,k}$
(see Lemma~\ref{lem:jump-cylindrification}) is incomparable with $\RT_{2,2}$. 

\begin{proposition}
\label{prop:RT2*RT1xlim}
$\RT_{2,2}\nleqW\RT_{1,\IN}'\times\lim$ and $\RT_{1,2}'\times\lim\nleqW\RT_{2,\IN}$.
\end{proposition}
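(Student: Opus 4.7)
For the second non-reduction $\RT_{1,2}'\times\lim\nleqW\RT_{2,\IN}$ my plan is immediate. The projection of the product onto its second factor gives $\lim\leqW\RT_{1,2}'\times\lim$. Any hypothetical reduction $\RT_{1,2}'\times\lim\leqW\RT_{2,\IN}$ would then compose by transitivity to yield $\lim\leqW\RT_{2,\IN}$, in direct contradiction with Corollary~\ref{cor:limit-avoidance} taken at $n=2$ (recall that $\lim^{(0)}=\lim$).

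For the first non-reduction $\RT_{2,2}\nleqW\RT_{1,\IN}'\times\lim$ the plan is to argue by contradiction with Jockusch's classical Theorem~\ref{thm:Jockusch}. Assume $\RT_{2,2}\leqW\RT_{1,\IN}'\times\lim$ holds, witnessed by computable realizers $K$ and $H$. By Theorem~\ref{thm:Jockusch} at $n=2$ there is a computable coloring $c_0:[\IN]^2\to 2$ that admits no infinite homogeneous set computable in $\emptyset'$. Applying the reduction to $c_0$, the forward map produces a computable pair $K(c_0)=(p,q)$, where $p$ codes a convergent sequence of $\CC_{1,\IN}$-names with limit $c_\infty$, and $q$ converges to some $x$. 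Since $p$ and $q$ are computable, both $c_\infty$ and $x$ are $\leqT\emptyset'$.

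The crux of the argument is to exhibit an especially tame valid output of $\RT_{1,\IN}'$ on $p$. Because $c_\infty$ has finite range, the pigeonhole principle provides a colour $j^\ast$ for which $M:=(c_\infty)^{-1}(j^\ast)$ is infinite. This $M$ is trivially homogeneous for $c_\infty$, so the pair $(M,x)$ lies in $(\RT_{1,\IN}'\times\lim)(K(c_0))$, and it is crucial that $M$ is computable from $c_\infty$ alone, hence $M\leqT c_\infty\leqT\emptyset'$. By the definition of Weihrauch reducibility, $H(c_0,M,x)$ must be an infinite homogeneous set for $c_0$; but $H$ is computable and all of its inputs $c_0$, $M$ and $x$ are $\leqT\emptyset'$, so $H(c_0,M,x)\leqT\emptyset'$. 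This contradicts the choice of $c_0$ supplied by Jockusch's theorem.

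The step I expect to be the main obstacle---and the one where the structure of $\RT_{1,\IN}'$ is essential---is recognizing that the adversary realising $\RT_{1,\IN}'$ may legitimately return an entire colour class $(c_\infty)^{-1}(j^\ast)$ as the homogeneous set, so that the jump in the input representation of $\RT_{1,\IN}'$ does \emph{not} translate into any extra computational power in the output beyond what is already available in $c_\infty$ itself. Once this "honest pigeonhole witness" is in place, the one-jump budget of the right-hand side is collapsed down to $\emptyset'$, which is below Jockusch's barrier for $\RT_{2,2}$, and the contradiction follows without further computability-theoretic machinery.
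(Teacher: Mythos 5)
Your proof is correct and follows the same route as the paper: for the first non-reduction, the key observation in both is that $\RT_{1,\IN}'$ always admits a color class $c_\infty^{-1}(j^\ast)$ as a valid homogeneous set, which is computable from $c_\infty$ (hence $\leqT\emptyset'$ for a computable input), so composing with the computable $H$ stays below $\emptyset'$, contradicting Jockusch's theorem; for the second, both project onto $\lim$ and invoke the limit-avoidance corollary. The only cosmetic difference is that you cite Corollary~\ref{cor:limit-avoidance} directly, while the paper cites Corollary~\ref{cor:limit-avoidance1}, which requires one extra composition step, so your citation is arguably cleaner.
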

\begin{proof}
For every computable input $\RT_{1,\IN}'\times\lim$ clearly has a limit computable output,
since for a limit computable coloring $c:[\IN]^1\to\IN$ there is always some $k\in\IN$ such that
$A:=\{i\in\IN:c(i)=k\}$ is infinite. Such an $A$ is clearly limit computable and $A\in\RT_{1,\IN}(c)$. 
However, $\RT_{2,2}$ has computable inputs
$c:[\IN]^2\to2$, which have no limit computable outputs $A\in\RT_{2,2}(c)$ by Theorem~\ref{thm:Jockusch}.
Hence, $\RT_{2,2}\nleqW\RT_{1,\IN}'\times\lim$.

While $\lim\leqW\RT_{1,2}'\times\lim$ holds obviously, we obtain $\lim\nleqW\RT_{2,\IN}$ by Corollary~\ref{cor:limit-avoidance1}.
Hence, $\RT_{1,2}'\times\lim\nleqW\RT_{2,\IN}$.
\end{proof}

This result yields the following corollary.

\begin{corollary}
\label{cor:RT12xlim-RT22}
$\RT_{1,2}'\times\lim\nW\RT_{2,2}$.
\end{corollary}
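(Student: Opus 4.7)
The plan is to derive the incomparability $\RT_{1,2}'\times\lim\nW\RT_{2,2}$ directly from the two non-reductions in Proposition~\ref{prop:RT2*RT1xlim} by sandwiching $\RT_{2,2}$ between $\RT_{1,2}'\times\lim$ and $\RT_{1,\IN}'\times\lim$ on one side, and between $\RT_{1,2}'\times\lim$ and $\RT_{2,\IN}$ on the other. Since $\nW$ asserts Weihrauch incomparability, I need to rule out both $\RT_{1,2}'\times\lim\leqW\RT_{2,2}$ and $\RT_{2,2}\leqW\RT_{1,2}'\times\lim$.

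First I would handle the direction $\RT_{1,2}'\times\lim\nleqW\RT_{2,2}$. By Lemma~\ref{lem:increasing-color} we have $\RT_{2,2}\leqSW\RT_{2,\IN}$, so a hypothetical reduction $\RT_{1,2}'\times\lim\leqW\RT_{2,2}$ would compose via transitivity with this fact to yield $\RT_{1,2}'\times\lim\leqW\RT_{2,\IN}$, contradicting the second assertion of Proposition~\ref{prop:RT2*RT1xlim}.

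For the other direction $\RT_{2,2}\nleqW\RT_{1,2}'\times\lim$, I would use monotonicity: since $\RT_{1,2}\leqSW\RT_{1,\IN}$ by Lemma~\ref{lem:increasing-color}, and jumps are monotone with respect to strong Weihrauch reducibility, we obtain $\RT_{1,2}'\leqSW\RT_{1,\IN}'$ and hence $\RT_{1,2}'\times\lim\leqW\RT_{1,\IN}'\times\lim$. A hypothetical reduction $\RT_{2,2}\leqW\RT_{1,2}'\times\lim$ would therefore transit to $\RT_{2,2}\leqW\RT_{1,\IN}'\times\lim$, contradicting the first assertion of Proposition~\ref{prop:RT2*RT1xlim}.

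No step looks to be a substantive obstacle here; the corollary is a clean bookkeeping consequence of Proposition~\ref{prop:RT2*RT1xlim} together with the elementary monotonicity properties $\RT_{1,2}\leqW\RT_{1,\IN}$ and $\RT_{2,2}\leqW\RT_{2,\IN}$ from Lemma~\ref{lem:increasing-color}, plus the fact that the product construction $\,\cdot\,\times\lim$ and the jump operation preserve Weihrauch reducibility. The only item worth stating carefully is the direction of monotonicity of the jump, but this is exactly \cite[Proposition~5.6(2)]{BGM12} as already invoked in the preliminaries.
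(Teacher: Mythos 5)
Your proof is correct and matches the paper's intent exactly: the paper gives only the one-line remark that Corollary~\ref{cor:RT12xlim-RT22} follows from Proposition~\ref{prop:RT2*RT1xlim}, and your argument is precisely the bookkeeping that substantiates that claim, via $\RT_{2,2}\leqW\RT_{2,\IN}$ in one direction and $\RT_{1,2}'\times\lim\leqW\RT_{1,\IN}'\times\lim$ (from color monotonicity plus jump monotonicity) in the other.
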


Since we have $\SRT_{2,\IN}\leqW\RT_{1,\IN}*\lim\equivW\RT_{1,\IN}'\times\lim$ by Corollary~\ref{cor:SRT-RT-lim} and Lemma~\ref{lem:jump-cylindrification},
we also obtain the following corollary of Proposition~\ref{prop:RT2*RT1xlim}

\begin{corollary}
\label{cor:RT22-SRT2N}
$\RT_{2,2}\nleqW\SRT_{2,\IN}$ and hence, in particular, $\RT_{2,2}\nleqW\SRT_{2,2}$.
\end{corollary}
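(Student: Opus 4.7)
The plan is to derive this corollary directly from Proposition~\ref{prop:RT2*RT1xlim} by showing that $\SRT_{2,\IN}$ already sits below $\RT_{1,\IN}'\times\lim$, so that the first non-reducibility established in that proposition transfers to $\SRT_{2,\IN}$.

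First I would invoke Corollary~\ref{cor:SRT-RT-lim} in the case $n=1$, $k=\IN$ to obtain
\[\SRT_{2,\IN}\leqW\RT_{1,\IN}*\lim.\]
Then, by Lemma~\ref{lem:jump-cylindrification} with $n=1$, $k=\IN$, we have $\RT_{1,\IN}*\lim\equivW\RT_{1,\IN}'\times\lim$, so composing the two reductions yields $\SRT_{2,\IN}\leqW\RT_{1,\IN}'\times\lim$.

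Now I would argue by contradiction: if $\RT_{2,2}\leqW\SRT_{2,\IN}$ held, then by transitivity of $\leqW$ we would conclude $\RT_{2,2}\leqW\RT_{1,\IN}'\times\lim$, directly contradicting the first assertion of Proposition~\ref{prop:RT2*RT1xlim}. Hence $\RT_{2,2}\nleqW\SRT_{2,\IN}$.

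For the ``in particular'' clause, I would simply note that by Lemma~\ref{lem:increasing-color}~(1) we have $\SRT_{2,2}\leqW\SRT_{2,\IN}$, so any reduction $\RT_{2,2}\leqW\SRT_{2,2}$ would again yield $\RT_{2,2}\leqW\SRT_{2,\IN}$ by transitivity, contradicting what has just been proved. There is no real obstacle here; the entire argument is a short chaining together of Corollary~\ref{cor:SRT-RT-lim}, Lemma~\ref{lem:jump-cylindrification}, Proposition~\ref{prop:RT2*RT1xlim} and monotonicity of $\leqW$.
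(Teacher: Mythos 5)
Your proof is correct and follows essentially the same route as the paper: the paper derives Corollary~\ref{cor:RT22-SRT2N} from Proposition~\ref{prop:RT2*RT1xlim} precisely by noting that $\SRT_{2,\IN}\leqW\RT_{1,\IN}*\lim\equivW\RT_{1,\IN}'\times\lim$ via Corollary~\ref{cor:SRT-RT-lim} and Lemma~\ref{lem:jump-cylindrification}. The handling of the ``in particular'' clause via Lemma~\ref{lem:increasing-color} is also correct.
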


The corresponding non-uniform result in reverse mathematics was much harder to obtain and solved a longstanding
open question \cite{CSY14}.
The proof uses a non-standard model, whereas our result would follow from a separation with a standard model.

Among other things, we are going to discuss the relation of $\RT_{2,2}$ to the cohesiveness problem.
The  following proof is essentially the proof of Cholak, Jockusch and Slaman \cite[Theorem~12.5]{CJS01}.

\begin{proposition}[Cohesiveness]
\label{prop:COH-RT22}
$\COH\leqSW\RT_{2,2}$.
\end{proposition}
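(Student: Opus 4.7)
The plan is to establish $\COH \leqSW \RT_{2,2}$ via the classical type-coloring construction, essentially following Cholak, Jockusch, and Slaman. Given a sequence $(R_i)_{i \in \IN}$ of subsets of $\IN$, I associate to each $x$ its type $\sigma_x = (R_0(x), R_1(x), \ldots, R_{x-1}(x)) \in 2^x$, which records the membership of $x$ in the first $x$ sets of the sequence. The forward preprocessor $K$ sends $(R_i)_i$ to the coloring $c:[\IN]^2 \to 2$ defined by
\[c(\{x < y\}) = \begin{cases} 0 & \text{if } R_n(x) = R_n(y) \text{ for all } n < x, \\ 1 & \text{otherwise,} \end{cases}\]
which is clearly computable from the sequence. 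An application of $\RT_{2,2}$ then yields an infinite homogeneous set $M$, from which a computable postprocessor is to extract a cohesive set for $(R_i)_i$.

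The color-$0$ case is immediate: for every $n$ and every $x \in M$ with $x > n$, homogeneity forces $R_n(y) = R_n(x)$ for all $y \in M$ with $y > x$, so $M$ is eventually constant on each $R_n$ and hence is itself cohesive. In this case the postprocessor simply returns $M$.

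The central difficulty, which I expect to be the main technical step, is the color-$1$ case: here the types $\{\sigma_x : x \in M\}$ form an antichain in $2^{<\omega}$, and because strong Weihrauch reducibility forces the postprocessor to operate on $M$ alone without access to $(R_i)_i$, one cannot simply thin $M$ using the sequence. Following the CJS strategy, my plan is to arrange the construction so that either an infinite color-$1$ homogeneous set cannot arise, or the antichain structure combined with a uniform thinning of $M$ driven only by its arithmetic content still produces a cohesive set; in particular, iterating the pigeonhole argument on the first position of disagreement should successively fix the behavior of $M$ on $R_0, R_1, \ldots$ and yield a cohesive subset depending only on $M$. Verifying that this yields a strong rather than merely ordinary Weihrauch reduction, and that all of the required iteration can be encoded into the single application of $\RT_{2,2}$, is the principal hurdle I anticipate.
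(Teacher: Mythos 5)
Your proposal does not complete the proof and, as you already sense, the gap is real: with your coloring the color-$1$ case is not merely technically awkward but actually fails. Your $c$ sets $c\{x<y\}=0$ iff $R_n(x)=R_n(y)$ for all $n<x$; a color-$1$ homogeneous set need not be cohesive, and worse, need not carry any usable information about $(R_i)_i$ at all. Concretely, take $R_0$ to be the even numbers and $R_n:=\{n+1\}$ for $n\geq1$. Then $M:=\{2,3,4,\ldots\}$ is color-$1$ homogeneous for your $c$ (the singleton $R_{x-1}=\{x\}$ always forces a disagreement at position $x-1<x$), yet $M\cap R_0$ and $M\setminus R_0$ are both infinite, so $M$ is not cohesive. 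Since in a \emph{strong} Weihrauch reduction the postprocessor $H$ receives only the homogeneous set and not $(R_i)_i$, and since $M=\{2,3,\ldots\}$ carries essentially no information about the input, no computable $H$ can recover a cohesive set from this $M$. The two fallback plans you sketch are therefore both blocked: an infinite color-$1$ homogeneous set \emph{can} arise, and ``thinning $M$ using only its arithmetic content'' is precisely what the strong-reduction constraint makes impossible here.

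The paper sidesteps this entirely by choosing a different coloring, so that \emph{every} homogeneous set is already cohesive, making both colors symmetric. Two tricks are combined. First, the sequence is padded with singletons, $S_{2i}:=R_i$ and $S_{2i+1}:=\{i\}$, which guarantees that for all $i<j$ the first disagreement position $d(i,j):=\min\{k:\chi_{S_k}(i)\neq\chi_{S_k}(j)\}$ is well defined (since $i\in S_{2i+1}$ but $j\notin S_{2i+1}$). Second, and crucially, the coloring does not merely record \emph{whether} a disagreement occurs before position $i$; it records the \emph{value} at the first disagreement, namely $c\{i<j\}:=0$ if $i\in S_{d(i,j)}$ and $1$ otherwise. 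If $M$ is homogeneous of either color but not cohesive, let $k$ be minimal with $M\cap S_k$ and $M\setminus S_k$ both infinite; then by minimality $d(i,j)\geq k$ for all large $i,j\in M$, and picking $i_0<i_1<i_2$ in $M$ with $\chi_{S_k}$ alternating gives $d(i_0,i_1)=d(i_1,i_2)=k$ but $c\{i_0,i_1\}\neq c\{i_1,i_2\}$, a contradiction. So $M$ itself is cohesive, and the postprocessor is the identity on $M$ — exactly what a strong reduction demands. If you want to repair your argument, replace your type-agreement coloring with this value-of-first-disagreement coloring; the singleton padding and the choice to color by $\chi_{S_{d(i,j)}}(i)$ (rather than by a yes/no flag relative to $\min\{i,j\}$) are the missing ideas.
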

\begin{proof}
Let $(R_i)_i$ be a sequence of sets $R_i\In\IN$. We compute a sequence $(S_i)_i$ by $S_{2i}:=R_i$
and $S_{2i+1}:=\{i\}$ for all $i\in\IN$. We let $d(i,j):=\min\{k:\chi_{S_k}(i)\not=\chi_{S_k}(j)\}$.
The definition of $(S_i)_i$ ensures that $d$ is well-defined for all $i<j$, and it can be computed from $(S_i)_i$.
We compute a coloring $c:[\IN]^2\to2$ with
\[c\{i<j\}:=\left\{\begin{array}{ll}
  0 & \mbox{if $i\in S_{d(i,j)}$}\\
  1 & \mbox{otherwise}
\end{array}\right.,\]
and we consider an infinite homogeneous set $M\in\RT_{2,2}(c)$ and $i\in\IN$. 
We claim that $M$ is cohesive for $(S_i)_i$ and hence for $(R_i)_i$. 
Let us assume for a contradiction that $k$ is the smallest number such that
$M\cap S_k$ and $M\cap(\IN\setminus S_k)$ are infinite. 
Since $k$ is minimal, there is a number $m\in\IN$ such that $d(i,j)\geq k$ for all $i,j\geq m$ in $M$.
There are also sufficiently large $i_0,i_1,i_2\geq m$ in $M$ such that $i_0<i_1<i_2$, $\chi_{S_k}(i_0)\not=\chi_{S_k}(i_1)$ and $\chi_{S_k}(i_1)\not=\chi_{S_k}(i_2)$.
This implies $d(i_0,i_1)=k=d(i_1,i_2)$, which in turn yields $c\{i_0,i_1\}\not=c\{i_1,i_2\}$, in contradiction to the homogeneity of $M$.
Hence, $M$ is cohesive for $(R_i)_i$, and we obtain $\COH\leqSW\RT_{2,2}$.
\end{proof}

We obtain $\RT_{2,2}\nleqW\COH$ since $\COH\leqSW\lim$ by \cite[Proposition~12.10]{BHK15},
but on the other hand $\RT_{2,2}\nleqW\lim'$ by Corollary~\ref{cor:SRT-limits}.
We obtain the following corollary.

\begin{corollary}
\label{cor:COH-RT22-SRT22}
$\SRT_{2,2}\sqcup\COH\leqW\RT_{2,2}\leqW\SRT_{2,2}*\COH\leqSW\WKL''$.
\end{corollary}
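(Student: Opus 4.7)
The statement packages three reductions. The outer two are essentially automatic. For $\SRT_{2,2}\sqcup\COH\leqW\RT_{2,2}$, since $\sqcup$ is the join in the Weihrauch lattice, it suffices to combine $\SRT_{2,2}\leqSW\RT_{2,2}$ from Lemma~\ref{lem:basic-reductions} with $\COH\leqSW\RT_{2,2}$ from Proposition~\ref{prop:COH-RT22}. The middle reduction $\RT_{2,2}\leqW\SRT_{2,2}*\COH$ is exactly Proposition~\ref{prop:RT-SRT-COH} specialized to $n=k=2$, so no new argument is needed.

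The real work is the upper bound $\SRT_{2,2}*\COH\leqSW\WKL''$. My plan is to decompose the left factor further using Corollary~\ref{cor:SRT-RT-lim}, which gives $\SRT_{2,2}\leqW\RT_{1,2}*\lim$. By monotonicity of $*$ in its left argument (which follows directly from the supremum definition of the compositional product) together with associativity, this yields
\[\SRT_{2,2}*\COH\leqW\RT_{1,2}*\lim*\COH\equivW\RT_{1,2}*\WKL',\]
where the equivalence invokes Fact~\ref{fact:WKL-lim-COH}.

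To finish, I would bound $\RT_{1,2}$ itself by $\WKL'$: by Proposition~\ref{prop:bottom}(2) and Fact~\ref{fact:WKL-BWT}(1) we have $\RT_{1,2}\equivW\BWT_2\equivSW\C_2'$, and since $\C_2\leqSW\WKL$ with jumps monotone for $\leqSW$, this gives $\RT_{1,2}\leqW\C_2'\leqSW\WKL'$. Substituting into the previous display and using monotonicity of $*$ again, together with Fact~\ref{fact:WKL-BWT}(6), produces
\[\RT_{1,2}*\WKL'\leqW\WKL'*\WKL'\equivW\WKL''.\]
Finally, $\WKL''$ is a cylinder (being a parallelization), so any ordinary Weihrauch reduction to it is automatically strong, upgrading the conclusion to $\SRT_{2,2}*\COH\leqSW\WKL''$.

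I do not anticipate any genuinely hard step: the argument is essentially bookkeeping, stringing together Corollary~\ref{cor:SRT-RT-lim}, Fact~\ref{fact:WKL-lim-COH}, and Fact~\ref{fact:WKL-BWT} in the right order. The only points I would double-check are (i) that $*$ is monotone in its left factor for $\leqW$ (immediate from the supremum definition) and (ii) that associativity of $*$ can be freely used to reparenthesise $\RT_{1,2}*\lim*\COH$, which is \cite[Corollary~17]{BP16}. Tracking which equivalences along the chain are strong versus ordinary is the only subtlety, but the cylinder property of $\WKL''$ removes it at the end.
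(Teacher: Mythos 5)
Your proof is correct, and the first two reductions match the paper exactly. For the upper bound $\SRT_{2,2}*\COH\leqW\WKL''$, your route is a genuinely different decomposition from the paper's. You first peel $\SRT_{2,2}\leqW\RT_{1,2}*\lim$ via Corollary~\ref{cor:SRT-RT-lim}, then absorb $\lim*\COH$ into $\WKL'$, and finally bound $\RT_{1,2}\leqW\WKL'$; the net effect is $\WKL'*\WKL'\equivW\WKL''$. The paper instead invokes the global upper bound Corollary~\ref{cor:upper-bound} to get $\SRT_{2,2}\leqW\WKL''$ immediately, then shows $\WKL''*\COH\leqW\WKL'*\lim*\COH\equivW\WKL'*\WKL'\equivW\WKL''$, i.e., that the trailing $*\COH$ is absorbed. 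Your decomposition is arguably tighter and avoids the slightly roundabout step of going up to $\WKL''$ and then arguing the composition does not exceed it; the paper's has the virtue of reusing the already-established global bound rather than reassembling it from its ingredients. Both are sound bookkeeping of the same basic facts.

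One small caveat: your parenthetical justification that $\WKL''$ is a cylinder "(being a parallelization)" is not a valid reason. The paper itself shows (Corollaries~\ref{cor:cylinder-CSRT} and \ref{cor:cylinder-SRT}) that parallelization alone does not guarantee the cylinder property: $\widehat{\RT_{n,k}}$ is a parallelization and is \emph{not} a cylinder. The correct route is that $\WKL$ is a cylinder (a standard fact used throughout the paper), and jumps preserve the cylinder property since $(\id\times f)'\equivSW\lim\times f'$ and $\id\leqSW\lim$; so $\WKL^{(n)}$ is a cylinder. The conclusion you draw is right, just the offered reason is not.
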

\begin{proof}
By Proposition~\ref{prop:COH-RT22}, Lemma~\ref{lem:basic-reductions} and since $\sqcup$ is the supremum
with respect to $\leqW$ we have
$\SRT_{2,2}\sqcup\COH\leqW\RT_{2,2}$.
The reduction $\RT_{2,2}\leqW\SRT_{2,2,}*\COH$ follows from Proposition~\ref{prop:RT-SRT-COH}.
The last mentioned reduction in the corollary 
follows from Corollary~\ref{cor:upper-bound} and Facts~\ref{fact:WKL-lim-COH} and \ref{fact:WKL-BWT} since
\[\SRT_{2,2}*\COH\leqW\WKL''*\COH\leqW\WKL'*\lim*\COH\equivW \WKL'*\WKL'\equivW \WKL''.\qedhere\]
\end{proof}

Given the fact that 
$\SRT_{2,2}\sqcup\COH\leqSW\SRT_{2,2}\times\COH\leqW\SRT_{2,2}*\COH$,
it would be desirable to clarify the relation of $\RT_{2,2}$ to all the three mentioned problems.

\begin{question}
\label{quest:SRT22-COH-RT22}
How does $\RT_{2,2}$ exactly relate to $\SRT_{2,2}\sqcup\COH$, $\SRT_{2,2}\times\COH$ and $\SRT_{2,2}*\COH$?
\end{question}

We can at least say something. We have $\COH\leqW\lim$ by \cite[Proposition~12.10]{BHK15} and by Corollary~\ref{cor:SRT-RT-lim}
and Lemma~\ref{lem:jump-cylindrification} 
we know that $\SRT_{2,2}\leqW\RT_{1,2}'\times\lim$.
Since $\lim$ is idempotent we obtain the following corollary.

\begin{corollary}
\label{cor:SRT22-COH-RT12-lim}
$\SRT_{2,2}\times\COH\leqW\RT_{1,2}'\times\lim$.
\end{corollary}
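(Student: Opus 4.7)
The plan is to chain together three ingredients already available in the excerpt: the bound on $\SRT_{2,2}$ coming from Corollary~\ref{cor:SRT-RT-lim}, the reduction $\COH\leqW\lim$ (cited as \cite[Proposition~12.10]{BHK15}), and the idempotency of $\lim$.

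First I would invoke Corollary~\ref{cor:SRT-RT-lim} (taking $n=1$, $k=2$) to get $\SRT_{2,2}\leqW\RT_{1,2}*\lim$, and then apply Lemma~\ref{lem:jump-cylindrification} to rewrite $\RT_{1,2}*\lim\equivW\RT_{1,2}'\times\lim$. This yields
\[\SRT_{2,2}\leqW \RT_{1,2}'\times\lim.\]
Combined with $\COH\leqW\lim$, and using the fact that the product operation $\times$ is monotone with respect to $\leqW$, I obtain
\[\SRT_{2,2}\times\COH\leqW (\RT_{1,2}'\times\lim)\times\lim.\]

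Next, using the associativity and commutativity of $\times$ up to Weihrauch equivalence, the right-hand side is equivalent to $\RT_{1,2}'\times(\lim\times\lim)$. Since $\lim$ is idempotent, $\lim\times\lim\equivW\lim$, so the right-hand side simplifies to $\RT_{1,2}'\times\lim$, which gives the claim. There is no real obstacle here; the corollary is essentially a bookkeeping combination of previously established reductions, packaged via idempotency of $\lim$ to absorb the extra $\lim$--factor contributed by the cohesiveness component.
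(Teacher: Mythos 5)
Your proof is correct and follows the same approach as the paper, which combines Corollary~\ref{cor:SRT-RT-lim} with Lemma~\ref{lem:jump-cylindrification} to get $\SRT_{2,2}\leqW\RT_{1,2}'\times\lim$, invokes $\COH\leqW\lim$, and absorbs the extra $\lim$ factor via idempotency. You have merely spelled out the bookkeeping (monotonicity and associativity/commutativity of $\times$) that the paper leaves implicit.
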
 

With Proposition~\ref{prop:RT2*RT1xlim} we arrive at the following conclusion.

\begin{corollary}
\label{cor:RT22-SRT22-COH}
$\RT_{2,2}\nleqW\SRT_{2,2}\times\COH$.
\end{corollary}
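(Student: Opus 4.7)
The plan is to derive this as a short corollary of the two results that immediately precede it, namely Corollary~\ref{cor:SRT22-COH-RT12-lim} and the first (non-reducibility) half of Proposition~\ref{prop:RT2*RT1xlim}. Nothing genuinely new needs to be proved; the task is to chain these inequalities correctly, together with monotonicity of the jump.

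First I would argue by contradiction: assume $\RT_{2,2} \leqW \SRT_{2,2}\times\COH$. Then by Corollary~\ref{cor:SRT22-COH-RT12-lim} and transitivity of $\leqW$, we obtain $\RT_{2,2} \leqW \RT_{1,2}'\times\lim$. Next I would upgrade the number of colors on the right-hand side. By Lemma~\ref{lem:increasing-color}(3) we have $\RT_{1,2}\leqSW\RT_{1,\IN}$, and since jumps are monotone with respect to strong Weihrauch reducibility (as recalled before Fact~\ref{fact:WKL-BWT}), this gives $\RT_{1,2}'\leqSW\RT_{1,\IN}'$. Taking the product with $\lim$ preserves the reduction, so $\RT_{1,2}'\times\lim\leqW\RT_{1,\IN}'\times\lim$, and therefore $\RT_{2,2}\leqW\RT_{1,\IN}'\times\lim$.

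This, however, directly contradicts Proposition~\ref{prop:RT2*RT1xlim}, which states $\RT_{2,2}\nleqW\RT_{1,\IN}'\times\lim$. Hence the initial assumption is false, and $\RT_{2,2}\nleqW\SRT_{2,2}\times\COH$ as claimed.

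Since every step is a one-line invocation of a previously established result, there is no real obstacle here; the only thing worth double-checking is that the upper bound on $\SRT_{2,2}\times\COH$ from Corollary~\ref{cor:SRT22-COH-RT12-lim} is phrased with exactly two colors ($\RT_{1,2}'$), while the non-reducibility in Proposition~\ref{prop:RT2*RT1xlim} is phrased with $\IN$-many colors ($\RT_{1,\IN}'$), so one must insert the color-monotonicity step to bridge them. That is the entire content of the argument.
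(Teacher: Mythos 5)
Your proposal is correct and follows exactly the route the paper intends: chain Corollary~\ref{cor:SRT22-COH-RT12-lim} with Proposition~\ref{prop:RT2*RT1xlim}, and bridge the color mismatch (two colors vs.\ $\IN$ colors) via Lemma~\ref{lem:increasing-color} and monotonicity of the jump. The paper leaves that bridging step implicit; you have spelled it out correctly.
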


We also mention that Corollary~\ref{cor:products} and Proposition~\ref{prop:COH-RT22} imply the following.

\begin{corollary}
$\SRT_{2,2}\times\COH\leqW\RT_{2,4}$.
\end{corollary}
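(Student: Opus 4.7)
The plan is to chain three reductions together using the building blocks already established. First, by Proposition~\ref{prop:COH-RT22} we have $\COH\leqSW\RT_{2,2}$, and by Lemma~\ref{lem:basic-reductions} we have $\SRT_{2,2}\leqSW\RT_{2,2}$. Since the product operation $\times$ is monotone with respect to strong Weihrauch reducibility (componentwise application of the forward and backward reductions), these two reductions combine to yield $\SRT_{2,2}\times\COH\leqSW\RT_{2,2}\times\RT_{2,2}=\RT_{2,2}^{2}$.

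Second, by Corollary~\ref{cor:products}(1) applied with $n=2$, $k=2$, $m=2$, we have $\RT_{2,2}^{2}\leqSW\RT_{2,4}$, which arises from the standard bijection $\alpha:\{0,1\}^{2}\to\{0,1,2,3\}$ packaging two colorings into one four-coloring whose homogeneous sets are simultaneously homogeneous for both.

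Concatenating these strong reductions yields $\SRT_{2,2}\times\COH\leqSW\RT_{2,4}$, from which $\SRT_{2,2}\times\COH\leqW\RT_{2,4}$ follows immediately. There is no real obstacle here; the corollary is essentially a bookkeeping observation combining Proposition~\ref{prop:COH-RT22} (for the $\COH$ factor) with the $m=2$ instance of the product reduction from Corollary~\ref{cor:products} (absorbing the two $\RT_{2,2}$ factors into a single $\RT_{2,4}$).
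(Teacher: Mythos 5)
Your proof is correct and matches the paper's intended argument exactly: the paper states the corollary as a direct consequence of Corollary~\ref{cor:products} and Proposition~\ref{prop:COH-RT22}, with the step $\SRT_{2,2}\leqSW\RT_{2,2}$ from Lemma~\ref{lem:basic-reductions} left implicit. Nothing to add.
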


We recall that the problem $\RT_{1,2}'$ is also studied under the name $\D_2^2$ (see for instance \cite{CJS01,CLY10}).
We introduce the following notation, where we use lower indices again.

\begin{definition}
We define $\D_{n,k}:=\RT_{1,k}^{(n-1)}$ for all $n,k\geq1$.
\end{definition}

Dzhafarov proved that $\COH\nleqSW\D_{2,k}$ holds for all $k\in\IN$ \cite[Corollary~1.10]{Dzh15}.
In fact, his key theorem \cite[Theorem~1.5]{Dzh15} yields even the following
stronger result.

\begin{corollary}
\label{cor:COH-D}
$\COH\nleqSW\CRT_{1,k}^{(m)}$ for all $k,m\in\IN$. 
\end{corollary}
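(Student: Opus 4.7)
The plan is to derive the corollary from Dzhafarov's Theorem~1.5 in \cite{Dzh15}, which is the engine also behind $\COH\nleqSW\D_{2,k}$ but is strong enough to handle arbitrary levels of the jump hierarchy. I would begin by unfolding the hypothesis: a strong reduction $\COH\leqSW\CRT_{1,k}^{(m)}$ would yield computable partial $K,H:\In\IN^\IN\to\IN^\IN$ with $HGK\vdash\COH$ for every realizer $G$ of $\CRT_{1,k}^{(m)}$. Such a realizer first converts its input $p$ into a standard $\CC_{1,k}$-name $q:=\lim^{[m]}(p)$ of a coloring $c:\IN\to k$ and then outputs some pair $(c(M),M)$ with $M\in\HH_c$.

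Next I would take an arbitrary computable sequence $(R_i)_i$ of subsets of $\IN$, let $r$ be its canonical name, set $p:=K(r)$, and let $c$ be the coloring named by $\lim^{[m]}(p)$. Since $K$ is computable and $\lim^{[m]}$ is the $m$-fold limit on Baire space, $c$ lies at the $\Delta^0_{m+1}$ level while $(R_i)_i$ is computable. This is precisely the setting covered by Dzhafarov's Theorem~1.5, which supplies an infinite $M\in\HH_c$ together with its color $j:=c(M)$ such that no fixed computable functional applied to the pair $(j,M)$ returns a set cohesive for $(R_i)_i$. In particular, $H(j,M)\notin\COH((R_i)_i)$, and since $(j,M)$ is a legitimate output of $\CRT_{1,k}^{(m)}$ on $p$, this contradicts the assumed strong reduction.

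The main obstacle is conceptual: a strong reduction forbids $H$ from reconsulting the instance $(R_i)_i$, so the entire cohesive-avoidance burden lies on the output $(j,M)$, and the $m$-fold jump on the input side has to be balanced by allowing $c$ to be arithmetical of level $m+1$. Theorem~1.5 of \cite{Dzh15} is designed precisely for this asymmetric setting, so the argument that proves the case $m=1$ in \cite[Corollary~1.10]{Dzh15} lifts uniformly to every $m\in\IN$, with the canonical name of $(R_i)_i$ and the composition $\lim^{[m]}\circ K$ playing the roles of the computable sequence and the $\Delta^0_{m+1}$-coloring required by the theorem.
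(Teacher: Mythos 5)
Your overall strategy — invoke Dzhafarov's Theorem~1.5 as the engine for cohesive avoidance against strong reductions — is exactly what the paper does; the paper offers nothing beyond the citation, so the only thing to scrutinize is whether your account of \emph{how} the theorem applies is sound, and there it slips in two places.

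The first is the order of quantifiers. You take an arbitrary computable sequence $(R_i)_i$ before knowing anything about $H$, and then assert that Theorem~1.5 supplies $(j,M)$ such that ``no fixed computable functional applied to the pair $(j,M)$ returns a set cohesive for $(R_i)_i$.'' That is not what the theorem says, and it could not be: for any fixed pair $(j,M)$ there will certainly be computable functionals producing cohesive sets relative to a given computable $\vec{R}$. The theorem runs in the opposite direction: one first fixes the (single) Turing functional $\Phi$ — here the $\Phi$ coming from $H$ — and the theorem then constructs a specific computable sequence $\vec{R}$ with the property that, for \emph{any} coloring $c:\IN\to k$, there is a homogeneous pair $(j,M)$ with $j=c(M)$ for which $\Phi$ applied to $(j,M)$ fails to be $\vec{R}$-cohesive. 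So the argument must first fix $K,H$ from the putative reduction, then take the $\vec{R}$ that Theorem~1.5 builds against $H$, and only then form $p:=K(r)$ and the induced coloring $c$. Second, the $\Delta^0_{m+1}$ ceiling you emphasize is not what makes the theorem applicable: Theorem~1.5 places no complexity restriction on the coloring at all, and it is precisely this unrestricted generality (not a ``balance'' between the $m$-fold jump and an arithmetical bound) that lets one push through an arbitrary number $m$ of jumps, and indeed also covers $m=0$. With the quantifier order corrected and the complexity-balancing framing dropped, your sketch becomes the intended argument.
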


In a subsequent paper \cite{Dzh16} Dzhafarov proved the following result.

\begin{theorem}
\label{thm:COH-SRT22}
$\COH\nleqW\SRT_{2,+}$ and hence, in particular, $\COH\nleqW\SRT_{2,2}$.
\end{theorem}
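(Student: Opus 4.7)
The plan is to first reduce the problem to a single value of $k$ using countable irreducibility of $\COH$, then rewrite $\SRT_{2,k}$ as a jump via Theorem~\ref{thm:CRT-SRT}, and finally close the resulting gap to a strengthening of Corollary~\ref{cor:COH-D}.

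First I would verify that $\COH$ is a fractal. Any non-empty clopen set $A \In (2^\IN)^\IN$ in the relevant domain is determined by finite prefixes of finitely many coordinates; given an arbitrary sequence $(R_i)_i$, one therefore computably produces a sequence $(R'_i)_i \in A$ by fixing the prescribed prefixes and encoding $(R_i)_i$ into cofinitely many of the remaining coordinates. Every set cohesive for $(R'_i)_i$ is, in particular, cohesive for $(R_i)_i$, so $\COH|_A \equivW \COH$; hence $\COH$ is a fractal and by \cite[Proposition~2.6]{BGM12} countably irreducible.

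Now assume for contradiction that $\COH \leqW \SRT_{2,+} = \bigsqcup_{k\geq 1}\SRT_{2,k}$. Countable irreducibility supplies some $k \geq 2$ with $\COH \leqW \SRT_{2,k}$, and Theorem~\ref{thm:CRT-SRT} together with Corollary~\ref{cor:basic-equivalences} rewrites this as $\COH \leqW \CRT_{1,k}'$. Corollary~\ref{cor:COH-D} already forbids the strong reduction $\COH \leqSW \CRT_{1,k}'$, so the whole task reduces to closing the gap between $\leqW$ and $\leqSW$ for this particular right-hand side.

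The hard part is precisely this gap: an ordinary Weihrauch reduction permits the postprocessor $H$ to combine the original sequence $(R_i)_i$ with the limit-computable homogeneous set $M$ produced by $\CRT_{1,k}'$, whereas the preservation argument underlying Corollary~\ref{cor:COH-D}, namely \cite[Theorem~1.5]{Dzh15}, is formulated only for strong reductions. I would therefore adapt that forcing construction so that the sequence $(R_i)_i$ and its homogeneous set are built side by side: the preprocessor $K$ sends $(R_i)_i$ to a convergent sequence of colorings with limit $c_\infty:\IN\to k$ and a declared color, and one needs to construct $(R_i)_i$ together with an infinite homogeneous $M$ for $c_\infty$ of the declared color such that $H((R_i)_i, M)$ meets both $R_n$ and its complement infinitely often for some $n$. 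This calls for a Mathias-style forcing, relativised to the evolving $(R_i)_i$, whose conditions carry finite approximations to both objects; at each stage one diagonalises against a single cohesiveness requirement while maintaining the existence of an infinite homogeneous extension of the current stem. Ensuring that the diagonalisation never destroys the future supply of homogeneous extensions, so that a genuine infinite $M$ of the correct color survives in the limit, is the principal technical challenge, and is where the combinatorial core of \cite{Dzh16} is expected to enter.
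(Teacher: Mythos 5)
The paper does not actually prove this statement: it is attributed directly to Dzhafarov~\cite{Dzh16} and stated without proof (``In a subsequent paper \cite{Dzh16} Dzhafarov proved the following result''), so there is no internal argument to compare your proposal against. Your attempt must therefore stand entirely on its own.

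Your preliminary reductions are correct and cleanly stated. The observation that $\COH$ is a fractal (because cohesiveness for a sequence is insensitive to finite modifications and to adding redundant coordinates, so $\COH$ restricted to any nonempty clopen set is still Weihrauch-equivalent to $\COH$) is right, and countable irreducibility via \cite[Proposition~2.6]{BGM12} legitimately reduces the claim to $\COH\nleqW\SRT_{2,k}$ for a single $k$. Rewriting $\SRT_{2,k}\equivW\CRT_{1,k}'$ through Theorem~\ref{thm:CRT-SRT} and Corollary~\ref{cor:basic-equivalences} is also valid. However, at exactly the point where the content of the theorem begins, the proposal stops: you correctly identify that Corollary~\ref{cor:COH-D} only excludes the \emph{strong} reduction $\COH\leqSW\CRT_{1,k}'$, that the whole theorem is about the passage from strong to ordinary reducibility (where $H$ also sees the original input $(R_i)_i$), and then you describe the required Mathias-style forcing only as a plan whose ``principal technical challenge'' is acknowledged but not solved, explicitly deferring the combinatorial core to \cite{Dzh16}. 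That is a genuine gap, not a presentational one: nothing elsewhere in the paper lets one finesse the step from $\nleqSW$ to $\nleqW$ here, and without carrying out the simultaneous construction of $(R_i)_i$ and a homogeneous set of the declared color while diagonalising against every candidate post-processor $H$, the argument is incomplete. As it stands, the proposal reduces the theorem to the very result it is trying to establish.
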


Since K\H{o}nig's lemma $\KL$ is often discussed in the context of $\RT_{2,2}$, we mention it briefly
in passing. By $\KL:\In\Tr_\IN\mto\IN^\IN$ we denote the multi-valued function that is defined on all infinite finitely branching
trees $T\In\IN^*$ and such that $\KL(T)=[T]$ is the set of infinite paths of $T$.
The set $\Tr_\IN$ of trees $T\In\IN^*$ is represented via characteristic functions of such trees as usually. 
We can also consider a variant $\KL_+$ of $\KL$, where the set of trees 
is represented by positive information only, i.e., by an enumeration of the corresponding tree $T$. 
The following theorem is essentially based on results from \cite{BGM12}. 
If $n\in\IN$, then we write $\widehat{n}\in\IN^\IN$ for the constant sequence with value $n$.

\begin{theorem}
\label{thm:KL}
$\KL\equivSW\KL_+\equivSW\WKL'$.
\end{theorem}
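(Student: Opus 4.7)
The plan is to prove the chain of strong reductions $\KL\leqSW\KL_+\leqSW\WKL'\leqSW\KL$, from which both claimed strong equivalences follow immediately.

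The reduction $\KL\leqSW\KL_+$ will be trivial: given $T\in\Tr_\IN$ via its characteristic function, I computably enumerate the $\sigma\in\IN^*$ with $\chi_T(\sigma)=1$, pass the resulting positive-information name to $\KL_+$, and return the path unchanged.

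For $\KL_+\leqSW\WKL'$ I encode $T$ as a binary tree. Fix a self-delimiting prefix code $\beta\colon\IN\to\{0,1\}^+$, say $\beta(k):=1^k0$, and extend it by concatenation to $e(k_0,\ldots,k_{m-1}):=\beta(k_0)\cdots\beta(k_{m-1})$. Let $\widetilde{T}\In\{0,1\}^*$ be the prefix closure of $\{e(\sigma):\sigma\in T\}$. Because $T$ is infinite and finitely branching, $\widetilde T$ is an infinite binary tree, and finite branching at every $\sigma\in T$ prevents arbitrarily long runs of $1$'s in $\widetilde T$ starting at any encoded node; hence the decoder $e^{-1}\colon[\widetilde T]\to[T]$ is a fixed total computable function. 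The finite trees $\widetilde T_n$ built from the first $n$ enumerated elements of $T$ increase monotonically to $\widetilde T$, so $(\widetilde T_n)_n$ is a valid jump-name for $\widetilde T$. A call to $\WKL'$ on it followed by the fixed decoder $e^{-1}$ yields a path in $T$.

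For $\WKL'\leqSW\KL$ I unpack the $\WKL'$-input into a computable sequence $(T_n)_n$ of binary trees with $\chi_{T_n}(\sigma)\to\chi_T(\sigma)$ pointwise, and construct a computable finitely branching tree $S\In\IN^*$ whose nodes at level $k$ encode (via a standard pairing) tuples $\bigl((b_0,m_0),\ldots,(b_{k-1},m_{k-1})\bigr)\in(\{0,1\}\times\IN)^k$ subject to $m_0\leq m_1\leq\cdots\leq m_{k-1}\leq k$ together with the finitely verifiable constraint $(b_0,\ldots,b_j)\in T_n$ for every $j<k$ and every $n\in[m_j,k]$. Downward closure under prefixes is immediate, and the bound $m_j\leq k$ confines every level-$k$ node to at most $2(k+2)$ children, making $S$ finitely branching. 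Given $\pi\in[T]$ and $k$ large enough that each $\pi\upharpoonright(j+1)$ has stabilized in $(T_n)_n$ by stage $k$, the node $\bigl((\pi(0),m_0),\ldots,(\pi(k-1),m_{k-1})\bigr)$ with $m_j$ the relevant stabilization stage lies in $S$, so $S$ is infinite. Any infinite path of $S$ forces $m_j\to\infty$, whence $(b_0,\ldots,b_j)\in\bigcap_{n\geq m_j}T_n=T$ for each $j$, so projecting the path onto its $b$-coordinates produces a member of $[T]$. One call to $\KL$ on $S$ followed by this fixed projection then realizes $\WKL'$.

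The technical heart will be the last reduction: the witness bound $m_j\leq k$ is the pivot on which the argument turns, and must be tight enough to preserve finite branching yet loose enough to let every $\pi\in[T]$ lift into $S$. This construction is the one implicit in the analysis of jumps of $\WKL$ carried out in \cite{BGM12}, in particular behind the reduction used in \cite[Theorem~8.13]{BGM12}.
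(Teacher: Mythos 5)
Your reductions $\KL\leqSW\KL_+$ and $\KL_+\leqSW\WKL'$ are correct, and the second is genuinely different from the paper's route: you encode a finitely branching tree directly into a binary tree via the self-delimiting code $\beta(k)=1^k0$ and observe that finite branching of $T$ blocks any path of $\widetilde T$ from ending in $1^\infty$, so the decoder $e^{-1}$ is a fixed total computable function on $[\widetilde T]$. The paper instead shows $\KL_+\leqSW\BWT_{\IN^\IN}$ and invokes $\WKL'\equivSW\BWT_{\IN^\IN}$ from \cite{BGM12}. Your encoding argument for this half is clean and correct.

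The reduction $\WKL'\leqSW\KL$, however, has a genuine gap: the tree $S$ you build need not be infinite. At level $k$ you require $m_{k-1}\leq k$, and the membership constraint then forces in particular $(b_0,\ldots,b_{k-1})\in T_k$, so $S$ has a node at level $k$ only if the $k$-th approximating tree $T_k$ already contains a word of length $k$. Pointwise convergence $\chi_{T_n}\to\chi_T$ does not guarantee this. Concretely, take $T=\{0,1\}^*$ and $T_n:=\{\sigma\in\{0,1\}^*:|\sigma|\leq\lfloor n/2\rfloor\}$; each $T_n$ is a binary tree and $\chi_{T_n}\to\chi_T$ in Baire space, yet $T_k$ has no word of length $k\geq1$, so $S=\{\emptyset\}$ is finite and $\KL$ is undefined on it. (Secondarily, prefix-closure of $S$ is not immediate either: a level-$k$ node with $m_{k-2}=k$ has a level-$(k-1)$ prefix violating $m_{k-2}\leq k-1$; tightening to $m_j\leq j+1$ fixes that but only aggravates the infiniteness problem.) The paper avoids all of this by proving $\WKL'\equivSW\BWT_{\IN^\IN}\leqSW\KL_+\leqSW\KL$, where the tree fed to $\KL$ is grown lazily one node per stage at whatever depth the data warrant, never forcing a node of length $k$ to appear by stage $k$. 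To repair your construction you would need to decouple the depth of $S$ from the length of the committed $b$-prefix, allowing the construction to stall while waiting for the $T_n$ to stabilize; that is a substantive change, not a tuning of the witness bound $m_j\leq k$.
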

\begin{proof}
It is clear that $\KL\leqSW\KL_+$.
By \cite[Corollaries~11.6 and 11.7]{BGM12} we have that $\WKL'\equivSW\BWT_{\IN^\IN}$.

It is easy to see that $\KL_+\leqSW\BWT_{\IN^\IN}$:
Given a finitely branching tree $T\In\IN^*$ by an enumeration $T=\{w_n:n\in\IN\}$
we just compute a sequence $(x_n)_n$ in $\IN^\IN$ with $x_n:=w_n\widehat{0}$.
Since $T$ is finitely branching, the set $\overline{\{x_n:n\in\IN\}}$ is compact, and hence $(x_n)_n$ has cluster points. 
It is clear that all cluster points of $(x_n)_n$ are infinite paths of $T$. This proves $\KL_+\leqSW\BWT_{\IN^\IN}$.

We now prove $\BWT_{\IN^\IN}\leqSW\KL_+$. Given a sequence $(x_n)_n$ that lies in a compact subset of $\IN^\IN$,
we enumerate a tree $T$ as follows. We start with the empty tree $T$ and in step $n=0,1,2,...$ we inspect $x_n$.
If $w\prefix x_n$ is the longest prefix of $x_n$ that was already enumerated into $T$, then we enumerate
$x_n|_k$ into $T$ in step $n$, where $k=|w|+1$. In this way a tree $T$ is enumerated such that $[T]$ is the
set of cluster points of $(x_n)_n$. The tree $T$ is a finitely branching tree since $(x_n)_n$ lies within a compact set.

Finally, we prove $\KL_+\leqSW\KL$. Given a finitely branching tree $T\In\IN^*$ by an enumeration $T=\{w_n:n\in\IN\}$,
we compute a finitely branching tree $S\In\IN^*$ (by determining its characteristic function) 
such that $\pr_1([S])=[T]$, where 
\[\pr_1(\langle n_0,k_0\rangle,\langle n_1,k_1\rangle,\langle n_2,k_2\rangle...)=(n_0,n_1,n_2,...)\in\IN^\IN\]
for all $n_i,k_i\in\IN$.
Since $\pr_1$ is computable, we obtain $\KL_+\leqSW\KL$ in this way.
We create the tree $S$ in stages $n=0,1,2,...$ and in stage $n$ we inspect the $n$--th word $w_n$ that is enumerated into $T$.
In this stage we decide that a word of the form $u_{w,v}:=\langle w_n(0),v(0)\rangle\langle w_n(1),v(1)\rangle...\langle w_n(k-1),v(k-1)\rangle$ with $k=|w_n|$ and all its prefixes
belong to $S$, where $v$ is the lexicographically smallest word $v\in\IN^k$ such that $u_{w,v}$ and all its prefixes have not
yet been decided to belong to $\IN^*\setminus S$. Additionally, we decide $w\in\IN^*\setminus S$ for all non-empty words $w$ that have not yet been decided to belong to $S$ and such that 
$|w|\leq n$ and $w(i)\leq n$ for all $i\leq n-1$. 
On the one hand, this algorithm ensures that the tree $S$ is decided for larger and larger blocks of size $n$,
and on the other hand, the construction guarantees that the resulting tree $S$ is finitely branching: on each level
$i$ of $T$ only finitely many different values can occur and at some stage $n$ of the construction all words $w$ of length
$i$ in $T$ have been considered and corresponding words $u_{w,v}$ have been added to $S$.
The fact that we always choose the lexicographically smallest $v$ ensures that no new strings of length $i$ are added to $S$ after stage $n$. Finally, the construction also guarantees $\pr_1([S])=[T]$ as promised. 
\end{proof}

Now Theorem~\ref{thm:KL} and Corollary~\ref{cor:delayed-parallelization} yield the following result,
which was independently proved in \cite[Corollary~2.3]{HJ16} by a very different method.

\begin{corollary}
\label{cor:KL}
$\KL\leqW\RT_{3,2}$.
\end{corollary}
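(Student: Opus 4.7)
The plan is straightforward since all the necessary ingredients have already been assembled. First, I would invoke Theorem~\ref{thm:KL}, which gives the strong Weihrauch equivalence $\KL \equivSW \WKL'$. In particular this yields the ordinary reduction $\KL \leqW \WKL'$.

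Next, I would apply Corollary~\ref{cor:delayed-parallelization}, whose middle clause states exactly that $\WKL' \leqW \RT_{3,2}$. Chaining these two reductions via the transitivity of $\leqW$ gives the desired $\KL \leqW \RT_{3,2}$.

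There is really no obstacle here: the content is entirely contained in the two cited results. The only thing worth remarking upon is that we obtain merely an ordinary Weihrauch reduction rather than a strong one, since the upper bound $\WKL' \leqW \RT_{3,2}$ from Corollary~\ref{cor:delayed-parallelization} is established only as an ordinary reduction (it is derived via $\widehat{\RT_{1,2}}$ and Theorem~\ref{thm:delayed-parallelization}, which gives the strong reduction for $\widehat{\RT_{1,2}} \leqSW \RT_{3,2}$, but is composed with the non-strong reduction $\WKL' \equivW \widehat{\C_2'} \leqW \widehat{\RT_{1,2}}$ coming from Theorem~\ref{thm:lower-bound}). The whole argument thus fits comfortably into one line.
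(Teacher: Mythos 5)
Your proof is correct and is exactly the paper's argument: combine Theorem~\ref{thm:KL} ($\KL\equivSW\WKL'$) with the middle clause of Corollary~\ref{cor:delayed-parallelization} ($\WKL'\leqW\RT_{3,2}$). One small side remark: the paper notes that the reduction genuinely cannot be upgraded to a strong one because $\C_2\leqSW\WKL\leqSW\KL$ but $\C_2\nleqSW\RT_{3,2}$ by Corollary~\ref{cor:LLPO-RT}, and the non-strong step in Corollary~\ref{cor:delayed-parallelization} for $n=1$ traces back to Proposition~\ref{prop:bottom} rather than Theorem~\ref{thm:lower-bound}, which only covers $n\geq2$.
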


We note that by Corollary~\ref{cor:LLPO-RT} the reduction cannot be replaced by a strong one
since $\C_2\leqSW\WKL\leqSW\KL$.
Likewise, one obtains $\WWKL\nleqSW\RT_{n,k}$ for all $n,k\geq1$ since $\C_2\leqSW\WWKL$.
We note that by Corollary~\ref{cor:limit-avoidance} we obtain that Corollary~\ref{cor:KL} is optimal in the following sense.

\begin{corollary}
\label{cor:KL-RT22}
$\KL\nleqW\RT_{2,\IN}$.
\end{corollary}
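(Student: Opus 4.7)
The plan is to derive this as a direct consequence of Corollary~\ref{cor:limit-avoidance} combined with Theorem~\ref{thm:KL}. The key observation is that $\KL$ sits above $\lim$ in the Weihrauch lattice, while Corollary~\ref{cor:limit-avoidance} already rules out $\lim$ as being Weihrauch reducible to $\RT_{2,\IN}$.

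More concretely, I would first invoke Theorem~\ref{thm:KL} to identify $\KL \equivSW \WKL'$, and then apply Fact~\ref{fact:WKL-BWT}(5), which yields $\lim \leqSW \WKL' \equivSW \KL$; in particular $\lim \leqW \KL$. Next, I would argue by contradiction: suppose $\KL \leqW \RT_{2,\IN}$. Composing with the reduction $\lim \leqW \KL$ gives $\lim \leqW \RT_{2,\IN}$. However, Corollary~\ref{cor:limit-avoidance} applied at $n = 2$ states that $\lim^{(0)} = \lim \nleqW \RT_{2,\IN}$, which is the required contradiction.

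Since the argument amounts to transporting a known lower bound across a known inequality of degrees, there is essentially no technical obstacle. The only point to be careful about is the base case of Corollary~\ref{cor:limit-avoidance}, where one checks that $\lim^{(n-2)}$ for $n = 2$ indeed unfolds to $\lim$ itself (as the $0$-th jump), so that the statement is genuinely applicable here rather than being vacuous.
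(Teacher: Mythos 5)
Your argument is correct and matches the paper's intended derivation exactly: the paper states Corollary~\ref{cor:KL-RT22} as an immediate consequence of Corollary~\ref{cor:limit-avoidance} (applied at $n=2$), using the known facts $\KL\equivSW\WKL'$ from Theorem~\ref{thm:KL} and $\lim\leqW\WKL'$ from Fact~\ref{fact:WKL-BWT}(5). The only detail you spelled out that the paper leaves implicit is the sanity check that $\lim^{(n-2)}$ at $n=2$ is $\lim$ itself, which is correct.
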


However, we note the related Questions~\ref{quest:WKL-SRT32} and \ref{quest:WKL-RT22}.
Liu proved the following theorem and lemma \cite[Theorem~1.5 and proof of Corollary~1.6]{Liu12}.

\begin{theorem}[Liu 2012~\cite{Liu12}]
\label{thm:Liu}
For any set $C$ not of $\PA$--degree and any $A\In\IN$ there exists an infinite subset $G$ of $A$ or $\IN\setminus A$
such that $G\oplus C$ is also not of $\PA$--degree.
\end{theorem}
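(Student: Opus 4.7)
The plan is to construct $G$ via a Mathias-style forcing whose conditions are pairs $(F, X)$, with $F$ a finite set contained entirely in $A$ or entirely in $\IN\sm A$ (a fixed ``side'') and $X$ an infinite reservoir contained in that side with $\max(F) < \min(X)$. A refinement $(F', X')$ satisfies $F \In F'$, $F' \sm F \In X$, $X' \In X$, and retains the same side. A sufficiently generic filter yields an infinite $G$ contained in $A$ or $\IN\sm A$. To guarantee that $G \oplus C$ is not of $\PA$-degree, we use the classical equivalence ``$\PA$-degree $\iff$ computes a separating set for every pair of disjoint c.e.\ sets'' and meet, for each index pair $(e,i)$, the requirement $R_{e,i}$: $\Phi_i^{G \oplus C}$ is not the characteristic function of a set separating $U_e$ from $V_e$.

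The heart of the proof is an extension lemma: for every condition $(F, X)$ and every $(e, i)$, there is a refinement $(F', X')$ forcing $R_{e,i}$. I would establish it by contradiction, using that $C$ is not of $\PA$-degree. Assume no refinement of $(F, X)$ forces $R_{e,i}$. Then every sufficiently long extension of the stem on either side of the reservoir---$X \cap A$ or $X \cap (\IN \sm A)$---produces, via $\Phi_i$, arbitrarily long prefixes of functions compatible with separating $(U_e, V_e)$. Organizing these prefixes as a $C$-computable combinatorial object---a ``two-sided bushy tree'' of potential separator prefixes, indexed by finite extensions $E \In X$---one aims to show that the structure is rich enough that $C$ itself can decode an actual separator for $(U_e, V_e)$, contradicting the non-$\PA$-degree of $C$.

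The main obstacle, and the content of Liu's innovation, is the construction and analysis of this bushy tree. Unlike Seetapun's cone-avoidance argument, where it suffices to restrict attention to a single side of the coloring and apply a pigeonhole, $\PA$-avoidance must preserve non-separation for every disjoint c.e.\ pair simultaneously, and one cannot pre-commit to a side without destroying preservation. The subtle step is therefore to run the bushy-tree analysis on both sides $X \cap A$ and $X \cap (\IN\sm A)$ in parallel: the failure of any diagonalizing refinement forces largeness on at least one side, and a careful combinatorial trace of the $\Phi_i$-computations enables $C$ to produce a separator without having to know the side in advance. Once this extension lemma is secured, a standard enumeration meeting all requirements $R_{e,i}$ along a generic filter, interleaved with the density conditions forcing $|G| = \infty$, produces the required infinite $G$ on one of the two sides.
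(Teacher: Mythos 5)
The paper does not prove Theorem~\ref{thm:Liu}; it cites it directly from Liu's 2012 paper \cite{Liu12}, so there is no in-paper proof to compare against. Your high-level framework---Mathias forcing, the separating-pair characterization of $\PA$-degree, an extension lemma per requirement obtained by contradiction against the non-$\PA$-ness of $C$---is the right general shape for an account of Liu's argument.

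There are, however, two genuine gaps. First, your conditions fix a side from the outset, with $F$ contained entirely in $A$ or entirely in $\IN\sm A$. For $\PA$-avoidance this does not work: committing to one side up front may leave some requirement $R_{e,i}$ unforcible on that side, and you cannot switch without losing all accumulated progress. Liu's conditions carry two stems at once, $(F_0,F_1,X)$ with $F_0\In A$ and $F_1\In\IN\sm A$, precisely so the side can be resolved per-requirement rather than once and for all. You yourself note that ``one cannot pre-commit to a side without destroying preservation,'' but that observation contradicts the single-sided condition structure you set up; the proposal needs to be reorganized around two-stemmed conditions for the density argument to even be stateable.

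Second, and more seriously, the combinatorial heart---the argument that if no refinement forces $R_{e,i}$ on either side, then $C$ computes a separator for $(U_e,V_e)$---is the entire content of Liu's theorem, and you have not supplied it. The phrase ``two-sided bushy tree'' is not Liu's technique (bushy-tree forcing belongs to the Kumabe/Khan--Miller line of DNC-avoidance arguments), and the sentence ``one aims to show that the structure is rich enough that $C$ itself can decode an actual separator'' states the goal without giving the mechanism. Liu's proof turns on a specific covering/partition lemma about the family of candidate separators generated by finite extensions into the reservoir, and that lemma is what makes the contradiction land. As written, your sketch identifies where the difficulty lives but leaves the decisive step unproved, so it is not yet a proof.
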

 
\begin{lemma}[Liu 2012~\cite{Liu12}]
\label{lem:Liu}
For any set $C$ not of $\PA$--degree and any uniformly $C$--computable sequence $(C_i)_i$ there exists a set 
$G$ which is cohesive for $(C_i)_i$ and such that $G\oplus C$ is also not of $\PA$--degree.
\end{lemma}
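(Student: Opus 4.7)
The plan is to build $G$ via Mathias forcing (relativized to $C$) with reservoirs that avoid PA-degree over $C$, meeting each cohesiveness requirement through a direct application of Theorem~\ref{thm:Liu}. A condition is a pair $(F,X)$ with $F$ finite, $X$ infinite, $\min X > \max F$, and $X \oplus C$ not of PA-degree; extension is defined by $(F',X') \leq (F,X)$ iff $F \In F'$, $F' \sm F \In X$, and $X' \In X$. Since $C$ alone is not of PA-degree, $(\emptyset, \IN)$ is a valid starting condition. The final set is $G := \bigcup_s F_s$ over a carefully chosen descending sequence of conditions.

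First I would construct a descending sequence $(F_s, X_s)_s$ interleaving three kinds of requirements: (a) cohesiveness for $C_s$, (b) $|F_{s+1}| > s$ (to force $G$ infinite), and (c) PA-avoidance for the $s$-th Turing functional $\Phi_s$. For (a), given $(F_s, X_s)$, enumerate $X_s$ as $\{x_0 < x_1 < \ldots\}$ and set $A := \{n : x_n \in C_s\}$, which is $(X_s \oplus C)$-computable because $(C_i)_i$ is $C$-computable. Applying Theorem~\ref{thm:Liu} relativized to the non-PA-degree set $X_s \oplus C$ with parameter $A$ yields an infinite $H$, contained either in $A$ or in $\IN \sm A$, with $H \oplus X_s \oplus C$ not of PA-degree. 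Then $X_{s+1} := \{x_n : n \in H\}$ is an infinite subset of $X_s$ lying entirely within $C_s$ or entirely within $\IN \sm C_s$, and $X_{s+1} \oplus C \leqT H \oplus X_s \oplus C$ still avoids PA-degree. Requirement (b) is trivially handled by absorbing $\min X_{s+1}$ into $F_{s+1}$ between stages, and this preserves the reservoir invariant.

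The main obstacle is (c): ensuring that the final $G$ itself, and not merely the reservoirs, satisfies $G \oplus C$ not of PA-degree. This requires a preservation lemma stating that for every Turing functional $\Phi$, the set of conditions forcing $\Phi^{G \oplus C}$ to fail to be a $\{0,1\}$-valued DNC function relative to $C$ is dense below every condition. The argument (essentially the engine behind Liu's proof of Theorem~\ref{thm:Liu}) proceeds by contradiction: if no such extension of $(F,X)$ exists, then by a compactness argument on the tree of forced values one extracts a $\{0,1\}$-valued function computable from $X \oplus C$ that diagonalizes against all partial $C$-computable functions, witnessing that $X \oplus C$ has PA-degree and contradicting the reservoir invariant. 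Meeting all countably many requirements of types (a), (b), (c) in sequence produces the desired descending chain, and the resulting $G = \bigcup_s F_s$ is an infinite set cohesive for $(C_i)_i$ with $G \oplus C$ not of PA-degree, as required.
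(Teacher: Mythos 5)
The paper itself gives no proof of this lemma; it is derived by citation from Liu's \cite{Liu12} Theorem~1.5 and the proof of his Corollary~1.6. Your reconstruction via Mathias forcing with conditions $(F,X)$ whose reservoirs satisfy the invariant ``$X\oplus C$ not of $\PA$--degree'' is exactly the standard route, and your handling of requirement (a) --- re-indexing $X_s$ by its principal function, applying Theorem~\ref{thm:Liu} relativized to $X_s\oplus C$ to the pulled-back set $A=\{n:x_n\in C_s\}$, and translating the resulting $H$ back into a subreservoir $X_{s+1}\In X_s$ of one colour --- is correct and is the only place Liu's hard combinatorics enters.

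There is, however, one genuine slip in requirement (c). You state the density requirement as forcing ``$\Phi^{G\oplus C}$ to fail to be a $\{0,1\}$-valued DNC function \emph{relative to} $C$,'' and the extracted function in the contradiction ``diagonalizes against all partial $C$-computable functions.'' Meeting this for every functional $\Phi$ only shows $G\oplus C$ is not of $\PA$--degree \emph{over $C$}, and that does \emph{not} imply $G\oplus C$ is not of $\PA$--degree over $\emptyset$ (the implication goes the other way: $\PA$ over $C$ implies $\PA$ over $\emptyset$; a completion of $\PA$ is $\PA$ over $\emptyset$ but never $\PA$ over itself). Since the lemma's conclusion and the reservoir invariant are both ``not of $\PA$--degree'' in the absolute sense, the density requirement must be: the condition forces $\Phi^{G\oplus C}$ not to be a $\{0,1\}$-valued DNC function \emph{over $\emptyset$}. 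The fix is trivial --- simply drop the relativization to $C$ in the forcing question; the dichotomy ``either some finite stem extension $E\In X$ and index $e$ make $\Phi^{(F\cup E)\oplus C}(e)$ diverge, leave $\{0,1\}$, or agree with $\Phi_e(e)$; or else a total $\{0,1\}$-valued DNC function is $X\oplus C$-computable'' goes through verbatim and contradicts the reservoir invariant. I would also temper the remark that this density argument is ``essentially the engine behind Liu's proof of Theorem~\ref{thm:Liu}'': the stem-preservation step here is the comparatively easy Lawton/Cholak--Jockusch--Slaman-style density argument, while Liu's genuinely hard combinatorics is invoked only as a black box for the reservoir-shrinking step.
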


We consider Peano arithmetic as the problem $\PA:\DD\mto\DD,{\bf b}\mapsto\{{\bf a}:{\bf a}\gg{\bf b}\}$ in the Weihrauch lattice, where $\DD$ denotes the set of Turing degrees represented by their members and ${\bf a}\gg{\bf b}$
expresses the property that ${\bf a}$ is a $\PA$--degree relative to ${\bf b}$.
From Liu's Theorem~\ref{thm:Liu} we can directly derive $\PA\nleqW\D_{n,2}$ for all $n\geq1$, and we also obtain the following corollary.

\begin{corollary}
\label{cor:PA}
$\PA\nleqW\CRT_{1,2}^{(n-1)}$ for all $n\geq1$.
\end{corollary}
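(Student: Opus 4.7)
The plan is to argue by contradiction, running essentially the same strategy that yields $\PA\nleqW\D_{n,2}$ from Liu's Theorem~\ref{thm:Liu}, observing that the extra colour output in $\CRT_{1,2}^{(n-1)}$ contributes only a single bit and therefore does not affect the $M$-computability of what the reduction produces.

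I will assume for contradiction that $\PA\leqW\CRT_{1,2}^{(n-1)}$ is witnessed by computable maps $H,K$. Feed the reduction a computable name $p$ for the degree $\mathbf{0}$. Then $K(p)$ is computable, and as a name in the iterated jump representation of $\CC_{1,2}$ it codes some coloring $c:\IN\to 2$ (which happens to be computable in $\emptyset^{(n-1)}$, although the argument will not require this bound). Let $A:=c^{-1}(\{1\})$ and apply Liu's Theorem~\ref{thm:Liu} with $C:=\emptyset$, which is trivially not of PA-degree. It delivers an infinite $M$ with $M\In A$ or $M\In\IN\sm A$ such that $M\equivT M\oplus\emptyset$ is not of PA-degree; such an $M$ is an infinite homogeneous set for $c$ with well-defined colour $i:=c(M)\in\{0,1\}$.

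Now pick a realizer $F$ of $\CRT_{1,2}^{(n-1)}$ whose value on $K(p)$ is a name for the pair $(i,M)$ of the form $\langle q_i,\chi_M\rangle$, where $q_i$ is a fixed computable name for the bit $i$ and $\chi_M$ is the characteristic function of $M$; this name is plainly $M$-computable. The reduction then asserts that $H\langle p,F(K(p))\rangle$ is a name for a degree $\mathbf{a}$ with $\mathbf{a}\gg\mathbf{0}$, i.e., a PA-degree. But this output is computable from the computable $p$ together with the $M$-computable $F(K(p))$, so $\mathbf{a}\leqT M$, and upward closure of the PA-degrees forces $M$ itself to be of PA-degree, contradicting the choice of $M$. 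The only point one might worry about is the jumped input, but this is exactly where Liu's theorem earns its keep: it places no computability restriction whatsoever on the set $A$, so the argument is uniform in $n$ and a single invocation handles every level of the arithmetical hierarchy at once.
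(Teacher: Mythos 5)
Your proof is correct and is essentially the same argument the paper has in mind: the paper only remarks that $\PA\nleqW\D_{n,2}$ (and hence the corollary) follows ``directly'' from Liu's Theorem~\ref{thm:Liu}, and your write-up supplies exactly those details—apply Liu's theorem with $C=\emptyset$ to the coloring $\lim^{[n-1]}(K(p))$, choose a realizer returning the pair $(i,M)$, and observe that the $H$-side output is then $M$-computable, forcing $M$ to be of PA-degree, a contradiction. Your note that the colour component contributes only one computable bit (so the passage from $\D_{n,2}=\RT_{1,2}^{(n-1)}$ to $\CRT_{1,2}^{(n-1)}$ is cost-free) and that Liu's theorem imposes no computability hypothesis on $A$ (so all $n$ are handled uniformly) are precisely the two observations the paper's brief remark is relying on.
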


In particular, this implies $\PA\nleqW\SRT_{2,2}$, since $\SRT_{2,2}\equivW\CRT_{1,2}'$ by Theorem~\ref{thm:CRT-SRT}.
Using Liu's Lemma~\ref{lem:Liu} this can be strengthened as follows.

\begin{corollary}
\label{cor:PA-RT22}
$\PA\nleqW\SRT_{2,2}*\COH$.
\end{corollary}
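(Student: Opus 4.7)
\emph{Proof plan.} The plan is to assume $\PA\leqW\SRT_{2,2}*\COH$ and derive a contradiction by chaining the two Liu results, each applied at one of the two stages of the compositional product. First I would invoke Theorem~\ref{thm:CRT-SRT} to replace $\SRT_{2,2}$ by the equivalent $\CRT_{1,2}'$, so that it suffices to refute $\PA\leqW\CRT_{1,2}'*\COH$; this is cleaner because the color of the homogeneous set is then part of the output and can be absorbed into a single bit of finite advice. Unfolding what such a reduction provides: given a computable input $p$ to $\PA$, it computably yields a uniformly $p$-computable sequence $(R_i)_i$ as an instance of $\COH$; given any cohesive solution $Y$ for $(R_i)_i$, it produces in $p\oplus Y$ a name of a limit-computable (equivalently, $(p\oplus Y)'$-computable) $2$-coloring $c_\infty:\IN\to 2$ as an instance of $\CRT_{1,2}'$; and given any solution $(k,G)$ of $\CRT_{1,2}'$, it computes from $p\oplus Y\oplus G\oplus k$ a set of $\PA$-degree.

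Next I would apply the two Liu results in succession. Take $p$ computable, hence not of $\PA$-degree. Since $(R_i)_i$ is uniformly $p$-computable, Liu's Lemma~\ref{lem:Liu} with $C:=p$ supplies a cohesive set $Y_0$ for $(R_i)_i$ such that $p\oplus Y_0$ is not of $\PA$-degree. Feeding $Y_0$ into the middle stage of the reduction produces a $(p\oplus Y_0)'$-computable coloring $c_\infty$, viewed as a subset $A\In\IN$. Now Liu's Theorem~\ref{thm:Liu} with $C:=p\oplus Y_0$ and this $A$ supplies an infinite set $G$ contained in $A$ or in $\IN\sm A$, i.e., homogeneous for $c_\infty$, with $G\oplus(p\oplus Y_0)$ still not of $\PA$-degree. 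Pairing $G$ with its $c_\infty$-color $k\in\{0,1\}$ yields a bona fide solution of $\CRT_{1,2}'$. The reduction then produces, from $p\oplus Y_0\oplus G\oplus k$, a set of $\PA$-degree; since $k$ contributes only a single bit, this degree is Turing-computable from $p\oplus Y_0\oplus G$, contradicting the conclusion of Liu's Theorem.

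The point requiring care is the unfolding of the compositional product $\CRT_{1,2}'*\COH$: one has to confirm that such a reduction really does decompose into the two independently invokable stages described above, with the $\CRT_{1,2}'$-instance being produced \emph{after} the $\COH$-stage and with free access to the cohesive set, so that Liu's Lemma and Liu's Theorem can be applied in sequence to the two stages. The crucial feature that makes the second application go through is that Theorem~\ref{thm:Liu} imposes no computability hypothesis on $A$, which is exactly what allows the intermediate coloring $c_\infty$, only $(p\oplus Y_0)'$-computable rather than $(p\oplus Y_0)$-computable, to play the role of $A$.
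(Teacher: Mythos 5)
Your proposal is correct and reconstructs exactly the argument the paper intends (the paper gives no explicit proof, only the remark that Liu's Lemma strengthens Corollary~\ref{cor:PA}): you pass through the equivalence $\SRT_{2,2}\equivW\CRT_{1,2}'$, apply Liu's Lemma~\ref{lem:Liu} to the $\COH$ stage to obtain a cohesive $Y_0$ with $p\oplus Y_0$ not of $\PA$-degree, then apply Liu's Theorem~\ref{thm:Liu} with $C:=p\oplus Y_0$ and $A$ the $(p\oplus Y_0)'$-computable limit coloring to get a homogeneous $G$ with $G\oplus p\oplus Y_0$ not of $\PA$-degree, and finally use upward closure of the $\PA$-degrees (and that the color $k$ is a single bit) to reach the contradiction. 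Your flagging of the two subtleties — the operational unfolding of $\CRT_{1,2}'*\COH$ into two sequential stages, and the fact that Theorem~\ref{thm:Liu} imposes no effectivity on $A$, which is what permits the intermediate coloring to be only $(p\oplus Y_0)'$-computable — is exactly the right emphasis.
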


Since $\RT_{2,2}\leqW\SRT_{2,2}*\COH$ by Proposition~\ref{prop:RT-SRT-COH}, this implies  $\PA\nleqW\RT_{2,2}$.
Since $\PA\lW\WKL$ \cite[Theorem~5.5, Corollary~6.4]{BHK15}, we also get the following conclusion.

\begin{corollary}
\label{cor:WKL}
$\WKL\nleqW\RT_{2,2}$.
\end{corollary}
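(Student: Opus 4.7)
The plan is to derive this as an immediate consequence of Corollary~\ref{cor:PA-RT22} together with the known fact that $\PA\leqW\WKL$, as cited from \cite[Theorem~5.5, Corollary~6.4]{BHK15}. Since Weihrauch reducibility is transitive, if we had $\WKL\leqW\RT_{2,2}$, then combining with $\PA\leqW\WKL$ we would obtain $\PA\leqW\RT_{2,2}$.

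Next I would use Proposition~\ref{prop:RT-SRT-COH} in the case $n=k=2$, which gives $\RT_{2,2}\leqW\SRT_{2,2}*\COH$. Combining this with the assumed $\PA\leqW\RT_{2,2}$ via transitivity yields $\PA\leqW\SRT_{2,2}*\COH$, directly contradicting Corollary~\ref{cor:PA-RT22}. This contradiction establishes $\WKL\nleqW\RT_{2,2}$.

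There is essentially no obstacle here, since all the substantive work has already been carried out: Liu's Lemma~\ref{lem:Liu} provides the cohesive set avoiding $\PA$--degree, which feeds into Corollary~\ref{cor:PA-RT22}, and Proposition~\ref{prop:RT-SRT-COH} is precisely the Cholak--Jockusch--Slaman decomposition in uniform form. The whole proof is a two-line transitivity argument from these ingredients.
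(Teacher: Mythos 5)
Your proof is correct and follows the same route as the paper: both deduce $\PA\nleqW\RT_{2,2}$ from Corollary~\ref{cor:PA-RT22} combined with Proposition~\ref{prop:RT-SRT-COH}, and then apply transitivity with $\PA\leqW\WKL$ from \cite{BHK15}. No differences of substance.
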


Likewise, one should be able to use the methods provided by Liu \cite{Liu15} to show that $\MLR\nleqW\RT_{2,2}$ (where $\MLR$ denotes the problem to find a point that is Martin-L\"of random relative to the input) and
hence $\WWKL\nleqW\RT_{2,2}$.
It would also be interesting to find out whether a variant of the above ideas can be used to answer Question~\ref{quest:WKL-RT22}.

By $\ConC_X$ we denote the restriction of $\C_X$ to connected subsets. 
We now want to discuss the intermediate value theorem $\IVT$, which we do not define here, but we
identify it with $\ConC_{[0,1]}$ (see \cite[Corollary~4.10]{BLRMP16a} and \cite[Theorem~6.2]{BG11a}, 
where $\ConC_{[0,1]}$ appears under the names $\ConC_1$ and $\C_I$, respectively).
$\ConC_{[0,1]}$ can be seen as a one-dimensional version of the Brouwer fixed point theorem 
and thus of the equivalence class of $\WKL$ (see \cite{BLRMP16a}).
Hence, from the uniform perspective it is an obvious question to ask how $\IVT$ is related to Ramsey's theorem. 
In order to approach this question we provide a new upper bound on $\IVT$ which is of independent interest.
By ${\CL_X:\In X^\IN\mto X}$ we denote the {\em cluster point problem} of $X$, which is the problem
to find a cluster point of the given input sequence (this is an extension of $\BWT_X$ since we do not
demand that the input sequence lies within a compact set, but only that it admits a cluster point).
By \cite[Theorem~9.4]{BGM12} we have $\CL_X\equivSW\C_X'$ for every computable metric space $X$.

\begin{proposition}
\label{prop:IVT}
$\IVT\leqW\C_\IN'$.
\end{proposition}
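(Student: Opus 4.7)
My plan is to use the characterization of $\C_\IN'$ as choice from non-empty $\Pi^0_2$-subsets of $\IN$, equivalently the equivalence $\C_\IN'\equivSW\CL_\IN$ just recalled. Given a closed connected non-empty $A\subseteq[0,1]$ presented by an enumeration $(J_i)_{i\in\IN}$ of rational open intervals exhausting $[0,1]\setminus A$, I will construct from $A$ a $\Pi^0_2$-description of a non-empty set $B_A\subseteq\IN$ such that every element of $B_A$, together with $A$, computes a Cauchy name of a point of $A$.

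I would interpret each $n\in\IN$ as the code of an oracle Turing program $\varphi_n^{?}$ and declare $n\in B_A$ precisely when (i) $\varphi_n^A$ is total, (ii) each $\varphi_n^A(k)$ is a rational in $[0,1]$, (iii) the sequence $(\varphi_n^A(k))_k$ is Cauchy with explicit rate $2^{-k}$, and (iv) its limit lies in $A$. Totality is $\Pi^0_2$ in $(n,A)$ in the standard way; given totality, (ii) and (iii) are $\Pi^0_1$ since they are decidable from finitely many values; and (iv) unfolds to ``for every $t$ the limit is not in the union $U_t$ of the first $t$ intervals $J_i$'', where, thanks to the Cauchy rate, membership of the limit in a given rational open interval is $\Sigma^0_1$ in the values, so (iv) is $\Pi^0_2$. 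Hence $B_A$ is $\Pi^0_2$ and admits a limit-computable negative-information name computable from $A$, which is precisely an input for $\C_\IN'$.

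The crux will be showing $B_A\neq\emptyset$, for which it suffices to exhibit, non-uniformly in $A$, one witness per $A$. In the degenerate case $A=\{a\}$ I would use a diameter-shrinking program: at step $k$, wait for the first stage $t$ with $\diam([0,1]\setminus U_t)<2^{-k}$, which must occur because every compact subset of $[0,1]\setminus\{a\}$ is eventually covered by the enumeration, and then output the leftmost rational endpoint of any surviving component of $[0,1]\setminus U_t$. This yields a Cauchy sequence of rate $2^{-k+1}$ (made exact by an index shift) converging to $a\in A$. In the non-degenerate case $A=[a,b]$ with $a<b$, pick any rational $q^*\in(a,b)\subseteq A$; the constant-$q^*$ program trivially satisfies (i)--(iv) and so witnesses $B_A\neq\emptyset$.

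Once $\C_\IN'$ returns some $n\in B_A$, the computable post-processor will run $\varphi_n^A$ to obtain the Cauchy sequence and output its limit as a real in $A$. The main obstacle is packaging the condition ``the limit of the Cauchy sequence lies in $A$'' in $\Pi^0_2$ form so that it can be fed to $\C_\IN'$; the explicit Cauchy rate is what makes this possible, since without it the limit itself would not even be computable from the sequence. Connectedness of $A$ enters only in the non-emptiness step, where the clean dichotomy between singletons and non-degenerate intervals yields an explicit witness in each case; this is why the proposition concerns $\ConC_{[0,1]}$ rather than $\C_{[0,1]}$, and why one application of $\C_\IN'$ -- with no parallelization and no extra use of $\lim$ -- is enough.
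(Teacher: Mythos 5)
Your proof is correct, but it takes a genuinely different route from the paper. The paper first invokes the known equivalence $\IVT\equivW\ConC_{[0,1]}\equivW\B_I$, where $\B_I$ is the boundedness problem on $\IR_<\times\IR_>$, and then gives a concrete reduction $\B_I\leqW\CL_\IN$: it builds a sequence of natural numbers by emitting $0$ each time the gap $b_n-a_n$ shrinks by another factor of $2$, and otherwise emitting the code of a rational strictly between $a_n$ and $b_n$, re-using the previous code whenever possible; the unique cluster point then either signals $a=b$ (the value $0$) or directly names a suitable rational. You instead stay with $\ConC_{[0,1]}$ and use a ``search for an index'' argument: you build, computably from the name $p$ of $A$, a $\Pi^0_2(p)$ set $B_A$ of codes of oracle programs that compute a fast Cauchy name of a point of $A$, and let $\C_\IN'$ pick one. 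Your analysis of the arithmetical complexity of $B_A$ is sound (in fact your condition (iv), ``the limit avoids every enumerated interval'', is already $\Pi^0_1$ given the fixed Cauchy rate, not merely $\Pi^0_2$, but this only helps you), and the translation of a $\Sigma^0_2(p)$ complement into a limit-computable negative-information name is the standard limit-lemma argument. Both proofs hinge on the same dichotomy -- degenerate interval $\{a\}$ versus non-degenerate $[a,b]$ -- to establish non-emptiness/cluster-point existence, and your observation that connectedness is exactly what guarantees $B_A\neq\emptyset$ (since a general co-c.e.\ closed set need not contain a $p$-computable point, as witnessed by $\WKL\nleqW\C_\IN'$) is a nice conceptual bonus. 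What the paper's proof buys is concreteness and independence from the enumeration-of-programs machinery; what yours buys is a transparent explanation of the role of connectedness and a reduction that works directly on $\ConC_{[0,1]}$ without detouring through $\B_I$.
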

\begin{proof}
We have $\IVT\equivW\ConC_{[0,1]}\equivW\B_I$ \cite[Proposition~3.6 and Theorem~6.2]{BG11a},
where $\B_I:\In\IR_<\times\IR_>\mto\IR,(x,y)\mapsto\{z:x\leq z\leq y\}$ with $\dom(\B_I):=\{(x,y):x\leq y\}$
denotes the boundedness problem introduced in \cite{BG11a}
and $\C_\IN'\equivW\CL_\IN$~\cite[Theorem~9.4]{BGM12}.
Here $\IR_<$ and $\IR_>$ denote the real numbers represented as supremum and infimum of sequences
of rational numbers, respectively.
The aim is to prove $\B_I\leqW\CL_\IN$.
Hence, we can assume that we have two monotone sequences $(a_n)_n$ and $(b_n)_n$ of rational numbers in $[0,1]$ 
with $a_n\leq a_{n+1}<b_{n+1}\leq b_n$ for all $n\in\IN$, and the goal is to find some $x\in[0,1]$ with $a:=\sup_{n\in\IN}a_n\leq x\leq\inf_{n\in\IN}b_n=:b$.
Now we produce a sequence $(c_n)_n$ of natural numbers in stages $n=0,1,2,...$ as follows, where we use a variable $k\in\IN$ that is initially $k=0$.
If at stage $n$ we find that $|b_n-a_n|<2^{-k-1}$, then we let $k:=k+1$, and we choose $c_n:=0$.
Otherwise, we choose some value $c_n\not=0$ such that $a_n<\overline{c_n}<b_n$, where $\overline{c_n}$ is the rational number with code $c_n$.
If possible, we choose $c_n$ such that it is identical to the previous such number chosen, and if that is impossible, then we chose $c_n$ with $\overline{c_n}:=a_n+\frac{b_n-a_n}{2}$. 
Now there are two possible cases. If $a=b$, then the sequence $(c_n)_n$ contains infinitely many zeros and at most one other number $c$ occurs infinitely
often in $(c_n)_n$ and it must be such that $a=b=\overline{c}$. Otherwise, if $a\not=b$, then the sequence  $(c_n)_n$ contains only finitely many zeros,
and only exactly one number $c$ different from zero occurs infinitely often. This number satisfies $a\leq \overline{c}\leq b$.
The result of $\CL_\IN(c_n)_n$ is one number $c$ that occurs infinitely often in $(c_n)_n$. If $c=0$, then we can compute $x:=a=b$ with the help of the input
$(a_n)_n$, $(b_n)_n$. If $c\not=0$, then $x:=\overline{c}$ is a suitable result. 
\end{proof}

Propositions~\ref{prop:IVT}, \ref{prop:KN-CN} and Theorem~\ref{thm:jumps-compact-choice}, 
which we are going to prove in 
Section~\ref{sec:boundedness}, yield the reduction $\IVT\leqW\SRT_{2,\IN}$.
Ludovic Patey (personal communication) improved this result by showing that $\IVT\leqW\RT_{2,2}$.
This answered an open question in an earlier version of this article. 

$\RT_{2,2}$ and $\SRT_{2,2}$ cannot be Weihrauch reducible to any of $\PA,\MLR$ or $\COH$ since the latter are reducible to $\lim$
and the former are not.
However, we can even say more. 
We recall that a problem $f:\In X\mto Y$ is called {\em densely realized}, if the set $\{F(p):F\vdash f\}$ 
is dense in the domain of the representation of $Y$ for all $p$ that are names for an input in the domain of $f$,
which intuitively means that a name of a result can start with any arbitrary prefix (that is a prefix of a valid name).
Since it is easy to see that $\PA,\MLR$ and $\COH$ are all densely realized, it follows by \cite[Proposition~4.3]{BHK15}
that $\PA^{(m)},\MLR^{(m)}$ and $\COH^{(m)}$ are all indiscriminative. In particular we obtain the following corollary.

\begin{corollary}
\label{cor:RT22-PA-MLR-COH}
$\SRT_{1,2}\nleqW\PA^{(m)}$, $\SRT_{1,2}\nleqW\MLR^{(m)}$ and $\SRT_{1,2}\nleqW\COH^{(m)}$ for all $m\in\IN$.
\end{corollary}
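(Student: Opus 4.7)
The plan is to exploit the indiscriminativeness fact already observed in the paragraph immediately preceding the corollary: since $\PA$, $\MLR$ and $\COH$ are all densely realized, \cite[Proposition~4.3]{BHK15} gives that $\PA^{(m)}$, $\MLR^{(m)}$ and $\COH^{(m)}$ are all indiscriminative, i.e.\ $\C_2 \nleqW g$ for each such $g$. By transitivity of $\leqW$, it therefore suffices to prove the single fact that $\SRT_{1,2}$ is discriminative, namely that $\C_2 \leqW \SRT_{1,2}$; for then any reduction $\SRT_{1,2}\leqW g$ would yield $\C_2\leqW g$, contradicting indiscriminativeness.

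To establish $\C_2\leqW\SRT_{1,2}$, I would first invoke Proposition~\ref{prop:bottom}(2), which gives $\SRT_{1,2}\equivW\lim\nolimits_2$. It then remains to reduce $\C_2$ to $\lim_2$. Given a negative-information name of a nonempty $A\In\{0,1\}$ (an enumeration of $\{0,1\}\sm A$), construct a binary sequence $(a_n)$ by starting with $a_n=0$, and switching to the constant tail $a_n=1$ as soon as $0$ is enumerated out of $A$, respectively to the constant tail $a_n=0$ as soon as $1$ is enumerated out. At most one of the two switches ever occurs (because $A\neq\emptyset$), so the sequence is eventually constant; its limit $\lim\nolimits_2(a_n)$ lies in $A$ in all three cases $A=\{0\},\{1\},\{0,1\}$. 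Hence $\C_2\leqW\lim\nolimits_2\equivW\SRT_{1,2}$.

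There is essentially no obstacle: the indiscriminativeness half is quoted, and the discriminativeness half boils down to the elementary finite-tolerance construction above. The only thing to be mildly careful about is that the reduction above is formulated as an ordinary Weihrauch reduction (a single call to $\lim_2$ on the constructed sequence followed by the identity back-computation suffices), which is exactly what is needed to combine with the indiscriminativeness assertion to yield the three stated non-reductions.
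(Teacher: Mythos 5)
Your proposal is correct and follows the same route as the paper: the preceding paragraph already yields that $\PA^{(m)},\MLR^{(m)},\COH^{(m)}$ are indiscriminative (i.e., $\C_2$ does not Weihrauch reduce to them), and the corollary then follows from $\C_2\leqW\SRT_{1,2}$ and transitivity, where $\C_2\leqW\SRT_{1,2}$ is exactly the $k=2$ case of Proposition~\ref{prop:finite-choice}. Your hand-rolled reduction $\C_2\leqW\lim_2\equivW\SRT_{1,2}$ is a correct direct proof of that fact (the paper notes in Proposition~\ref{prop:finite-choice} that it can be proved directly, citing \cite[Corollary~13.8]{BGM12} as an alternative).
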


We mention that it was noted by Hirschfeldt and Jockusch \cite[Figure~6]{HJ16} that the following
corollary follows from \cite[Theorem~2.3]{HJK+08}.

\begin{corollary}[Diagonally non-computable functions]
\label{cor:DNC}
$\DNC_\IN\leqSW\RT_{1,2}'=\D_{2,2}$.
\end{corollary}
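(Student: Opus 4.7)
The plan is to exhibit, uniformly in any oracle $X \in \IN^\IN$ given as an input to $\DNC_\IN$, a $\Delta^0_2(X)$ two-coloring $c_X \colon \IN \to \{0,1\}$ with the property that every infinite $c_X$-monochromatic set $H \In \IN$ uniformly computes a function $f_H \colon \IN \to \IN$ that is diagonally non-computable relative to $X$. A $\Delta^0_2(X)$ coloring is an admissible input for $\RT_{1,2}' = \D_{2,2}$, encoded as an $X$-computable sequence of 2-colorings whose pointwise limit is $c_X$. Hence, provided both the instance-building map $X \mapsto c_X$ and the solution-decoding map $H \mapsto f_H$ are computable (and in particular the decoder does not need to consult $X$ any further once the homogeneous set is returned), we obtain exactly a \emph{strong} Weihrauch reduction $\DNC_\IN \leqSW \RT_{1,2}'$.

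The construction of such $c_X$ is the content of \cite[Theorem~2.3]{HJK+08}. The idea is to split $\IN$ into consecutive finite blocks $B_0 < B_1 < \ldots$ with $|B_e|$ growing sufficiently fast, and in each $B_e$ to use the $X'$-approximation of the partial $X$-computable value $\varphi_e^X(e)$ in order to $\Delta^0_2(X)$-color $B_e$ so that each color class intersects $B_e$ in a "safe" sub-block, from which one can read off a code of some number different from the eventual value of $\varphi_e^X(e)$. Any infinite monochromatic set $H$ must meet $B_e$ in at least one element $h$ lying in the safe sub-block of the corresponding color, and a fixed uniform decoding then sets $f_H(e)$ from $h$, guaranteeing $f_H(e) \neq \varphi_e^X(e)$ whenever the latter converges.

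The main technical point is to choose the block sizes $|B_e|$ large enough, and to engineer the $\Delta^0_2(X)$-dynamics so that the "safe" positions in each color class stabilize after finitely many mind changes in the approximation to $\varphi_e^X(e)$, while remaining non-empty for both colors whatever the final approximation does. This is handled by the priority argument in \cite{HJK+08}, and the entire construction is uniform in $X$. Because the decoding $H \mapsto f_H$ uses only the block partition and the already-produced coloring $c_X$ together with the monochromatic set $H$, no further call to $X$ is needed after the $\D_{2,2}$-query, which is precisely what is required for a strong Weihrauch reduction.
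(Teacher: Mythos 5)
Your proposal takes the same route as the paper, which does not give a proof of its own but simply cites Hirschfeldt and Jockusch \cite[Figure~6]{HJ16}, who observe that the statement follows from \cite[Theorem~2.3]{HJK+08}. Your sketch of the underlying block construction is in the right spirit, so the overall approach matches.

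One remark in your write-up is internally inconsistent and should be corrected. You say the decoder $H \mapsto f_H$ ``uses only the block partition and the already-produced coloring $c_X$ together with the monochromatic set $H$,'' and then conclude that this yields a strong Weihrauch reduction. But in a strong reduction the outer functional sees \emph{only} the output of the $\D_{2,2}$-query, i.e., the homogeneous set $H$; it does not see the original oracle $X$, and in particular it does not see $c_X$ (which is only $\Delta^0_2(X)$ and hence not even available to an ordinary Weihrauch outer functional that merely has $X$). So as literally written, the decoder you describe is not admissible. What the construction of \cite{HJK+08} actually provides, and what you need to assert, is that the block partition $B_0 < B_1 < \ldots$ is a fixed computable partition (independent of $X$), and that the coloring is arranged so that the DNC value $f_H(e)$ can be read off purely from the position within $B_e$ of some element of $H \cap B_e$; the decoder therefore needs only $H$, and this is what makes the reduction strong rather than merely ordinary. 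With that correction the argument is exactly the one the paper cites.
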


More information on the uniform content and relations between the problems  $\DNC,\PA,\WKL,\WWKL,\MLR,\COH$ and 
other problems can be found in \cite{BHK15}.

\section{Separation Techniques for Jumps}
\label{sec:separation}

In this section we plan to prove some further separation results related to $\RT_{2,2}$ 
(that justify some of the missing arrows in the diagram in Figure~\ref{fig:diagram-RT22}).
For this purpose we collect some classical results that yield separation techniques
for jumps. Often continuity arguments can be used for separation results. 
In the case of reductions to jumps of the form $f\leqW g'$, such continuity arguments
are not always applicable, since jumps $g'$ implicitly involve limits. 
However, continuity arguments can occasionally be replaced by arguments 
based on the existence of continuity points, which are still available for limit computable functions.
The classical techniques that we recall here
concern $\SO{2}$--measurable functions, which form the topological counterpart of limit computable functions.
Baire proved that the set of points of continuity of every $\SO{2}$--measurable function is a
comeager $G_\delta$--set under certain conditions. We recall that a set is
called {\em comeager} if it contains a countable intersection of dense open sets. 
The following is result is \cite[Theorem~24.14]{Kec95}.

\begin{proposition}[Baire~\cite{Kec95}]
\label{prop:Baire}
Let $X,Y$ be metric spaces, and let $Y$ additionally be separable.
If $f:X\to Y$ is $\SO{2}$--measurable, then the set of points of continuity
of $f$ is a comeager $G_\delta$--set in $X$.
\end{proposition}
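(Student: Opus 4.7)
The plan is to prove the conclusion in two essentially independent parts: first that the continuity set $C(f) := \{x \in X : f \text{ is continuous at } x\}$ is always a $G_\delta$, which does not require measurability, and second that $C(f)$ is comeager, which is where the $\SO{2}$-measurability hypothesis enters.

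For the first part, I would work with the oscillation function $\omega_f : X \to [0,\infty]$ defined by $\omega_f(x) := \inf \{\diam(f(U)) : U \text{ open neighborhood of } x\}$. A standard verification shows that $f$ is continuous at $x$ iff $\omega_f(x) = 0$ and that $\{x \in X : \omega_f(x) < 1/n\}$ is open for each $n \geq 1$, whence $C(f) = \bigcap_{n \geq 1} \{x : \omega_f(x) < 1/n\}$ is a $G_\delta$. Separability of $Y$ is not needed for this step.

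For comeagerness I would use separability of $Y$ to fix a countable base $(V_n)_n$ of open sets in $Y$. A point $x$ fails to be a continuity point precisely when some $V_n$ contains $f(x)$ but $x \notin \mathrm{int}(f^{-1}(V_n))$; equivalently, letting $A_n := f^{-1}(V_n) \setminus \mathrm{int}(f^{-1}(V_n))$, the discontinuity set decomposes as $D(f) = \bigcup_n A_n$. Since $f$ is $\SO{2}$-measurable, each $f^{-1}(V_n)$ is $F_\sigma$, so I can write $f^{-1}(V_n) = \bigcup_m F_{n,m}$ with $F_{n,m}$ closed. If $x \in A_n$, then $x \in F_{n,m}$ for some $m$, and $x \notin \mathrm{int}(F_{n,m})$ because otherwise $x$ would lie in $\mathrm{int}(f^{-1}(V_n))$; thus $x \in \partial F_{n,m}$, giving $A_n \In \bigcup_m \partial F_{n,m}$.

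Finally, the boundary $\partial F$ of any closed set $F$ is closed with empty interior (if an open $U \In F \setminus \mathrm{int}(F)$ existed, then $U$ would be an open subset of $F$, hence $U \In \mathrm{int}(F)$, forcing $U = \emptyset$), and is therefore nowhere dense. Consequently $D(f)$ is a countable union of nowhere dense sets, i.e., meager, so $C(f)$ is comeager. I do not anticipate a real obstacle: this is the classical Baire characterization, separability of $Y$ enters only to supply a countable open base, and $\SO{2}$-measurability enters only to supply the $F_\sigma$-decomposition of the preimages.
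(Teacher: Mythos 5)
Your proof is correct, and it is the standard textbook argument found in the cited reference (Kechris, Theorem~24.14): the oscillation function gives the $G_\delta$ structure, and the $F_\sigma$-decomposition of preimages of a countable base together with nowhere-density of boundaries of closed sets gives comeagerness. The paper does not prove this proposition itself but merely cites it, so there is no in-paper argument to compare against; your argument matches the one the citation points to.
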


We recall that a topological space $X$ is called a {\em Baire space}\footnote{We distinguish between {\em a} Baire space $X$ in general and {\em the} Baire space $\IN^\IN$, in particular, which is also an instance of a Baire space.}, 
if every comeager set $C\In X$ is dense in the space $X$. Hence we get the following corollary.

\begin{corollary}
\label{cor:Baire}
Let $X$ and $Y$ be metric spaces, where $X$ is additionally a Baire space and $Y$ is separable.
If $f:X\to Y$ is $\SO{2}$--measurable, then $f|_U$ has a point of continuity for every non-empty 
open set $U\In X$.
\end{corollary}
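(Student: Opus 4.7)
The plan is to apply Proposition~\ref{prop:Baire} to the restriction $f|_U$, viewed as a map between metric spaces in its own right. Let $U\In X$ be non-empty and open. First I would verify that $U$, equipped with the subspace metric, is itself a Baire space: this is the standard fact that every non-empty open subset of a Baire space is again a Baire space, since a countable intersection of sets dense and open in $U$ can be extended by $X\sm\overline{U}$ to a countable intersection of sets dense and open in $X$, whose density in $X$ forces density in $U$.

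Next I would observe that $f|_U\colon U\to Y$ is $\SO{2}$--measurable as a map on the metric space $U$. Indeed, for any open $V\In Y$ we have $(f|_U)^{-1}(V)=f^{-1}(V)\cap U$, and since $f^{-1}(V)\in\SO{2}(X)$ and $U$ is open in $X$, this intersection belongs to $\SO{2}(U)$.

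Now I would invoke Proposition~\ref{prop:Baire} with $U$ in place of $X$: since $U$ is a metric space and $Y$ is a separable metric space, the set $C$ of points of continuity of $f|_U$ is a comeager $G_\delta$--subset of $U$. Because $U$ is a Baire space, $C$ is dense in $U$, and since $U$ is non-empty this forces $C\not=\emptyset$. Any point of $C$ is a point of continuity of $f|_U$, which is what was to be shown.

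There is no real obstacle here; the corollary is essentially bookkeeping on top of Proposition~\ref{prop:Baire}. The only mildly delicate ingredient is the fact that openness transfers the Baire property from $X$ to $U$, but this is entirely classical and can be cited without further elaboration.
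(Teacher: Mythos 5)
Your proof is correct, but it takes a slightly roundabout path compared with the one the paper intends. The paper's ``Hence'' points to the direct argument: apply Proposition~\ref{prop:Baire} to $f$ on $X$ itself to obtain a comeager $G_\delta$ set $C$ of continuity points of $f$; since $X$ is a Baire space, $C$ is dense in $X$, so $C\cap U\neq\emptyset$ for every non-empty open $U\In X$; and any $x\in C\cap U$ is automatically a point of continuity of $f|_U$ because $U$ is open (restriction to an open neighbourhood does not destroy continuity at a point). You instead restrict first, apply the proposition to $f|_U\colon U\to Y$ on the subspace $U$, and appeal to the Baire property of $U$ to conclude. Both routes are sound, and both ultimately rest on Proposition~\ref{prop:Baire}, but yours requires two auxiliary facts the paper avoids: that an open subspace of a Baire space is Baire, and that $\SO{2}$--measurability passes to the restriction viewed as a map on the subspace. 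Neither is hard (and your sketch of the first is correct), but they are extra moving parts; the paper's version sidesteps them by working with density of $C$ in $X$ and simply intersecting with $U$. There is nothing wrong with your argument, and it would even generalize more readily to restrictions to sets $U$ that are merely comeager rather than open, but for the statement at hand it is doing more work than necessary.
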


By the Baire category theorem \cite[Theorem~8.4]{Kec95}, every complete separable metric space is a Baire space.
Moreover, closed subspaces of complete separable metric spaces are complete again. 
Hence Corollary~\ref{cor:Baire} easily implies the direction ``(1) $\TO$ (2)'' of the following 
characterization of $\SO{2}$--measurable functions \cite[Theorem~24.15]{Kec95}.

\begin{theorem}[Baire characterization theorem~\cite{Kec95}]
\label{thm:Baire}
Let $X,Y$ be separable metric spaces, and let $X$ additionally be complete. 
For every function $f:X\to Y$ the following conditions are equivalent to each other:
\begin{enumerate}
\item $f$ is $\SO{2}$--measurable,
\item $f|_A$ has a point of continuity for every non-empty closed set $A\In X$.
\end{enumerate}
\end{theorem}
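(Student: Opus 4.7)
The forward direction is essentially already handled by the remark preceding the theorem: if $A \In X$ is non-empty and closed, then $A$ is a closed subspace of the complete separable metric space $X$, hence $A$ is itself complete and separable, so by the Baire category theorem (cited above Corollary~\ref{cor:Baire}) the space $A$ is a Baire space. The restriction $f|_A$ inherits $\SO{2}$-measurability, since for any open $V \In Y$ we have $(f|_A)^{-1}(V) = f^{-1}(V) \cap A$ and the intersection of an $\SO{2}$ subset of $X$ with the closed set $A$ is $\SO{2}$ in $A$. Applying Corollary~\ref{cor:Baire} with the Baire space $A$ in place of $X$ and the non-empty open set $U := A$ yields a point of continuity of $f|_A$.

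For the reverse direction (2)$\TO$(1), the plan is to fix an open set $U \In Y$ and show that $f^{-1}(U)$ is $F_\sigma$ (i.e., $\SO{2}$) in $X$. Writing $U = \bigcup_{n \in \IN} C_n$ with $C_n \In Y$ closed and $C_n \In \mathrm{int}(C_{n+1})$ — for instance $C_n := \{y \in Y : d(y, Y \sm U) \geq 2^{-n}\}$ — it suffices to exhibit each $f^{-1}(C_n)$ as a countable union of sets that are closed in $X$. My plan is a transfinite peeling construction: define a decreasing family of closed subsets $(A_\alpha)_{\alpha < \omega_1}$ of $X$ by $A_0 := X$, $A_\lambda := \bigcap_{\alpha < \lambda} A_\alpha$ at limits, and at a successor stage use hypothesis~(2) to choose a point of continuity $x_\alpha$ of $f|_{A_\alpha}$; depending on whether $f(x_\alpha)$ lies in $U$ or outside, peel off a non-empty relatively open neighborhood $W_\alpha$ of $x_\alpha$ in $A_\alpha$ that is fully contained in some $f^{-1}(C_{n+1})$ or disjoint from $f^{-1}(U)$, and set $A_{\alpha+1} := A_\alpha \sm W_\alpha$.

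By separability of $X$, any strictly decreasing transfinite chain of closed subsets has countable length, so the construction terminates at some countable ordinal $\beta$ with $A_\beta = \emptyset$. Since each $W_\alpha$ is relatively open in $A_\alpha$ and each $A_\alpha$ is closed in $X$, the difference $A_\alpha \sm A_{\alpha+1} = W_\alpha$ is the intersection of an open set with the closed set $A_\alpha$, and hence an $F_\sigma$ in $X$. The union of those $W_\alpha$ at which $W_\alpha \In f^{-1}(C_{n+1})$ forms a countable family (indexed by $\alpha < \beta$ and $n$) whose union is exactly $f^{-1}(U)$, establishing $f^{-1}(U) \in \SO{2}$.

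The main obstacle will be the bookkeeping of the transfinite construction: one must verify carefully that (i) each extracted piece $W_\alpha$ really is $F_\sigma$ in the ambient space $X$ and not merely in $A_\alpha$ (which uses that $A_\alpha$ remains closed throughout, including at limits), (ii) the construction exhausts $f^{-1}(U)$ and $X \sm f^{-1}(U)$ simultaneously, so that no stray point is left outside every $W_\alpha$ while still lying in some $A_\gamma$, and (iii) the termination at a countable ordinal is genuine. For (iii) the argument is that $X$, being separable and metrizable, has a countable base $(B_n)$; each proper inclusion $A_{\alpha+1} \subsetneq A_\alpha$ witnesses some $B_n$ that meets $A_\alpha$ but not $A_{\alpha+1}$, so each such $n$ can occur at only one stage, bounding the length of the chain by $\omega$ much less $\omega_1$. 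With these ingredients in place, the $F_\sigma$ description of $f^{-1}(U)$ follows, completing (2)$\TO$(1).
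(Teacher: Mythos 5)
Your forward direction (1) $\TO$ (2) matches the paper exactly: restrict to the complete separable closed subspace $A$, note that it is a Baire space, observe that $\SO{2}$-measurability passes down to the closed subspace, and invoke Corollary~\ref{cor:Baire} with $U:=A$. The paper does not actually prove the converse direction itself --- it cites \cite[Theorem~24.15]{Kec95} --- so there is no paper proof to compare against there, but your transfinite peeling argument for (2) $\TO$ (1) has a genuine gap.

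The problem is Case~2 of your dichotomy. When $f(x_\alpha)\notin U$, you want a relatively open $W_\alpha\ni x_\alpha$ in $A_\alpha$ that is \emph{disjoint from} $f^{-1}(U)$. Continuity of $f|_{A_\alpha}$ at $x_\alpha$ only lets you pull back an \emph{open} neighbourhood of $f(x_\alpha)$ in $Y$; but the set you want to avoid, $U$, is open, so $Y\setminus U$ is closed and in general contains no open neighbourhood of $f(x_\alpha)$. If $f(x_\alpha)$ is a boundary point of $U$, no such $W_\alpha$ exists: points of $A_\alpha$ arbitrarily close to $x_\alpha$ may be mapped into $U$, and continuity at $x_\alpha$ does not rule this out, since a sequence in $U$ is free to converge to a point of $Y\setminus U$. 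Your item~(ii) at the end flags exactly this worry, but the construction as written does not resolve it.

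The repair is to run a separate peeling construction for each fixed $n$, aiming to separate $f^{-1}(C_n)$ from $X\setminus f^{-1}(U)$ rather than to carve out $f^{-1}(U)$ in a single pass. Because $C_n\subseteq U$, the two \emph{open} sets $U$ and $Y\setminus C_n$ cover $Y$, so the dichotomy ``$f(x_\alpha)\in U$'' versus ``$f(x_\alpha)\in Y\setminus C_n$'' is exhaustive, and both alternatives pull back along continuity at $x_\alpha$: in the first case take $W_\alpha\subseteq f^{-1}(U)$, in the second take $W_\alpha$ with $W_\alpha\cap f^{-1}(C_n)=\emptyset$. Since $X$ is partitioned into the $W_\alpha$, the union $S_n$ of the first-case pieces is an $F_\sigma$ set with $f^{-1}(C_n)\subseteq S_n\subseteq f^{-1}(U)$, and then $f^{-1}(U)=\bigcup_n S_n$ is $F_\sigma$, as desired. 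Your termination argument is otherwise sound, except that consuming each basic open set at most once bounds the length of the strictly decreasing closed chain by a \emph{countable} ordinal, not by $\omega$; the chain may have any countable order type, and countability is all that is needed.
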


We note that while Theorem~\ref{thm:Baire} yields a stronger conclusion than Corollary~\ref{cor:Baire},
it also requires stronger conditions on the domain $X$. In certain situations, we will be dealing with
Baire spaces $X^\IN$ which are not Polish spaces, and hence we will have to take recourse to Corollary~\ref{cor:Baire}.

We recall \cite[Proposition~9.1]{Bra05} that the limit map $\lim$ is a prototype of a $\SO{2}$--measurable function
(relatively to its domain of convergent sequences), and hence Theorem~\ref{thm:Baire} can be
used as a separation tool for certain reductions to jumps, since limit computable functions are closed under composition
with continuous functions.

\begin{fact} 
\label{fact:lim}
$\lim:\In\IN^\IN\to\IN^\IN$ is $\SO{2}$--measurable relatively to its domain.
\end{fact}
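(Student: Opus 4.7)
The plan is to verify the definition directly by exhibiting, for each open $U\In\IN^\IN$, an explicit $\SO{2}$-set $V\In\IN^\IN$ with $\lim^{-1}(U)=V\cap\dom(\lim)$, and to do so in a way that is uniformly computable in a name of $U$ (which yields the ``effective'' part that will be needed elsewhere in the paper). Since open sets in $\IN^\IN$ are countable unions of basic cylinders $[w]:=\{q\in\IN^\IN:w\prefix q\}$ and the class $\SO{2}$ is closed under countable unions (uniformly in the names), it is enough to handle a single basic cylinder $[w]$ with $|w|=k$, and to then take countable unions.

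For a fixed $w\in\IN^*$ with $|w|=k$, I would write down the set
\[
V_w:=\bigcap_{j<k}\bigcup_{N\in\IN}\bigcap_{i\geq N}\{\langle p_0,p_1,p_2,\ldots\rangle\in\IN^\IN:p_i(j)=w(j)\}.
\]
Each inner basic set $\{\langle p_0,p_1,\ldots\rangle:p_i(j)=w(j)\}$ is clopen in $\IN^\IN$ because $p_i(j)=\langle p_0,p_1,\ldots\rangle\langle i,j\rangle$ depends only on a single coordinate of the tupled sequence. Hence $\bigcap_{i\geq N}$ of them is closed, the union over $N$ is a $\SO{2}$-set, and finitely intersecting $k$ many $\SO{2}$-sets yields again a $\SO{2}$-set. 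The construction of an appropriate $\SO{2}$-name for $V_w$ is uniformly computable in $w$.

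The key verification is then that $\lim^{-1}([w])=V_w\cap\dom(\lim)$. For any $\langle p_0,p_1,\ldots\rangle\in\dom(\lim)$ with limit $q=\lim_i p_i$, the assertion $w\prefix q$ is equivalent to $q(j)=w(j)$ for all $j<k$, which, by pointwise convergence, is equivalent to the statement that for each $j<k$ there exists $N$ such that $p_i(j)=w(j)$ for all $i\geq N$; this is exactly membership in $V_w$. For a general open set $U=\bigcup_n[w_n]$ given by an enumeration of the words $w_n$, I would take $V:=\bigcup_n V_{w_n}$, which is $\SO{2}$ uniformly in the enumeration, and conclude $\lim^{-1}(U)=V\cap\dom(\lim)$.

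There is no real obstacle here; the argument is a bookkeeping exercise on logical complexity. The only mild subtlety worth highlighting is that we work relative to the domain of convergent sequences: the set $V_w$ might contain non-convergent $\langle p_0,p_1,\ldots\rangle$ whose coordinatewise tails stabilize (indeed every convergent-on-every-coordinate sequence lies in $V_w$ for some $w$ of any given length), but this causes no problem because the definition of $\SO{n}$-measurability relative to the domain only compares $\lim^{-1}(U)$ with $V\cap\dom(\lim)$.
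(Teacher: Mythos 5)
Your proof is correct. The paper does not give its own argument for this fact --- it simply cites \cite[Proposition~9.1]{Bra05} --- so you have supplied the missing verification, and you did it in the standard way: compute $\lim^{-1}([w])$ explicitly as the finite intersection over $j<|w|$ of the $\Sigma^0_2$-sets ``$p_i(j)$ is eventually $w(j)$'', observe that each inner predicate is clopen in the tupled sequence, and take countable unions for general open $U$. The relativization to $\dom(\lim)$ is handled properly (the extra points of $V_w$ outside the domain are harmless by definition of measurability relative to the domain), and your remark about uniformity correctly anticipates the effective $\SO{2}$-completeness of $\lim$ that the paper relies on elsewhere.
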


For the following result we use the Baire characterization theorem~\ref{thm:Baire} as a separation technique.
For partial functions $f:\In X\to Y$ we use the notation $f(A):={\{y\in Y:(\exists x\in A)\; f(x)=y\}}$ 
for the image of $A$ under $f$.

\begin{theorem}
\label{thm:BWT-RTxlim}
$\BWT_{k+1}\nleqW\RT_{1,k}'\times\lim$ for all $k\geq1$.
\end{theorem}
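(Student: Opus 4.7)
The plan is to argue by contradiction using Baire's theorem (Proposition~\ref{prop:Baire}) iteratively on the compact Polish space $(k+1)^\IN$. Suppose $\BWT_{k+1}\leqW\RT_{1,k}'\times\lim$ via computable $K,H$. Lemma~\ref{lem:jump-cylindrification} allows rewriting this as $\BWT_{k+1}\leqW\RT_{1,k}*\lim$, exposing a $\SO{2}$-measurable map $p\mapsto c^p\in\CC_{1,k}$ on $(k+1)^\IN$ and auxiliary $\SO{2}$-measurable data so that, for each color $j\in\{0,\ldots,k-1\}$, the set $M_j^p:=\{n:c^p(n)=j\}$ is a homogeneous set for $c^p$ whenever infinite. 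Composing the continuous post-processor of the reduction with $p$, $M_j^p$ and the limit data yields $\SO{2}$-measurable maps $\Psi_j:(k+1)^\IN\to\{0,\ldots,k\}$ satisfying $\Psi_j(p)\in\BWT_{k+1}(p)$ whenever $j\in J(p):=\{j:M_j^p\text{ infinite}\}$. Pigeonhole guarantees $J(p)\not=\emptyset$ for every $p$.

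The heart of the plan is an inductive construction of non-empty closed subsets $A_0\supseteq A_1\supseteq\cdots\supseteq A_k$ of $(k+1)^\IN$, together with pairwise distinct colors $j_1,\ldots,j_k\in\{0,\ldots,k-1\}$ and pairwise distinct values $y_1,\ldots,y_k\in\{0,\ldots,k\}$, maintaining for all $p\in A_i$ the invariants $\{j_1,\ldots,j_i\}\cap J(p)=\emptyset$ and $\{y_1,\ldots,y_i\}\cap\BWT_{k+1}(p)=\emptyset$. At stage $i+1\leq k$, $A_i$ is a closed subspace of a Polish space and hence Baire; the invariant forces $J(p)\In\{0,\ldots,k-1\}\setminus\{j_1,\ldots,j_i\}$ for $p\in A_i$, so the covering $A_i=\bigcup_{j\notin\{j_1,\ldots,j_i\}}(A_i\cap E_j)$, where $E_j:=\{p:j\in J(p)\}$, combined with Baire's theorem produces some $j_{i+1}$ for which $A_i\cap E_{j_{i+1}}$ is non-meager in $A_i$. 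Proposition~\ref{prop:Baire} supplies a comeager set of continuity points of $\Psi_{j_{i+1}}|_{A_i}$, which meets $A_i\cap E_{j_{i+1}}$ in some $p^{(i+1)}$. Setting $y_{i+1}:=\Psi_{j_{i+1}}(p^{(i+1)})$ gives $y_{i+1}\in\BWT_{k+1}(p^{(i+1)})\setminus\{y_1,\ldots,y_i\}$ by the reduction and the invariant. Continuity into the discrete target yields a prefix of $p^{(i+1)}$ on which $\Psi_{j_{i+1}}$ is locally constant at $y_{i+1}$; define $A_{i+1}\In A_i$ as the closed subset consisting of those $p$ extending this prefix and using only colors other than $y_{i+1}$ beyond its length. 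Non-emptiness follows because $i+1\leq k$ leaves a color in $\{0,\ldots,k\}\setminus\{y_1,\ldots,y_{i+1}\}$ available, and the invariant extends: if some $p\in A_{i+1}$ had $j_{i+1}\in J(p)$, then $\Psi_{j_{i+1}}(p)=y_{i+1}\in\BWT_{k+1}(p)$ would contradict the fresh avoidance of $y_{i+1}$.

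After $k$ stages, $\{j_1,\ldots,j_k\}=\{0,\ldots,k-1\}$, forcing $J(p)=\emptyset$ on the non-empty $A_k$, contradicting the pigeonhole observation. The main obstacle is the bookkeeping: one must simultaneously maintain closedness and non-emptiness of each $A_i$, both exclusion invariants, and the applicability of Baire on the nested Polish subspaces. The numerical resource driving the contradiction is the asymmetry between $k+1$ potential cluster points of $p$ and only $k$ available colors in the $\RT_{1,k}$-coloring, which lets both sides shrink by one at each stage and exhausts the $\RT_{1,k}$ side exactly one stage before the $\BWT_{k+1}$ side.
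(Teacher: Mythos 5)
Your proposal follows essentially the same Baire-category strategy as the paper's proof: iterate over nested closed cylinder subsets of $(k+1)^\IN$, driving the contradiction by the asymmetry between the $k+1$ possible cluster points and the $k$ colors of the $\RT_{1,k}$-coloring. The bookkeeping with the invariants $\{j_1,\ldots,j_i\}\cap J(p)=\emptyset$ and $\{y_1,\ldots,y_i\}\cap\BWT_{k+1}(p)=\emptyset$, the use of the non-meager piece $A_i\cap E_{j_{i+1}}$, and the exhaustion argument after $k$ stages are all sound. However, there is a genuine gap at the central step.

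You claim that ``composing the continuous post-processor $H$ with $p$, $M_j^p$ and the limit data yields $\SO{2}$-measurable maps $\Psi_j:(k+1)^\IN\to\{0,\ldots,k\}$'' and then apply Proposition~\ref{prop:Baire} to $\Psi_{j_{i+1}}|_{A_i}$ to obtain a comeager set of continuity points in $A_i$. But $\Psi_j$ is \emph{not} total on $(k+1)^\IN$: the post-processor $H$ from the reduction is a \emph{partial} computable function, guaranteed to converge only on inputs of the form $\langle p, GK(p)\rangle$ for realizers $G$, i.e., only when $\chi_{M_j^p}$ is a legitimate output, which requires $j\in J(p)$. The domain of $\Psi_j$ is therefore a proper Borel subset of $(k+1)^\IN$ (roughly $\PO{3}$), not a $G_\delta$ and hence not a Polish (or even Baire) space. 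Proposition~\ref{prop:Baire} applies only to total functions on a metric space and yields comeagerness \emph{relative to the domain}; since that domain need not be Baire, the set of continuity points may even be empty, and in any case it need not be comeager in $A_i$. So the invocation as written does not go through, and this is exactly the crux on which the whole construction rests.

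The paper's proof carefully avoids this by applying the Baire characterization theorem (Theorem~\ref{thm:Baire}) only to the \emph{total} $\SO{2}$-measurable map $\langle K_1,K_2\rangle := \langle\lim\times\lim\rangle\circ K$, which is total because $\BWT_{k+1}$ is total and $K$ is computable, and then treating the choice of maximal homogeneous set and the application of $H$ by explicit finite-prefix continuity arguments (continuity of the \emph{computable} $H$ on its own domain is always available). Your proof can be repaired in the same way: apply Baire/Corollary~\ref{cor:Baire} to the total $\SO{2}$-measurable ``inner'' map $\Phi_j(p):=\langle p,\chi_{M_j^p},\lim K_2(p)\rangle$ restricted to $A_i$, find $p^{(i+1)}\in A_i\cap E_{j_{i+1}}$ among its continuity points, and then deduce local constancy of $\Psi_{j_{i+1}}$ on $A_{i+1}\cap E_{j_{i+1}}$ from the continuity of $\Phi_{j_{i+1}}|_{A_i}$ at $p^{(i+1)}$ combined with the continuity of $H$ at $\Phi_{j_{i+1}}(p^{(i+1)})$. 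With that repair the rest of your argument is correct and amounts to a reorganization of the paper's proof with the distinctness arguments packaged into explicit invariants.
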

\begin{proof}
Since $\RT_{1,k}'\times\lim$ is a cylinder, it suffices to prove $\BWT_{k+1}\nleqSW\RT_{1,k}'\times\lim$.
Let us assume for a contradiction that $\BWT_{k+1}\leqSW\RT_{1,k}'\times\lim$,
i.e., there are computable functions $H,K$ such that $HGK$ is a realizer
of $\BWT_{k+1}$ whenever $G$ is a realizer of $\RT_{1,k}'\times\lim=(\RT_{1,k}\times\id)\circ(\lim\times\lim)$.
Since $\BWT_{k+1}$ is total, it follows
that $\langle K_1,K_2\rangle:=\langle\lim\times\lim\rangle\circ K$ is total too,
and this function is $\SO{2}$--measurable by Fact~\ref{fact:Borel}.
We construct a number of points $p_0,...,p_k\in\IN^\IN$ inductively. 
By the Theorem~\ref{thm:Baire} there is a point $p_0\in\IN^\IN$ 
of continuity of $\langle K_1,K_2\rangle$.
Let $\langle s_0,r_0\rangle:=\langle K_1(p_0),K_2(p_0)\rangle$,
let $q_0\in\RT_{1,k}(s_0)$ be a maximal homogeneous set, and let $c_0\leq k-1$ be the
corresponding color.
Let $k_0\leq k$ be such that $H\langle q_0,r_0\rangle=k_0$.
By continuity of $H$ there exists a prefix $w_0\prefix\langle q_0,r_0\rangle$
such that $H(w_0\IN^\IN)=\{k_0\}$. 
We can assume that $w_0=\langle x_0,y_0\rangle$ is of even length $2n_0$.
Since $\langle K_1,K_2\rangle$ is continuous at $p_0$, there is a $v_0\prefix p_0$ such that
$K_1(v_0\IN^\IN)\In s_0|_{n_0}\IN^\IN$ and $K_2(v_0\IN^\IN)\In y_0\IN^\IN$.
Now we consider $N_0:=\{0,...,k\}\setminus\{k_0\}$ and the closed set $A_0:=v_0N_0^\IN$.
Again by Theorem~\ref{thm:Baire} there exists a point $p_1\in A_0$ of continuity
of $\langle K_1,K_2\rangle|_{A_0}$. 
Let $\langle s_1,r_1\rangle:=\langle K_1(p_1),K_2(p_1)\rangle$,
let $q_1\in\RT_{1,k}(s_1)$ be a maximal homogeneous set, and let $c_1\leq k-1$ be the corresponding color.
Let $k_1\leq k$ be such that $H\langle q_1,r_1\rangle = k_1$.
Since $p_1\in A_0$ contains $k_0$ at most finitely many times, it follows
that $k_1\not=k_0$. By continuity of $H$ there exists a prefix $w_1\prefix \langle q_1,r_1\rangle$
such that $H(w_1\IN^\IN)=\{k_1\}$.
Again we can assume that $w_1=\langle x_1,y_1\rangle$ is of even length $2n_1$,
and additionally we can assume $n_1>n_0$.
Since $v_0\prefix p_1$ we have $y_0\prefix K_2(p_1)=r_1$, and we obtain
\[s_1|_{n_0}=K_1(p_1)|_{n_0}=K_1(p_0)|_{n_0}=s_0|_{n_0}.\]
If $c_1=c_0$, i.e., if the maximal homogeneous sets $q_0$ and $q_1$ of $s_0$ and $s_1$, respectively,
have the same color, then $x_0=q_0|_{n_0}=q_1|_{n_0}$ follows due to the maximality of $q_0$ and $q_1$ 
and hence $w_0\prefix\langle q_1,r_1\rangle$, 
which implies $H\langle q_1,r_1\rangle=k_0$ and hence $k_0=k_1$. 
Since $k_0\not=k_1$, we can conclude that $c_0\not=c_1$.
Now we continue as before:
since $\langle K_1,K_2\rangle|_{A_0}$ is continuous at $p_1$, there is a $v_1\prefix p_1$ such that
$K_1(v_1\IN^\IN)\In s_1|_{n_1}\IN^\IN$ and $K_2(v_1\IN^\IN)\In y_1\IN^\IN$.
Now we consider $N_1:=\{0,...,k\}\setminus\{k_0,k_1\}$ and the closed set $A_1:=v_1N_1^\IN$.
Again by Theorem~\ref{thm:Baire} there exists a point $p_2\in A_1$ of continuity
of $\langle K_1,K_2\rangle_{A_1}$. In this case we eventually obtain colors $k_2\leq k,c_2\leq k-1$ such that
$k_2\not\in\{k_0,k_1\}$ and $c_2\not\in\{c_0,c_1\}$.
This construction can be repeated $k$ times since there are $k+1$ colors in $\{0,...,k\}$ until
$p_0,...,p_k$ and $c_0,...,c_k$ are determined. However, the construction yields that the
$c_0,...,c_k$ have to be pairwise different, which is a contradiction since there are only $k$ colors $c\in\{0,...,k-1\}$ available.  
\end{proof}

We get the following immediate corollary with the help of Lemma~\ref{lem:jump-cylindrification} and Corollary~\ref{cor:SRT-RT-lim}.

\begin{corollary}
\label{cor:BWTk-SRT2k}
$\BWT_{k+1}\nleqW\SRT_{2,k}$ for all $k\geq1$.
\end{corollary}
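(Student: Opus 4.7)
The plan is a short chain of reductions that routes the assumed reduction $\BWT_{k+1}\leqW\SRT_{2,k}$ into the forbidden reduction of Theorem~\ref{thm:BWT-RTxlim}, namely $\BWT_{k+1}\leqW\RT_{1,k}'\times\lim$.

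First, I would suppose for contradiction that $\BWT_{k+1}\leqW\SRT_{2,k}$. By Corollary~\ref{cor:SRT-RT-lim} applied with $n=1$, we have $\SRT_{2,k}\leqW\RT_{1,k}*\lim$. Since Weihrauch reducibility is transitive, composing these gives $\BWT_{k+1}\leqW\RT_{1,k}*\lim$.

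Next, I would invoke Lemma~\ref{lem:jump-cylindrification} (again with $n=1$), which provides the equivalence $\RT_{1,k}*\lim\equivW\RT_{1,k}'\times\lim$. Combining with the previous step yields $\BWT_{k+1}\leqW\RT_{1,k}'\times\lim$, which directly contradicts Theorem~\ref{thm:BWT-RTxlim}. Hence the assumption fails and $\BWT_{k+1}\nleqW\SRT_{2,k}$.

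Since the proof is just transitivity together with two previously established facts, there is essentially no obstacle: the nontrivial work has already been done in Theorem~\ref{thm:BWT-RTxlim} (the Baire-category/continuity-point argument) and in the identification of $\RT_{n,k}*\lim$ with $\RT_{n,k}'\times\lim$. The only thing to double-check is that the cited results apply in the case $k=1$; but Corollary~\ref{cor:SRT-RT-lim} and Lemma~\ref{lem:jump-cylindrification} are both stated for all $k\geq1,\IN$, and Theorem~\ref{thm:BWT-RTxlim} is formulated precisely for $k\geq1$, so the argument goes through uniformly.
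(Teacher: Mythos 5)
Your proof is correct and matches the paper's argument exactly: the paper also derives Corollary~\ref{cor:BWTk-SRT2k} by combining Corollary~\ref{cor:SRT-RT-lim} and Lemma~\ref{lem:jump-cylindrification} with Theorem~\ref{thm:BWT-RTxlim}.
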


Since $\BWT_{k+1}\leqW\RT_{1,k+1}\leqW\SRT_{2,k+1}$ by Proposition~\ref{prop:bottom} and Lemma~\ref{lem:increasing-size},  
we obtain also the following separation result as a consequence of Corollary~\ref{cor:BWTk-SRT2k}.

\begin{corollary}
\label{cor:SRT-k-k+1}
$\SRT_{2,k+1}\nleqW\SRT_{2,k}$ for all $k\geq1$.
\end{corollary}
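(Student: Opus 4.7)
The plan is to reduce this to Corollary~\ref{cor:BWTk-SRT2k}, which tells us $\BWT_{k+1}\nleqW\SRT_{2,k}$. The strategy is a transitivity argument: if we can show $\BWT_{k+1}\leqW\SRT_{2,k+1}$, then any purported reduction $\SRT_{2,k+1}\leqW\SRT_{2,k}$ would contradict Corollary~\ref{cor:BWTk-SRT2k}.

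To obtain $\BWT_{k+1}\leqW\SRT_{2,k+1}$, I would first recall from Proposition~\ref{prop:bottom} that $\BWT_{k+1}\equivW\RT_{1,k+1}$, so Ramsey's theorem for cardinality $1$ and $k+1$ colors already gives us the Bolzano--Weierstra\ss{} theorem for $k+1$ points. Then Lemma~\ref{lem:increasing-size} provides the chain $\RT_{1,k+1}\leqW\SRT_{2,k+1}$, which yields the desired reduction by composition.

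Putting these together: assume for contradiction that $\SRT_{2,k+1}\leqW\SRT_{2,k}$. By transitivity of $\leqW$ we get
\[\BWT_{k+1}\equivW\RT_{1,k+1}\leqW\SRT_{2,k+1}\leqW\SRT_{2,k},\]
directly contradicting Corollary~\ref{cor:BWTk-SRT2k}. Hence $\SRT_{2,k+1}\nleqW\SRT_{2,k}$.

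There is no real obstacle here; the corollary is purely a consequence of chaining the already-established reductions with the hard lower bound from Corollary~\ref{cor:BWTk-SRT2k}, which in turn relied on the Baire-category separation in Theorem~\ref{thm:BWT-RTxlim}. All the genuine work sits in that theorem.
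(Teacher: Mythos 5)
Your proposal is correct and is exactly the argument the paper gives: the paper deduces $\BWT_{k+1}\leqW\RT_{1,k+1}\leqW\SRT_{2,k+1}$ from Proposition~\ref{prop:bottom} and Lemma~\ref{lem:increasing-size}, then invokes Corollary~\ref{cor:BWTk-SRT2k} and transitivity to get the separation. No difference in approach.
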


As next separation result we want to prove $\BWT_2'\nleqW\RT_ {1,2}'$.
To this end we are going to use Corollary~\ref{cor:Baire} as a separation tool.
It is not too difficult to see that for every topological space $X$ that is endowed with the
discrete topology, the product space $X^\IN$ is a Baire space. 
We will apply this observation to the particular case of the set 
\[X_2:=\{p\in2^\IN:(\exists k)(\forall n\geq k)\;p(n)=p(k)\}\]
of convergent sequences of zeros and ones. 
On the other hand, $X_2$ is also endowed with the subspace topology that it
inherits from the ordinary product topology on $\IN^\IN$.
In both cases, we assume that $X_2^\IN$ is equipped with the respective product topology.
In order to distinguish both topologies, we speak about ``discrete continuity'' of a map $f:X_2^\IN\to Y$,
if $X_2$ is endowed with the discrete topology and about continuity if the usual subspace topology
of $\IN^\IN$ is used for $X_2$. In the latter case, the map 
\[h_2:X_2^\IN\to\IN^\IN,((x_{i,j})_i)_j\mapsto\langle(x_{0,j})_j,(x_{1,j})_j,(x_{2,j})_j,...\rangle,\]
is a computable embedding, i.e., $h_2$ as well as the partial inverses $h_2^{-1}$ are 
both computable and continuous. Intuitively, $h_2$ swaps rows and columns and applies the Cantor encoding $\langle ...\rangle$,
both being purely technical changes. Hence we will identify $X_2^\IN$ with (a subspace of) $\IN^\IN$ via $h_2$ in the following proof.
We note that $\BWT_2'$ formally has the same domain as $\BWT_2$, namely $2^\IN$.
But a realizer of $\BWT_2'$ has to use $\lim_{2^\IN}$ as input representation and hence it will have domain $h_2(X_2^\IN)$,
which we identify with $X_2^\IN$.
For $w\in\IN^n$ and $p\in\IN^\IN$ we use the notation
$w^\rightarrow p:=wp(n)p(n+1)p(n+2)...$ for the sequence that is obtained from $p$ by replacing the first
$n$ positions of $p$ by $w$. 

\begin{theorem}
\label{thm:BWT2j-RT12j}
$\BWT_2'\nleqW\RT_ {1,2}'$.
\end{theorem}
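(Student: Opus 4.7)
The argument is by contradiction. Suppose $\BWT_2'\leqW\RT_{1,2}'$ via computable $H,K\colon\In\IN^\IN\to\IN^\IN$ such that the map $p\mapsto H\langle p,GK(p)\rangle$ realizes $\BWT_2'$ for every realizer $G$ of $\RT_{1,2}'$. Realizer inputs for $\BWT_2'$ are (via $h_2$) elements of $X_2^\IN$; realizer inputs for $\RT_{1,2}'$ are $\lim$--names of colorings $c\colon\IN\to 2$; realizer outputs are characteristic functions of arbitrary infinite homogeneous sets for $c$; and the composite must name a bit in $\BWT_2(\lim p)\In\{0,1\}$.

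First I equip $X_2$ with the discrete topology, so that $X_2^\IN$ becomes a complete separable metric space and in particular a Baire space. The composite $L:=\lim\circ K\colon X_2^\IN\to 2^\IN$ is $\SO{2}$--measurable by Fact~\ref{fact:lim} together with continuity of $K$. Corollary~\ref{cor:Baire} then yields a point $p^*\in X_2^\IN$ at which $L$ is discrete-product continuous: for every $n$ there is a finite set $F\In\IN$ of coordinates with $L(p)|_n=L(p^*)|_n$ whenever $p\in X_2^\IN$ agrees with $p^*$ on $F$.

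Next I let $c^*:=L(p^*)$, choose an infinite homogeneous set $M^*$ for $c^*$ of a color $i\in\{0,1\}$ that appears infinitely often in $c^*$, and arrange for a realizer $G$ to return $M^*$ on input $K(p^*)$. Set $b:=H\langle p^*,M^*\rangle\in\{0,1\}$. By continuity of $H$ I fix finite prefixes $u^*\prefix p^*$ and $v^*\prefix M^*$ with $H\langle p,q\rangle=b$ for all $p\in u^*\IN^\IN$ and $q\in v^*\IN^\IN$; then I enlarge $F$ so that it carries every coordinate on which $u^*$ depends together with every coordinate on which the initial segment of $L$ covering the indices listed in $v^*$ depends. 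The crux is now to construct $p^{**}\in X_2^\IN$ that agrees with $p^*$ on $F$ and satisfies both (i)~$\lim p^{**}$ has only finitely many $b$'s, so that $\BWT_2(\lim p^{**})=\{1-b\}$, and (ii)~$c^{**}:=L(p^{**})$ still admits an infinite homogeneous set $M^{**}$ extending $v^*$ with the same color $i$ as $v^*$ under $c^*$. Then $G$ on $K(p^{**})$ can be taken to return such an $M^{**}$, yielding $H\langle p^{**},M^{**}\rangle=b$ by the prefix witness and contradicting $b\notin\BWT_2(\lim p^{**})$.

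The main obstacle is condition~(ii). Because $\RT_{1,2}'$ does not disclose the color of its output, the realizer is free to choose an infinite homogeneous set of the opposite color whenever color $i$ becomes tail-sparse in $c^{**}$, so I need to secure a perturbation $p^{**}$ that preserves infinitely many positions of color $i$ in $c^{**}$. I would handle this by a second Baire category argument applied to the $\SO{2}$--measurable map $q\mapsto L(p^*\oplus q)$ on the Polish space of admissible perturbations of $p^*$ on coordinates outside $F$, exhibiting a $p^{**}$ that simultaneously drives $\lim p^{**}$ to $(1-b)^\IN$ and keeps infinitely many $i$--positions in $c^{**}$. In the exceptional case where no such perturbation exists, a symmetric construction starting from a continuity point of $L|_{A_{1-b}}$, with $A_j:=\{p\in X_2^\IN:\lim p=j^\IN\}$, and exchanging the roles of $0$ and $1$ completes the dichotomy and yields the contradiction.
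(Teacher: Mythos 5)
Your setup matches the paper's opening moves exactly: assume $\BWT_2'\leqW\RT_{1,2}'$ via $H,K$, equip $X_2$ with the discrete topology so that $X_2^\IN$ is a Baire space, note $\lim\circ K$ is $\SO{2}$--measurable, invoke Corollary~\ref{cor:Baire} to obtain a point $p^*$ of discrete continuity, fix prefix witnesses via continuity of $H$, and then attempt to perturb $p^*$ outside a finite coordinate set. Up to that point the two arguments coincide.

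The gap is precisely at the step you flag as ``the main obstacle.'' You need a perturbation $p^{**}$, agreeing with $p^*$ on the control coordinates, such that $\lim p^{**}$ converges to $(1-b)^\IN$ \emph{and} color $i$ still appears infinitely often in $c^{**}=\lim K(p^{**})$. You propose a ``second Baire category argument'' for this, but the condition ``color $i$ appears infinitely often in $L(p^*\oplus q)$'' is a $\PO{2}$ condition on $c^{**}$ composed with a $\SO{2}$--measurable map in $q$, and there is no reason for it to hold on a comeager (or even nonempty) set of perturbations with the prescribed tail behaviour. Nothing prevents $K$ from being engineered so that whenever the perturbed input has tail converging to $1-b$, the coloring $L$ becomes eventually monochromatic of color $1-i$; a continuity point of the perturbed map $q\mapsto L(p^*\oplus q)$ carries no information about this. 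The announced ``exceptional case'' dichotomy — starting from a continuity point of $L|_{A_{1-b}}$ and ``exchanging roles'' — is asserted but not argued: it is unclear what the symmetric construction would even produce, and no reason is offered why the two horns exhaust all possibilities. As it stands, the plan does not rule out an adversarial $K$ that defeats both horns.

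The paper closes this gap by an entirely different device: rather than seeking a single perturbation with a fixed property, it builds $y=(x|_k,z_0,z_1,\dots)$ in countably many alternating stages. Even stages append blocks $w_m=(0^m\widehat{1})^m$ that at short inspection length look like $\widehat{0}$ while actually converging to $1$; odd stages append $\widehat{1}$'s. At each stage the construction fixes a fresh finite commitment via continuity of $H$, diagonalizing against the homogeneous sets of the intermediate colorings, so that in the limit $c_y:=\lim K(y)$ provably has maximal homogeneous sets of \emph{both} colors. That is exactly the combinatorial fact that your one-shot Baire argument cannot certify, and once you have both colors infinitely often, the maximal homogeneous set of the same color as the original $q$ automatically has the prefix $q|_m$, forcing $H$ to output the wrong bit. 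Preserving a specific chosen color $i$ across a single perturbation is both harder and unnecessary — the back-and-forth trades the ``preservation'' requirement for an ``every color survives'' property, which is achievable by stages whereas the former is not achievable in one shot.
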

\begin{proof}
Let us assume for a contradiction that $\BWT_2'\leqW\RT_{1,2}'$. 
Then there are computable functions $H,K$ such that $H\langle \id,GK\rangle$ is a realizer of $\BWT_2'$
whenever $G$ is a realizer of $\RT_{1,2}'$. 
We tacitly identify $X_2^\IN$ with a subspace of $\IN^\IN$ via $h_2$.
Since $\BWT_2':2^\IN\mto\{0,1\}$ is total, it follows that $\lim K:X_2^\IN\to\IN^\IN$ is also total
and $\SO{2}$--measurable. 
If we endow $X_2$ with the discrete topology, then $X_2^\IN$ is a Baire space, and by Corollary~\ref{cor:Baire} there
exists a point of discrete continuity $x\in X_2^\IN$ of $\lim K$.
Let $c:=\lim K(x)$ be the corresponding coloring. 
Then there is some maximal infinite homogeneous set $q\in\RT_{1,2}'(c)$, and there is a realizer $G$
of $\RT_{1,2}'$ that maps $K(x)$ to $q$. 
By continuity of $H$ there is some $m\in\IN$ such that $H\langle x|_mX_2^\IN,q|_m\IN^\IN\rangle=\{e\}$ for some $e\in\{0,1\}$.
Without loss of generality, we assume $e=0$.
Hence, by discrete continuity of $\lim K$ at $x$, there is some $k>m$ such that $\lim K(x|_kX_2^\IN)\In c|_m2^\IN$.

We construct some $y=(x|_k,z_0,z_1,z_2...)\in X_2^\IN$ such that all $z_i\in X_2$ converge to $1$, and the coloring $c_y:=\lim K(y)$ has maximal
homogeneous sets $q_0,q_1\in\RT_{1,2}(c_y)$ of different colors. This yields a contradiction: since $x|_k\prefix y$, we obtain $c|_m\prefix c_y$, 
and hence $q|_m$ must be prefix of either $q_0$ or $q_1$, say $q_0$, and this implies $H\langle y,q_0\rangle=e=0$, whereas the correct result would have to be $1$,
since all the $z_i$ converge to $1$.
We describe the inductive construction of $y$ in even and odd stages $i\in\IN$.

Stage $0$: Let $y_0:=x|_k\widehat{0}\,\widehat{0}...$, $c_0:=\lim K(y_0)$, and let $r_0\in\RT_{1,2}(c_0)$. 
Then by continuity of $H$ 
there is some $m_0>k$ such that $H\langle x|_{k}w_{m_0}X_2^\IN,r_0|_{m_0}\IN^\IN\rangle=\{0\}$
with $w_m:=(0^m\widehat{1})^m=(0^m\widehat{1},0^m\widehat{1},...,0^m\widehat{1})\in X_2^m$ for all $m\in\IN$.

Stage $1$: Let $y_1:=x|_kw_{m_0}\widehat{1}\,\widehat{1}...$, $c_1:=\lim K(y_1)$, and let $r_1\in\RT_{1,2}(c_1)$. 
Then also ${0^{m_0}}^\rightarrow r_1\in\RT_{1,2}(c_1)$, and by continuity of $H$ there
is some $m_1>k+m_0$ such that $H\langle y_1|_{m_1}X_2^\IN,{0^{m_0}}^\rightarrow r_1|_{m_1}\IN^\IN\rangle=\{1\}$.

We now assume that $i\geq1$ and that $y_0,...,y_{2i-1}\in X_2^\IN$ and $m_0,...,m_{2i+1}\in\IN$ have already been constructed.

Stage $2i$: Let $y_{2i}:=y_{2i-1}|_{m_{2i-1}}\widehat{0}\,\widehat{0}...$, $c_{2i}:=\lim K(y_{2i})$, and let $r_{2i}\in\RT_{1,2}(c_{2i})$. 
Then by continuity of $H$ there is some $m_{2i}>m_{2i-1}$ such that 
\[H\langle y_{2i-1}|_{m_{2i-1}}w_{m_{2i}}X_2^\IN,{0^{m_{2i-1}}}^\rightarrow  r_{2i}|_{m_{2i}}\IN^\IN\rangle=\{0\}.\]

Stage $2i+1$: Let $y_{2i+1}:=y_{2i-1}|_{m_{2i-1}}w_{m_{2i}}\widehat{1}\,\widehat{1}...$, $c_{2i+1}:=\lim K(y_{2i+1})$, and let $r_{2i+1}\in\RT_{1,2}(c_{2i+1})$.
Then also ${0^{m_{2i}}}^\rightarrow r_{2i+1}\in\RT_{1,2}(c_{2i+1})$, and by continuity of $H$ there is some $m_{2i+1}>m_{2i-1}+m_{2i}$ such that 
\[H\langle y_{2i+1}|_{m_{2i+1}}X_2^\IN,{0^{m_{2i}}}^\rightarrow r_{2i+1}|_{m_{2i+1}}\IN^\IN\rangle=\{1\}.\]

This construction yields a sequence $(y_i)_i$ in $X_2^\IN$ and a strictly increasing sequence
$(m_i)_i$ in $\IN$ such that $y_{2i-1}|_{m_{2i-1}}\prefix y_{2i+1}$ for all $i$.
Since all the blocks $w_m$ that are added to the initial $x|_k$ in the odd stages are of the form $w_m=(z_1,...,z_m)$,
with $z_i\in X_2$ that all converge to $1$, it follows that $(y_{2i+1})_i$ converges to a $y=(x|_k,z_0,z_1,z_2...)\in X_2^\IN$, with $z_i$ that all converge to $1$. 
Let $c_y:=\lim K(y)$ be the corresponding coloring.
We now prove that there are homogeneous sets in $\RT_{1,2}(c_y)$ of different color. 
We fix some maximal homogeneous set $r\in\RT_{1,2}(c_y)$.
It suffices to prove that there are infinitely many $i$ with $r(i)=0$.
The inductive proof follows the stages of the construction above and uses the corresponding objects constructed therein. 

Stage $0,1$:  We obtain $r_0|_{m_0}\not\In r$, since otherwise there is some $s\in\RT_{1,2}(c_y)$ such that $r_0|_{m_0}\prefix s$ and $H\langle y,s\rangle=1$.
Since $x|_kw_{m_0}\prefix y$ this contradicts the choice of $m_0$ in Stage 0.
In particular, we obtain that there is some $i$ with $0\leq i<m_0$ and $r(i)=0$.

Stages $2i,2i+1$: Likewise, ${0^{m_{2i-1}}}^\rightarrow r_{2i}|_{m_{2i}}\not\In r$, since otherwise there is some $s\in\RT_{1,2}(c_y)$ such that ${0^{m_{2i-1}}}^\rightarrow r_{2i}|_{m_{2i}}\prefix s$
and $H\langle y,s\rangle=1$.
Since $y_{2i-1}|_{m_{2i-1}}w_{m_{2i}}\prefix y$ this contradicts the choice of $m_{2i}$ in Stage $2i$.
In particular, there is some $i$ with $m_{2i-1}\leq i<m_{2i}$ and $r(i)=0$.

Altogether, this proves that there are infinitely many $i$ with $r(i)=0$, since $(m_i)_i$ is strictly increasing.
\end{proof}

By Corollary~\ref{cor:lower bounds with jumps} we have $\lim_\IN\leqW\RT_{1,2}'$.
On the other hand, we obtain the following corollary.

\begin{fact}
\label{fact:CN-BWT}
$\lim_\IN\nleqW\BWT_k^{(m)}$ for all $k,m\geq1$.
\end{fact}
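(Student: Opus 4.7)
The plan is to argue by contradiction. Suppose $\lim_\IN\leqW\BWT_k^{(m)}$, and fix computable functions $K,H\colon\In\IN^\IN\to\IN^\IN$ such that $H\langle\id,GK\rangle$ realizes $\lim_\IN$ for every realizer $G$ of $\BWT_k^{(m)}$.

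First I would introduce a sequence of inputs that collapses to a single limit in $\IN^\IN$ while realizing infinitely many different values under $\lim_\IN$. Concretely, let $p_n:=0^n\widehat{n}\in\IN^\IN$, so that $\lim_\IN(p_n)=n$ for every $n\in\IN$ while $p_n\to\widehat{0}$ pointwise in $\IN^\IN$.

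Second, I would exploit the crucial fact that the output space of $\BWT_k^{(m)}$ is the finite set $k=\{0,\ldots,k-1\}$, since only the input representation is changed by the jumps. For each $n$ the set $\BWT_k^{(m)}(K(p_n))$ is a non-empty subset of $k$; pick $i_n$ inside it. The pigeonhole principle then yields an infinite $N\In\IN$ and some $i^*\in k$ with $i_n=i^*$ for all $n\in N$. Let $r^*:=i^*\widehat{0}\in\IN^\IN$ be the standard name for $i^*$. For each $n\in N$ one can define a (not necessarily computable) realizer $G$ of $\BWT_k^{(m)}$ by setting $G(K(p_n)):=r^*$, which is admissible because $i^*\in\BWT_k^{(m)}(K(p_n))$, and letting $G(q)$ be an arbitrary admissible name on every other valid input $q$. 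The reduction then forces $H\langle p_n,r^*\rangle$ to be a name for $\lim_\IN(p_n)=n$ for every $n\in N$.

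Finally I would derive a continuity contradiction. The map $M\colon\IN^\IN\to\IN^\IN$, $M(p):=H\langle p,r^*\rangle$, is continuous since $H$ is computable, and the previous step yields $M(p_n)(0)=n$ for all $n\in N$ under the standard representation of $\IN$. But $p_n\to\widehat{0}$ pointwise, so $M(p_n)\to M(\widehat{0})$ pointwise, and in particular $M(p_n)(0)$ is eventually equal to the fixed natural number $M(\widehat{0})(0)$. This contradicts $M(p_n)(0)=n$ for arbitrarily large $n\in N$.

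The main obstacle is the realizer-construction in the middle step: once the reduction data $H,K$ are fixed, one must legally collapse the second argument of $H$ to a single standard name $r^*$ uniformly over the infinite set $N$. The pigeonhole applied to the finite output space of $\BWT_k^{(m)}$ is exactly what makes this possible, and it is the only place where the finiteness of $k$ and the invariance of the output space under jumps are used; the remainder is a standard continuity contradiction on Baire space.
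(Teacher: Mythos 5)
Your approach differs from the paper's, which reduces the claim to a general cardinality result from \cite{BGM12} after noting $\lim_\IN\equivW\C_\IN$; your direct argument correctly singles out the finite range of $\BWT_k^{(m)}$ as the lever, but the last step has a genuine gap. You set $M(p):=H\langle p,r^*\rangle$ and invoke $M(p_n)\to M(\widehat{0})$ pointwise. This tacitly assumes $\widehat{0}\in\dom(M)$, i.e., that $H$ is defined on $\langle\widehat{0},r^*\rangle$, and nothing in the argument guarantees it. The pigeonhole only gives an infinite $N$ and a color $i^*$ with $i^*\in\BWT_k^{(m)}(K(p_n))$ for $n\in N$; it gives no information about $\BWT_k^{(m)}(K(\widehat{0}))$. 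If $i^*\notin\BWT_k^{(m)}(K(\widehat{0}))$, then $r^*$ is not of the form $GK(\widehat{0})$ for any realizer $G$, so the reduction imposes no obligation on $H$ at $\langle\widehat{0},r^*\rangle$ and $M$ may simply be undefined there. Moreover the $p_n$ disagree with each other, and with $\widehat{0}$, at infinitely many positions, so continuity of $M$ at the individual points $p_n$ does not by itself force any two of the values $M(p_n)(0)$ to coincide.

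The repair the paper itself points to is an inductive nested-prefix construction as in Proposition~\ref{prop:choice-cardinality}, rather than a single pigeonhole at a fixed accumulation point. At stage $j$ one extends an already committed prefix $u_{j-1}$ to $p_j:=u_{j-1}\widehat{j}$ (a $\lim_\IN$-name with limit $j$), picks $i_j\in\BWT_k^{(m)}(K(p_j))$ with name $r_j:=i_j\widehat{0}$, uses continuity of $H$ on the pair $\langle p_j,r_j\rangle$ (which is in $\dom(H)$ by the reduction) to fix longer prefixes $u_j\prefix p_j$, $v_j\prefix r_j$ (with $u_{j-1}\prefix u_j$, $|v_j|\geq 1$) that already commit $H$ to output $j$, and continues. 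After $k+1$ stages the pigeonhole over the colors $i_0,\dots,i_k$ produces $a<b$ with $i_a=i_b$, and then $\langle p_b,r_b\rangle$ extends both committed prefix pairs, forcing $H\langle p_b,r_b\rangle(0)$ to equal both $a$ and $b$ --- a contradiction reached without ever leaving $\dom(H)$. Your idea is morally right and recovers the same quantitative bound $k$; the branching just has to be built into the $\lim_\IN$-names themselves rather than read off an accumulation point that may fall outside the relevant domain.
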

\begin{proof}
Let us denote by $\U\C_\IN$ the restriction of $\C_\IN$ to singletons.
By \cite[Fact~3.2(1)]{BGM12} $\UC_\IN$ is a strong fractal, and $\C_\IN$ is obviously slim, 
which means that $\range(\U\C_\IN)=\range(\C_\IN)$.
Under these conditions \cite[Theorem~13.3]{BGM12} yields 
$\C_\IN\nleqW\BWT_{k}^{(m)}$ since $|\range(\C_\IN)|=|\IN|>k=|\range(\BWT_k^{(m)})|$.
By \cite[Proposition~3.8]{BGM12} we have $\lim_\IN\equivW\C_\IN$, which implies the claim.
\end{proof}

We could also replace the application of \cite[Theorem~13.3]{BGM12} in this proof by an 
application of Proposition~\ref{prop:choice-cardinality} that we prove below.
Now it follows with Fact~\ref{fact:CN-BWT}
that $\RT_{1,2}'\nleqW\BWT_2'$. Hence, together with Theorem~\ref{thm:BWT2j-RT12j}
we obtain the following corollary.

\begin{corollary}
\label{cor:BWT2j-RT12j}
$\BWT_2'\nW\RT_{1,2}'$.
\end{corollary}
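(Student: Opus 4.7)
The plan is to establish both non-reductions that make up the incomparability statement $\BWT_2'\nW\RT_{1,2}'$, i.e., $\BWT_2'\nleqW\RT_{1,2}'$ and $\RT_{1,2}'\nleqW\BWT_2'$.

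The first non-reduction $\BWT_2'\nleqW\RT_{1,2}'$ is exactly the content of Theorem~\ref{thm:BWT2j-RT12j} (the hard part, already proved in the excerpt via the Baire category argument on $X_2^\IN$).

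For the converse non-reduction $\RT_{1,2}'\nleqW\BWT_2'$, I would argue by contradiction using the transitivity of $\leqW$ together with two facts already established in the paper. Suppose $\RT_{1,2}'\leqW\BWT_2'$. By Corollary~\ref{cor:lower bounds with jumps} we have $\lim_\IN\leqW\RT_{1,2}'$, so composing the two reductions yields $\lim_\IN\leqW\BWT_2'$. Since $\BWT_2'=\BWT_2^{(1)}$, this contradicts Fact~\ref{fact:CN-BWT} (applied with $k=2$, $m=1$), which asserts that $\lim_\IN\nleqW\BWT_k^{(m)}$ for all $k,m\geq1$.

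Combining both directions yields $\BWT_2'\nW\RT_{1,2}'$. The only genuine work is Theorem~\ref{thm:BWT2j-RT12j}; the converse non-reduction is a one-line transitivity argument, so there is no real obstacle here beyond correctly invoking the earlier results.
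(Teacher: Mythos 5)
Your proposal is correct and matches the paper's own argument exactly: the paper also obtains $\RT_{1,2}'\nleqW\BWT_2'$ by combining $\lim_\IN\leqW\RT_{1,2}'$ (Corollary~\ref{cor:lower bounds with jumps}) with Fact~\ref{fact:CN-BWT}, and then pairs this with Theorem~\ref{thm:BWT2j-RT12j} to conclude incomparability.
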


By Proposition~\ref{prop:bottom} we have $\BWT_2\equivW\RT_{1,2}$, 
but Corollary~\ref{cor:BWT2j-RT12j} shows that
the situation is very different for strong Weihrauch reducibility.
With the help of \cite[Proposition~5.6(2)]{BGM12} we obtain the following.

\begin{corollary}
\label{cor:BWT2-RT12}
$\BWT_2\nSW\RT_{1,2}$.
\end{corollary}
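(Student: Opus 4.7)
The plan is to prove Corollary~\ref{cor:BWT2-RT12} by contradiction, leveraging the fact that the jump operator is monotone with respect to strong Weihrauch reducibility. Assume toward a contradiction that $\BWT_2 \leqSW \RT_{1,2}$. By \cite[Proposition~5.6(2)]{BGM12}, taking jumps preserves strong Weihrauch reductions, so this assumption would yield $\BWT_2' \leqSW \RT_{1,2}'$, and hence in particular $\BWT_2' \leqW \RT_{1,2}'$.

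However, this conclusion directly contradicts Corollary~\ref{cor:BWT2j-RT12j}, which establishes $\BWT_2' \nW \RT_{1,2}'$ (and in particular $\BWT_2' \nleqW \RT_{1,2}'$). Thus the assumption $\BWT_2 \leqSW \RT_{1,2}$ is untenable, and we obtain $\BWT_2 \nSW \RT_{1,2}$.

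There is no real obstacle here: the entire content of the corollary rests on the preceding separation result together with the monotonicity of jumps under $\leqSW$. The remark preceding the statement emphasizes the contrast between this and Proposition~\ref{prop:bottom}, which gives $\BWT_2 \equivW \RT_{1,2}$; the significance of the corollary is precisely that the equivalence, while valid for ordinary Weihrauch reducibility, fails for strong Weihrauch reducibility, and this is witnessed by lifting the incomparability at the level of jumps back down via monotonicity.
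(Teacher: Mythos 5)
Your approach is exactly the paper's: apply the monotonicity of the jump with respect to $\leqSW$ (from \cite[Proposition~5.6(2)]{BGM12}) to lift the incomparability from Corollary~\ref{cor:BWT2j-RT12j} back down. However, the proof as written is incomplete. The symbol $\nSW$ denotes \emph{incomparability} with respect to strong Weihrauch reducibility, i.e., both $\BWT_2\nleqSW\RT_{1,2}$ and $\RT_{1,2}\nleqSW\BWT_2$. You only establish the first of these: you assume $\BWT_2\leqSW\RT_{1,2}$, derive $\BWT_2'\leqW\RT_{1,2}'$, and contradict one half of $\BWT_2'\nW\RT_{1,2}'$. To conclude $\BWT_2\nSW\RT_{1,2}$ you also need to rule out $\RT_{1,2}\leqSW\BWT_2$; this follows by the exact same monotonicity argument, since such a reduction would yield $\RT_{1,2}'\leqW\BWT_2'$, contradicting the other half of the incomparability in Corollary~\ref{cor:BWT2j-RT12j}. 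The fix is trivial, but the proof as stated does not justify the conclusion it draws.
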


It is clear that $\D_{2,2}=\RT_{1,2}'\leqSW\CRT_{1,2}'\equivW\SRT_{2,2}$ by Theorem~\ref{thm:CRT-SRT},
and by Theorem~\ref{thm:lower-bound} we also obtain $\BWT_2'\leqSW\CRT_{1,2}'$.
Now Corollary~\ref{cor:BWT2j-RT12j} also leads to the following
separation result which was independently proved by Dzhafarov \cite[Corollary~3.3]{Dzh16}.

\begin{corollary}
\label{cor:SRT22-D22}
$\SRT_{2,2}\nleqW\D_{2,2}$.
\end{corollary}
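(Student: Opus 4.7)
The plan is to derive this separation directly from the preceding results as a short chain of reductions plus a single contradiction. Specifically, I would exploit the equivalence $\CRT_{1,2}' \equivW \SRT_{2,2}$ (Theorem~\ref{thm:CRT-SRT}) together with the lower bound $\BWT_2' \leqSW \CRT_{1,2}'$ (noted just before the statement, following from Theorem~\ref{thm:lower-bound} via Proposition~\ref{prop:bottom} and monotonicity of the jump). Combining these two facts gives $\BWT_2' \leqW \SRT_{2,2}$.

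Next I would argue by contradiction: assume $\SRT_{2,2} \leqW \D_{2,2} = \RT_{1,2}'$. Composing with the reduction just established, transitivity of $\leqW$ immediately yields $\BWT_2' \leqW \RT_{1,2}'$. This directly contradicts Theorem~\ref{thm:BWT2j-RT12j}, which was the hard work proved by the Baire-category argument involving the Baire space $X_2^\IN$ and the construction of a sequence $y=(x|_k,z_0,z_1,z_2,\ldots)$ with conflicting maximal homogeneous sets.

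The entire argument is thus a two-line deduction, and there is no real obstacle in the proof itself; all of the genuine difficulty is contained in Theorem~\ref{thm:BWT2j-RT12j}, which we may invoke as already established. The only thing worth double-checking is that the equivalence $\CRT_{1,2}' \equivW \SRT_{2,2}$ is indeed an ordinary Weihrauch equivalence (not just a one-sided reduction), so that combining it with the strong reduction $\BWT_2' \leqSW \CRT_{1,2}'$ preserves the direction of the eventual reduction $\BWT_2' \leqW \SRT_{2,2}$; this is clear from Theorem~\ref{thm:CRT-SRT}, where both directions of the equivalence are explicitly proved.
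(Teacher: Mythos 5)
Your proof is correct and follows essentially the same route as the paper: the text immediately preceding the corollary establishes $\BWT_2' \leqSW \CRT_{1,2}' \equivW \SRT_{2,2}$ and then invokes Corollary~\ref{cor:BWT2j-RT12j} (the corollary form of Theorem~\ref{thm:BWT2j-RT12j}) to conclude. Your two-step transitivity argument and contradiction with the Baire-category result are exactly what the paper does.
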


Altogether, we have now separated several of the degrees illustrated in the diagram
of Figure~\ref{fig:diagram-RT22}.

\section{Boundedness and Induction}
\label{sec:boundedness}

The purpose of this section is to collect some results on how Ramsey's theorem is related to 
boundedness and induction. In reverse mathematics there is a well-known strict hierarchy of
induction principles $\I\sO{n}$ and boundedness principles $\B\sO{n}$ (see \cite[Theorem~4.32]{Hir15}):
\[\B\sO{1}\leftarrow\I\sO{1}\leftarrow\B\sO{2}\leftarrow\I\sO{2}\leftarrow...\]
While these principles have no immediate interpretation as problems in the Weihrauch lattice,
there are equivalent formulations of these principles that do have such interpretations.
For instance, it is well-known that $\I\sO{n}$ is equivalent to the least number principle $\L\pO{n}$ over a very weak system \cite[Theorem~2.4]{HP93}.
The least number principle $\L\pO{1}$ can be interpreted as the the following problem:
\[\min:\In\IN^\IN\to\IN,p\mapsto\min\{n\in\IN:(\forall k\in\IN)\;p(k)\not=n\}.\]
Equivalently, one could also consider the map $\min:\In\AA_-(\IN)\to\IN,A\mapsto\min(A)$ that maps
every non-empty closed set $A\In\IN$ (given by an enumeration of its complement) to its minimum. 
The $n$--th jump $\min^{(n)}$ now clearly corresponds to $\L\pO{n+1}$ for all $n\in\IN$.
The following result is easy to obtain.

\begin{proposition}[Least number principle]
\label{prop:least}
$\C_\IN\equivSW\min$.
\end{proposition}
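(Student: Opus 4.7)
The plan is to prove the equivalence in both directions separately, with one direction being immediate and the other requiring a small construction. For $\C_\IN\leqSW\min$, observe that for any nonempty $A\in\AA_-(\IN)$ the value $\min(A)$ is itself an element of $A$, so the choice $K=\id$ and $H=\id$ already yields a strong reduction.

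For the converse $\min\leqSW\C_\IN$, the plan is to compute (from the negative name of $A$, by a computable $K$) a nonempty set $B\In\IN$ every element of which codes $\min(A)$ in its first coordinate. Given a name $q$ of $A$ enumerating $\IN\sm A$, I would form the non-decreasing auxiliary sequence
\[m_s:=\min(\IN\sm\{q(0),...,q(s-1)\}),\]
which stabilizes at $\min(A)$ after finitely many stages $N$. Setting
\[B:=\{\langle m,s\rangle\in\IN:m_t=m\text{ for all }t\geq s\},\]
one checks that $B=\{\langle\min(A),s\rangle:s\geq N\}$, so $B$ is nonempty; the postprocessing $H$ is simply the projection onto the first coordinate, which by construction outputs $\min(A)$ given any $\langle m,s\rangle\in B$.

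The key step, and the main (though mild) obstacle, is to verify that $B$ is uniformly computable with negative information from $q$. At each stage $u$ and for each candidate $\langle m,s\rangle$ with $s\leq u$, one enumerates $\langle m,s\rangle$ into $\IN\sm B$ as soon as some $t\in[s,u]$ with $m_t\not=m$ is observed; this condition is monotone in $u$ and decidable at each stage, so it yields a valid negative name of $B$, and conversely every $\langle m,s\rangle\notin B$ admits such a witness by definition and is eventually enumerated. The naive alternative $B:=\{\min(A)\}$ fails because detecting that some $n\in A$ exceeds $\min(A)$ purely from a negative name of $A$ is a $\PO{1}$-event; attaching the stage index $s$ turns the relevant instability condition ``$(m_t)_t$ leaves the value $m$ after stage $s$'' into an observable $\SO{1}$-event, which is exactly what makes the construction work.
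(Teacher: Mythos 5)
Your proof is correct and takes essentially the same approach as the paper. The easy direction $\C_\IN\leqSW\min$ is identical, and for $\min\leqSW\C_\IN$ both arguments pad the (a priori not negatively detectable) minimum with a stage index to form a nonempty set whose complement is enumerable; the paper uses pairs $\langle n,k\rangle$ with the value in the second coordinate and removes a pair once its value is refuted, while you use $\langle m,s\rangle$ with the value in the first coordinate and the same refutation criterion, then project — a cosmetic difference only.
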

\begin{proof}
$\C_\IN\leqSW\min$ is obvious, since given an enumeration $p$ of a the complement of a non-empty set $A\In\IN$,
we have $\min(p)\in A$. 
We also have $\min\leqSW\C_\IN$: Given a sequence $p\in\IN^\IN$ we start enumerating the complement 
of a set $A\In\IN$ while we inspect the sequence $p$. The algorithm proceeds in stages $i=0,1,2,...$.
In stage $i$ we let $k_i:=\min\{n\in\IN:(\forall j\leq i)\;p(j)\not=n\}$ and we remove all $\langle n,m\rangle$ from $A$ for $n,m\leq i$ and $m\not=k_i$. Hence, the final set $A$ satisfies 
\[A\In\{\langle n,k\rangle\in\IN:n\in\IN,k=\min(\IN\setminus\range(p))\}\]
and given a number $\langle n,k\rangle\in A$ we can easily extract $k=\min(\IN\setminus\range(p))$. 
Hence, $\min\leqSW\C_\IN$.
\end{proof}

Since jumps preserve strong Weihrauch equivalences we obtain $\C_\IN^{(n)}\equivSW\min^{(n)}$ for all $n\in\IN$, and hence
$\C_\IN^{(n)}$ can be seen as a counterpart of $\I\sO{n+1}$ for all $n\in\IN$. 

Likewise, the boundedness principle $\B\sO{n}$ has been characterized with the help of the pigeonhole principle and the
regularity principle $\R\sO{n}$. More precisely, it is known that $\B\sO{n+2}$ is equivalent to $\R\sO{n+1}$ for all $n\in\IN$
over a very weak system~\cite[Theorem~2.23]{HP93}. 
Now $\BWT_\IN$ can be seen as a counterpart of $\R\sO{1}$ since the computational goal of $\BWT_\IN$ is to find
a number that appears infinitely often in a bounded sequence. More generally, $\R\sO{n+1}$ corresponds to $\BWT_\IN^{(n)}$ for all $n\in\IN$.
We recall that $\K_\IN$ denotes {\em compact choice} on $\IN$, 
which can be defined as closed choice $\C_\IN$, except that additionally an upper bound for the input set is provided (see \cite{BGM12} for a precise definition). 
Another characterization of $\K_\IN$ is $\K_\IN\equivSW\C_2^*$ (which holds by \cite[Proposition~10.9]{BGM12} where $\LLPO\equivSW\C_2$)
and we obtain $\K_\IN'\equivSW\BWT_\IN$ by \cite[Corollary~11.10]{BGM12}.
Hence, it is justified to say that $\K_\IN^{(n+1)}$ corresponds to $\B\sO{n+2}$ for all $n\in\IN$. 

Analogously to the above implication chain for $\I\sO{n}$ and $\B\sO{n}$ we obtain
\[\K_\IN\lW\C_\IN\lW\K_\IN'\lW\C_\IN'\lW\K_\IN''\lW\C_\IN''\lW....\]
This reduction chain is based on the following observation.

\begin{proposition}
\label{prop:KN-CN}
$\K_\IN^{(n)}\lSW\C_\IN^{(n)}\lSW\K_\IN^{(n+1)}$ and $\K_\IN^{(n)}\lW\C_\IN^{(n)}\lW\K_\IN^{(n+1)}$ for  $n\in\IN$.
\end{proposition}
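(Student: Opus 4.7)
The plan is to separate the positive reductions from the strict parts and to lean on monotonicity of jumps under $\leqSW$ throughout. The positive reduction $\K_\IN^{(n)}\leqSW\C_\IN^{(n)}$ is immediate, since $\K_\IN$ is by definition the restriction of $\C_\IN$ to instances carrying an additional upper bound; jumping $n$ times preserves this. For $\C_\IN^{(n)}\leqSW\K_\IN^{(n+1)}$, I would use the identification $\K_\IN'\equivSW\BWT_\IN$ noted just above the proposition and reduce $\C_\IN$ to $\BWT_\IN$ directly: given an enumeration $p$ of $\IN\sm A$ for a non-empty set $A\In\IN$, compute the sequence $q_n:=\min(\IN\sm\{p(0),\ldots,p(n-1)\})$. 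This sequence is non-decreasing, bounded above by $\min(A)$, and converges to $\min(A)$, so it has compact closure in $\IN$ and its unique cluster point $\min(A)$ lies in $A$; any cluster point returned by $\BWT_\IN$ therefore solves the instance of $\C_\IN$. Jumping $n$ times yields $\C_\IN^{(n)}\leqSW\K_\IN^{(n+1)}$.

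The strict direction $\K_\IN^{(n+1)}\nleqW\C_\IN^{(n)}$, which automatically rules out the strong version as well, I would read off from the already-available equivalences $\K_\IN^{(n+1)}\equivSW\BWT_\IN^{(n)}$ and $\C_\IN^{(n)}\equivW\lim_\IN^{(n)}$ together with Fact~\ref{fact:WKL-BWT}(4), which asserts exactly $\lim_\IN^{(n)}\lW\BWT_\IN^{(n)}$.

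The main obstacle is the strict direction $\C_\IN^{(n)}\nleqW\K_\IN^{(n)}$. My approach would be to decompose $\K_\IN^{(n)}$ via $\K_\IN\equivSW\C_2^*=\bigsqcup_m\C_2^m$ and to show that jumps distribute over countable coproducts up to strong equivalence---intuitively because in any convergent sequence in the coproduct representation the selector prefix must eventually stabilize---so that $\K_\IN^{(n)}\equivSW\bigsqcup_m(\C_2^m)^{(n)}$. I would then appeal to the fact that $\C_\IN^{(n)}$ is a strong fractal, inherited from the strong fractal property of $\C_\IN$ and preserved under jumps, and hence countably irreducible by \cite[Proposition~2.6]{BGM12}. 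A hypothetical reduction $\C_\IN^{(n)}\leqW\K_\IN^{(n)}$ would therefore collapse to $\C_\IN^{(n)}\leqW(\C_2^m)^{(n)}$ for some fixed $m$, contradicting the range-cardinality argument of Proposition~\ref{prop:choice-cardinality}: the range of $(\C_2^m)^{(n)}$ has only $2^m$ elements, whereas the range of $\C_\IN^{(n)}$ is all of $\IN$ and is already attained by the strong fractal subproblem obtained by restricting to singletons. The delicate points are verifying that jumps distribute over countable coproducts and preserve the strong fractal property; both reduce to direct but careful inspection of the jump representation and of the clopen structure surviving under $\lim$.
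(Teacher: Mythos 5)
Your treatment of the positive reductions and of the direction $\K_\IN^{(n+1)}\nleqW\C_\IN^{(n)}$ is fine and even slightly more streamlined than the paper's: the explicit reduction $\C_\IN\leqSW\BWT_\IN$ via the running minimum of the complement is neat (the paper instead cites $\C_\IN\equivSW\lim_\IN\leqSW\BWT_\IN$), and reading off $\K_\IN^{(n+1)}\equivSW\BWT_\IN^{(n)}\nleqW\lim_\IN^{(n)}\equivSW\C_\IN^{(n)}$ directly from Fact~\ref{fact:WKL-BWT}(4) is a correct shortcut.

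The problem is your argument for $\C_\IN^{(n)}\nleqW\K_\IN^{(n)}$. The step ``jumps distribute over countable coproducts up to strong equivalence'' is in fact false for $n\geq1$, and this is exactly the delicate point you flag. The easy direction $\bigsqcup_m(\C_2^m)^{(n)}\leqSW(\C_2^*)^{(n)}$ holds, but the direction you need, $(\C_2^*)^{(n)}\leqW\bigsqcup_m(\C_2^m)^{(n)}$, fails: a jump-name of a point in $\bigsqcup_m X_m$ is a convergent sequence whose selector component eventually stabilizes, but you cannot compute the stable value from any finite prefix, whereas a name in $\bigsqcup_m X_m'$ hands you the selector up front. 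Concretely, already for $n=1$ one has $(\C_2^*)'\equivSW\K_\IN'\equivSW\BWT_\IN$ and $\bigsqcup_m(\C_2^m)'\equivSW\bigsqcup_m\BWT_2^m$, and $\BWT_\IN\nleqW\bigsqcup_m\BWT_2^m$: since $\BWT_\IN$ is a jump it is countably irreducible, so a reduction would factor through a single $\BWT_2^m$, whose range has only $2^m$ elements; combined with $\C_\IN\leqSW\lim_\IN\leqSW\BWT_\IN$ and Proposition~\ref{prop:choice-cardinality} this gives $|\IN|\leq 2^m$, a contradiction. So the decomposition you rely on simply does not hold once you jump, and the chain from countable irreducibility of $\C_\IN^{(n)}$ to a single $(\C_2^m)^{(n)}$ never gets off the ground.

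The paper avoids this by parallelizing. Since parallelization commutes with jumps and is a closure operator, $\widehat{\K_\IN^{(n)}}\equivSW\WKL^{(n)}$ (from $\C_2\leqSW\K_\IN\equivSW\C_2^*\leqSW\widehat{\C_2}\equivSW\WKL$) and $\widehat{\C_\IN^{(n)}}\equivSW\lim^{(n)}$ (from $\C_\IN\equivSW\lim_\IN$). If $\C_\IN^{(n)}\leqW\K_\IN^{(n)}$ held, parallelizing would give $\lim^{(n)}\leqW\WKL^{(n)}$, contradicting Fact~\ref{fact:WKL-BWT}(5). This one argument in fact yields \emph{both} strictness claims uniformly, so it also subsumes your Fact~\ref{fact:WKL-BWT}(4) route for the other inequality. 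I would replace your coproduct decomposition by this parallelization argument.
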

\begin{proof}
We obtain $\K_\IN\leqSW\C_\IN\equivSW\lim_\IN\leqSW\BWT_\IN\equivSW\K_\IN'$.
Here $\C_\IN\equivSW\lim_\IN$ holds by \cite[Proposition~3.8]{BGM12}.
The claimed reductions follow since jumps are monotone with respect to $\leqSW$.
The separation results follow from  Fact~\ref{fact:WKL-BWT}~(5) since we have
$\widehat{\C_\IN}\equivSW\widehat{\lim_\IN}\equivSW\lim$ (see also \cite[Example~3.10]{BBP12}), 
$\widehat{\K_\IN}\equivSW\WKL$ and since parallelization commutes with jumps. 
Here $\widehat{\K_\IN}\equivSW\WKL$ holds
since we obtain that $\C_2\leqSW\C_2^*\equivSW\K_\IN\leqSW\widehat{\C_2}\equivSW\WKL$ and 
parallelization is a closure operator.
\end{proof}

In reverse mathematics a considerable amount of work has been spent in order to calibrate Ramsey's theorem for pairs according
to the above boundedness and induction principles. 
Hence, it is natural to ask how it compares uniformly to choice principles.
We start with some easy observations, and as a preparation we prove the following result.
The proof is a simple variation of the proof of \cite[Theorem~13.3]{BGM12}.

\begin{proposition}[Choice and cardinality]
\label{prop:choice-cardinality}
Let $f:\In X\mto\IN$ and $X\In\IN$ be such that $\C_X\leqW f$. Then $|X|\leq|\range(f)|$.
\end{proposition}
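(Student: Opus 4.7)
The plan is to prove the contrapositive. Since $X\In\IN$ is countable, the case $|\range(f)|=\aleph_0$ is immediate: $|X|\leq\aleph_0=|\range(f)|$. So I may assume $|\range(f)|=k$ is finite and, toward contradiction, that $|X|\geq k+1$. Fix distinct $x_0,\ldots,x_k\in X$, set $X_0=\{x_0,\ldots,x_k\}$, and let $K,H$ be computable functions realizing $\C_X\leqW f$. I will exploit the freedom to pick the realizer of $f$: using the representation of $\IN$ in which $\widehat{n}=(n,n,n,\ldots)$ names $n$, choose $F$ of the form $F(q)=\widehat{s(q)}$ for some selector $s(q)\in f(\delta_f(q))\In\range(f)$.

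The heart of the argument is a suitable convergent family of names. Fix an enumeration of $X\sm X_0$ (padded by repetitions if finite) and let $p^*\in\IN^\IN$ be the corresponding name of $X_0$ in $\AA_-(X)$. For each $j\in\{0,\ldots,k\}$ and $N\in\IN$ define $p_j^N$ by copying the first $N$ symbols of $p^*$, then inserting an enumeration of the $k$ elements of $X_0\sm\{x_j\}$, and then continuing with the remainder of $p^*$. Each $p_j^N$ enumerates $X\sm\{x_j\}$, hence names $\{x_j\}$ in $\AA_-(X)$, and $p_j^N\to p^*$ in Baire space as $N\to\infty$. Since $p^*$ names $X_0\in\dom(\C_X)$, the pair $\langle p^*,F(K(p^*))\rangle=\langle p^*,\widehat{s(K(p^*))}\rangle$ lies in $\dom H$.

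Now apply pigeonhole twice. For each fixed $j$, the values $s(K(p_j^N))$ lie in $\range(f)=\{0,\dots,k-1\}$, so there is an infinite $S_j\In\IN$ and a label $y_j\in\range(f)$ with $s(K(p_j^N))=y_j$ for all $N\in S_j$. With $k+1$ choices of $j$ but only $k$ possible labels, a second pigeonhole gives $j_1\neq j_2$ with $y_{j_1}=y_{j_2}=:y$. The reduction then guarantees $H\langle p_{j_1}^N,\widehat{y}\rangle(0)=x_{j_1}$ for $N\in S_{j_1}$ and $H\langle p_{j_2}^M,\widehat{y}\rangle(0)=x_{j_2}$ for $M\in S_{j_2}$. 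Each such computation is performed by the same deterministic Turing machine $\Phi_y$ and reads only a finite prefix of its first argument (the second argument $\widehat{y}$ is constant).

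The contradiction comes from continuity of $\Phi_y$ at $p^*$: since both $p_{j_1}^N\to p^*$ and $p_{j_2}^M\to p^*$ inherit arbitrarily long initial segments of $p^*$, for $N,M$ large enough the finitely many positions queried by the halting computations $\Phi_y(p_{j_1}^N)$ and $\Phi_y(p_{j_2}^M)$ lie entirely inside the common prefix with $p^*$. Hence $\Phi_y(p^*)$ halts (with the same use) and equals both $x_{j_1}$ and $x_{j_2}$, forcing $x_{j_1}=x_{j_2}$ and contradicting distinctness. The delicate point, which I expect to be the main obstacle, is arranging that the pigeonholed value $y$ is actually realizable at $p^*$ so that $\Phi_y(p^*)$ is guaranteed to halt; I would handle this by refining the selector $s$ so that the pigeonhole is restricted to labels in $f(\delta_f(K(p^*)))$ (choosing $s(K(p_j^N))\in f(\delta_f(K(p^*)))\cap f(\delta_f(K(p_j^N)))$ whenever this intersection is non-empty infinitely often) and, where this fails, replacing the Baire-space limit argument by the direct observation that the finite use of $H$ on $p_{j_1}^N$ also lies within the common prefix of $p_{j_2}^M$ for $M$ chosen sufficiently larger than that use, so that $\Phi_y(p_{j_2}^M)$ is forced to output $x_{j_1}$ rather than $x_{j_2}$.
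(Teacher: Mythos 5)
Your approach is genuinely different from the paper's and, as written, has a gap that the proposed patches do not close.

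The paper constructs the names adaptively: it builds a chain of names $p_0,p_1,p_2,\ldots$ of the shrinking sets $X, X\setminus\{k_0\}, X\setminus\{k_0,k_1\},\ldots$, where each $p_{i+1}$ is chosen to extend the prefix $w_i$ of $p_i$ that already determines $H$'s output on the discrete second input $n_i$. Because $w_0\prefix w_1\prefix\cdots$ and $k_{i+1}\notin\{k_0,\ldots,k_i\}$ by construction, distinctness of the $n_i$ is forced directly: if $n_{i+1}=n_j$ for some $j\le i$ then $H\langle p_{i+1},n_{i+1}\rangle = k_j$ since $w_j\prefix p_{i+1}$, contradicting $k_{i+1}\neq k_j$. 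Nothing in this argument needs to control uses across independently chosen names.

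Your proof instead fixes a two-parameter family $p_j^N$ in advance and tries to extract the contradiction by a double pigeonhole plus convergence $p_j^N\to p^*$. The gap is in the last step. You claim that for $N$ large the positions queried by $\Phi_y(p_{j_1}^N)$ lie inside the common prefix with $p^*$, i.e.\ inside the first $N$ symbols. But the use of $\Phi_y$ at $p_{j_1}^N$ depends on $p_{j_1}^N$ and can exceed $N$ for every $N$; indeed, if it were eventually bounded by $N$ then $\Phi_y(p^*)$ would halt, and $\Phi_y(p^*)$ halting is precisely the unproved premise. Your first patch (refine $s$ so the pigeonholed $y$ lies in $f(\delta_f K(p^*))$) would make $\Phi_y(p^*)$ defined, but it requires $f(\delta_f K(p_j^N))\cap f(\delta_f K(p^*))\neq\emptyset$ for infinitely many $N$ and for strictly more values of $j$ than $|f(\delta_f K(p^*))|$, and nothing rules out that this fails for all but one $j$. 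Your fallback (``choose $M$ larger than the use of $H$ on $p_{j_1}^N$'') runs into the same obstacle: $p_{j_1}^N$ and $p_{j_2}^M$ agree only up to position $\min(N,M)$, so you still need the use of $\Phi_y(p_{j_1}^N)$ to be $\le N$, which is exactly what is not guaranteed. Note also that the reduction constrains $H\langle q, F K(q)\rangle$, not $H\langle q,\widehat{y}\rangle$ for an arbitrary fixed label $y$, so you cannot simply extend a finite prefix of $p_{j_1}^N$ to a name of $\{x_{j_2}\}$ and keep the second input $\widehat{y}$.

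The structural reason the paper's argument succeeds and yours stalls is adaptivity: each new name is chosen \emph{after} the use of $H$ on the previous one is known, by extending that finite prefix to a name of a strictly smaller set (which is always possible in $\AA_-(X)$). Without that, there is no bound on the uses and the limit argument does not go through.
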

\begin{proof}
Let us assume that $\C_X\leqW f$. We use the representation $\psi_-$ of closed subsets of $X$.
Then there are computable $H,K$ such that $H\langle\id,GK\rangle\vdash\C_X$ for all $G\vdash f$.
We fix some realizer $G\vdash f$.
For simplicity and without loss of generality we assume that $G$ and $H$ have target space $\IN$.
We consider the following claim: for each $i\in\IN$ with $|X|>i$ 
there exist
\begin{enumerate}
\item $k_i\in X\setminus\{k_0,...,k_{i-1}\}$,
\item $n_i\in\range(f)\setminus\{n_0,...,n_{i-1}\}$,
\item $p_i$, a name of a closed subset $A_i\In X$, 
\item $w_i\prefix p_i$, 
\end{enumerate}
such that $w_{i-1}\prefix w_i$, $GK(p_i)=n_i$ and $H\langle w_i\IN^\IN,n_i\rangle=k_i$.
Let $w_{-1}$ be the empty word. 
We prove this claim by induction on $i$. 
Let us assume that $|X|>0$, and let $p_0$ be a name of $A_0:=X$.
Then $k_0:=H\langle p_0,GK(p_0)\rangle\in A_0$ and $n_0:=GK(p_0)$.
By continuity of $H$ there is some $w_0\prefix p_0$ such that 
such that $H\langle w_0\IN^\IN,n_0\rangle=k_0$. 
Let us now assume that $|X|>1$, and let $p_1$ be a name of $A_1:=X\setminus\{k_0\}$ with $w_0\prefix p_1$.
Then $k_1:=H\langle p_1,GK(p_1)\rangle\in A_1$ and $n_1:=GK(p_1)$.
Since $k_1\not=k_0$, we obtain $n_0\not=n_1$ since $H\langle w_0\IN^\IN,n_0\rangle=k_0$.
By continuity of $H$ there is some $w_1\prefix p_1$ with $w_0\prefix w_1$
such that $H\langle w_1\IN^\IN,n_1\rangle=k_1$. 
The proof can now continue inductively as above which proves the claim.
The claim implies $|X|\leq|\range(f)|$.
\end{proof}

From this result we can conclude the following observation.

\begin{proposition}[Finite Choice]
\label{prop:finite-choice}
$\C_k\leqW\SRT_{1,k}$ and $\C_{k+1}\nleqW\RT_{1,k}$ for all $k\geq1$.
\end{proposition}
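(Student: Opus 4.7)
The plan is to prove the two parts separately, each by reducing to machinery that has already been developed in the paper.

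For the positive direction $\C_k \leqW \SRT_{1,k}$, I would give a direct construction. Given a name $p$ of a nonempty closed set $A \In \{0, \ldots, k-1\}$ (presented by an enumeration of its complement), the inner reduction $K$ computes a coloring $c \colon \IN \to k$ stage by stage by setting $c(n) := \min(A_n)$, where $A_n$ is the remaining approximation to $A$ after reading the first $n$ symbols of $p$. Since $A$ is nonempty, $A_n \supseteq A$ is nonempty at every stage, and once all points strictly below $\min(A)$ have been enumerated out, $c(n) = \min(A)$ from there on. Hence $c$ is stable with limit $\min(A)$, so $c \in \dom(\SRT_{1,k})$. Any infinite homogeneous set $M \in \SRT_{1,k}(c)$ must be constant of value $\min(A)$, because any other color has only finite $c$-preimage. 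Therefore the outer reduction $H(p, M) := c(\min M)$ returns $\min(A) \in A$, completing the Weihrauch reduction.

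For the negative direction $\C_{k+1} \nleqW \RT_{1,k}$, my plan is to combine two earlier results. By Proposition~\ref{prop:bottom}(2) we have $\RT_{1,k} \equivW \BWT_k$, so a hypothetical reduction $\C_{k+1} \leqW \RT_{1,k}$ would yield $\C_{k+1} \leqW \BWT_k$. Now $\BWT_k$ has codomain $\{0, \ldots, k-1\} \In \IN$, so Proposition~\ref{prop:choice-cardinality} applies with $X := \{0, \ldots, k\}$ and $f := \BWT_k$, forcing
$k + 1 = |X| \leq |\range(\BWT_k)| \leq k$,
which is absurd.

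There is no substantial obstacle: the positive part is a routine construction hinging on the fact that any infinite monochromatic subset of a convergent sequence must sit (up to a finite initial segment) inside the limit fiber, and the negative part is essentially a cardinality count made rigorous by Proposition~\ref{prop:choice-cardinality}. The only point to verify carefully is that this last proposition genuinely applies to $\BWT_k$, but this is immediate since its values lie in the finite set $\{0,\ldots,k-1\} \In \IN$, so realizers may be taken to have target $\IN$ as the proof of Proposition~\ref{prop:choice-cardinality} requires.
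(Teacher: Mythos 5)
Your proposal is correct and takes essentially the same approach as the paper. For the first part you spell out the direct construction (encoding $\C_k$ as a convergent, hence stable, $k$-coloring) that the paper merely asserts ``can easily be proved directly'' while citing $\C_k\leqW\lim_k$ and the equivalence $\lim_k\equivW\SRT_{1,k}$ from Proposition~\ref{prop:bottom}; for the second part your argument---pass to $\BWT_k\equivW\RT_{1,k}$ via Proposition~\ref{prop:bottom} and then apply the cardinality bound of Proposition~\ref{prop:choice-cardinality}---is exactly the paper's argument.
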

\begin{proof}
By Lemma~\ref{prop:bottom} we have $\lim_k\equivW\SRT_{1,k}$, hence the first reduction follows from \cite[Corollary~13.8]{BGM12} and can easily been proved directly.
By Lemma~\ref{prop:bottom} we have $\BWT_k\equivW\RT_{1,k}$, and hence the second claim follows from Proposition~\ref{prop:choice-cardinality}.
\end{proof}

Since $\K_\IN\equivW\C_2^*$ by \cite[Proposition~10.9]{BGM12} we get the following conclusion.

\begin{proposition}[Compact choice]
\label{prop:compact-choice}
$\K_\IN\leqW\SRT_{1,+}$ and $\K_\IN\nleqW\RT_{1,k}$ for all $k\geq1$.
\end{proposition}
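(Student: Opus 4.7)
The plan is to derive both halves of the proposition as straightforward chainings of previously established reductions, with Proposition~\ref{prop:finite-choice} doing the main work.

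For the positive reduction $\K_\IN \leqW \SRT_{1,+}$, I would combine three ingredients. First, the identification $\K_\IN \equivW \C_2^*$ noted just before the proposition (from \cite[Proposition~10.9]{BGM12}). Second, the reduction $\C_2 \leqW \SRT_{1,2}$ furnished by Proposition~\ref{prop:finite-choice}; applying monotonicity of the finite parallelization operator (which is immediate from monotonicity of $\times$ and $\sqcup$ with respect to $\leqW$) lifts this to $\C_2^* \leqW \SRT_{1,2}^*$. Third, Corollary~\ref{cor:finite-parallelization} yields $\SRT_{1,2}^* \leqW \SRT_{1,+}$. Concatenating,
\[
\K_\IN \equivW \C_2^* \leqW \SRT_{1,2}^* \leqW \SRT_{1,+}.
\]

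For the negative statement $\K_\IN \nleqW \RT_{1,k}$, I would argue by contradiction using the cardinality-based separation already recorded in Proposition~\ref{prop:finite-choice}. The key observation is that $\C_{k+1} \leqSW \K_\IN$: an instance of $\C_{k+1}$ is an enumeration of the complement in $\{0,\dots,k\}$ of a non-empty set $A$, and by pairing this enumeration (viewed now as an enumeration of $\IN \setminus A$) with the constant bound $k$ we obtain a valid instance of $\K_\IN$ whose solutions lie in $A$. Hence, were $\K_\IN \leqW \RT_{1,k}$ to hold, transitivity would produce $\C_{k+1} \leqW \RT_{1,k}$, contradicting Proposition~\ref{prop:finite-choice}.

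There is no serious obstacle here: both assertions reduce to routine bookkeeping using results already in hand. The only mild subtlety worth flagging is the verification $\C_{k+1} \leqSW \K_\IN$, which is immediate once one unfolds the definition of compact choice as closed choice enriched with an upper bound, since $k$ itself serves as such a bound.
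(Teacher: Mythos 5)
Your proof is correct and follows essentially the same route as the paper: the positive direction is the chain $\K_\IN \equivW \C_2^* \leqW \SRT_{1,2}^* \leqW \SRT_{1,+}$, and the negative direction combines $\C_{k+1}\leqW\K_\IN$ with the cardinality obstruction $\C_{k+1}\nleqW\RT_{1,k}$ from Proposition~\ref{prop:finite-choice}. The only difference is that you spell out the monotonicity of $*$ and the verification of $\C_{k+1}\leqSW\K_\IN$ explicitly, which the paper leaves tacit.
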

\begin{proof}
Since $\C_2\leqW\SRT_{1,2}$, we obtain $\K_\IN\equivW\C_2^*\leqW\SRT_{1,2}^*\leqW\SRT_{1,+}$ by Corollary~\ref{cor:finite-parallelization}.
On the other hand, $\C_{k+1}\nleqW\RT_{1,k}$ and $\C_{k+1}\leqW\K_\IN$ implies $\K_\IN\nleqW\RT_{1,k}$ for all $k\geq1$.
\end{proof}

We obtain the following corollary.

\begin{corollary}[Jump of compact choice]
\label{cor:jump-compact-choice}
$\K_\IN'\equivW\RT_{1,\IN}$ and $\K_\IN'\nleqW\SRT_{2,+}$.
\end{corollary}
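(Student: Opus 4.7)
The plan is to handle the two assertions separately. For the equivalence $\K_\IN' \equivW \RT_{1,\IN}$, I would just string together two facts already in hand: the strong equivalence $\K_\IN' \equivSW \BWT_\IN$ recorded in the text just before Proposition~\ref{prop:KN-CN} (via \cite[Corollary~11.10]{BGM12}), and $\BWT_\IN \equivW \RT_{1,\IN}$, which is the case $k=\IN$ of Proposition~\ref{prop:bottom}~(2). No further work is needed.

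For the separation $\K_\IN' \nleqW \SRT_{2,+}$, I would argue by contradiction, using the identification $\K_\IN' \equivW \RT_{1,\IN}$ just obtained to rephrase it as $\RT_{1,\IN} \nleqW \SRT_{2,+}$. The key leverage is that $\SRT_{2,+} = \bigsqcup_{k\geq 1}\SRT_{2,k}$ is a countable coproduct, while $\RT_{1,\IN}$ is a fractal (as asserted just before Corollary~\ref{cor:arbitrary-colors}) and therefore countably irreducible by \cite[Proposition~2.6]{BGM12}. Hence an assumed reduction $\RT_{1,\IN} \leqW \SRT_{2,+}$ would collapse to a single component, namely $\RT_{1,\IN} \leqW \SRT_{2,k_0}$ for some fixed $k_0 \geq 1$. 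Combining with Proposition~\ref{prop:bottom} and Lemma~\ref{lem:increasing-color} would then yield
\[\BWT_{k_0+1} \equivW \RT_{1,k_0+1} \leqW \RT_{1,\IN} \leqW \SRT_{2,k_0},\]
in direct contradiction to Corollary~\ref{cor:BWTk-SRT2k}.

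The only step that requires a modicum of care is the fractal property of $\RT_{1,\IN}$, but this should be straightforward: fixing a finite prefix $w$ of a coloring $c:[\IN]^1 \to \IN$ imposes no essential constraint on the admissible completions or on the family of infinite homogeneous sets supplied by any tail, so the restriction of a canonical realizer to any cylinder $w\IN^\IN$ meeting its domain is Weihrauch equivalent to the realizer itself. Once the fractal property is in place, the rest of the argument is pure bookkeeping on the separation results of Section~\ref{sec:separation}.
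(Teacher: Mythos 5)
Your proof is correct and essentially matches the paper's: both halves use the same ingredients, namely $\K_\IN'\equivSW\BWT_\IN$ together with Proposition~\ref{prop:bottom} for the equivalence, and a countable-irreducibility argument collapsing $\SRT_{2,+}$ to a single $\SRT_{2,k_0}$ followed by Corollary~\ref{cor:BWTk-SRT2k} for the separation. The only cosmetic difference is in how countable irreducibility is justified: the paper observes directly that $\K_\IN'$ is countably irreducible because it is a jump (\cite[Proposition~5.8]{BGM12}), whereas you rephrase in terms of $\RT_{1,\IN}$ and appeal to its fractal property (\cite[Proposition~2.6]{BGM12}); both are valid, and the paper's route is marginally more economical since it requires no separate verification of the fractal structure.
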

\begin{proof}
$\K_\IN'\equivW\RT_{1,\IN}$ follows from Proposition~\ref{prop:bottom} since $\K_\IN'\equivSW\BWT_\IN$.
Corollary~\ref{cor:BWTk-SRT2k} implies $\K_\IN'\nleqW\SRT_{2,k}$ for all $k\geq1$, and since $\K_\IN'$ is countably irreducible (as any jump is by \cite[Proposition~5.8]{BGM12})
we obtain $\K_\IN'\nleqW\SRT_{2,+}$.
\end{proof}

The equivalence $\K_\IN'\equivW\RT_{1,\IN}$ corresponds to a well-known theorem of Hirst~\cite{Hir87}, which says that $\RT^1_{<\infty}$ is equivalent to $\B\sO{2}$ over $\RCA_0$ (see also \cite[Theorem~6.81]{Hir15}),
whereas the second equivalence $\K_\IN'\nleqW\SRT_{2,+}$ shows that the reverse mathematics result \cite{CJS01} (see also \cite[Theorem~6.82]{Hir15})
that $\SRT_{2,2}$ proves $\RT_{<\infty}^1$ over $\RCA_0$ cannot be proved uniformly (for instance the proof presented in \cite[Theorem~6.82]{Hir15} contains a non-constructive case distinction).

Corollary~\ref{cor:jump-compact-choice} yields also one direction of the following corollary,
and the other direction follows since $\RT_{1,\IN}\leqW\lim'$, but $\SRT_{2,2}\nleqW\lim'$ and $\SRT_{2,\IN}\nleqW\lim'$  by Corollary~\ref{cor:SRT-limits}.

\begin{corollary}
\label{cor:SRT22-RT1N}
$\RT_{1,\IN}\nW\SRT_{2,2}$ and $\RT_{1,\IN}\nW\SRT_{2,+}$.
\end{corollary}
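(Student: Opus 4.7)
The plan is to establish the two incomparabilities separately, exploiting the fact that a $\nW$ assertion amounts to two independent non-reductions.

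For the ``$\nleqW$'' direction $\RT_{1,\IN}\nleqW\SRT_{2,+}$ (which, via $\SRT_{2,2}\leqW\SRT_{2,+}$ from Lemma~\ref{lem:increasing-color}, immediately yields also $\RT_{1,\IN}\nleqW\SRT_{2,2}$), I would simply invoke Corollary~\ref{cor:jump-compact-choice}, which already gives $\K_\IN'\equivW\RT_{1,\IN}$ together with $\K_\IN'\nleqW\SRT_{2,+}$. So this direction is essentially free.

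For the reverse direction, $\SRT_{2,2}\nleqW\RT_{1,\IN}$ and $\SRT_{2,+}\nleqW\RT_{1,\IN}$, the strategy is to place $\RT_{1,\IN}$ strictly below a problem that is known not to compute $\SRT_{2,k}$. First I would note that by the upper bound Corollary~\ref{cor:upper-bound} we have $\RT_{1,\IN}\leqSW\WKL'$, and by Fact~\ref{fact:WKL-BWT}(5) also $\WKL'\leqSW\lim'$, so altogether $\RT_{1,\IN}\leqW\lim'$. On the other hand, Corollary~\ref{cor:SRT-limits} gives $\SRT_{2,2}\nleqW\lim'=\lim^{(n-1)}$ for $n=2$ and $k=2$. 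Hence if one had $\SRT_{2,2}\leqW\RT_{1,\IN}$, composing with the reduction $\RT_{1,\IN}\leqW\lim'$ would yield $\SRT_{2,2}\leqW\lim'$, contradicting Corollary~\ref{cor:SRT-limits}. The same Corollary~\ref{cor:SRT-limits} also gives $\SRT_{2,\IN}\nleqW\lim'$, and the identical argument rules out $\SRT_{2,+}\leqW\RT_{1,\IN}$; alternatively, any such reduction would imply $\SRT_{2,2}\leqW\SRT_{2,+}\leqW\RT_{1,\IN}$, which we have just excluded.

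There is no genuine obstacle here: both halves are short deductions from previously established facts, and the only thing to be careful about is making sure the right instance of Corollary~\ref{cor:SRT-limits} is invoked (namely $n=2$, whence the offending limit is $\lim'$, matching the known upper bound on $\RT_{1,\IN}$) and that the transitivity step uses $\leqW$ rather than $\leqSW$, which is harmless since we are only proving an impossibility of Weihrauch reduction.
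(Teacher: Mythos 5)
Your proof is correct and follows essentially the same route as the paper: one direction from $\K_\IN'\equivW\RT_{1,\IN}\nleqW\SRT_{2,+}$ (Corollary~\ref{cor:jump-compact-choice}), the other from $\RT_{1,\IN}\leqW\lim'$ together with $\SRT_{2,2}\nleqW\lim'$ (Corollary~\ref{cor:SRT-limits}). If anything, you are slightly more careful than the paper's one-line justification: you spell out the chain $\RT_{1,\IN}\leqSW\WKL'\leqSW\lim'$, and you correctly derive the $\SRT_{2,+}$ case from $\SRT_{2,2}\leqW\SRT_{2,+}$ rather than (as the paper somewhat loosely does) invoking $\SRT_{2,\IN}\nleqW\lim'$, which does not by itself bound $\SRT_{2,+}$ from below.
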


We can conclude the following results on $\C_\IN$ from earlier results.

\begin{proposition}[Closed choice]
\label{prop:closed-choice}
$\C_\IN\equivW\SRT_{1,\IN}$ and $\C_\IN\nleqW\RT_{1,+}$.
\end{proposition}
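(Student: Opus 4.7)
The plan is to derive both halves of the proposition from results already assembled in the paper, with essentially no new ingredients required. For the equivalence $\C_\IN \equivW \SRT_{1,\IN}$, I would simply chain two previously established identifications: Proposition~\ref{prop:bottom}(2) gives $\lim_\IN \equivW \SRT_{1,\IN}$, and the equivalence $\C_\IN \equivSW \lim_\IN$ (invoked e.g.\ in the proof of Proposition~\ref{prop:KN-CN} and originating from \cite[Proposition~3.8]{BGM12}) closes the loop. This half is essentially a citation.

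For the separation $\C_\IN \nleqW \RT_{1,+}$, I would argue by contradiction, exploiting countable irreducibility. Suppose for a contradiction that $\C_\IN \leqW \RT_{1,+} = \bigsqcup_{k \geq 1}\RT_{1,k}$. In the proof of Corollary~\ref{cor:arbitrary-colors} it was noted that $\SRT_{1,\IN}$ is a fractal, hence countably irreducible by \cite[Proposition~2.6]{BGM12}. Since countable irreducibility is preserved by Weihrauch equivalence (if $f \equivW g$ with $g$ countably irreducible and $f \leqW \bigsqcup_n h_n$, then $g \leqW \bigsqcup_n h_n$, so $g \leqW h_n$ and hence $f \leqW h_n$ for some $n$), the first half of the proposition yields that $\C_\IN$ itself is countably irreducible. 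Consequently $\C_\IN \leqW \RT_{1,k}$ for some $k \geq 1$.

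It then only remains to derive a contradiction from $\C_\IN \leqW \RT_{1,k}$. Composing with the trivial strong reduction $\C_{k+1} \leqSW \C_\IN$ gives $\C_{k+1} \leqW \RT_{1,k}$, which directly contradicts the second statement of Proposition~\ref{prop:finite-choice}. Since every step is a direct invocation of an earlier result, no genuine obstacle arises; the only point one needs to articulate is the transfer of countable irreducibility across Weihrauch equivalence, which is immediate from the definition as spelled out above.
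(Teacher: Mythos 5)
Your proof is correct and follows essentially the same route as the paper: cite $\C_\IN\equivW\lim_\IN\equivW\SRT_{1,\IN}$ for the equivalence, use countable irreducibility of $\C_\IN$ to pass from $\RT_{1,+}$ to some single $\RT_{1,k}$, and then derive a contradiction from the earlier cardinality-based separation. The only minor variations are that you establish countable irreducibility of $\C_\IN$ by transfer from $\SRT_{1,\IN}$ rather than directly from the fact that $\C_\IN$ is a strong fractal (\cite[Fact~3.2]{BGM12}), and you land the final contradiction on $\C_{k+1}\nleqW\RT_{1,k}$ (Proposition~\ref{prop:finite-choice}) rather than on $\K_\IN\nleqW\RT_{1,k}$ (Proposition~\ref{prop:compact-choice}); since the latter is itself proved from the former, your version is a hair more direct but not substantively different.
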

\begin{proof}
The first statement follows from Proposition~\ref{prop:bottom} since $\C_\IN\equivW\lim_\IN$ by \cite[Proposition~3.8]{BGM12}, and the second statement follows from the fact that $\C_\IN$ is countably irreducible by \cite[Fact~3.2]{BGM12} (since
every strong fractal is a fractal and hence countably irreducible),
but $\C_\IN\leqW\RT_{1,k}$ for some $k\geq1$ is impossible by Proposition~\ref{prop:compact-choice} since $\K_\IN\leqW\C_\IN$.
\end{proof}

Altogether, we have justified the way choice problems are displayed in Figure~\ref{fig:diagram-RTnk}.
Finally, we provide the following reduction that can be derived from earlier results.

\begin{theorem}[Jumps of compact choice]
\label{thm:jumps-compact-choice}
$\K_\IN^{(n)}\leqW\SRT_{n,\IN}$ for all $n\geq2$.
\end{theorem}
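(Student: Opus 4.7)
The plan is to reduce $\K_\IN^{(n)}$ to $\CRT_{n-1,\IN}'$ through a chain of strong Weihrauch reductions and then invoke Theorem~\ref{thm:CRT-SRT} to identify $\CRT_{n-1,\IN}'$ with $\SRT_{n,\IN}$ up to ordinary Weihrauch equivalence. The crucial fact enabling this approach is that jumps are monotone with respect to $\leqSW$, so strong reductions can be transferred cleanly upwards through iterated jumps.

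First I would assemble two ground-level strong reductions: $\K_\IN'\equivSW\BWT_\IN$ (as recorded at the start of this section) and $\BWT_\IN\leqSW\CRT_{1,\IN}$ from Proposition~\ref{prop:bottom}(3). The latter is concrete: a finite-range sequence $p\in\IN^\IN$ is reinterpreted as a coloring $c(\{i\}):=p(i)$, and any color of an infinite $c$-homogeneous set is a cluster point of $p$, so that both components of $\CRT_{1,\IN}(c)$ provide what $\BWT_\IN$ asks for. Composing the two reductions and jumping $n-1$ times yields $\K_\IN^{(n)}\leqSW\CRT_{1,\IN}^{(n-1)}$.

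Next I would prove by induction on $m\geq0$ that $\CRT_{1,\IN}^{(m)}\leqSW\CRT_{m+1,\IN}$. The base case $m=0$ is trivial. For the inductive step, the hypothesis together with monotonicity of jumps under $\leqSW$ gives $\CRT_{1,\IN}^{(m+1)}\leqSW\CRT_{m+1,\IN}'$, and Proposition~\ref{prop:jumps} together with Lemma~\ref{lem:basic-reductions} supplies $\CRT_{m+1,\IN}'\leqSW\CSRT_{m+2,\IN}\leqSW\CRT_{m+2,\IN}$, completing the step. Applying this result with $m=n-2$ (legal since $n\geq2$) and jumping once more produces $\CRT_{1,\IN}^{(n-1)}\leqSW\CRT_{n-1,\IN}'$.

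Concatenating the two chains yields $\K_\IN^{(n)}\leqSW\CRT_{n-1,\IN}'$, and Theorem~\ref{thm:CRT-SRT} then gives $\CRT_{n-1,\IN}'\equivW\SRT_{n,\IN}$, so altogether $\K_\IN^{(n)}\leqW\SRT_{n,\IN}$. The argument is essentially bookkeeping, with no single step presenting a serious obstacle; the only subtlety to observe is that all reductions must be kept strong until the final step, since ordinary Weihrauch reducibility is not in general monotone under jumps, so the merely ordinary equivalence from Theorem~\ref{thm:CRT-SRT} can only be invoked once all jumps have already been performed.
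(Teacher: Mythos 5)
Your proof is correct and follows essentially the same route as the paper's: both start from $\K_\IN^{(n)}\equivSW\BWT_\IN^{(n-1)}\leqSW\CRT_{1,\IN}^{(n-1)}$ and then climb in cardinality via iterated applications of Proposition~\ref{prop:jumps} together with the monotonicity of jumps under $\leqSW$, downgrading to $\leqW$ only at the very end. The paper compresses your explicit induction into a single chain $\CRT_{1,\IN}^{(n-1)}\leqSW\CSRT_{n,\IN}\leqW\SRT_{n,\IN}$ and finishes with Corollary~\ref{cor:basic-equivalences} rather than invoking Theorem~\ref{thm:CRT-SRT}, but the latter is established from exactly those ingredients, so the two arguments coincide in substance.
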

\begin{proof}
By Propositions~\ref{prop:bottom} and \ref{prop:jumps} and since jumps are monotone with respect to $\leqSW$ we obtain
$\K_\IN^{(n)}\equivSW\BWT_\IN^{(n-1)}\leqSW\CRT_{1,\IN}^{(n-1)}\leqSW\CSRT_{n,\IN}\leqW\SRT_{n,\IN}$.
\end{proof}

The special case for $n=2$ can be seen as the uniform
version of a theorem of Cholak, Jockusch and Slaman \cite{CJS01}, see also \cite[Theorem~6.89]{Hir15},
which states that $\SRT_{<\infty}^2$ proves $\B\Sigma^0_3$ over $\RCA_0$.
In light of Corollary~\ref{cor:jump-compact-choice} and Theorem~\ref{thm:jumps-compact-choice} there is quite some gap in between $\SRT_{2,+}$ and $\SRT_{2,\IN}$.
The diagram summarizes our calibration of choice problems by Ramsey's theorem.

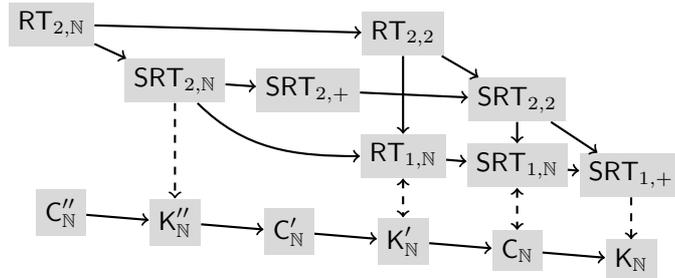
\begin{figure}[htb]
\begin{tikzpicture}[scale=.5,auto=left,every node/.style={fill=black!15}]

\def\rvdots{\raisebox{1mm}[\height][\depth]{$\huge\vdots$}};

  \node (SRT2N) at (3,4.75) {$\SRT_{2,\IN}$};
  \node (RT2N) at (-0.25,6.25) {$\RT_{2,\IN}$};
  \node (RT1N) at (9,2.75) {$\RT_{1,\IN}$};
  \node (RT22) at (9,6) {$\RT_{2,2}$};
  \node (SRT22) at (12,4.25) {$\SRT_{2,2}$};
  \node (SRT1N) at (12,2.5) {$\SRT_{1,\IN}$};
  \node (SRT2P) at (6.5,4.5) {$\SRT_{2,+}$};
  \node (SRT1P) at (15,2.25) {$\SRT_{1,+}$};
  \node (CNSS) at (0,1.25) {$\C_\IN''$};
  \node (KNSS) at (3,1) {$\K_\IN''$};
  \node (CNS) at (6,0.75) {$\C_\IN'$};
  \node (KNS) at (9,0.5) {$\K_\IN'$};
  \node (CN) at (12,0.25) {$\C_\IN$};
  \node (KN) at (15,0) {$\K_\IN$};

   \foreach \from/\to in {
   RT2N/RT22,
   RT22/SRT22,
   SRT22/SRT1P,
   RT2N/SRT2N,
   SRT2N/SRT2P,
   SRT2P/SRT22,
   RT22/RT1N,
   RT1N/SRT1N,
   SRT1N/SRT1P,
   SRT22/SRT1N,
   CNSS/KNSS,
   KNSS/CNS,
   CNS/KNS,
   KNS/CN,
   CN/KN}
   \draw [->,thick] (\from) -- (\to);

   \foreach \from/\to in {
   SRT2N/KNSS,
   SRT1P/KN}
   \draw [->,thick,dashed] (\from) -- (\to);

 \draw[<->,thick,dashed] (RT1N) to (KNS);
 \draw[<->,thick,dashed] (SRT1N) to (CN);
%

  \draw [->,thick,looseness=1] (SRT2N) to [out=315,in=180] (RT1N);

\end{tikzpicture}
  
\ \\[-0.5cm]
\caption{Closed and compact choice on natural numbers calibrated with Ramsey's theorem in the Weihrauch lattice.}
\label{fig:diagram-KC}
\end{figure}

Further questions could be studied along these lines. We mention the following question.
The first part of this question is related to \cite[Theorem~6.85]{Hir15} and the second part
to \cite[Open Question~6.92]{Hir15}.

\begin{question}[Jump of closed choice]
\label{quest:jump-closed-choice}
Does $\C_\IN'\leqW\RT_{2,2}$ hold? Does $\C_\IN''\leqW\RT_{2,\IN}$ hold?
\end{question}

In light of \cite[Theorem~6.85]{Hir15} a positive answer to the first part of this question seems
unlikely. 
Another natural question is how cluster point problems are related to Ramsey's theorem.
We can say at least something.

\begin{corollary}[Cluster point problem]
\label{cor:CR-RT33}
$\CL_\IR\leqW\RT_{3,3}$.
\end{corollary}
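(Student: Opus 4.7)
The plan is to reduce $\CL_\IR$ to the product $\CL_\IN\times\widehat{\BWT_\IR}$ and then bound each factor by an appropriate component of the decomposition $\RT_{2,\IN}\times\RT_{3,2}\leqSW\RT_{3,3}$ provided by Theorem~\ref{thm:products}.

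Given a sequence $(x_n)$ in $\IR$ with a cluster point, I would first compute $z_n\in\IZ$ (encoded in $\IN$) with $|z_n-x_n|<1$ for every $n$ by rounding a sufficiently fast Cauchy approximation of $x_n$. Any cluster point $c$ of $(x_n)$ forces $z_n\in\{\lfloor c\rfloor,\lceil c\rceil\}$ for infinitely many $n$, so $(z_n)$ admits a cluster point in $\IZ$. For each $k\in\IZ$ in parallel I then build a sequence $(y_{k,j})_j$ in $[k-1,k+1]$ by listing the entries $x_n$ with $z_n=k$ in their natural order, and, whenever only finitely many such indices turn out to be available, padding the tail with the constant value $k$ (implemented by dovetailing the enumeration of $\{n:z_n=k\}$ against the required output precision). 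Each resulting sequence $(y_{k,j})_j$ has compact closure, and for every $k$ for which $\{n:z_n=k\}$ is infinite every cluster point of $(y_{k,j})_j$ is also a cluster point of $(x_n)$. Applying $\widehat{\BWT_\IR}$ to the family $((y_{k,j})_j)_k$ produces a sequence $(c_k)_k$ of candidate cluster points, and applying $\CL_\IN$ to $(z_n)$ delivers some $k^*$ with $\{n:z_n=k^*\}$ infinite; then $c_{k^*}$ is a cluster point of $(x_n)$. This establishes $\CL_\IR\leqW\CL_\IN\times\widehat{\BWT_\IR}$.

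To bound the second factor, I would use $\BWT_\IR\equivSW\WKL'$ (recorded in Figure~\ref{fig:diagram-RT22}) together with Fact~\ref{fact:WKL-BWT}(2), which exhibits $\WKL'$ as a parallelization and thus gives $\widehat{\BWT_\IR}\equivSW\WKL'$, and then Corollary~\ref{cor:delayed-parallelization} to obtain $\WKL'\leqW\RT_{3,2}$. To bound the first factor, I would use the chain
\[
\CL_\IN\equivSW\C_\IN'\leqSW\BWT_\IN'\leqSW\CRT_{1,\IN}'\equivW\SRT_{2,\IN}\leqSW\RT_{2,\IN},
\]
in which $\C_\IN\leqSW\BWT_\IN$ combines Proposition~\ref{prop:least} with Fact~\ref{fact:WKL-BWT}(4) applied to $k=\IN$, the strong reduction $\BWT_\IN\leqSW\CRT_{1,\IN}$ arises from viewing a bounded sequence in $\IN$ as a coloring of singletons and projecting the output, Theorem~\ref{thm:CRT-SRT} supplies the central equivalence, and Lemma~\ref{lem:basic-reductions} provides the last reduction. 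Composing these two bounds with Theorem~\ref{thm:products} (for $n=k=2$) completes the argument. The main subtlety will be the definition of the padded sequences $(y_{k,j})_j$ as genuine computable reals: since the set $\{n:z_n=k\}$ is only enumerable, the search for its entries and the precision-dependent emission of padding must be carefully interleaved, but this is a standard dovetailing construction.
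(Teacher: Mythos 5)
Your argument is correct and reaches the same final destination by the same last three moves — bound one factor by $\RT_{2,\IN}$, the other by $\RT_{3,2}$, then invoke Theorem~\ref{thm:products} — but the way you obtain the initial decomposition of $\CL_\IR$ differs from the paper. The paper simply cites \cite[Proposition~9.15]{BGM12} for $\CL_\IR\equivW\C_\IN'\times\CL_{2^\IN}\equivW\C_\IN'\times\WKL'$, whereas you reconstruct the one-directional reduction $\CL_\IR\leqW\CL_\IN\times\widehat{\BWT_\IR}$ by hand via the integer-rounding/bucketing construction. Since $\CL_\IN\equivSW\C_\IN'$ and $\widehat{\BWT_\IR}\equivSW\WKL'$, this is the same degree-theoretic fact, but your derivation is self-contained rather than outsourced; what the paper's route buys is brevity and the full $\equivW$ (which is not needed here). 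Your bound on the first factor, $\CL_\IN\equivSW\C_\IN'\leqSW\BWT_\IN'\leqSW\CRT_{1,\IN}'\equivW\SRT_{2,\IN}\leqSW\RT_{2,\IN}$, is also essentially the paper's chain (the paper phrases it through $\K_\IN''$ and Theorem~\ref{thm:jumps-compact-choice}, but $\K_\IN''\equivSW\BWT_\IN'$, so it is the same reduction). Two small remarks: first, the step $\C_\IN\leqSW\BWT_\IN$ is not a consequence of Proposition~\ref{prop:least} plus Fact~\ref{fact:WKL-BWT}(4) as you cite — you additionally need $\C_\IN\equivSW\lim_\IN$ (from \cite[Proposition~3.8]{BGM12}, invoked in the proof of Proposition~\ref{prop:KN-CN}), or more directly just Proposition~\ref{prop:KN-CN} itself with $\K_\IN'\equivSW\BWT_\IN$. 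Second, the dovetailed padding in your construction of $(y_{k,j})_j$ should be made precise to ensure each $y_{k,j}$ is a genuine computable real; the clean way is to set $y_{k,j}:=x_n$ for the largest $n\le j$ with $z_n=k$, and $y_{k,j}:=k$ if no such $n\le j$ exists — this is manifestly computable, keeps $(y_{k,j})_j$ inside $[k-1,k+1]$, and preserves the cluster-point set whenever $\{n:z_n=k\}$ is infinite.
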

\begin{proof}
We recall that $\CL_\IN\equivW\C_\IN'$ and $\CL_{2^\IN}\equivW\C_{2^\IN}'\equivW\WKL'$ by \cite[Theorem~9.4]{BGM12}.
By \cite[Proposition~9.15]{BGM12} we have $\CL_\IR\equivW\C_\IN'\times\CL_{2^\IN}\equivW\C_\IN'\times\WKL'$.
Hence, we obtain with Proposition~\ref{prop:KN-CN}, Theorem~\ref{thm:jumps-compact-choice} and Corollary~\ref{cor:delayed-parallelization}
\[\CL_\IR\equivW\C_\IN'\times\WKL'\leqW\K_\IN''\times\WKL'\leqW\RT_{2,\IN}\times\RT_{3,2}\leqW\RT_{3,3}.\]
The last mentioned reduction holds by Theorem~\ref{thm:products}.
\end{proof}

However, it is not immediately clear whether the following holds. 

\begin{question}
\label{quest:CR-RT32}
$\CL_\IR\leqW\RT_{3,2}$?
\end{question}

\section{Conclusion}

We have studied the uniform computational content of Ramsey's theorem in the Weihrauch lattice,
and we have clarified many aspects of Ramsey's theorem in this context. 
Key results are the lower bound provided in Theorem~\ref{thm:lower-bound}, the theorems on products (Theorem~\ref{thm:products})
and parallelization (Theorem~\ref{thm:delayed-parallelization}), as well as the theorem on jumps (Theorem~\ref{thm:CRT-SRT}) and the upper
bounds in Corollary~\ref{cor:upper-bound} derived from it.
From this tool box of key results (together with the squashing theorem, Theorem~\ref{thm:squashing}) 
we were able to derive a number of interesting consequences, such as the characterization of the 
parallelization of Ramsey's theorem in Corollary~\ref{cor:parallelization} and the effect of 
increasing numbers of colors in Theorem~\ref{thm:increasing-colors}.
The separation tools provided in Section~\ref{sec:separation} have led to some further clarity. 
A number of important questions regarding the uniform behavior of Ramsey's theorem were left open.
Hopefully, some future study will shed further light on this question.

\section{Acknowledgments}

We would like to thank Steve Simpson and Ludovic Patey for helpful comments on an earlier version of this article
and the anonymous referee for his or her very careful proof reading that helped us to improve the presentation of the article.

\bibliographystyle{plain}
\bibliography{C:/Users/vbrattka/Dropbox/Bibliography/lit}

\end{document}